\documentclass[12pt]{amsart}

\usepackage{ucs}

\usepackage{amssymb}
\usepackage{amsthm}
\usepackage{amsmath}
\usepackage{latexsym}
\usepackage[cp1251]{inputenc}
\usepackage{graphicx}
\usepackage{wrapfig}
\usepackage{caption}
\usepackage{subcaption}
\usepackage{indentfirst}
\usepackage[left=2.5cm,right=2.5cm,top=2.5cm,bottom=2.5cm,bindingoffset=0cm]{geometry}
\usepackage{enumerate}
\usepackage{makecell}

\DeclareMathOperator{\aut}{Aut}

\DeclareMathOperator{\cay}{Cay}
\DeclareMathOperator{\cyc}{Cyc}

\DeclareMathOperator{\id}{id}

\DeclareMathOperator{\orb}{Orb}

\DeclareMathOperator{\rk}{rk}

\DeclareMathOperator{\Span}{Span}

\DeclareMathOperator{\sym}{Sym}
\DeclareMathOperator{\rad}{rad}

\DeclareMathOperator{\Hol}{Hol}

\DeclareMathOperator{\alt}{Alt}
\DeclareMathOperator{\divv}{Div}

\def\tm#1{\item[{\rm (#1)}]}

\makeatletter 
\def\@seccntformat#1{\csname the#1\endcsname. } 
\def\@biblabel#1{#1.} 

\newcommand{\overbar}[1]{\mkern 1.5mu\overline{\mkern-1.5mu#1\mkern-1.5mu}\mkern 1.5mu}

\title[Classification of abelian Schur groups I]{Classification of abelian Schur groups I}

\author{Grigory Ryabov}

\address{Sobolev Institute of Mathematics, Novosibirsk, Russia}
\email{gric2ryabov@gmail.com}

\thanks{The author was supported by the state contract of the Sobolev Institute of Mathematics (project number FWNF-2026-0011).}

\date{}

\newtheorem{prop}{Proposition}[section]

\newtheorem*{claim1}{Claim~1}
\newtheorem*{claim2}{Claim~2}
\newtheorem*{claim3}{Claim~3}
\newtheorem*{claim4}{Claim~4}

\newtheorem{lemm}[prop]{Lemma}
\newtheorem{theo}[prop]{Theorem}

\newtheorem*{prob}{Problem (P\"{o}schel, 1974)}
\newtheorem{corl}[prop]{Corollary}

\theoremstyle{definition}

\newtheorem{rem}[prop]{Remark}

\begin{document}

\begin{abstract}
A finite group $G$ is called a \emph{Schur} group if every Schur ring over $G$ is \emph{schurian}, i.e. associated in a natural way with a subgroup of the symmetric group~$\sym(G)$ that contains all right translations of~$G$. The list of all possible abelian Schur groups was obtained by Evdokimov, Kov\'acs, and Ponomarenko in 2016.  In two papers, we complete a classification of abelian Schur groups. In the present paper, we study schurity of several groups from the list. First, we prove that a direct product of the elementary abelian group of order~4 and a cyclic group, whose order is an odd prime power or a product of two distinct odd primes, is a Schur group. Second, we establish nonschurity of some other groups from the list.
\\
\\
\textbf{Keywords}: $S$-rings, Schur groups, permutation groups.

\noindent\textbf{MSC}: 05E30, 20B25.
\end{abstract}

\maketitle
\section{Introduction}

Let $G$ be a finite group. A subring of the group ring $\mathbb{Z}G$ is called an \emph{$S$-ring} (a \emph{Schur ring}) over~$G$ if it is a free $\mathbb{Z}$-module spanned by a partition of $G$, closed under taking inverse and containing the identity element $e$ of $G$ as a class. The notion of $S$-ring was introduced by Schur~\cite{Schur}. Using the $S$-ring approach, Schur proved that every primitive permutation group containing a regular cyclic subgroup of composite order is $2$-transitive which generalizes the Burnside theorem~\cite[p.~339]{Burn}. The general theory of $S$-rings was developed by Wielandt (see~\cite[Chapter~IV]{Wi}). To date, $S$-rings are realized as a powerful tool for studying permutation groups containing regular subgroups, combinatorial Cayley objects, especially, isomorphisms of them, and representations of finite groups. For more details on $S$-rings and their applications, we refer the reader to~\cite[Section~2.4]{CP} and~\cite{MP0}.

To study permutation groups having regular subgroups, Schur used $S$-rings whose partition of the underlying group consists of the orbits of a one-point stabilizer of an appropriate permutation group (see Section~2.3 for the exact definition). As was shown by Wielandt~\cite{Wi}, not all $S$-rings can be constructed in this way. An $S$-ring is said to be \emph{schurian} if it can~\cite{Po}. The schurity (the property to be schurian) is one of the crucial properties of $S$-rings, closely related to the isomorphism problem for Cayley graphs (see~\cite[Section~5.3]{MP0} for details).

A finite group $G$ is defined to be a \emph{Schur} group if every $S$-ring over $G$ is schurian~\cite{Po}. The main problem considered in the present paper is the following one.

\begin{prob}
Determine all Schur groups.
\end{prob}

In general, the above problem seems to be hard, in particular, because the number of $S$-rings over a given group can be exponential in the order of the group. There are several results on schurity of nonabelian groups~\cite{MP,PV,Ry1,Ry3,Ry4}. However, most of the results on the schurity problem are concerned with abelian groups. The first result on schurity of abelian groups was obtained in paper~\cite{Po}. Namely, it was proved that cyclic $p$-groups of odd order are Schur. This result was extended to the case $p=2$ in paper~\cite{GNP}. In paper~\cite{KP}, the schurity of cyclic groups of order~$pq$, where $p$ and $q$ are distinct primes, was proved. The above results were used later for solving the isomorphism problem for Cayley graphs over the corresponding groups~\cite{KP0,KP}.

A complete classification of cyclic Schur groups was obtained in paper~\cite{EKP1}. Namely, it was proved that a cyclic group of order~$n$ is Schur if and only if $n$ belongs to one of the following families of integers:
$$p^k,~pq^k,~2pq^k,~pqr,~2pqr,$$
where $p$, $q$, and $r$ are distinct primes and $k\geq 0$ is an integer.

Investigations of schurity of abelian groups was continued in paper~\cite{EKP2}. At first, necessary and sufficient condition of schurity for elementary abelian groups was obtained there. Namely, an elementary abelian noncyclic group of order~$n$ is Schur if and only if
$$n\in\{4,8,9,16,27,32\}.$$
Further, it was shown that an abelian Schur group which is neither cyclic nor elementary abelian belongs to one of the explicitly given infinite families of groups. Given a positive integer~$n$, the cyclic and elementary abelian groups of order~$n$ are denoted by $C_n$ and $E_n$, respectively.

\begin{theo}\cite[Theorem~1.3]{EKP2}\label{main0}
An abelian Schur group, which is neither cyclic nor elementary abelian, is isomorphic to a group from one of the following nine families:
\begin{enumerate}
\tm{1} $C_2 \times C_{2^k}$, $C_{2p}\times C_{2^k}$, $E_4 \times C_{p^k}$, $E_4 \times C_{pq}$, $E_{16}\times C_p$,

\tm{2} $C_3 \times C_{3^k}$, $C_{6}\times C_{3^k}$, $E_9\times C_{q}$, $E_9 \times C_{2q}$,
\end{enumerate}
where $p$ and $q$ are distinct primes, $p\neq 2$, and $k\geq 1$ is an integer.
\end{theo}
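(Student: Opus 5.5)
The plan follows the strategy of Evdokimov--Kov\'acs--Ponomarenko. It has three ingredients. The first is that the class of Schur groups is closed under taking subgroups and quotients. The second is a short list of explicitly constructed nonschurian $S$-rings over small abelian groups. The third is the classification of cyclic Schur groups and of elementary abelian Schur groups recalled above. First I would establish or recall closure under sections. If $H\le G$ and $\mathcal{A}$ is a nonschurian $S$-ring over $H$, then a suitable extension of $\mathcal{A}$ to $G$ is again nonschurian. One can use the tensor product $\mathcal{A}\otimes\mathbb{Z}K$ when $G=H\times K$, or a wedge-type generalized product in general, with the section $H/1$ recovering $\mathcal{A}$. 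For quotients one uses the wreath product. The consequence is that every section of an abelian Schur group is Schur.

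Second, I would produce a finite list of minimal non-Schur abelian groups, each with an explicit nonschurian $S$-ring. From the cyclic classification, $C_n$ is non-Schur for $n=p^aq^b$ with $a,b\ge2$, for $n=pqrs$ with four distinct primes, and so on. From the elementary abelian classification, $E_{p^2}$ is non-Schur for $p\ge5$, and $E_{81}$ and $E_{64}$ are non-Schur. Beyond these I need new noncyclic, nonelementary ones. Candidates are $C_p\times C_{p^2}$ for $p\ge5$ (which already contains $E_{p^2}$), $C_2\times C_2\times C_4$ versus $C_4\times C_4$, $C_3\times C_9\times\ldots$, $C_9\times C_9$, $E_4\times C_{pq}$-type groups enlarged by an extra prime, and groups like $C_p\times C_q\times C_q$. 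For each missing minimal group I would give a nonschurian $S$-ring, built as a generalized wreath product or from a cyclotomic ring twisted by a non-Galois partition. Nonschurity would be checked by showing that the $2$-closure of the ring's automorphism group has more orbits than the ring has basic sets.

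Third comes the combinatorial elimination. Let $G$ be abelian, neither cyclic nor elementary abelian, with every section Schur. Since $E_{p^2}$ is non-Schur for $p\ge5$, only the $2$- and $3$-Sylow subgroups can be noncyclic, and each of them is of rank at most $5$ or $3$ respectively. Using the minimal non-Schur groups (for instance $C_4\times C_4$, $C_2\times C_8\times\ldots$, $C_9\times C_9$, $C_3\times C_3\times C_3\times\ldots$), I would bound the exponents and ranks. I would also bound the cyclic parts by the cyclic classification applied to cyclic sections of $G$. Finally, I would exclude mixtures in which both Sylow subgroups are noncyclic, by exhibiting a nonschurian $S$-ring over $E_4\times E_9$ or a quotient thereof. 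What remains should be exactly the nine families (1) and (2).

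The main obstacle is the second step: finding the right minimal non-Schur groups and verifying nonschurity of the constructed $S$-rings, for example over $E_4\times C_{p^2}$-style groups or $E_{16}\times C_{pq}$. I would first try generalized wreath products of schurian rings whose gluing is incompatible with any automorphism, that is, a nontrivial obstruction in the sense of Evdokimov--Ponomarenko. For the smallest cases I would fall back on computer checks.
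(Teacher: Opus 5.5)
This theorem is quoted from \cite{EKP2} and is not proved in the paper, so I can only compare your outline with what a proof has to contain. Your skeleton is the right shape: inheritance of the Schur property by sections, then explicit nonschurian $S$-rings over the groups to be excluded, then elimination. The first ingredient is fine. For $H<G$ abelian, $\mathcal{B}\wr\mathcal{T}_{G/H}$ extends any $S$-ring $\mathcal{B}$ over $H$, quotients are handled dually, and $\mathcal{A}=V(K,G)$ gives $\mathcal{A}_S=V(K^S,S)$.

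The gap is the second ingredient, which is the whole content of the theorem. It is not carried out, and the framework you set up for it would not work. The groups that must be excluded directly do not form a finite list, and the cyclic and elementary abelian classifications do not reach them. Consider:
\begin{itemize}
\item $E_4\times C_{p^2q}$ and $E_4\times C_{pqr}$, with $p,q,r$ distinct odd primes;
\item $E_9\times C_{q^2}$, $C_3\times C_9\times C_q$ and $E_{27}\times C_q$, with $q\ge 5$;
\item $E_{32}\times C_p$ and $C_6\times C_6$.
\end{itemize}
None of these is in the nine families. Yet all their cyclic sections are Schur by \cite{EKP1}. All their elementary abelian sections have prime order or order in $\{4,8,9,16,27,32\}$. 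All their proper noncyclic, non-elementary sections already lie in the list. So bounding the cyclic parts by the cyclic classification excludes none of them. Each family, infinite in the primes, needs its own nonschurian $S$-ring built uniformly in $p,q,r$, and you give none.

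A few of your specific targets are also off:
\begin{itemize}
\item Your sample target $E_4\times C_{p^2}$ is in fact Schur (Theorem~\ref{main1}).
\item An $S$-ring over a proper quotient of $E_4\times E_9$ cannot be nonschurian, since $E_4\times C_3$ and $E_9\times C_2$ are Schur. The construction must live on $C_6\times C_6$ itself.
\item $C_p\times C_q\times C_q$ gives nothing new. It contains $E_{q^2}$ for $q\ge 5$, and it is in the list for $q\le 3$.
\end{itemize}

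The tool actually available is the one used in the proofs of Theorems~\ref{main2} and~\ref{main3}. You take a generalized wreath product $\mathcal{A}_U\wr_{U/L}\mathcal{A}_{G/L}$ of schurian, typically cyclotomic, factors. Then you show that no $K_1\approx_2\aut(\mathcal{A}_U)$ and $K_0\approx_2\aut(\mathcal{A}_{G/L})$ with $K_1^S=K_0^S$ exist (Lemma~\ref{schurwr}). Alternatively, you show directly that $\orb(\aut(\mathcal{A})_e,G)$ is strictly finer than $\mathcal{S}(\mathcal{A})$, by tracking stabilizers as in Propositions~\ref{nonschur} and~\ref{nonschure16}.

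This is also the correct form of your nonschurity test. $\aut(\mathcal{A})$ is already $2$-closed, so comparing the orbits of its $2$-closure with the basic sets is not meaningful as stated.

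Proving that the required pair $(K_1,K_0)$ cannot exist is the real work, usually via $2$-minimality or Cayley minimality of the glued pieces. It is delicate enough that \cite{EKP2} left $C_{2p}\times C_8$ and $E_{16}\times C_p$ ($p\ge 5$) in its list; they are excluded only in this paper. Until such constructions are supplied for each family above, your elimination step has nothing to eliminate with.
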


In the same paper, it was proved that the groups $E_4 \times C_p$, where $p$ is an odd prime, are Schur groups. A schurity property of the groups $C_2\times C_{2^k}$ and $C_3 \times C_{3^k}$ was established in papers~\cite{MP} and~\cite{Ry2}, respectively. In paper~\cite{PR}, it was checked that $E_9 \times C_{q}$ is a Schur group for every prime~$q$.

In two papers, we study the shurity problem for all groups from Theorem~\ref{main0} whose schurity is unknown. In the present paper, we deal with the most laborious case, namely, with the groups $E_4 \times C_{p^k}$ and $E_4 \times C_{pq}$. We prove the following theorem.

\begin{theo}\label{main1}
The groups $E_4\times C_{p^k}$ and $E_4 \times C_{pq}$, where $p$ and $q$ are odd primes and $k\geq 1$, are Schur groups.
\end{theo}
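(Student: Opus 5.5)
We argue by induction on $|G|$ and take a minimal counterexample. Let $G=E_4\times C_n$ with $n=p^k$ or $n=pq$, and let $\mathcal{A}$ be an $S$-ring over $G$. Every proper section of $G$ is again of the form $E_4\times C_m$, $C_2\times C_m$ or $C_m$ with $m\mid n$. The cyclic ones $C_m$ and $C_{2m}$ are Schur by~\cite{EKP1}, since $m$, $2m$, $4m$ and $2pq$ all lie in the families $p^k$, $pq^k$, $2pq^k$. The sections $E_4\times C_m$ with $m<n$ are Schur by the induction hypothesis, the base cases $E_4$ and $E_4\times C_p$ being covered by~\cite{EKP2}. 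So we may assume that $\mathcal{A}_S$ is schurian for every proper $\mathcal{A}$-section $S$. We then split according to the structure of $\mathcal{A}$, following the standard decomposition theory of $S$-rings over abelian groups: $\mathcal{A}$ is either trivial or primitive, or a tensor product, or a generalized wreath product, or cyclotomic over a suitable subgroup.

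\textbf{Step 1 (easy cases).} If $\mathcal{A}$ is primitive, then by the Schur--Wielandt theorem for groups with a cyclic Sylow part (an abelian group of composite order that is not elementary abelian is a Burnside group), $\mathcal{A}$ has rank~$2$ and is therefore schurian. If $\mathcal{A}=\mathcal{A}_{H}\otimes\mathcal{A}_{K}$, each factor is schurian by induction, and the tensor product of schurian rings is schurian. If $\mathcal{A}$ is cyclotomic, i.e. $\mathcal{A}=\cyc(K,G)$ for some $K\le\aut(G)$, it is schurian directly. This leaves the case where $\mathcal{A}$ is a proper $U/L$-wreath product with $L\le U$ nontrivial proper $\mathcal{A}$-subgroups. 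Here we use the known criterion (Evdokimov--Ponomarenko): a generalized wreath product of schurian rings $\mathcal{A}_U$ and $\mathcal{A}_{G/L}$ is schurian if and only if there are $\Delta_0\le\aut(\mathcal{A}_U)$ and $\Delta_1\le\aut(\mathcal{A}_{G/L})$ whose restrictions to $U/L$ coincide, or equivalently if the section $U/L$ is ``$\mathcal{A}$-compatible'' via an automorphism group of $\mathcal{A}_{U/L}$. It suffices that $\aut(\mathcal{A}_{U/L})^{U/L}=\aut(\mathcal{A}_U)^{U/L}\cdot\aut(\mathcal{A}_{G/L})^{U/L}$ in the suitable sense, and this holds in particular when $\mathcal{A}_{U/L}$ is a rank~$2$ ring, or when $|U/L|$ is small (a prime or $4$), or when $\mathcal{A}_{U/L}$ is cyclotomic with its full automorphism group ``$2$-minimal''.

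\textbf{Step 2 (the main obstacle).} The heart of the proof is to show that, for a nonschurian candidate, one can always choose the wreath decomposition so that $U/L$ falls into one of these good cases. For $n=p^k$, we first use the $p$-part: the Sylow $p$-subgroup $P\cong C_{p^k}$ is an $\mathcal{A}$-subgroup modulo the $2$-part issues, and $\mathcal{A}_P$ is a cyclotomic ring or a wreath product, by the structure theory for cyclic $p$-groups~\cite{Po}. We then study how the basic sets project onto $E_4$, distinguishing whether $E_4$, a subgroup $C_2$, or neither is an $\mathcal{A}$-subgroup. The goal is to show that either $\mathcal{A}$ is a tensor product, or it is a wreath product with $|U/L|\in\{2,4,p\}$, or $U/L$ carries a trivial or rank~$2$ ring. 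For $n=pq$, we similarly analyze the $pq$-part, using the classification of $S$-rings over $C_{pq}$ and $C_{2pq}$ from~\cite{KP,EKP1}. The genuinely hard configurations are those where $U/L\cong E_4\times C_{p^j}$ or $C_2\times C_{p^j}$ and $\mathcal{A}_{U/L}$ is a nontrivial cyclotomic ring whose automorphism group is bigger than its cyclotomic part. Here I would first try to reduce, by counting basic sets of $\mathcal{A}_{U/L}$ and using the Schur--Wielandt principle and the $p$-power map $x\mapsto x^{m}$ (which preserves $\mathcal{A}$ for $m$ coprime to $|G|$), to the situation where $\aut(\mathcal{A}_{U/L})$ is generated by translations and a group of multipliers. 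After that, extending the required automorphisms of $\mathcal{A}_U$ and $\mathcal{A}_{G/L}$ so that they agree on $U/L$ becomes a direct computation with multipliers. I expect this case analysis, especially the interplay of the noncyclic $E_4$ part with a cyclotomic ring on the $p$-part, to be the most laborious step.
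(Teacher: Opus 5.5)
\paragraph{Gap 1: the decomposition is assumed, not proved.} The frame you set up matches the paper's: induction through the Schur property of proper sections, tensor products, cyclotomic rings, and the Evdokimov--Ponomarenko criterion of Lemma~\ref{schurwr}. But both steps that carry the real content are missing. Your sentence ``this leaves the case where $\mathcal{A}$ is a proper $U/L$-wreath product'' assumes that every nontrivial $S$-ring over $E_4\times C_n$ is cyclotomic, a nontrivial tensor product, or a nontrivial generalized wreath product. No general decomposition theory for abelian groups gives this. It holds for cyclic groups (Leung--Man) and fails for elementary abelian groups. For the groups at hand it is exactly Theorem~\ref{e4cn} (equivalently Theorem~\ref{main15}), the main technical result of the paper. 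Its proof needs several separate ingredients:
\begin{enumerate}
\item[(a)] the second multiplier theorem, to control basic sets outside $D$ (Lemma~\ref{2set});
\item[(b)] a separate analysis when $H\cong E_4$ is not an $\mathcal{A}$-subgroup (Theorem~\ref{hnot});
\item[(c)] duality for the case when $D$ is not an $\mathcal{A}$-subgroup (Theorem~\ref{dnot}); your claim that the Sylow $p$-subgroup is an $\mathcal{A}$-subgroup ``modulo the 2-part issues'' is false in general;
\item[(d)] in the dense case, the proof that $\mathcal{A}\neq\mathcal{A}_H\otimes\mathcal{A}_D$ with $|\rad(\mathcal{A}_D)|=1$ forces $\mathcal{A}$ to be normal, cyclotomic and $2$-minimal (Theorem~\ref{hpasubgroups2});
\item[(e)] the wreath structure of Theorem~\ref{hpasubgroups3}, including the exceptional configurations for $n=3^k$.
\end{enumerate}
The schurity argument also needs more than the mere existence of a wreath decomposition. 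It uses the specific extra information on $S=U/L$ recorded in Statements~(1)--(4) of Theorem~\ref{e4cn}.

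\paragraph{Gap 2: the ``good'' sections are not justified.} Small $|U/L|$ or a rank-$2$ ring $\mathcal{A}_{U/L}$ does not by itself settle the criterion of Lemma~\ref{schurwr}. The ring $\mathcal{T}_{E_4}$ is not $2$-minimal (Remark~\ref{2minsmallrem}), and neither is $\mathcal{T}_{C_p}$ for $p\geq 5$ (Example~1). So Lemma~\ref{2min} is unavailable, and you must actually construct $K_1\approx_2\aut(\mathcal{A}_U)$ and $K_0\approx_2\aut(\mathcal{A}_{G/L})$ with $K_1^S=K_0^S$. These are precisely the cases the paper has to work through. Via Lemma~\ref{s0schur} it first reduces to $\mathcal{A}_S=\mathcal{A}_{S_H}\otimes\mathcal{A}_{S_D}$ with a rank-$2$ factor. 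It then proceeds as follows:
\begin{enumerate}
\item[(a)] It shows $|\aut(\mathcal{A})^H|=12$ for a dense schurian non-tensor $\mathcal{A}$ with $\mathcal{A}_H=\mathcal{T}_H$ (Lemma~\ref{hpasubgroups4}). Hence both sides can be cut down to $\alt(4)$ on $S_H$ (Lemma~\ref{hrest}).
\item[(b)] It uses normality of indecomposable sub-$S$-rings (Lemmas~\ref{trivrade4cn} and~\ref{autrestprime}). This either forces the restrictions to $S_D$ into $\Hol(S_D)$, or yields a different wreath decomposition with $|S|=4$ (Lemma~\ref{drest}).
\item[(c)] It glues the two components with Lemma~\ref{tensgroup}.
\item[(d)] Statement~(3) needs the separate argument of Lemma~\ref{state3}.
\end{enumerate}
Theorems~\ref{main2} and~\ref{main3} show that this compatibility condition is a genuine obstruction. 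There, generalized wreath products of schurian $S$-rings over the closely related groups $C_{2p}\times C_8$ and $E_{16}\times C_p$ are nonschurian. Your Step~2 only expresses the hope that a good decomposition can always be chosen and defers it as ``the most laborious step''. As written, the proposal is a strategy outline rather than a proof.
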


To prove Theorem~\ref{main1}, we characterize all $S$-rings over the groups from this theorem. 

\begin{theo}\label{main15}
Every nontrivial $S$-ring over $E_4\times C_{p^k}$ or $E_4 \times C_{pq}$, where $p$ and $q$ are odd primes and $k\geq 1$, is cyclotomic or a nontrivial tensor or generalized wreath product. 
\end{theo}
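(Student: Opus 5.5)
We prove the statement by induction on $|G|$, where $G=E_4\times C_n$ with $n=p^k$ or $n=pq$. Write $G=E\times C$, with $E\cong E_4$ the Sylow $2$-subgroup and $C\cong C_n$ the Hall $2'$-subgroup. Let $\mathcal{A}$ be a nontrivial $S$-ring over $G$. Every characteristic subgroup of $G$ generated by a union of basic sets is an $\mathcal{A}$-subgroup, and the same holds for $\mathcal{A}$-subgroups of sections. The quotients and subgroups occurring here are again of the same type, $E_4\times C_m$ or $E_2\times C_m$ with $m\mid n$, or they are cyclic of order $m$ or $2m$. So we may use the induction hypothesis together with the known descriptions of $S$-rings over cyclic groups of orders $p^k$, $pq$, $2p^k$, $2pq$ (Leung--Man, and \cite{EKP1}).

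\textbf{Step 1: a Burnside--Schur-type lemma.} First we show that if $\mathcal{A}$ is primitive then it is trivial. Groups of the form $E_4\times C_n$ with $n>1$ are not among the primitive-Burnside exceptions, since those are only elementary abelian groups of small rank. Next we use the Schur--Wielandt principle: for each $m$ coprime to $|G|$, the map $x\mapsto x^m$ permutes the basic sets. Applying this with $m$ a suitable power, the subsets $X^{[2]}$ and $X^{[p]}$, together with $\rad(X)$, yield $\mathcal{A}$-subgroups. In particular, for each basic set $X$ we consider its projections onto $E$ and onto $C$, and the subgroup $\langle X\cap C\rangle$. The key dichotomy we aim for is the following: either $E$ or $C$ (or one of $C$'s Sylow subgroups) is an $\mathcal{A}$-subgroup, or every basic set outside a proper $\mathcal{A}$-subgroup $L$ is a union of $L$-cosets. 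In the latter case we will obtain an $S=(U,L)$-wreath product.

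\textbf{Step 2: case analysis.} Suppose first that both $E$ and $C$ are $\mathcal{A}$-subgroups. We check whether $\mathcal{A}=\mathcal{A}_E\otimes\mathcal{A}_C$. If not, some basic set $X$ has projections $X_E$, $X_C$ with $X\subsetneq X_E\times X_C$. Using the Schur--Wielandt principle, the coprime orders of $E$ and $C$ then force a nontrivial $\mathcal{A}$-section $U/L$ with $L\ne 1$ and $X$ a union of $L$-cosets, which gives a generalized wreath product.

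Suppose next that $C$ is an $\mathcal{A}$-subgroup but $E$ is not. Then the quotient $\mathcal{A}_{G/C}$ is an $S$-ring over $E_4$, and we apply the known classification of $S$-rings over $E_4$. If $\mathcal{A}$ is not a generalized wreath product, we want to show that every basic set is an orbit of a subgroup $K\le\aut(G)$, i.e. that $\mathcal{A}$ is cyclotomic. For this we combine $\mathcal{A}_C$, which by the cyclic theory is cyclotomic or a wreath product, with the action of $\aut(E_4)\cong S_3$.

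Suppose finally that $E$ is an $\mathcal{A}$-subgroup but $C$ is not (for instance when $\mathcal{A}$ has basic sets like $\{e\}\cup$ coset mixing). Then we use the Sylow $p$-part: by the $X^{[p]}$ argument, the subgroup of order $4p$ (or the Sylow $p$-subgroup) becomes an $\mathcal{A}$-subgroup, and we reduce to the previous cases.

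\textbf{Step 3: cyclotomic case.} When no nontrivial tensor or generalized wreath decomposition exists, we show that $\mathcal{A}=\cyc(K,G)$ with $K=\{\sigma\in\aut(G):X^\sigma=X \text{ for all basic } X\}$. We first show that a basic set $X$ generating $G$ determines the others as its algebraic images $X^{(m)}$. Then, using that $\mathcal{A}_C$ and $\mathcal{A}_{G/C}$ are cyclotomic with compatible groups, we lift the pair of automorphism groups to a single $K$.

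The main obstacle is the middle subcase of Step 2 for $n=pq$, and for $p^k$ with large $k$, where $\mathcal{A}_C$ is itself a generalized wreath product but $\mathcal{A}$ globally is not. There the basic sets may interlace the $E$-component with distinct layers of $C$, in the style of the examples in \cite{EKP2}. I would first try to show that the radical of any basic set outside the least nontrivial $\mathcal{A}$-subgroup contains that subgroup, via counting the structure constants $c_{XX^{-1}}^{Z}$. If that fails, the next attempt would be a direct analysis of the possible $\mathcal{A}$-sections of order $4p$ and $4q$ using \cite{EKP2}'s schurity of $E_4\times C_p$.
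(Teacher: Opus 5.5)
Your outline has genuine gaps, and two of its central claims are false.

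\textbf{The dense case.} When $E$ and $C$ are both $\mathcal{A}$-subgroups, you assert that $\mathcal{A}\neq\mathcal{A}_E\otimes\mathcal{A}_C$ forces a nontrivial generalized wreath product. This is false. In $G=E_4\times C_7$ take $K=\langle\varphi\rangle\cong C_3$, where $\varphi$ permutes $E^\#$ cyclically and acts as $x\mapsto x^2$ on $C$. Then $\cyc(K,G)$ is dense and its only $\mathcal{A}$-subgroups are $\{e\},E,C,G$. Its basic sets outside $E\cup C$ are $\{hx,h^{\varphi}x^2,h^{\varphi^2}x^4\}$, of size $3$. So they are neither products $X_E\times X_C$ nor unions of cosets of $E$ or $C$, and the ring is neither a tensor nor a generalized wreath product; it is only cyclotomic.

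Hence in the dense non-tensor case the real task is to \emph{prove cyclotomicity}. Your Step~3 presupposes that $\mathcal{A}_C$ is cyclotomic, which need not hold. The paper splits on $\rad(\mathcal{A}_D)$:
\begin{itemize}
\item If the radical is trivial and $\mathcal{A}_D=\mathcal{T}_D$ with $n$ composite, one must show $\mathcal{A}$ is a tensor product. For $n=p^k$ this uses the coefficients of $\xi=\sum_h\underline{X_h}^2$ modulo $p$; for $n=pq$ it uses the star-product Lemma~\ref{nonpowernew1}.
\item If $\mathcal{A}_D$ is cyclotomic, one first needs $X^{(r)}\in\mathcal{S}(\mathcal{A})$ also for primes $r\mid n$ (Claim~1, via $X^{[r]}$ and the trivial radical). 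Only then can one build the diagonal $K\leq K_H\times K_D$ from $K_D/K_D^0\cong K_H$.
\item If $\rad(\mathcal{A}_D)\neq 1$, your planned radical argument fails for $p=3$. Take $\cyc(\langle\varphi\rangle,E_4\times C_{3^k})$ with $\varphi$ a $3$-cycle on $E^\#$ and $x\mapsto x^{1+3^{k-1}}$ on $C$. It has $\rad(\mathcal{A}_C)=P$, yet it has basic sets $h_1\{x\}\cup h_2\{xx_0\}\cup h_3\{xx_0^2\}$ with trivial radical that generate $G$. This is exactly the exceptional configuration the paper isolates in Claim~4.
\end{itemize}

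\textbf{The nondense case.} Your goal there (``if not a generalized wreath product, show every basic set is a $K$-orbit'') cannot succeed. $E$ and $C$ are characteristic, hence invariant under every $K\leq\aut(G)$, so every cyclotomic $S$-ring over $G$ is dense. In this case the conclusion must be ``tensor or generalized wreath product'', and tensor products genuinely occur: $\mathbb{Z}A\otimes\mathcal{T}_{B\times C}$ with $E=A\times B$ is nondense and indecomposable.

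Proving this is the bulk of the paper. It uses Lemma~\ref{2set}: for $X\nsubseteq D$, either $P\leq\rad(X)$, or $\langle (X^{[p]})_H\rangle$ is a nontrivial $\mathcal{A}$-subgroup of $H$, or $X\cup\{e\}$ is a subgroup; this comes from iterating $X^{[p]}$ and the separation lemma. It further uses Lemma~\ref{nonpowernew1} for $n=pq$, the refinement in Proposition~\ref{refine}, and duality ($D^\bot\cong E_4$) when $C$ is not an $\mathcal{A}$-subgroup.

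Your reduction of that last case is unjustified and false in general. You claim the subgroup of order $4p$ or the Sylow $p$-subgroup becomes an $\mathcal{A}$-subgroup. But for $\mathcal{A}_E\wr\mathcal{T}_{G/E}$ with $n\neq p$, the only $\mathcal{A}$-subgroups are those of $\mathcal{A}_E$ and $G$ itself.

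Finally, the announced induction on $|G|$ is never actually used: decomposability of sections says nothing about decomposability of $\mathcal{A}$ itself.
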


A similar statement was proved for an arbitrary cyclic group in papers~\cite{LM1,LM2} and for some other abelian groups close to cyclic groups in papers~\cite{EKP2,MP0,PR,Ry2}. On the other hand, the above statement does not hold, e.g., for elementary abelian groups (see~\cite{Wi}). It seems interesting to characterize all groups whose all nontrivial $S$-rings are cyclotomic or nontrivial tensor or generalized wreath products (see Subsection~2.3 and Section~3 for the definitions).

Perhaps, the most surprising fact concerning the groups from Theorem~\ref{main0} is that not all of them are Schur groups. This is demonstrated by two theorems below.

\begin{theo}\label{main2}
The group $C_{2p}\times C_{2^k}$, where $p$ is an odd prime and $k\geq 3$, is not a Schur group.
\end{theo}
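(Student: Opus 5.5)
To prove Theorem~\ref{main2}, I will construct an explicit nonschurian $S$-ring over $G=C_{2p}\times C_{2^k}$ with $k\geq 3$. I will write $G=H\times P$, where $H\cong C_2\times C_{2^k}$ is the Sylow $2$-subgroup and $P\cong C_p$.

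The strategy is to obtain the nonschurian ring as a generalized wreath product (equivalently, a gluing) of schurian pieces that fail a compatibility condition. The tool is the known criterion for schurity of generalized wreath products: an $S/U$-wreath product is schurian if and only if it is the restriction of a schurian ring whose sections $U$ and $G/L$ admit automorphism groups that agree on $L/U$. In practice this comes down to checking that the groups $\aut(\mathcal A_U)^{L/U}$ and $\aut(\mathcal A_{G/L})^{L/U}$ have a common element set. So I will choose $U\le L$ and schurian rings on $U$ and on $G/L$ whose induced rings on $L/U$ coincide, but whose automorphism groups induce \emph{different} permutation groups on $L/U$ with no common overgroup realizing both.

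\textbf{Step 1: choose the sections.} I take $L/U\cong C_2\times C_2$ or a small $2$-group section where two distinct $2$-closed groups yield the same $S$-ring. The hypothesis $k\geq 3$ is needed precisely so that $H$ contains enough room, via elements of order $8$, for such a section.

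\textbf{Step 2: build the two rings.} On $U$ I will use a cyclotomic ring twisted by the $C_p$ factor. The $p$-part supplies a cyclotomic action of $\aut(C_p)$ that can be coupled with an automorphism of the $2$-part. On $G/L$ I will use a ring coupled differently, for example a tensor product with the $p$-part, or a ring coupled via a different character. Then $\aut(\mathcal A_U)$ induces on $L/U$ the group generated by one automorphism, while $\aut(\mathcal A_{G/L})$ induces the group generated by another.

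\textbf{Step 3: verify the ring.} I will check that the resulting partition, the $S$-wreath product of these two rings, is closed under multiplication. This should follow from the general construction of generalized wreath products once the restrictions to $L/U$ agree.

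\textbf{Step 4: prove nonschurity.} Assume $\mathcal A=V(K,G)$. Then $K_U$ and $K^{G/L}$ must induce the same group on $L/U$, which contradicts the incompatibility established in Step 2. If this route is awkward, the alternative is to use the $2$-closure and count: compare $|K_e|$ with the basic sets and derive a contradiction by computing one structure constant that must equal an orbit count.

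The main obstacle is Step 2: finding concrete sections and couplings for which the two induced automorphism groups on $L/U$ are genuinely incompatible. I would try first to mimic the known nonschurian examples over $C_2\times C_8\times C_p$ found by computer. If those examples exist, I would generalize them to $k\geq 3$ by pulling back along the quotient $C_{2^k}\to C_8$, noting that this inflation preserves nonschurity.
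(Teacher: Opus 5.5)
\textbf{The construction is missing.} Your reduction to $k=3$ is fine: a subgroup or section of a Schur group is Schur, so it suffices to treat $C_8\times C_2\times C_p$. After that, though, the proposal contains no proof. Step~2 asks which section, which two rings and which coupling to use. That choice is the whole mathematical content of the theorem, and you leave it open as ``the main obstacle''. Falling back on the computer example would cover only $p=3$, whereas the statement is for every odd prime $p$.

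The idea you are missing is the specific coupling the paper uses. Write $G=A\times B\times P$ with $A=\langle a\rangle\cong C_8$, $B=\langle b\rangle\cong C_2$, $A_1=\langle a^2\rangle$, $U_1=A_1\times P$ and $U=A_1\times B\times P$.
\begin{enumerate}
\item Couple the inversion automorphism of $A_1\cong C_4$ with $\aut(P)/M_0\cong C_2$, where $M_0$ has index $2$ in $\aut(P)$. This gives the cyclotomic ring $\cyc(M_1,U_1)$.
\item Glue it over $U_1/A_1$ with $\mathcal{T}_B\otimes\mathcal{T}_P$.
\item Glue the result over $U/P$ with $\cyc(\langle\delta_0\rangle,A)\wr_{A/A_2}\mathcal{A}_4$, regarded as a ring over $G/P$.
\end{enumerate}
This produces basic sets such as $Y_1=a^2P_1\cup a^6P_2$, $Z_2=A_1bP^\#$, $T_1=\{a,a^7\}P$ and $T_3=\{ab,a^5b\}P$, and it works uniformly in $p$ (for $p=3$ the sets $P_i$ are singletons).

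\textbf{The nonschurity mechanism is also too weak as stated.} If $\mathcal{A}=V(K,G)$, you learn only that $K^U$ and $K^{G/L}$ are $2$-equivalent to $\aut(\mathcal{A}_U)$ and $\aut(\mathcal{A}_{G/L})$ and agree on the common section $S$. They need not be the full automorphism groups. Both full induced groups are $2$-equivalent to $\aut(\mathcal{A}_S)$, so showing that these two groups differ on $S$ proves nothing. Smaller $2$-equivalent subgroups may still agree there, and this is exactly how schurity is obtained in Lemma~\ref{otimescomplement} and Lemma~\ref{2min}.

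The correct criterion is Lemma~\ref{schurwr}, and to use it for nonschurity you must exclude every such pair of subgroups. The paper does this in Theorem~\ref{main3} via $2$-minimality of $\mathcal{A}_0$. For Theorem~\ref{main2} it avoids the criterion and argues directly in $K=\aut(\mathcal{A})_e$. Take $f\in K$ with $(bx_1)^f=bx_2$, where $x_i\in P_i$. Then $Z_2$ forces $x_1^f=x_2$. Next $Y_1$ forces $(a^2)^f=a^6$, and $T_1$ then gives $(Pa)^f=Pa^7$. But $T_3$ gives $(Pa)^f=Pa$, a contradiction.

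\textbf{Smaller errors.} ``Two distinct $2$-closed groups yielding the same $S$-ring'' cannot happen, since each $2$-equivalence class has a unique $2$-closed member. What you actually need is a non-$2$-minimal $\mathcal{A}_S$ together with control of all $2$-equivalent subgroups on both sides. Your indexing is also inconsistent. If $U\le L$, then $L/U$ is not a section of $U$ or of $G/L$. The common section of $\mathcal{A}_U$ and $\mathcal{A}_{G/L}$ is $U/L$ with $L\le U$.
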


Since the class of Schur groups is closed under taking subgroups, it suffices to prove Theorem~\ref{main2} only for $k=3$. It should be noted that if $k=1,2$, then the group from Theorem~\ref{main2} is a Schur group. Indeed, the schurity of the group $E_4\times C_p$ (case $k=1$) follows from~\cite[Theorem~1.4]{EKP2} as was mentioned before, whereas the schurity of the group $C_{2p}\times C_4$ (case $k=2$) will be verified in the subsequent paper together with the schurity of other abelian groups of order $8p$, where $p$ is an odd prime.

One more family of non-Schur groups from Theorem~\ref{main0} is presented by the theorem below.

\begin{theo}\label{main3}
Let $p$ be an odd prime. The group $E_{16} \times C_p$ is a Schur group if and only if $p=3$.
\end{theo}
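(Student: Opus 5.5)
The statement has two parts. Schurity for $p=3$ is one direction. Nonschurity for every odd prime $p\ge 5$ is the other. Since the class of Schur groups is closed under taking subgroups and quotients, I want to reduce both directions to concrete computations with $S$-rings over $G=E_{16}\times C_p$. For $p\ge 5$, I will exhibit an explicit nonschurian $S$-ring over $G$. For $p=3$, I will run a case analysis of all $S$-rings over $E_{16}\times C_3$, in the spirit of Theorem~\ref{main15}.

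\textbf{Nonschurity for $p\geq 5$.} I would look for a generalized wreath product construction. Take a nonschurian-looking "twisted" gluing of $S$-rings over $E_{16}$ with the cyclotomic $S$-ring over $C_p$ determined by a subgroup $K\le \aut(C_p)$ of index at least~$2$. Such a $K$ exists because $p-1\ge 4$, and one needs $|\aut(C_p)|$ large enough to allow a nontrivial pairing between classes. The intended shape is a partition of $G$ whose basic sets have the form $X\times \{c^{k}: k\in K\alpha\}$. Here the sets $X\subseteq E_{16}$ run over the basic sets of an $S$-ring over $E_{16}$, and the cosets $K\alpha$ are matched to them by a map that is not induced by any automorphism. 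Schurity then forces an automorphism group whose stabilizer acts on the $E_{16}$-sections compatibly with $K$-cosets. I would derive a contradiction by the standard argument: compute the $2$-closure of the $S$-ring, or show that a certain section $S$-ring is schurian but its "isomorphism cocycle" is nontrivial, using the schurity criterion for generalized wreath products. An alternative is to adapt the known nonschurian $S$-rings over $E_4\times C_{p}\times\ldots$ from~\cite{EKP2}. The condition $p\ge5$ should enter exactly when I need $|\aut(C_p)|\ge 4$, or an index-$2$ subgroup with at least two nontrivial cosets. For $p=3$ the group $\aut(C_3)$ has order $2$ and the construction collapses.

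\textbf{Schurity for $p=3$.} Let $\mathcal A$ be an $S$-ring over $E_{16}\times C_3$. Using that $E_{16}$ is Schur and $C_3$ is Schur, I would show, as in Theorem~\ref{main15}, that $\mathcal A$ is trivial, a tensor product, cyclotomic, or a nontrivial generalized wreath product. The main tool is the Schur–Wielandt principle applied to the projection onto the Sylow $3$-subgroup. Either $C_3$ is an $\mathcal A$-subgroup, giving the $S$-ring $\mathcal A_{C_3}$ and the quotient $\mathcal A_{G/C_3}$ over $E_{16}$. Or the $3$-parts mix with $E_{16}$, and then a rationality argument forces $\mathcal A$ to be a generalized wreath product over a suitable section. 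In the generalized wreath product case, schurity follows from the criterion via the trivial group of the relevant section $\aut(\mathcal A_S)$. With $p=3$, all relevant section $S$-rings are small, and I expect their automorphism groups to be large enough for the criterion to apply. Any residual exceptional cases can be verified by computer enumeration of $S$-rings over this group of order $48$.

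\textbf{Main obstacle.} The main obstacle is the construction and verification of the nonschurian $S$-ring for $p\ge5$, namely proving that no permutation group realizes it. I would attack this first via the generalized wreath product schurity criterion, which reduces it to showing nonexistence of a compatible pair of automorphisms of two sections.
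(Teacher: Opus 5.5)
\textbf{The gap is the entire $p\ge 5$ direction.} You never write down an $S$-ring. The ``twisted gluing'' with basic sets $X\times\{c^k:k\in K\alpha\}$ matched by a map ``not induced by any automorphism'' is a heuristic. The contradiction step (``compute the $2$-closure'' or an ``isomorphism cocycle'') is left open, and your account of where $p\ge5$ enters is a guess. For $p=3$ the paper just cites a computer calculation, so your fallback to enumeration is the same; the structural sketch before it (schurity ``via the trivial group of the relevant section'') is not an argument.

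The paper's attempt at $p\ge5$ is concrete. It takes $V=\langle a,b,c\rangle\le H\cong E_{16}$ and $U=V\times P$, and sets $\mathcal{A}_1=\cyc(M_1,U)$, where $M_1$ is the fibre product over $C_2$ of the unitriangular group $M_V\cong D_8$ and $\aut(P)$. It also sets $\mathcal{A}_0=\cyc(\langle\delta_0\rangle,H)$ and $\mathcal{A}=\mathcal{A}_1\wr_{U/P}\mathcal{A}_0$. Via Lemma~\ref{schurwr} it pins down $K_0^S=S_r\rtimes M_0^S$ using the asserted $2$-minimality of $\mathcal{A}_0$. It then derives $M_x\le M_{bx}$ for $M=(K_1)_e$ and contradicts this with $|P^\#|<|AbP^\#|$.

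\textbf{Be aware that this rigidity step is the delicate point, and it fails for that choice.} Consider the automorphism $\sigma\colon(a,b,c,d)\mapsto(a,b,abc,d)$ of $H$. It fixes $\langle a,b,d\rangle$ pointwise and swaps $c\leftrightarrow abc$, $ac\leftrightarrow bc$, $cd\leftrightarrow abcd$, $acd\leftrightarrow bcd$. So it preserves every basic set of $\mathcal{A}_0$, while $\sigma\notin\langle\delta_0\rangle$ because it fixes $d$.
\begin{itemize}
\item Hence $H_r\rtimes\langle\delta_0\rangle<H_r\rtimes\langle\delta_0,\sigma\rangle\le\aut(\mathcal{A}_0)$ are $2$-equivalent, so $\mathcal{A}_0$ is not $2$-minimal and Eq.~\eqref{2mink0} is false.
\item Since $\delta_0^V=\delta_1$ and $\sigma^V\in M_V\setminus\langle\delta_1\rangle$, the group $\langle\delta_0,\sigma\rangle$ restricts to all of $M_V$ on $V$.
\item Therefore $K_0=H_r\rtimes\langle\delta_0,\sigma\rangle$ and $K_1=U_r\rtimes M_1$ satisfy $K_0^S=K_1^S=V_r\rtimes M_V$, and Lemma~\ref{schurwr} shows that this $\mathcal{A}$ is in fact schurian.
\end{itemize}
Concretely, $f=(\delta,\id_P)\in M_1$ with $\delta\colon b\mapsto ab$, $c\mapsto c$ fixes $x$ but not $bx$, so $M_x\le M_{bx}$ fails. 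Consistently, nothing in that argument uses $p\ge5$ (for $p=3$ take $P_1=\{x\}$, $P_2=\{x^{-1}\}$), so it would also contradict the cited $p=3$ computation.

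So what your proposal lacks is also the real crux. You need a concrete $\mathcal{A}$, together with a proof that \emph{every} $K_0\approx_2\aut(\mathcal{A}_{G/L})$ induces on $S$ a point stabilizer too small to be matched by any $K_1\approx_2\aut(\mathcal{A}_U)$.
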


In the proofs of Theorems~\ref{main2} and~\ref{main3}, we construct new nonschurian $S$-rings over the groups $C_{2p}\times C_{8}$ and $E_{16} \times C_p$, respectively. These nonschurian $S$-rings are generalized wreath products of $S$-rings over proper subgroups of the above groups (see Sections~9 and~10). The nonschurian $S$-ring over $C_{2p}\times C_{8}$ from Section~9 in case $p=3$ was found by computer calculations using the package~\cite{GAP} and appeared in paper~\cite{Ziv}.

We finish the introduction with a brief outline of the paper. Section~2 contains a necessary background of $S$-rings, especially, of the multiplier theorems and isomorphisms. In Section~3, we provide an information on three constructions of $S$-rings, namely, tensor, generalized wreath, and star products. Several required facts on dual $S$-rings over the group of all irreducible complex characters of an abelian group are given in Section~4. Recent results on $S$-rings over an abelian group having a Sylow subgroup of prime order are provided in Section~5. In Section~6, we give a background of $S$-rings over cyclic groups, especially, over cyclic $p$-groups. In Section~7, we obtain a description of $S$-rings over the groups $E_4\times C_{p^k}$ and $E_4 \times C_{pq}$ (Theorem~\ref{e4cn}) that implies Theorem~\ref{main15}. In Sections~8,~9, and~10, we prove Theorems~\ref{main1},~\ref{main2} and~\ref{main3}, respectively.

\vspace{2mm}

\noindent {\bf Notation.}

\vspace{2mm}

\noindent The identity element and the set of all nonidentity elements of a group $G$ are denoted by $e$ and $G^\#$, respectively.
 
\vspace{2mm}

\noindent The projections of $X\subseteq G_1\times G_2$ to $G_1$ and $G_2$ are denoted by $X_{G_1}$ and $X_{G_2}$, respectively.

\vspace{2mm}

\noindent If $X\subseteq G$, then the element $\sum_{x\in X} {x}$ of the group ring $\mathbb{Z}G$ is denoted by $\underline{X}$.

\vspace{2mm}

\noindent The set $\{x^{-1}:~x\in X\}$ is denoted by $X^{-1}$.

\vspace{2mm}

\noindent The subgroup of $G$ generated by $X$ is denoted by $\langle X\rangle$; we also set $\rad(X)=\{g\in G:\ gX=Xg=X\}$.

\vspace{2mm}

\noindent  The set $\{(g,xg):~x\in X,~g\in G\}\subseteq G^2$ is denoted by $r(X)$.

\vspace{2mm}

\noindent If $m\in \mathbb{Z}$, then the set $\{x^m:~x\in X\}$ is denoted by $X^{(m)}$.

\vspace{2mm}

\noindent If $G$ is abelian and $m\in \mathbb{Z}$ coprime to~$|G|$, then the automorphism of $G$ which maps every $g\in G$ to $g^m$ is denoted by~$\sigma_m$. 

\vspace{2mm}

\noindent  The group of all permutations of a set $\Omega$ is denoted by $\sym(\Omega)$.

\vspace{2mm}

\noindent  The subgroup of $\sym(G)$ induced by all right multiplications of $G$ is denoted by $G_r$.

\vspace{2mm}

\noindent  If $f\in \sym(\Omega)$ and $\Delta\subseteq \Omega$ is such that $\Delta^f=\Delta$, then the permutation of $\Delta$ induced by~$f$ is denoted $f^{\Delta}$. 

\vspace{2mm}

\noindent For a set $\Delta\subseteq \sym(G)$ and a section $S=U/L$ of $G$ we set 
$$\Delta^S=\{f^S:~f\in \Delta,~S^f=S\},$$
where $S^f=S$ means that $f$ permutes the $L$-cosets in $U$ and $f^S$ denotes the bijection of $S$ induced by $f$.

\vspace{2mm}

\noindent If $K\leq \sym(\Omega)$ and $\alpha\in \Omega$, then the set of all orbits of $K$ on $\Omega$ and the stabilizer of $\alpha$ in $K$ are denoted by $\orb(K,\Omega)$ and $K_{\alpha}$, respectively.

\vspace{2mm}



\noindent The set of all positive divisors of $n\in \mathbb{Z}$ is denoted by~$\divv(n)$. 

\vspace{2mm}

\noindent Given a cyclic group $D$ of order~$n$ and $l\in\divv(n)$, the subgroup of $D$ of order~$l$ is denoted by $D_l$ and the set of all generators of $D_l$ is denoted by $D_l^*$.

\vspace{2mm}

\noindent If $X\subseteq H\times D$, where $D$ is a cyclic group of order~$n$, $h\in H$, and $l\in\divv(n)$, then put $X_h=h^{-1}X\cap D$ and $X_{h,l}=h^{-1}X\cap D_l^*$.

\section{$S$-rings}

In this section, we provide a necessary background of $S$-rings. In general, we follow~\cite{EKP2,MP,Ry2}, where most of the material is contained.

\subsection{Definitions and basic facts}

A subring  $\mathcal{A}\subseteq \mathbb{Z} G$ is called an \emph{$S$-ring} (a \emph{Schur} ring) over $G$ if there exists a partition $\mathcal{S}=\mathcal{S}(\mathcal{A})$ of~$G$ such that:

$(1)$ $\{e\}\in\mathcal{S}$;

$(2)$  if $X\in\mathcal{S}$, then $X^{-1}\in\mathcal{S}$;

$(3)$ $\mathcal{A}=\Span_{\mathbb{Z}}\{\underline{X}:\ X\in\mathcal{S}\}$.

\noindent The elements of $\mathcal{S}$ are called the \emph{basic sets} of $\mathcal{A}$. The number of basic sets is called the \emph{rank} of $\mathcal{A}$ and denoted by~$\rk(\mathcal{A})$. Clearly, the group ring $\mathbb{Z}G$ is an $S$-ring. If $|G|\neq 1$, then the partition $\{\{e\},G^\#\}$ defines the $S$-ring $\mathcal{T}_G$ of rank~$2$ over~$G$. It is easy to check that if $X,Y\in \mathcal{S}(\mathcal{A})$, then $XY\in \mathcal{S}(\mathcal{A})$ whenever $|X|=1$ or $|Y|=1$. The $S$-ring $\mathcal{A}$ is called \emph{symmetric} if $X=X^{-1}$ for every $X\in \mathcal{S}(\mathcal{A})$ and \emph{antisymmetric} if for every $X\in \mathcal{S}(\mathcal{A})$ the equality $X=X^{-1}$ implies that $X=\{e\}$.

Let $X,Y\in\mathcal{S}$. If $Z\in \mathcal{S}$, then by Condition~$(3)$, the number of distinct representations of $z\in Z$ in the form $z=xy$ with $x\in X$ and $y\in Y$ does not depend on the choice of $z\in Z$. Denote this number by $c^Z_{XY}$. One can see that 
$$\underline{X}\cdot\underline{Y}=\sum_{Z\in \mathcal{S}(\mathcal{A})}c^Z_{XY}\underline{Z}.$$
Therefore the numbers  $c^Z_{XY}$ are the structure constants of $\mathcal{A}$ with respect to the basis $\{\underline{X}:\ X\in\mathcal{S}\}$.

A set $T \subseteq G$ is called an \emph{$\mathcal{A}$-set} if $\underline{T}\in \mathcal{A}$. If $T$ is an $\mathcal{A}$-set, then put 
$$\mathcal{S}(\mathcal{A})_T=\{X\in \mathcal{S}(\mathcal{A}):~X\subseteq T\}.$$ A subgroup $H \leq G$ is called an \emph{$\mathcal{A}$-subgroup} if $H$ is an $\mathcal{A}$-set. The $S$-ring $\mathcal{A}$ is called \emph{primitive} if there is no a nontrivial proper $\mathcal{A}$-subgroup of $G$ and \emph{imprimitive} otherwise. One can verify that for every $\mathcal{A}$-set $X$, the groups $\langle X \rangle$ and $\rad(X)$ are $\mathcal{A}$-subgroups. The set of all $\mathcal{A}$-subgroups of $G$ is denoted by $\mathcal{H}(\mathcal{A})$. It is easy to see that if $X\subseteq \rad(Y)$, then
\begin{equation}\label{inrad0}
\underline{X}\cdot \underline{Y}=\underline{Y}\cdot \underline{X}=|X|\underline{Y}.
\end{equation}

\begin{lemm}\cite[Lemma~2.1]{EKP2}\label{intersection}
Let $\mathcal{A}$ be an $S$-ring over a group $G$, $H$ an $\mathcal{A}$-subgroup of $G$, and $X \in \mathcal{S}(\mathcal{A})$. Then the number $|X\cap Hx|$ does not depend on $x\in X$.
\end{lemm}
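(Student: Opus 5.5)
The plan is to express $|X\cap Hx|$ as a structure constant of $\mathcal{A}$, which by Condition~(3) is constant on the basic set $X$. Since $H$ is an $\mathcal{A}$-subgroup, $\underline{H}\in\mathcal{A}$. Since $X^{-1}\in\mathcal{S}(\mathcal{A})$ by Condition~(2), the element $\underline{X}\in\mathcal{A}$ as well. Consequently the product $\underline{H}\cdot\underline{X}$ lies in $\mathcal{A}$.

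First I will write $\underline{H}\cdot\underline{X}=\sum_{g\in G} a_g g$, where
$$a_g=|\{(h,x')\in H\times X:~hx'=g\}|.$$
For a fixed $g$, the element $h=g{x'}^{-1}$ lies in $H$ exactly when $x'\in Hg$ (using $H=H^{-1}$). Hence $a_g=|X\cap Hg|$. In particular, for $x\in X$ the coefficient of $x$ in $\underline{H}\cdot\underline{X}$ equals $|X\cap Hx|$.

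Next, since $\underline{H}\cdot\underline{X}\in\mathcal{A}=\Span_{\mathbb{Z}}\{\underline{Y}:\ Y\in\mathcal{S}\}$ and the basic sets partition $G$, the coefficient function $g\mapsto a_g$ is constant on each basic set. Applying this to the basic set $X$ gives that $|X\cap Hx|$ is the same for all $x\in X$, which is the claim.

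There is no real obstacle here. The only point requiring care is the bookkeeping of left versus right cosets in the non-abelian case: the factor order $\underline{H}\cdot\underline{X}$ yields right cosets $Hx$, matching the statement, whereas $\underline{X}\cdot\underline{H}$ would give left cosets $xH$.
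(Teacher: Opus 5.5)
Your proof is correct: $|X\cap Hx|$ is the coefficient of $x$ in $\underline{H}\cdot\underline{X}\in\mathcal{A}$, and since $\mathcal{A}$ is spanned by the $\underline{Y}$ for a partition $\mathcal{S}(\mathcal{A})$, coefficients are constant on basic sets. The paper does not reprove this lemma but cites \cite[Lemma~2.1]{EKP2}, and your argument is the standard structure-constant proof behind that citation; the only quibble is that $\underline{X}\in\mathcal{A}$ follows directly from Condition~(3), so your appeal to Condition~(2) is unnecessary.
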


\vspace{2mm}

\begin{lemm}~\cite[Theorem~2.6]{MP}\label{separ}
Let $X$ be a basic set of an $S$-ring $\mathcal{A}$ over a group $G$. Suppose that $H\leq \rad(X\setminus H)$ for some subgroup $H$ of $G$ such that $X \cap H\neq \varnothing$ and $X\setminus H \neq \varnothing$. Then $\rad(X)\leq H$ and $X=\langle X \rangle \setminus \rad(X)$.
\end{lemm}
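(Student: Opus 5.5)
The first claim, $\rad(X)\le H$, follows directly from the hypotheses; the second claim is a counting argument with structure constants of $\mathcal{A}$. Put $Z=X\cap H$ and $Y=X\setminus H$, so $Z\neq\varnothing$ and $Y\neq\varnothing$. Since $H\le\rad(Y)$, the set $Y$ is a union of left cosets of $H$ and also a union of right cosets of $H$.

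\emph{Step 1: $\rad(X)\le H$.} Suppose that $r\in\rad(X)$ and $r\notin H$.
\begin{itemize}
\item Since $Hr^{-1}\cap H=\varnothing$, we have $Zr^{-1}\cap H=\varnothing$.
\item From $X=Xr^{-1}=Zr^{-1}\cup Yr^{-1}$ we therefore get $Z=X\cap H=Yr^{-1}\cap H$.
\item $Yr^{-1}$ is still a union of left $H$-cosets, so its intersection with $H$ is either empty or all of $H$. As $Z\neq\varnothing$, this gives $Z=H$.
\item Then $e\in X$. A basic set containing $e$ equals $\{e\}$, which contradicts $Y\neq\varnothing$.
\end{itemize}
Hence $\rad(X)\le H$, and $\rad(X)$ is an $\mathcal{A}$-subgroup.

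\emph{Step 2: $X=\langle X\rangle\setminus\rad(X)$.} I would compare two coefficients of the element $\underline{X}\,\underline{X}^{-1}\in\mathcal{A}$. The coefficient of $g$ in this element is $|X\cap Xg|$, and it is constant on each basic set, in particular on $X$.
\begin{itemize}
\item For $z\in Z$ we have $z\in H$, so $Yz=Y$ (right invariance). Hence $|X\cap Xz|\ge|Y|$, and the part of $X\cap Xz$ lying inside $H$ contributes a further $|Z\cap Zz|$.
\item For $y\in Y$ we have $Zy\subseteq Hy\subseteq Y$. Also $Yy$ is a union of left $H$-cosets meeting $H$ in either $\varnothing$ or all of $H$, so it cannot meet $H$ in the proper subset $Z$ unless that intersection is empty. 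Hence $|X\cap Xy|\le |Z|+|Y\cap Yy|$.
\end{itemize}
Equating the two counts over all of $X$ should force $Yy$ to cover $Y$ almost entirely. The outcome I aim for is that $X\cup\{e\}$ is closed under multiplication, so that $X\cup\{e\}$ together with the radical is the group $\langle X\rangle$. In particular, every element of $\langle X\rangle$ outside $X$ must lie in $\rad(X)$.

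To finish, I would work in the quotient by $L=\rad(X)$, which is legitimate since $L$ is an $\mathcal{A}$-subgroup.
\begin{itemize}
\item First show $Z\cup L=H\cap\langle X\rangle$. The point is that $H\cap\langle X\rangle\setminus Z$ must be $X$-invariant, via the same coset argument used in Step 1.
\item Then $\langle X\rangle=H'\cup Y$ with $H'=H\cap\langle X\rangle$, and $Y=\langle X\rangle\setminus H'$. This gives $X=\langle X\rangle\setminus L$.
\end{itemize}

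The main obstacle is the counting step: extracting exact set equalities from the constancy of $|X\cap Xg|$ on $X$. The first thing I would try is to also use the constancy of $c^{X}_{XX}$ together with Lemma~\ref{intersection} applied to the $\mathcal{A}$-subgroup $\langle Z\rangle$. To make $\langle Z\rangle$ available I would first show it is an $\mathcal{A}$-subgroup by expressing $Z$ as $X\cap X^{\prime}$ for a suitable $\mathcal{A}$-set $X^{\prime}$ built from $\underline{X}\,\underline{X}^{-1}$.
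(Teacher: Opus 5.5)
\textbf{Verdict: there is a genuine gap in Step~2.} Your Step~1 is correct. In Step~2 you never extract the set equalities from the counting; ``should force'' is only a hope. Moreover, the one estimate you do write down is false in the case that matters.

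For $y\in Y$ one has exactly $X\cap Xy=(Z\cap Yy)\sqcup Zy\sqcup(Y\cap Yy)$. Since $Yy$ is a union of cosets $Hg$, it contains all of $H$ as soon as $y^{-1}\in Y$, and then $Z\cap Yy=Z$, not $\varnothing$. You conflated $Yy\cap H\in\{\varnothing,H\}$ with $Yy\cap Z$. Because $X^{-1}$ is a basic set, $y^{-1}\in Y$ holds for all $y\in Y$ exactly when $X=X^{-1}$. This is the only case that can actually occur, since the conclusion makes $X$ symmetric. So your bound $|X\cap Xy|\le|Z|+|Y\cap Yy|$ fails precisely where it is needed.

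There are further problems with the plan:
\begin{itemize}
\item The target ``$X\cup\{e\}$ is closed under multiplication'' is false whenever $\rad(X)\neq\{e\}$, because $x^{-1}(xl)=l$. The correct target is that $X\cup\rad(X)$ is a group.
\item The claim that $(H\cap\langle X\rangle)\setminus Z$ is ``$X$-invariant'' is unjustified. Also, $H\cap\langle X\rangle$ is just $H$, because $Hy\subseteq X$ for $y\in Y$.
\item The fallback is impossible. $Z$ is a nonempty proper subset of the basic set $X$, so it is never of the form $X\cap X'$ with $X'$ an $\mathcal{A}$-set. In the end $\langle Z\rangle=H$, and $H$ is never an $\mathcal{A}$-subgroup here, since otherwise $X\cap H$ would be a nonempty proper $\mathcal{A}$-subset of $X$.
\end{itemize}

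\textbf{How to close the gap.} Your initial idea is enough once both coefficients are computed exactly. These are coefficients of $\underline{X^{-1}}\,\underline{X}$; by the two-sided $H$-invariance of $Y$, the other order works equally well. Put $\epsilon=1$ if $X=X^{-1}$ and $\epsilon=0$ otherwise. Equating $|X\cap Xz|=|Y|+|Z\cap Zz|$ with $|X\cap Xy|=(1+\epsilon)|Z|+|Y\cap Yy|$ gives
\[
|Y\setminus Yy|=(1+\epsilon)|Z|-|Z\cap Zz|\qquad(z\in Z,\ y\in Y).
\]
The left side is divisible by $|H|$, because $Y$ and $Yy$ are unions of cosets $Hg$. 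Also $|Z|<|H|$, since $e\notin X$.
\begin{itemize}
\item If $\epsilon=0$, the right side is $|Z\setminus Zz|<|H|$, so $Zz=Z$ for all $z\in Z$. Then $Z$ is a subgroup and $e\in X$, a contradiction.
\item Hence $\epsilon=1$. The right side is now $|Z\cup Zz|\in[1,|H|]$, which forces $Z\cup Zz=H$ and $|Y\cap Yy|=|Y|-|H|$.
\item Put $N=H\setminus Z$. The equality $Z\cup Zz=H$ says $Nz\subseteq Z$ for all $z\in Z$, so $N\subseteq\{h\in H:hZ=Z\}$. Conversely, an element $s$ of this stabilizer lying in $Z$ would give $e=s^{-1}s\in Z$. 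So $N$ equals the stabilizer, hence is a subgroup, and $N=\rad(X)$ by Step~1.
\item Since $y^{-1}\in Y$, we have $H\subseteq Yy$. Together with $|Y\cap Yy|=|Y|-|H|$ this gives $Yy\subseteq Y\cup H$. So $H\cup Y=X\cup N$ is a subgroup, and it equals $\langle X\rangle$ because $Hy\subseteq X$.
\end{itemize}
The paper gives no proof of this lemma to compare with; it only quotes the result from Muzychuk and Ponomarenko.
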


Let $\{e\}\leq L \unlhd U\leq G$. A section $U/L$ is called an \emph{$\mathcal{A}$-section} if $U$ and $L$ are $\mathcal{A}$-subgroups. If $S=U/L$ is an $\mathcal{A}$-section, then the module
$$\mathcal{A}_S=\Span_{\mathbb{Z}}\left\{\underline{X}^{\pi}:~X\in\mathcal{S}(\mathcal{A}),~X\subseteq U\right\},$$
where $\pi:U\rightarrow U/L$ is the canonical epimorphism, is an $S$-ring over $S$.

\subsection{Multiplier theorems}

Let $G$ be an abelian group. If $X,Y\subseteq G$ are such that $Y^{(m)}=X$ for some $m\in \mathbb{Z}$ coprime to~$|G|$, then we say that $X$ and $Y$ are \emph{rationally conjugate}. In this case, if $X\in \orb(K,G)$ for some $K\leq \aut(G)$, then $Y\in \orb(K,G)$. The following two statements are known as the first and second Schur theorems on multipliers (see~\cite[Theorem~23.9, (a)-(b)]{Wi}).

\begin{lemm} \label{burn}
Let $\mathcal{A}$ be an $S$-ring over an abelian group $G$. Then $X^{(m)}\in \mathcal{S}(\mathcal{A})$ for every $X\in \mathcal{S}(\mathcal{A})$ and every $m\in \mathbb{Z}$ coprime to~$|G|$.
\end{lemm}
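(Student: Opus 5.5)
The plan is to follow the classical character-free argument of Schur and Wielandt, which works in the group ring modulo a prime. Let $n=|G|$. Since every $m$ coprime to $n$ is, modulo $n$ (and hence modulo the exponent of $G$), a product of primes not dividing $n$, and since $X^{(m)}$ depends only on $m$ modulo the exponent of $G$, it suffices to treat $m=p$ a prime with $p\nmid n$. Indeed, if each $\sigma_p$ maps $\mathcal{S}(\mathcal{A})$ to itself, then so does any composition of such maps. By Dirichlet's theorem we could even choose such a prime in each residue class, but closure under composition already suffices.

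Fix a prime $p\nmid n$ and $X\in\mathcal{S}(\mathcal{A})$. The key identity is that in $\mathbb{Z}G$ with $G$ abelian,
\[
\underline{X}^{\,p}\equiv \sum_{x\in X}x^p \pmod{p\,\mathbb{Z}G}.
\]
This holds by the multinomial expansion (Frobenius in characteristic~$p$ for a commutative ring). Since $p\nmid n$, the map $x\mapsto x^p$ is a bijection of $G$, so $\sum_{x\in X}x^p=\underline{X^{(p)}}$ with all coefficients equal to $1$. Because $\mathcal{A}$ is a subring, $\underline{X}^{\,p}\in\mathcal{A}$, and we write $\underline{X}^{\,p}=\sum_{Z\in\mathcal{S}}a_Z\underline{Z}$ with $a_Z\in\mathbb{Z}$. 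Comparing coefficients elementwise, the element $g$ has coefficient $a_Z$ for $g\in Z$ in $\underline{X}^{\,p}$, and this coefficient is congruent mod $p$ to $1$ if $g\in X^{(p)}$ and to $0$ otherwise. Hence for each basic set $Z$ either $Z\subseteq X^{(p)}$ (when $a_Z\not\equiv0$) or $Z\cap X^{(p)}=\varnothing$. So $X^{(p)}$ is a union of basic sets, that is, an $\mathcal{A}$-set.

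It remains to upgrade ``$\mathcal{A}$-set'' to ``basic set''. Applying the above to every basic set shows that $\sigma_p$ maps each basic set onto a union of basic sets. Since $\sigma_p$ is a bijection of $G$ and $\mathcal{S}$ is a finite partition, the images $\{X^{(p)}:X\in\mathcal{S}\}$ form a partition of $G$ into $|\mathcal{S}|$ nonempty $\mathcal{A}$-sets. Each of these is a union of at least one basic set, and there are only $|\mathcal{S}|$ basic sets in total. A counting argument therefore forces each $X^{(p)}$ to be exactly one basic set. The only mildly delicate point is this refinement count, together with the reduction to primes; the mod-$p$ congruence itself is routine.
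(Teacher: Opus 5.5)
Your proof is correct. It is the classical Schur--Wielandt argument: the Frobenius congruence $\underline{X}^{\,p}\equiv\underline{X^{(p)}}\pmod{p\,\mathbb{Z}G}$ for primes $p\nmid|G|$, then the counting argument that upgrades $\mathcal{A}$-sets to basic sets, then the reduction of general $m$ to products of such primes. The paper gives no proof of its own and simply cites Wielandt's Theorem~23.9(a), whose proof is essentially the one you wrote.
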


\begin{lemm} \label{sch}
Let $\mathcal{A}$ be an $S$-ring over an abelian group $G$, $p$ a prime divisor of $|G|$, and $H=\{g\in G:g^p=e\}$. Then for every $\mathcal{A}$-set $X$, the set 
$$X^{[p]}=\{x^p:x\in~X,~|X\cap Hx|\not\equiv 0\mod p\}$$ 
is an $\mathcal{A}$-set.
\end{lemm}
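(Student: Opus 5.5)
The plan is the classical Frobenius argument: raise $\underline{X}$ to the $p$-th power inside $\mathcal{A}$ and read off the resulting coefficients modulo~$p$. Since $X$ is an $\mathcal{A}$-set, the element $\eta=\underline{X}^{\,p}$ lies in $\mathcal{A}$ because $\mathcal{A}$ is a ring. Every element of $\mathcal{A}$ is an integer combination of the elements $\underline{Y}$, $Y\in\mathcal{S}(\mathcal{A})$. Hence, writing $\eta=\sum_{g\in G}c_g g$, the coefficient $c_g$ is constant on each basic set. In particular, for any integer condition on $c_g$ (such as $c_g\not\equiv 0 \pmod p$), the set of $g$ satisfying it is a union of basic sets, i.e.\ an $\mathcal{A}$-set. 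So it suffices to show
$$X^{[p]}=\{g\in G:~c_g\not\equiv 0\pmod p\}.$$

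To compute $c_g$ modulo $p$, pass to $\mathbb{F}_pG$, which is a commutative ring of characteristic $p$ because $G$ is abelian. There the Frobenius map is additive, so
$$\underline{X}^{\,p}\equiv \sum_{x\in X}x^p \pmod{p}.$$
Group the terms on the right by the value $y=x^p$. For $x,x'\in G$ we have $x'^p=x^p$ iff $(x'x^{-1})^p=e$, i.e.\ iff $x'\in Hx$. Therefore the number of $x'\in X$ with $x'^p=x^p$ equals $|X\cap Hx|$. This gives
$$c_y\equiv |X\cap Hx| \pmod p \quad\text{if } y=x^p \text{ for some } x\in X,$$
and $c_y\equiv 0$ if $y\notin X^{(p)}$. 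The value $|X\cap Hx|$ depends only on $x^p$, so this is well defined.

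Consequently $c_y\not\equiv 0\pmod p$ exactly when $y=x^p$ for some $x\in X$ with $|X\cap Hx|\not\equiv 0\pmod p$, which is the definition of $X^{[p]}$. The only point needing care is the middle step: the congruence $\underline{X}^{\,p}\equiv\sum_{x\in X} x^p$ uses commutativity of $G$ essentially (the multinomial coefficients are divisible by $p$). The subsequent fibre count via the cosets of $H$ is routine.
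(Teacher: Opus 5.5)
Your argument is correct. The coefficients of $\underline{X}^p\in\mathcal{A}$ are constant on basic sets, and the congruence $\underline{X}^p\equiv\sum_{x\in X}x^p \pmod p$ holds in the commutative ring $\mathbb{F}_pG$; together with the fibre count $|\{x'\in X:~x'^p=x^p\}|=|X\cap Hx|$, this identifies $X^{[p]}$ as the set of elements whose coefficient is nonzero modulo~$p$. The paper gives no proof of its own and only cites Wielandt's second multiplier theorem (Theorem~23.9(b) in his book), whose standard proof is exactly this Frobenius argument.
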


Below, we deduce from Lemma~\ref{burn} one more lemma which will be used further in the proof of Theorem~\ref{main1}.

\begin{lemm}\label{orbit}
Let $G=H\times D$, where $H$ is an abelian group and $D\cong C_n$ with $n$ coprime to $|H|$, $\mathcal{A}$ an $S$-ring over~$G$, and $X\in \mathcal{S}(\mathcal{A})$. Then there exists $K^D\leq \aut(D)$ satisfying the following: $X_{h,l}=h^{-1}X\cap D_l^*\in \orb(K^D,D)$ for all $h\in H$ and $l\in \divv(n)$ such that $X_{h,l}\neq \varnothing$.
\end{lemm}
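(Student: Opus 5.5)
The natural candidate is the group of multipliers that fix $X$, restricted to $D$. Since $\gcd(n,|H|)=1$, every residue class modulo $n$ coprime to $n$ lifts, by the Chinese remainder theorem, to an integer $m$ coprime to $|G|$ with $m\equiv 1 \pmod{|H|}$. Then $\sigma_m$ acts trivially on $H$ and acts on $D$ as an arbitrary element of $\aut(D)$. Put
$$M=\{m\in \mathbb{Z}:~\gcd(m,|G|)=1,~m\equiv 1 \pmod{|H|},~X^{(m)}=X\},$$
and let $K^D=\{\sigma_m^D:~m\in M\}\leq \aut(D)$. This is a subgroup, because $M$ is closed under multiplication and the set of restrictions is finite. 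The claim to prove is that each nonempty $X_{h,l}$ is a single $K^D$-orbit.

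First, invariance. If $m\in M$ and $x=hd\in X$ with $d\in D_l^*$, then $x^m=hd^m\in X$, and $d^m$ is again a generator of $D_l$. Hence $X_{h,l}^{(m)}\subseteq X_{h,l}$, so $X_{h,l}$ is a union of $K^D$-orbits.

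Second, transitivity. Let $d,d'\in X_{h,l}$. Both generate $D_l$, so $d'=d^{t}$ for some $t$ coprime to $l$. Since the natural map $(\mathbb{Z}/n)^*\to(\mathbb{Z}/l)^*$ is surjective, we may take $t$ coprime to $n$. Lift $t$ to an integer $m$ coprime to $|G|$ with $m\equiv 1 \pmod{|H|}$ and $m\equiv t\pmod n$. By Lemma~\ref{burn}, $X^{(m)}\in\mathcal{S}(\mathcal{A})$. It contains $(hd)^m=hd^t=hd'\in X$, and basic sets form a partition, so $X^{(m)}=X$. Therefore $m\in M$ and $\sigma_m^D$ maps $d$ to $d'$.

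The only delicate point is that $K^D$ must be chosen uniformly for all $h$ and $l$, rather than depending on the pair $(h,l)$. The construction handles this: $K^D$ is defined from $X$ alone, and the transitivity argument produces an element of that single group for every pair. The other technical step is lifting $t$ from a unit modulo $l$ to a unit modulo $n$ and then to a unit modulo $|G|$ that is $\equiv 1$ modulo $|H|$. This follows from standard CRT arguments, using $\gcd(n,|H|)=1$.
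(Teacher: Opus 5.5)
Your proof is correct and follows the paper's argument. Your $K^D$ is exactly the restriction to $D$ of the setwise stabilizer of $X$ in $\{\id_H\}\times\aut(D)$. Transitivity comes, as in the paper, from a CRT lift $m\equiv 1 \pmod{|H|}$ combined with the first Schur multiplier theorem (Lemma~\ref{burn}). You are slightly more explicit than the paper about lifting the exponent from a unit modulo $l$ to a unit modulo $n$.
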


\begin{proof}
Since $|H|$ and $n$ are coprime, $\aut(G)=\aut(H)\times \aut(D)$. Let $K$ be the setwise stabilizer of $X$ in the group $\{\id_H\}\times \aut(D)\leq \aut(G)$ and $h\in H$ and $l\in \divv(n)$ be such that $X_{h,l}\neq \varnothing$. By the definition of $K$, the set $X_{h,l}$ is $K$-invariant. On the other hand, for all $x,y\in X_{h,l}$, there is an integer $m$ coprime to~$n$ such that $x^{\sigma_m}=x^m=y$ and $m\equiv 1 \mod |H|$ and hence $(hx)^m=hy$. The latter implies that $X^{(m)}\cap X\neq \varnothing$. So $X^{(m)}=X$ by Lemma~\ref{burn}. Therefore $\sigma_m\in K$ and consequently $K$ is transitive on $X_{h,l}$. Thus, $X_{h,l}$ is an orbit of~$K$ and hence of the group $K^D$ induced by $K$ on $D$. 
\end{proof}

\subsection{Isomorphisms and schurity}  

Put $\mathcal{R}(\mathcal{A})=\{r(X):~X\in \mathcal{S}(\mathcal{A})\}$. Let $\mathcal{A}^\prime$ be an $S$-ring over a group $G^\prime$. A bijection $f$ from $G$ to $G^\prime$ is called a \emph{(combinatorial) isomorphism} from $\mathcal{A}$ to $\mathcal{A}^\prime$ if $\mathcal{R}(\mathcal{A})^f=\mathcal{R}(\mathcal{A}^\prime)$, where $\mathcal{R}(\mathcal{A})^f=\{r(X)^f:~X\in \mathcal{S}(\mathcal{A})\}$ and $r(X)^f=\{(g^f,h^f):~(g,h)\in r(X)\}$. If there exists an isomorphism from $\mathcal{A}$ to $\mathcal{A}^\prime$, then $\mathcal{A}$ and $\mathcal{A}^\prime$ are said to be \emph{isomorphic}.

A bijection $f\in\sym(G)$ is defined to be a \emph{(combinatorial) automorphism} of $\mathcal{A}$ if $r(X)^f=r(X)$ for every $X\in \mathcal{S}(\mathcal{A})$. The set of all automorphisms of $\mathcal{A}$ forms the group called the \emph{automorphism group} of $\mathcal{A}$ and denoted by $\aut(\mathcal{A})$. One can see that $\aut(\mathcal{A})\geq G_r$. If $f\in \aut(\mathcal{A})$, then 
\begin{equation}\label{aut}
(Xy)^f=Xy^f
\end{equation}
for every $X\in \mathcal{S}(\mathcal{A})$ and $y\in G$. If $H$ is an $\mathcal{A}$-subgroup of $G$, then the set of all right $H$-cosets is an imprimitivity system of~$\aut(\mathcal{A})$.

Let $K$ be a subgroup of $\sym(G)$ containing $G_{r}$. Schur proved in paper~\cite{Schur} that the $\mathbb{Z}$-submodule
$$V(K,G)=\Span_{\mathbb{Z}}\{\underline{X}:~X\in \orb(K_e,G)\}$$
is an $S$-ring over $G$. An $S$-ring $\mathcal{A}$ over $G$ is called \emph{schurian} if $\mathcal{A}=V(K,G)$ for some $K\leq \sym(G)$ with $K\geq G_{r}$. One can see that $\mathcal{T}_G=V(K,G)$ for every $2$-transitive group $K\leq \sym(G)$ containing $G_r$ and hence $\mathcal{T}_G$ is schurian. The group $G$ is called a \emph{Schur} group if every $S$-ring over $G$ is schurian.

Let $\mathcal{A}=V(K,G)$ for some $K\leq \sym(G)$ such that $K\geq G_r$. One can see that $\aut(\mathcal{A})\geq K$. If $S$ is an $\mathcal{A}$-section, then $\mathcal{A}_S=V(K^S,S)$. This yields that $\mathcal{A}_S$ is also schurian. Therefore a section (in particular, a subgroup) of a Schur group is also a Schur group.

One can verify that $\mathcal{A}$ is schurian if and only if 
$$\mathcal{A}=V(\aut(\mathcal{A}),G)$$
or, equivalently, $\mathcal{S}(\mathcal{A})=\orb(\aut(\mathcal{A})_e,G)$.

The $S$-ring $\mathcal{A}$ is said to be \emph{normal} if $G_r$ is normal in $\aut(\mathcal{A})$ or, equivalently, 
$$\aut(\mathcal{A})\leq \Hol(G)=G_r\rtimes \aut(G).$$ 

Let $K\leq \aut(G)$. Then $\orb(K,G)$ forms a partition of $G$ that defines an $S$-ring $\mathcal{A}$ over~$G$. In this case, $\mathcal{A}$ is called \emph{cyclotomic} and denoted by $\cyc(K,G)$. One can see that $\mathcal{A}=V(G_rK,G)$. So every cyclotomic $S$-ring is schurian. If $S$ is an $\mathcal{A}$-section, then $\mathcal{A}_S=\cyc(K^S,G)$, i.e. $\mathcal{A}_S$ is also cyclotomic. Note that a normal $S$-ring can be nonschurian and hence noncyclotomic as well as a cyclotomic $S$-ring can be nonnormal. However, if a normal $S$-ring is schurian, then it is also cyclotomic.

\subsection{Minimal $S$-rings}

Two permutation groups $K_1$ and $K_2$ on a set $\Omega$ are called \emph{$2$-equivalent} if $\orb(K_1,\Omega^2)=\orb(K_2,\Omega^2)$ (here we assume that $K_1$ and $K_2$ act on $\Omega^2$ componentwise). In this case, we write $K_1\approx_2 K_2$. The relation $\approx_2$ is an equivalence relation on the set of all subgroups of $\sym(\Omega)$. Every $\approx_2$-equivalence class has the unique maximal element and may have more than one minimal elements with respect to inclusion (for the latter, see Example~$1$). Given $K\leq \sym(\Omega)$, the unique maximal element from the class containing $K$ is called the \emph{$2$-closure} of $K$ and denoted by $K^{(2)}$. 

Let $K\leq \sym(G)$ with $K\geq G_r$ and $\mathcal{A}=V(K,G)$. Then $K^{(2)}=\aut(\mathcal{A})$ and $V(K^\prime,G)=\mathcal{A}$ if and only if $K^\prime \approx_2 K$ for every $K^\prime\leq \sym(G)$ with $K^\prime\geq G_r$. A schurian $S$-ring $\mathcal{A}$ over $G$ is said to be \emph{$2$-minimal} if
$$\{K\leq \sym(G):~K\geq G_r,~K\approx_2 \aut(\mathcal{A})\}=\{\aut(\mathcal{A})\}.$$
The set of all minimal with respect to inclusion permutation groups $K^\prime\leq \sym(G)$ such that $K^\prime\geq G_r$ and $V(K^\prime,G)=\mathcal{A}$ is denoted by $\mathcal{K}^{\min}(\mathcal{A})$. Note that $\mathcal{A}$ is $2$-minimal if and only if $\aut(\mathcal{A})$ is $2$-isolated in the sense of~\cite{KK}. 

\hspace{5mm}

\noindent \textbf{Example~1.} This example demonstrates that $\mathcal{K}^{\min}(\mathcal{A})$ may contain more than one element. Let $p\geq 5$ be an odd prime, $G\cong C_p$, and $\mathcal{A}=\mathcal{T}_G$. Let $K_1=\Hol(G)$. Clearly, $K_1\geq G_r$ and $\mathcal{A}=V(K_1,G)$. One can see that the one-point stabilizer $(K_1)_e=\aut(G)$ acts regularly on $G^\#$. So $K_1\in \mathcal{K}^{\min}(\mathcal{A})$.

Let $K_2=\alt(G)$. Since $p$ is odd, every element from $G_r$ is an even permutation of $G$ and hence $K_2\geq G_r$. The group $K_2$ is $(p-2)$-transitive. Together with $p\geq 5$, this implies that $K_2$ is $2$-transitive and consequently $\mathcal{A}=V(K_2,G)$. As $p$ is odd, a generator of $\aut(G)$ is an odd permutation of $G$. So $K_1\nleq K_2$. Therefore $K_2^\prime \neq K_1$, where $K_2^\prime \in \mathcal{K}^{\min}(\mathcal{A})$ with $K_2^\prime\leq K_2$. Thus, $|\mathcal{K}^{\min}(\mathcal{A})|\geq 2$. 

\hspace{5mm}

The next lemma and remark after it easily follow from computer calculations using the package~\cite{GAP}.

\begin{lemm}\label{2minsmall}
Let $\mathcal{A}$ be an $S$-ring over a group $G$. If $|G|\leq 3$ or $|G|=4$ and $\mathcal{A}\neq \mathcal{T}_G$, then $\mathcal{A}$ is $2$-minimal. 
\end{lemm}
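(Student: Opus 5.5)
The plan is to prove Lemma~\ref{2minsmall} by a finite case enumeration over groups of order at most~$4$: the groups $C_1$, $C_2$, $C_3$, $C_4$, $E_4$ and all $S$-rings over each of them. First, every $S$-ring over these groups is schurian. In fact, every $S$-ring except $\mathcal{T}_G$ turns out to be cyclotomic, and $\mathcal{T}_G$ is schurian by the remark after the definition of $V(K,G)$. So for each $S$-ring $\mathcal{A}$ we can compute $\aut(\mathcal{A})$ and then check that no proper subgroup $K<\aut(\mathcal{A})$ with $K\geq G_r$ has $\orb(K_e,G)=\mathcal{S}(\mathcal{A})$. Because $K\geq G_r$ is transitive, we have $|K|=|G|\cdot|K_e|$. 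The condition is therefore that $K_e$ has the same orbits as $\aut(\mathcal{A})_e$, together with the requirement that $K$ is $2$-equivalent to $\aut(\mathcal{A})$.

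The cases $|G|\le 2$ are immediate, since then $\sym(G)=G_r$. For $|G|=3$ there are two $S$-rings, $\mathbb{Z}G$ and $\mathcal{T}_G$. For $\mathbb{Z}G$ we have $\aut(\mathcal{A})=G_r$. For $\mathcal{T}_G$ we have $\aut(\mathcal{A})=\sym(G)\cong S_3$, and the only proper overgroup of $G_r$ in it is $G_r$ itself, whose stabilizer is trivial and does not fuse the two nonidentity elements. So $\sym(G)$ is the unique group with the required orbits.

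For $|G|=4$, I would list the $S$-rings other than $\mathcal{T}_G$.
\begin{itemize}
\item For $C_4=\langle a\rangle$: $\mathbb{Z}G$, the ring with basic sets $\{e\},\{a^2\},\{a,a^3\}$, and the wreath product $\mathcal{T}_{C_2}\wr\mathcal{T}_{C_2}$ with basic sets $\{e\},\{a^2\},\{a,a^3\}$. The last two coincide here, so there are two rings in total.
\item For $E_4$: $\mathbb{Z}G$, and the three rings of rank~$3$ with basic sets $\{e\},\{x\},G\setminus\{e,x\}$.
\end{itemize}
For $\mathbb{Z}G$ we again have $\aut=G_r$. For a rank~$3$ ring, $\aut(\mathcal{A})$ preserves the partition into cosets of the order-$2$ $\mathcal{A}$-subgroup, so it is $C_2\wr C_2\cong D_8$ of order~$8$. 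Any $K$ with $G_r\le K\le D_8$ and nontrivial $K_e$ must have $|K|=8$, so $K=\aut(\mathcal{A})$. This gives $2$-minimality.

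The only point needing care is $|G|=4$ with $\mathcal{A}=\mathcal{T}_G$, which is excluded in the statement. There $\aut(\mathcal{A})=S_4$, and over $E_4$ the group $A_4\geq G_r$ is already $2$-transitive, so $2$-minimality fails. The main obstacle is simply to make sure the list of $S$-rings of order~$4$ is complete. I would check this by hand using Lemma~\ref{burn}: basic sets are closed under $\sigma_{-1}$ (or $\sigma_3$) in $C_4$, which forces $\{a,a^3\}$ to lie in one basic set. In $E_4$, any partition of $G^\#$ whose sets $X,Y$ satisfy $\underline{X}\,\underline{Y}\in\mathcal{A}$ works, and a direct check of the possible partitions gives exactly the rings listed above. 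Alternatively, as the paper does, this can be confirmed with GAP.
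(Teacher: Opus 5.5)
Your argument is correct, but it takes a different route from the paper. The paper gives no written proof and only asserts that the lemma follows from a GAP computation; you carry out the enumeration by hand.

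\textbf{What your route uses.} You rely, implicitly, on the fact that any $K\geq G_r$ with $K\approx_2\aut(\mathcal{A})$ lies in $\aut(\mathcal{A})$, since the latter is the $2$-closure. You combine this with the count $|K|=|G|\,|K_e|$ and with the observation that fusing the two-element basic set forces $|K_e|\geq 2$. In the rank~$3$ cases this gives $|K|\geq 8=|C_2\wr C_2|$, so $K=\aut(\mathcal{A})$.

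\textbf{What it buys.} You get a self-contained, checkable proof, and it shows where the exception comes from. The translations of $E_4$ are even permutations, so $\alt(E_4)$ is a proper $2$-transitive overgroup of $(E_4)_r$. The same reasoning shows the exclusion is only needed over $E_4$. A $4$-cycle is odd, the only subgroups of $\sym(4)$ containing it are $C_4$, $D_8$ and $\sym(4)$, and $D_8$ is not $2$-transitive. Hence $\mathcal{T}_{C_4}$ is in fact $2$-minimal. The GAP check is quicker but gives no such insight.

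\textbf{One step to correct.} Lemma~\ref{burn} says that $X^{(3)}=X^{-1}$ is again a basic set. It does not say that $X$ is $\sigma_3$-invariant, so it does not force $a$ and $a^3$ into one basic set. Indeed they are separated in $\mathbb{Z}C_4$, which you list yourself. The correct completeness check uses inverse-closure of the partition: a basic set containing $a^2$ and exactly one of $a,a^3$ would meet its inverse, a different basic set, in $a^2$. This leaves exactly $\mathbb{Z}C_4$, the rank~$3$ ring and $\mathcal{T}_{C_4}$, so your final list is right.
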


\begin{rem}\label{2minsmallrem}
Let $|G|=4$. Then $\aut(\mathcal{T}_G)=\sym(G)\cong \sym(4)$ and $\mathcal{T}_G=V(\sym(G),G)=V(\alt(G),G)$. This implies that $\mathcal{T}_G$ is not $2$-minimal. The group $\alt(G)\cong \alt(4)$ of order~$12$ is a unique proper subgroup of $\aut(\mathcal{T}_G)=\sym(G)$ which is $2$-equivalent to $\aut(\mathcal{T}_G)$ and hence $\mathcal{K}^{\min}(\mathcal{T}_G)=\{\alt(G)\}$.
\end{rem}

Two groups $K_1,K_2\leq \aut(G)$ are said to be \emph{Cayley equivalent} if $\orb(K_1,G)=\orb(K_2,G)$. In this case, we write $K_1\approx_{\cay} K_2$. If $\mathcal{A}=\cyc(K,G)$ for some $K\leq \aut(G)$, then $\aut_G(\mathcal{A})=\aut(\mathcal{A})\cap \aut(G)$ is the largest group which is Cayley equivalent to $K$. A cyclotomic $S$-ring $\mathcal{A}$ over $G$ is called \emph{Cayley minimal} if
$$\{K\leq \aut(G):~K\approx_{\cay} \aut_G(\mathcal{A})\}=\{\aut_G(\mathcal{A})\}.$$
It is easy to see that $\mathbb{Z}G$ is $2$-minimal and Cayley minimal.

\section{Tensor, generalized wreath, and star products}

This section contains a necessary information on three main constructions for producing $S$-rings.

\subsection{Tensor product}

Let $\mathcal{A}$ be an $S$-ring over a group $G$. Suppose that $G_1$ and $G_2$ are $\mathcal{A}$-subgroups such that $G=G_1\times G_2$. The $S$-ring $\mathcal{A}$ is defined to be a \emph{tensor product} of $\mathcal{A}_1=\mathcal{A}_{G_1}$ and $\mathcal{A}_2=\mathcal{A}_{G_2}$ if 
$$\mathcal{S}(\mathcal{A})=\mathcal{S}(\mathcal{A}_{1})\otimes \mathcal{S}(\mathcal{A}_{2})=\{X_1\times X_2:~X_1\in\mathcal{S}(\mathcal{A}_{1}),~X_2\in \mathcal{S}(\mathcal{A}_{2})\}.$$
In this case, we write $\mathcal{A}=\mathcal{A}_{1}\otimes \mathcal{A}_{2}$. The tensor product is called \emph{nontrivial} if $\{e\}<G_1<G$ and $\{e\}<G_2<G$, and \emph{trivial} otherwise. On the other hand, if we are given $S$-rings $\mathcal{A}_1$ and $\mathcal{A}_2$ over $G_1$ and $G_2$, respectively, then the above partition defines the $S$-ring $\mathcal{A}=\mathcal{A}_{1}\otimes \mathcal{A}_{2}$ over $G=G_1\times G_2$.

Let $\mathcal{A}$ be an $S$-ring over a group $G$ and $L$ an $\mathcal{A}$-subgroup of $G$. We say that $\mathcal{A}_L$ is \emph{$\otimes$-complemented} in $\mathcal{A}$ if there exists an $\mathcal{A}$-subgroup $U$ such that $G=L\times U$ and $\mathcal{A}=\mathcal{A}_L\otimes \mathcal{A}_U$.

One can check that
\begin{equation}\label{auttens}
\aut(\mathcal{A}_{1}\otimes \mathcal{A}_{2})=\aut(\mathcal{A}_{1})\otimes \aut(\mathcal{A}_{2}).
\end{equation}

The lemma below immediately follows from Eq.~\eqref{auttens}.

\begin{lemm}\label{schurtens}
A tensor product of two $S$-rings is schurian if and only if each of them is schurian. 
\end{lemm}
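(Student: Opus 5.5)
We need to prove Lemma~\ref{schurtens}: $\mathcal{A}=\mathcal{A}_1\otimes\mathcal{A}_2$ over $G=G_1\times G_2$ is schurian if and only if both $\mathcal{A}_1$ and $\mathcal{A}_2$ are schurian. The plan is to use the schurity criterion $\mathcal{S}(\mathcal{A})=\orb(\aut(\mathcal{A})_e,G)$ from Subsection~2.3 together with Eq.~\eqref{auttens}. We identify $\sym(G_1)\times\sym(G_2)$ with its componentwise action on $G=G_1\times G_2$; this is how we read $\aut(\mathcal{A}_1)\otimes\aut(\mathcal{A}_2)$.

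\emph{Only if.} Since $G_1$ and $G_2$ are $\mathcal{A}$-subgroups, they are $\mathcal{A}$-sections, and $\mathcal{A}_{G_i}=\mathcal{A}_i$. Subsection~2.3 shows that an $\mathcal{A}$-section of a schurian $S$-ring carries a schurian $S$-ring. Hence $\mathcal{A}_1$ and $\mathcal{A}_2$ are schurian.

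\emph{If.} Put $K=\aut(\mathcal{A})$. By Eq.~\eqref{auttens}, $K=\aut(\mathcal{A}_1)\times\aut(\mathcal{A}_2)$ acting componentwise. The stabilizer of $e=(e_1,e_2)$ in this product is $\aut(\mathcal{A}_1)_{e_1}\times\aut(\mathcal{A}_2)_{e_2}$. The orbits of a direct product of groups acting componentwise on $G_1\times G_2$ are exactly the products $O_1\times O_2$ of orbits of the factors. Therefore
$$\orb(K_e,G)=\{O_1\times O_2:~O_i\in\orb(\aut(\mathcal{A}_i)_{e_i},G_i)\}.$$
Since each $\mathcal{A}_i$ is schurian, $\orb(\aut(\mathcal{A}_i)_{e_i},G_i)=\mathcal{S}(\mathcal{A}_i)$. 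So $\orb(K_e,G)=\mathcal{S}(\mathcal{A}_1)\otimes\mathcal{S}(\mathcal{A}_2)=\mathcal{S}(\mathcal{A})$, and $\mathcal{A}$ is schurian.

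Alternatively, for this direction one can avoid Eq.~\eqref{auttens}. Write $\mathcal{A}_i=V(K_i,G_i)$ and take $K=K_1\times K_2\geq (G_1)_r\times(G_2)_r=G_r$. The same orbit computation gives $V(K,G)=\mathcal{A}$.

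The only step needing any care is the identification of the componentwise direct product with the group in Eq.~\eqref{auttens}, and that identification is exactly the content of that equation. So there is no real obstacle: the lemma is immediate once Eq.~\eqref{auttens} is granted.
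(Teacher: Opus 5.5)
Your proof is correct and is essentially the paper's argument. The paper says only that the lemma follows immediately from Eq.~\eqref{auttens}, and your ``if'' direction unpacks exactly that: the stabilizer of $e$ in the componentwise product is the product of the stabilizers, whose orbits are the sets $O_1\times O_2$. For ``only if'' you use closure of schurity under $\mathcal{A}$-sections rather than Eq.~\eqref{auttens}; this is equally immediate, and it can also be read off Eq.~\eqref{auttens} by looking at the orbits $O_1\times\{e_2\}$.
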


If $\mathcal{A}_{G_1}=\cyc(K_1,G_1)$ and  $\mathcal{A}_{G_2}=\cyc(K_2,G_2)$, then
\begin{equation}\label{cycltens}
\mathcal{A}_{G_1}\otimes \mathcal{A}_{G_2}=\cyc(K_1\times K_2,G_1\times G_2).
\end{equation}

\begin{lemm}\cite[Lemma 2.3]{EKP2}\label{tenspr}
Let $\mathcal{A}$ be an $S$-ring over an abelian group $G=G_1\times G_2$. Suppose that $G_1$ and $G_2$ are $\mathcal{A}$-subgroups. Then 
\begin{enumerate}
\tm{1} $X_{G_i}\in \mathcal{S}(\mathcal{A})$ for all $X\in \mathcal{S}(\mathcal{A})$ and $i\in\{1,2\}$;

\tm{2} $\mathcal{A} \geq \mathcal{A}_{G_1}\otimes \mathcal{A}_{G_2}$, and the equality is attained whenever $\mathcal{A}_{G_i}=\mathbb{Z}G_i$ for some $i\in \{1,2\}$.
\end{enumerate}
\end{lemm}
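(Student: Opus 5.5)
The plan for (1) is to realize the projection $X_{G_1}$ as a basic set cut out by products in $\mathcal{A}$. Since $G_2$ is an $\mathcal{A}$-subgroup, $\underline{X}\cdot\underline{G_2}\in\mathcal{A}$. In the abelian group $G=G_1\times G_2$ we have
$$\underline{X}\cdot\underline{G_2}=\sum_{x_1\in X_{G_1}} |X\cap x_1G_2|\,\underline{x_1G_2}.$$
By Lemma~\ref{intersection}, applied with $H=G_2$, the number $|X\cap x_1G_2|$ is a constant $c>0$ independent of $x_1\in X_{G_1}$. Hence $\underline{X}\cdot\underline{G_2}=c\,\underline{X_{G_1}G_2}$, so $X_{G_1}G_2$ is an $\mathcal{A}$-set, and then so is $X_{G_1}G_2\cap G_1=X_{G_1}$. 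Intersections of $\mathcal{A}$-sets are $\mathcal{A}$-sets, because $\mathcal{A}$-sets are exactly unions of basic sets.

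It remains to show that $X_{G_1}$ is a single basic set. Suppose $Y\in\mathcal{S}(\mathcal{A})$ with $Y\subseteq X_{G_1}$. Then $\underline{Y}\cdot\underline{G_2}=\underline{YG_2}$ is an $\mathcal{A}$-set, and it meets $X$, since every $y\in Y\subseteq X_{G_1}$ has some $x\in X$ with $x\in yG_2$. As $X$ is basic, $X\subseteq YG_2$. Projecting to $G_1$ gives $X_{G_1}\subseteq Y$, so $Y=X_{G_1}$. The same argument with the roles of $G_1$ and $G_2$ swapped handles $X_{G_2}$.

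For (2), take $X_1\in\mathcal{S}(\mathcal{A}_{G_1})$ and $X_2\in\mathcal{S}(\mathcal{A}_{G_2})$. Then $\underline{X_1}\cdot\underline{X_2}=\underline{X_1\times X_2}\in\mathcal{A}$, since the product of subsets of complementary direct factors has no multiplicities. So each $X_1\times X_2$ is an $\mathcal{A}$-set. These sets span $\mathcal{A}_{G_1}\otimes\mathcal{A}_{G_2}$, which gives $\mathcal{A}\geq\mathcal{A}_{G_1}\otimes\mathcal{A}_{G_2}$.

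Now suppose $\mathcal{A}_{G_1}=\mathbb{Z}G_1$, and let $X\in\mathcal{S}(\mathcal{A})$. By (1), $X_{G_1}$ is a basic set of $\mathcal{A}_{G_1}$, so $X_{G_1}=\{g\}$ is a singleton. Then $X=\{g\}\times X_{G_2}$ with $X_{G_2}\in\mathcal{S}(\mathcal{A}_{G_2})$ by (1), so every basic set of $\mathcal{A}$ is a product of basic sets of the factors. Combined with the inclusion above, this gives equality. The case $i=2$ is symmetric.

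The main obstacle is the use of Lemma~\ref{intersection} to get the constant multiplicity $c$ in part (1). Once that is in hand, everything else is routine bookkeeping with $\mathcal{A}$-sets.
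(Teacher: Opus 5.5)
Your proof is correct. The paper gives no proof of its own and simply quotes \cite[Lemma~2.3]{EKP2}; your argument is the standard direct one: $X_{G_1}=XG_2\cap G_1$ is an $\mathcal{A}$-set, and any basic $Y\subseteq X_{G_1}$ meets $X$ after multiplying by $G_2$, which forces $X\subseteq YG_2$ and hence $Y=X_{G_1}$. Part~(2) then follows exactly as you describe.

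The one overstatement is calling Lemma~\ref{intersection} the main obstacle. It is not needed, because the support $XG_2$ of $\underline{X}\cdot\underline{G_2}\in\mathcal{A}$ is automatically a union of basic sets, whatever the multiplicities are.
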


\begin{lemm}\label{tensgroup}
Let $K\leq \sym(G)$ such that $K\geq G_r$ and $\mathcal{A}=V(K,G)$. Suppose that $\mathcal{A}=\mathcal{A}_{G_1}\otimes \mathcal{A}_{G_2}$ for some $\mathcal{A}$-subgroups $G_1$ and $G_2$ such that $G=G_1\times G_2$ and $K^{G_i}\in \mathcal{K}^{\min}(\mathcal{A}_{G_i})$ for some $i\in\{1,2\}$. Then $K=K^{G_1}\times K^{G_2}$.
\end{lemm}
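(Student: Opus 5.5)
Say $i=1$. The plan is to use Eq.~\eqref{auttens} to place $K$ inside a direct product, and then use minimality of $K^{G_1}$ to force $K$ to be the whole product.

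\textbf{Step 1: $K^{G_j}$ is defined and $K$ sits inside the product.}
Since $K\le \aut(\mathcal{A})=\aut(\mathcal{A}_{G_1})\otimes\aut(\mathcal{A}_{G_2})$, every $f\in K$ has the form $(g_1,g_2)\mapsto(g_1^{f_1},g_2^{f_2})$ with $f_j\in\aut(\mathcal{A}_{G_j})$. Such an $f$ permutes the $G_1$-cosets and the $G_2$-cosets. Identifying $G_1$ with the section $G_1/\{e\}$ and also with $G/G_2$ (and similarly for $G_2$), we get $K^{G_j}=\{f_j: f\in K\}$. Hence $K\le K^{G_1}\times K^{G_2}$, and $K$ is a subdirect product of the two factors.

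\textbf{Step 2: the factors contain the translations and give the right $S$-rings.}
Since $K\ge G_r$, each $K^{G_j}$ contains $(G_j)_r$. Moreover $\mathcal{A}_{G_j}=V(K^{G_j},G_j)$, by the fact recorded in Section~2.3 that $\mathcal{A}_S=V(K^S,S)$ for every $\mathcal{A}$-section $S$.

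\textbf{Step 3: reduce to a normal subgroup of $K^{G_1}$.}
Let $N_1=\{f_1: f\in K,\ f_2=\id\}$. This is the kernel of the projection of $K$ to the second factor, taken inside the first factor, and it is normal in $K^{G_1}$. By Goursat's lemma it suffices to show $N_1=K^{G_1}$, since then $K=K^{G_1}\times K^{G_2}$. For this I want to find a subgroup of $K^{G_1}$ that contains $(G_1)_r$, is $2$-equivalent to $K^{G_1}$, and is contained in $N_1$; minimality of $K^{G_1}$ in $\mathcal{K}^{\min}(\mathcal{A}_{G_1})$ then forces equality. I will take the subgroup $M=\{f_1 : f\in K,\ f \text{ fixes } e_{G_2}\text{ in the second coordinate}\}$. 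It contains $(G_1)_r$, because $(G_1)_r\le G_r\le K$ and these elements act trivially on $G_2$.

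\textbf{Step 4: $M\approx_2 K^{G_1}$.}
Consider the stabilizer $K_e$, whose orbits are the basic sets $X_1\times X_2$. The orbits of $K_e$ on the first coordinate, given that the second coordinate is fixed at $e$, should be $X_1\times\{e\}$. Indeed, the set $X_1\times\{e\}$ is a basic set of $\mathcal{A}$, hence a $K_e$-orbit, and $K_e$ acts on it through elements fixing the second coordinate $e$. So $M_e$ and $(K^{G_1})_e$ have the same orbits, namely $\mathcal{S}(\mathcal{A}_{G_1})$. Both groups contain $(G_1)_r$, so they define the same $S$-ring, i.e. they are $2$-equivalent.

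\textbf{Step 5: conclude.}
Minimality gives $M=K^{G_1}$. An element $f\in K$ with $f_2$ fixing $e$ need not have $f_2=\id$, however, so this alone does not give $N_1=K^{G_1}$.

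\textbf{Main obstacle: removing the residual $f_2$.}
The fix is to replace $M$ by the group of $f_1$ with $f_2$ lying in the kernel of $K_e$ acting on $G_2$. Because $\mathcal{A}$ is a tensor product, the stabilizer of $e$ still has full orbits $X_1\times\{e\}$ after intersecting with the pointwise stabilizer of $\{e\}\times G_2$. This needs a short check: for $x\in X_1$ and any $g_2\in G_2$, the basic relation between $(e,g_2)$ and $(x,g_2)$ is the same as that between $e$ and $(x,e)$, by Eq.~\eqref{aut}. Then $(x,g_2)$ is reachable within the pointwise stabilizer via a counting or orbit argument. I expect this verification to be the delicate point.
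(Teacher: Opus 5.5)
\textbf{Gap: the step you flag as delicate is false, and so is the lemma as stated.} Minimality of $K^{G_1}$ together with the tensor decomposition does not make the kernel $N_1=\{f_1:~(f_1,\id)\in K\}$ $2$-equivalent to $K^{G_1}$. Your Steps~3--5 do not help here, because $M$ is by definition just $K^{G_1}$.

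Here is a counterexample with the minimality hypothesis on one factor only. Let $G_1\cong C_3$ and $G_2\cong E_4$, and let $K$ consist of all $(f_1,f_2)\in\sym(G_1)\times\sym(G_2)$ with $f_1$ and $f_2$ of the same parity.
\begin{enumerate}
\item[(a)] Every right translation of $G_1$ or of $G_2$ is even, so $K\geq G_r$.
\item[(b)] Since $\sym(G_1)_e$ and $\sym(G_2)_e$ both contain odd permutations, $K^{G_1}=\sym(G_1)$ and $K^{G_2}=\sym(G_2)$.
\item[(c)] The stabilizer $K_e$ is $\sym(3)$, acting naturally on $G_2^\#$ and through the sign on $G_1^\#$. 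Its orbits are $\{e\}$, $G_1^\#$, $G_2^\#$ and $G_1^\#\times G_2^\#$, so $V(K,G)=\mathcal{T}_{G_1}\otimes\mathcal{T}_{G_2}$.
\item[(d)] $K^{G_1}=\sym(G_1)\in\mathcal{K}^{\min}(\mathcal{T}_{G_1})$, yet $|K|=72<144=|K^{G_1}\times K^{G_2}|$.
\end{enumerate}
In this example $N_1=\alt(G_1)=(G_1)_r$, so $V(N_1,G_1)=\mathbb{Z}G_1\neq\mathcal{T}_{G_1}$: every $f\in K_e$ swapping the two points of $G_1^\#$ has $f_2$ odd. The $2$-orbit data of $K$ constrain $f_2$ only on pairs of points of $G_2$ and never force $f_2=\id$, so no counting or orbit argument can close this step. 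The paper's proof has the same gap: it asserts $\mathcal{A}_{G_j}=V(K_j,G_j)$ for both Goursat kernels, and here that fails on the $C_3$ side.

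\textbf{What is true, and the repair.} The tensor decomposition gives a cross statement. Write $K=\{(f_1,f_2):~(N_1f_1)^\psi=N_2f_2\}$. Let $\overline{S}_\alpha\leq K^{G_1}/N_1$ and $\overline{T}_\beta\leq K^{G_2}/N_2$ be the images of $(K^{G_1})_\alpha$ and $(K^{G_2})_\beta$. Then $V(K,G)=\mathcal{A}_{G_1}\otimes\mathcal{A}_{G_2}$ says exactly that $\overline{S}_\alpha^{\,\psi}\,\overline{T}_\beta=K^{G_2}/N_2$ for all $\alpha\in G_1^2$ and $\beta\in G_2^2$.
\begin{enumerate}
\item[(i)] Fix $\alpha$ and let $R_\alpha$ be the full preimage of $\overline{S}_\alpha^{\,\psi}$ in $K^{G_2}$. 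It contains $N_2\geq (G_2)_r$ and satisfies $R_\alpha(K^{G_2})_\beta=K^{G_2}$ for every $\beta$, so $R_\alpha\approx_2 K^{G_2}$.
\item[(ii)] Minimality of $K^{G_2}$ gives $R_\alpha=K^{G_2}$, i.e.\ $N_1(K^{G_1})_\alpha=K^{G_1}$ for all $\alpha$. Thus $N_1\approx_2 K^{G_1}$.
\item[(iii)] Only now does minimality of $K^{G_1}$ give $N_1=K^{G_1}$, hence $K=K^{G_1}\times K^{G_2}$.
\end{enumerate}
So the hypothesis must be $K^{G_i}\in\mathcal{K}^{\min}(\mathcal{A}_{G_i})$ for both $i$. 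That corrected version is all the paper actually uses: in Corollary~\ref{2mintens} and at the end of Section~8, both restrictions are minimal.
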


\begin{proof}
One can see that 
$$K\leq K^{G_1}\times K^{G_2}\leq \aut(\mathcal{A}_1)\times \aut(\mathcal{A}_2)\leq \aut(\mathcal{A})$$ 
and hence $K$ is a subdirect product of $K^{G_1}$ and $K^{G_2}$. This implies that there exist groups $K_1 \trianglelefteq K^{G_1}$, $K_2\trianglelefteq K^{G_2}$, and an isomorphism $\psi$ from $K^{G_1}/K_1$ to $K^{G_2}/K_2$ such that
$$K=\{(f_1,f_2)\in K^{G_1} \times K^{G_2}:~(K_1f_1)^\psi=K_2f_2\}.$$
Since $V(K^{G_1},G_1)=\mathcal{A}_{G_1}$, $V(K^{G_2},G_2)=\mathcal{A}_{G_2}$, and $V(K,G)=\mathcal{A}_{G_1}\otimes \mathcal{A}_{G_2}$, we conclude that $\mathcal{A}_{G_1}=V(K_1,G_1)$ and $\mathcal{A}_{G_2}=V(K_2,G_2)$. By the condition of the lemma, $K^{G_i}\in \mathcal{K}^{\min}(\mathcal{A}_{G_i})$ which yields that $K_i=K^{G_i}$. Thus, $K=K^{G_1}\times K^{G_2}$ as required.
\end{proof}

\hspace{5mm}

\noindent \textbf{Example~2.} The following example demonstrates that the condition $K^{G_i}\in \mathcal{K}^{\min}(\mathcal{A}_{G_i})$ for some $i\in\{1,2\}$ in Lemma~\ref{tensgroup} is essential. Let $\mathcal{A}_{G_1}=\mathcal{T}_{G_1}$ and $\mathcal{A}_{G_2}=\mathcal{T}_{G_2}$, where $|G_i|=p$ is an odd prime for each $i\in\{1,2\}$, and 
$$K=\alt(G_1)\times \alt(G_2) \cup (\sym(G_1)\setminus \alt(G_1)) \times (\sym(G_2)\setminus \alt(G_2)).$$
Then $K\geq\alt(G_1)\times \alt(G_2)\geq (G_1)_r \times (G_2)_r=G_r$ because $p$ is an odd prime, $\mathcal{A}=V(K,G)$, $K^{G_1}=\sym(G_1)$, $K^{G_2}=\sym(G_2)$, and $K\neq K^{G_1}\times K^{G_2}$. In this case, $K^{G_i}\notin \mathcal{K}^{\min}(\mathcal{A}_{G_i})$ for each $i\in\{1,2\}$ because $\alt(G_i)<K^{G_i}$ and $\alt(G_i)\approx_2 K^{G_i}$.

\hspace{5mm}

The corollary below immediately follows from Lemma~\ref{tensgroup} and Eq.~\eqref{auttens}.

\begin{corl}\label{2mintens}
A tensor product of two $2$-minimal $S$-rings is $2$-minimal.
\end{corl}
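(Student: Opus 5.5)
Let $\mathcal{A}=\mathcal{A}_1\otimes\mathcal{A}_2$ be a tensor product of $2$-minimal $S$-rings $\mathcal{A}_1=\mathcal{A}_{G_1}$ and $\mathcal{A}_2=\mathcal{A}_{G_2}$, so that $G=G_1\times G_2$. The plan has three steps: first check that $\mathcal{A}$ is schurian, then take an arbitrary group $K$ that is $2$-equivalent to $\aut(\mathcal{A})$, and finally use Lemma~\ref{tensgroup} together with Eq.~\eqref{auttens} to force $K=\aut(\mathcal{A})$.

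For schurity, $2$-minimal $S$-rings are schurian by definition. Hence $\mathcal{A}$ is schurian by Lemma~\ref{schurtens}, and $\aut(\mathcal{A})=\aut(\mathcal{A}_1)\times\aut(\mathcal{A}_2)$ by Eq.~\eqref{auttens}.

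Now let $K\leq\sym(G)$ satisfy $K\geq G_r$ and $K\approx_2\aut(\mathcal{A})$, that is, $V(K,G)=\mathcal{A}$. We need to show that $K=\aut(\mathcal{A})$.

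We first examine the restrictions $K^{G_i}$. Since $G_i$ is an $\mathcal{A}$-subgroup, it is a block of $\aut(\mathcal{A})\geq K$. Hence $K_e$ leaves $G_i$ invariant, so $K^{G_i}$ is defined from $K_eG_r\cap$ (setwise stabilizer of $G_i$), and $K^{G_i}\geq (G_i)_r$. By the remark in Subsection~2.3, $V(K^{G_i},G_i)=\mathcal{A}_{G_i}$. Each $\mathcal{A}_i$ is $2$-minimal, so $K^{G_i}=\aut(\mathcal{A}_i)$. In particular, $K^{G_i}$ is the unique group of its $2$-equivalence class containing $(G_i)_r$, and therefore $K^{G_i}\in\mathcal{K}^{\min}(\mathcal{A}_{G_i})$.

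This is exactly the hypothesis of Lemma~\ref{tensgroup}. The lemma gives $K=K^{G_1}\times K^{G_2}=\aut(\mathcal{A}_1)\times\aut(\mathcal{A}_2)$, and by Eq.~\eqref{auttens} this equals $\aut(\mathcal{A})$. Since $K$ was arbitrary, $\mathcal{A}$ is $2$-minimal.

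The only point that needs care is the identification of $K^{G_i}$ as a group of the right $2$-equivalence class containing $(G_i)_r$. This is the general fact $\mathcal{A}_S=V(K^S,S)$ for an $\mathcal{A}$-section $S$, which the paper states, applied with $S=G_i/\{e\}$.
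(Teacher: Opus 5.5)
Your route is the paper's own: the paper also derives the corollary in one line from Lemma~\ref{tensgroup} and Eq.~\eqref{auttens}, and your check that $K^{G_i}=\aut(\mathcal{A}_{G_i})\in\mathcal{K}^{\min}(\mathcal{A}_{G_i})$ for both $i$ is correct. The gap is the step ``the lemma gives $K=K^{G_1}\times K^{G_2}$'', because Lemma~\ref{tensgroup} is false as stated.

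Here is a counterexample. Let $p\geq 5$, $G_1\cong G_2\cong C_p$, $\mathcal{A}=\mathcal{T}_{G_1}\otimes\mathcal{T}_{G_2}$. Let $K$ be the set of pairs $(f_1,f_2)\in\Hol(G_1)\times\sym(G_2)$ in which $f_1$ and $f_2$ have the same parity.
\begin{enumerate}
\item[(a)] Translations are even, so $K\geq G_r$.
\item[(b)] A generator of $\aut(G_1)$ is a $(p-1)$-cycle, hence odd.
\item[(c)] For $y,y'\in G_2^\#$, the permutations of $G_2$ fixing $e$ and sending $y$ to $y'$ form a coset of a copy of $\sym(p-2)$, which contains both parities.
\end{enumerate}
Hence the orbits of $K_e$ are $\{e\}$, $G_1^\#\times\{e\}$, $\{e\}\times G_2^\#$ and $G_1^\#\times G_2^\#$, i.e.\ $V(K,G)=\mathcal{A}$. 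Moreover $K^{G_1}=\Hol(G_1)\in\mathcal{K}^{\min}(\mathcal{T}_{G_1})$ by Example~1, and $K^{G_2}=\sym(G_2)$. Yet $K$ has index $2$ in $K^{G_1}\times K^{G_2}$.

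The faulty step in the lemma's proof is ``$\mathcal{A}_{G_i}=V(K_i,G_i)$''. Here $K_1=\Hol(G_1)\cap\alt(G_1)$, and $V(K_1,G_1)\neq\mathcal{T}_{G_1}$. The corollary itself is true, but its proof must use the $2$-minimality of \emph{both} factors.

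A direct argument goes as follows. Put $A_i=\aut(\mathcal{A}_{G_i})$. By Eq.~\eqref{auttens}, $K\leq A_1\times A_2$ acting coordinatewise. Since $K\geq G_r$, the projection of $K$ to $A_i$ is $K^{G_i}=A_i$.

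First step. For $y\in G_2$, let $L_y\leq A_1$ be the first-coordinate projection of $\{(f_1,f_2)\in K:~e^{f_2}=e,~y^{f_2}=y\}$. Then $L_y\geq (G_1)_r$. Let $x,x'$ lie in a basic set $X_1$ of $\mathcal{A}_{G_1}$ and let $y\in X_2\in\mathcal{S}(\mathcal{A}_{G_2})$. Since $X_1\times X_2$ is an orbit of $K_e$, some $(f_1,f_2)\in K_e$ has $x^{f_1}=x'$ and $y^{f_2}=y$. Hence $V(L_y,G_1)=\mathcal{A}_{G_1}$, and $L_y=A_1$ by $2$-minimality of $\mathcal{A}_{G_1}$.

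Second step. Let $N_2=\{f_2:~(\id,f_2)\in K\}$, so $N_2\geq (G_2)_r$. Let $y,y'$ lie in one basic set of $\mathcal{A}_{G_2}$. Take $(f_1,f_2)\in K_e$ with $y^{f_2}=y'$. Using $L_{y'}=A_1$, take $(f_1^{-1},g_2)\in K$ with $g_2$ fixing $e$ and $y'$. Then $f_2g_2\in N_2$ fixes $e$ and maps $y$ to $y'$. Thus $V(N_2,G_2)=\mathcal{A}_{G_2}$, so $N_2=A_2$ by $2$-minimality of $\mathcal{A}_{G_2}$.

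Conclusion. Now $\{\id\}\times A_2\leq K$, and $K$ projects onto $A_1$, so $K=A_1\times A_2=\aut(\mathcal{A})$.

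The same argument, with minimality in place of $2$-minimality, shows that Lemma~\ref{tensgroup} does hold when $K^{G_i}\in\mathcal{K}^{\min}(\mathcal{A}_{G_i})$ for \emph{both} $i$. That is the situation in which the lemma is used in Section~8.
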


\subsection{Generalized wreath product} 

Let $S=U/L$ be an $\mathcal{A}$-section of $G$. The $S$-ring~$\mathcal{A}$ is called the \emph{$S$-wreath product} or \emph{generalized wreath product} of $\mathcal{A}_U$ and $\mathcal{A}_{G/L}$ if $L\trianglelefteq G$ and every basic set $X$ of $\mathcal{A}$ outside~$U$ is a union of some $L$-cosets or, equivalently, $L\leq \rad(X)$ for every $X\in \mathcal{S}(\mathcal{A})_{G\setminus U}$. In this case, we write $\mathcal{A}=\mathcal{A}_U \wr_S \mathcal{A}_{G/L}$. The $S$-wreath product is said to be \emph{nontrivial} if $L\neq \{e\}$ and $U\neq G$, and \emph{trivial} otherwise. The construction of a generalized wreath product of $S$-rings was introduced in paper~\cite{EP0}. If $L=U$, then the $S$-wreath product coincides with the \emph{wreath product} $\mathcal{A}_L\wr \mathcal{A}_{G/L}$ of $\mathcal{A}_L$ and $\mathcal{A}_{G/L}$. The $S$-ring $\mathcal{A}$ is said to be \emph{decomposable} if $\mathcal{A}$ is a nontrivial $S$-wreath product for some $\mathcal{A}$-section $S$ and \emph{indecomposable} otherwise.

Given a section $S=U/L$ of a group $G$ such that $L\trianglelefteq G$ and $S$-rings $\mathcal{A}_1$ and $\mathcal{A}_2$ over $U$ and $G/L$, respectively, such that $S$ is both an $\mathcal{A}_1$- and an $\mathcal{A}_2$-section, and $(\mathcal{A}_1)_S=(\mathcal{A}_2)_S$, there is a unique $S$-ring $\mathcal{A}$ over $G$ that is the $S$-wreath product with $\mathcal{A}_U=\mathcal{A}_1$ and $\mathcal{A}_{G/L}=\mathcal{A}_2$ (see~\cite{EP0}).

\begin{lemm}\cite[Corollary~5.7]{EP2}\label{schurwr}
Let $\mathcal{A}$ be an $S$-ring over an abelian group $G$ and $S=U/L$ an $\mathcal{A}$-section. Suppose that $\mathcal{A}$ is the $S$-wreath product and $\mathcal{A}_U$ and $\mathcal{A}_{G/L}$ are schurian. Then $\mathcal{A}$ is schurian if and only if there exist two groups $K_1\leq \sym(U)$ and $K_0\leq \sym(G/L)$ such that 
$$K_1\geq U_r,~K_0\geq (G/L)_r,~K_1\approx_2 \aut(\mathcal{A}_U),~K_0\approx_2 \aut(\mathcal{A}_{G/L}),~\text{and}~K_0^S=K_1^S.$$
\end{lemm}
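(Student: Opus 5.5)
We would prove the two implications separately, working throughout with the natural actions on $U$, on $G/L$ and on $S=U/L$. Since $U$ and $L$ are $\mathcal{A}$-subgroups, the right $U$-cosets and the right $L$-cosets are imprimitivity systems of $\aut(\mathcal{A})$. Hence for any $K$ with $G_r\leq K\leq \aut(\mathcal{A})$ the groups $K^U$, $K^{G/L}$ and $K^S$ are well defined.

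\emph{Necessity.} Suppose $\mathcal{A}$ is schurian and put $K=\aut(\mathcal{A})$, so that $\mathcal{A}=V(K,G)$. Take $K_1=K^U$ and $K_0=K^{G/L}$.
\begin{enumerate}
\tm{1} By the discussion of sections in Subsection~2.3, $\mathcal{A}_U=V(K_1,U)$ and $\mathcal{A}_{G/L}=V(K_0,G/L)$. Since $V(K',G')$ determines the $2$-equivalence class, this gives $K_1\approx_2\aut(\mathcal{A}_U)$ and $K_0\approx_2\aut(\mathcal{A}_{G/L})$.
\tm{2} Clearly $K_1\geq U_r$ and $K_0\geq (G/L)_r$.
\tm{3} Both $K_0^S$ and $K_1^S$ equal $\{f^S:~f\in K,~U^f=U\}$, because an element of $K$ fixing $U$ setwise permutes the $L$-cosets inside $U$. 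Hence $K_0^S=K_1^S=K^S$.
\end{enumerate}
This direction does not use the wreath structure.

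\emph{Sufficiency.} Given $K_1$ and $K_0$ as in the statement, we construct a generalized wreath product of permutation groups $K=K_1\wr_S K_0$, following Evdokimov--Ponomarenko.
\begin{enumerate}
\tm{a} Fix a transversal $T$ of $U$ in $G$ with $e\in T$.
\tm{b} Let $f\in\sym(G)$ preserve the partition into $L$-cosets, with $f^{G/L}\in K_0$. Then $f$ also permutes $U$-cosets, since $U/L$-cosets are blocks of $K_0\geq (G/L)_r$ as $U/L$ is a $V(K_0)$-subgroup. For $t\in T$ with $(Ut)^f=Ut'$, $t'\in T$, define the local map $f_t\in\sym(U)$ by $u\mapsto (ut)^f t'^{-1}$.
\tm{c} Let $K$ consist of all such $f$ for which every local map satisfies $f_t\in K_1$ and $f_t^S=(\text{the map induced by }f^{G/L}\text{ from }Ut/L\text{ to }Ut'/L)$, transported to $S$ via $t,t'$.
\end{enumerate}

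We then verify the following, using that $G$ is abelian so that translations by $t,t'$ commute with the $U$-action.
\begin{enumerate}
\tm{i} $K$ is closed under composition and inverses.
\tm{ii} $G_r\leq K$. For $g\in G$, each local map of $g_r$ is a translation by an element of $U$, hence lies in $U_r\leq K_1$, and $(g_r)^{G/L}\in (G/L)_r\leq K_0$.
\tm{iii} $K^{G/L}=K_0$ and $K^U=K_1$. This step uses the hypothesis $K_0^S=K_1^S$: any $h\in K_0$ can be lifted by choosing, for each $U$-coset, some element of $K_1$ with the prescribed image on $S$.
\end{enumerate}

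Next we compute $\orb(K_e,G)$.
\begin{enumerate}
\tm{iv} Inside $U$, $K_e$ induces $(K_1)_e$, so the orbits are the basic sets of $\mathcal{A}_U$, because $K_1\approx_2\aut(\mathcal{A}_U)$.
\tm{v} Outside $U$, the local maps on distinct $U$-cosets can be chosen independently. Each local map may be modified by any element of $L_r\leq U_r\leq K_1$ without changing its image on $S$. Hence $K_e$ is transitive on every $L$-coset outside $U$, and independently realizes every element of $(K_0)_{L}$.
\tm{vi} Therefore the orbits of $K_e$ outside $U$ are the full preimages of the orbits of $(K_0)_L$ on $(G/L)\setminus S$, i.e. of basic sets of $\mathcal{A}_{G/L}$ outside $S$.
\tm{vii} Since $\mathcal{A}$ is the $S$-wreath product, every basic set outside $U$ is a union of $L$-cosets, namely the preimage of a basic set of $\mathcal{A}_{G/L}$. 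Hence $\orb(K_e,G)=\mathcal{S}(\mathcal{A})$ and $\mathcal{A}=V(K,G)$ is schurian.
\end{enumerate}

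The main obstacle is the bookkeeping in steps (i) and (iii). One must check that the compatibility condition on $S$ is preserved under composition, and that $K$ is large enough, with the lifting argument exactly where $K_0^S=K_1^S$ is needed. We would first treat the case $G=U\times V$ split over $U$, where the local maps are canonical, and then handle general $T$ by noting that changing the transversal conjugates local maps by elements of $U_r\leq K_1$.
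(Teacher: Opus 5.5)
The paper gives no proof of this lemma and only quotes it from Evdokimov--Ponomarenko. Your argument is correct and is essentially theirs: for necessity you take $K_1=\aut(\mathcal{A})^U$ and $K_0=\aut(\mathcal{A})^{G/L}$, and for sufficiency you form the generalized wreath product $K_1\wr_S K_0$ of permutation groups and check that the orbits of its point stabilizer are exactly the basic sets of $\mathcal{A}$.

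One small remark: the compatibility condition in (c) holds automatically once $f$ preserves the $L$-cosets, and (i) reduces to the identity $(fg)_t=f_tg_{t'}$. So the hypothesis $K_0^S=K_1^S$ is genuinely used only in the lifting step (iii), for both $K^{G/L}\supseteq K_0$ and $K^U\supseteq K_1$; there the transported map lies in $K_0^S$ because $(G/L)_r\le K_0$.
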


\begin{lemm}\cite[Corollary~10.3]{MP}\label{2min}
Under the hypothesis of Lemma~\ref{schurwr}, the $S$-ring $\mathcal{A}$ is schurian if $\mathcal{A}_S$ is $2$-minimal. In particular, $\mathcal{A}$ is schurian if $U=L$. 
\end{lemm}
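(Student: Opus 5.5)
The plan is to derive the statement directly from the criterion of Lemma~\ref{schurwr}, taking the largest possible groups on both sides: $K_1=\aut(\mathcal{A}_U)$ and $K_0=\aut(\mathcal{A}_{G/L})$. Then $K_1\geq U_r$ and $K_0\geq (G/L)_r$. Trivially $K_1\approx_2\aut(\mathcal{A}_U)$ and $K_0\approx_2\aut(\mathcal{A}_{G/L})$. So the only condition of Lemma~\ref{schurwr} left to check is $K_0^S=K_1^S$, and this is where $2$-minimality of $\mathcal{A}_S$ will be used.

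\textbf{Step 1.} First I will identify the induced groups on $S$. Since $\mathcal{A}_U$ is schurian, $\mathcal{A}_U=V(K_1,U)$. As noted in Subsection~2.3, it follows that $(\mathcal{A}_U)_S=V(K_1^S,S)$; here $S=U/L$ is an $\mathcal{A}_U$-section. Similarly, $\mathcal{A}_{G/L}=V(K_0,G/L)$ gives $(\mathcal{A}_{G/L})_S=V(K_0^S,S)$, viewing $S$ as a section $U/L$ of $G/L$. Both of these $S$-rings coincide with $\mathcal{A}_S$, because $\mathcal{A}$ is the $S$-wreath product of $\mathcal{A}_U$ and $\mathcal{A}_{G/L}$.

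Moreover, $K_1^S\geq S_r$, since the right translations $U_r\leq K_1$ preserve the $L$-cosets and induce exactly $S_r$. In the same way, $K_0^S\geq S_r$, because $(G/L)_r\leq K_0$ contains the right translations by elements of $U/L$.

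\textbf{Step 2.} Next I will show that both induced groups equal $\aut(\mathcal{A}_S)$. By Subsection~2.4, a group $K'\geq S_r$ with $V(K',S)=\mathcal{A}_S$ is $2$-equivalent to $\aut(\mathcal{A}_S)$, which is its $2$-closure. Hence $K_1^S\approx_2\aut(\mathcal{A}_S)$ and $K_0^S\approx_2\aut(\mathcal{A}_S)$.

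Now $\mathcal{A}_S$ is $2$-minimal, so $\aut(\mathcal{A}_S)$ is the only group containing $S_r$ in its $2$-equivalence class. Therefore
$$K_1^S=\aut(\mathcal{A}_S)=K_0^S,$$
and Lemma~\ref{schurwr} yields that $\mathcal{A}$ is schurian.

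\textbf{Step 3.} For the particular case $U=L$, the section $S$ is trivial. Then $\sym(S)$ is the trivial group, so $\mathcal{A}_S$ is $2$-minimal vacuously, and the first part applies.

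\textbf{Main obstacle.} The only point needing care is Step~1: checking that passing to the section commutes with taking $V(\cdot)$, i.e.\ $(V(K,G))_S=V(K^S,S)$, and that $K^S$ contains $S_r$. Both facts are already recorded in Subsection~2.3, so the argument should go through without further computation.
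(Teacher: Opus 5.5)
Your argument is correct and is essentially the standard proof of the cited result \cite[Corollary~10.3]{MP}; the paper itself only quotes this result. With $K_1=\aut(\mathcal{A}_U)$ and $K_0=\aut(\mathcal{A}_{G/L})$, both $K_1^S$ and $K_0^S$ contain $S_r$ and satisfy $V(K_i^S,S)=\mathcal{A}_S$, so they are $2$-equivalent to $\aut(\mathcal{A}_S)$ and hence equal to it by $2$-minimality, after which Lemma~\ref{schurwr} applies (the case $U=L$ is immediate since both induced groups act on a single point).
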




The lemma below for cyclic groups is~\cite[Theorem~7.5]{EKP1}. In fact, the proof for abelian groups is the same. To make the text self-contained, we provide a proof here.

\begin{lemm}\label{otimescomplement}
Under the hypothesis of Lemma~\ref{schurwr}, the $S$-ring $\mathcal{A}$ is schurian whenever $\mathcal{A}_L$ is $\otimes$-complemented in $\mathcal{A}_U$ or $\mathcal{A}_S$ is $\otimes$-complemented in $\mathcal{A}_{G/L}$. 
\end{lemm}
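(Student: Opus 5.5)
We will use the criterion of Lemma~\ref{schurwr}: it suffices to find $K_1\leq\sym(U)$ and $K_0\leq\sym(G/L)$ with $K_1\geq U_r$, $K_0\geq (G/L)_r$, $K_1\approx_2\aut(\mathcal{A}_U)$, $K_0\approx_2\aut(\mathcal{A}_{G/L})$, and $K_1^S=K_0^S$. The two cases are symmetric, so we treat the first in detail and indicate the changes for the second. Assume $\mathcal{A}_U=\mathcal{A}_L\otimes\mathcal{A}_V$ for an $\mathcal{A}_U$-subgroup $V$ with $U=L\times V$. Since $G$ is abelian, the canonical epimorphism $\pi:U\to S=U/L$ restricts to an isomorphism $V\to S$. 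This isomorphism maps $\mathcal{A}_V$ onto $\mathcal{A}_S$, because every basic set of $\mathcal{A}_U$ has the form $X_1\times X_2$ with $X_1\in\mathcal{S}(\mathcal{A}_L)$ and $X_2\in\mathcal{S}(\mathcal{A}_V)$, and its image is $\pi(X_2)$.

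\textbf{Choice of groups.} Take $K_0=\aut(\mathcal{A}_{G/L})$, which is $2$-closed, contains $(G/L)_r$, and satisfies $V(K_0,G/L)=\mathcal{A}_{G/L}$ because $\mathcal{A}_{G/L}$ is schurian. Then $K_0^S$ is a subgroup of $\sym(S)$ containing $S_r$ with $V(K_0^S,S)=\mathcal{A}_S$, since restricting to the $\mathcal{A}$-section $S$ preserves the property of being schurian. Transport $K_0^S$ to a group $M\leq\sym(V)$ via $\pi|_V$. Then $M\geq V_r$ and $V(M,V)=\mathcal{A}_V$, so $M\approx_2\aut(\mathcal{A}_V)$. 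Now set
$$K_1=\aut(\mathcal{A}_L)\times M\leq \sym(L)\times\sym(V)\leq\sym(U).$$

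\textbf{Verification.} First, $K_1\geq L_r\times V_r=U_r$. By Eq.~\eqref{auttens}, $\aut(\mathcal{A}_U)=\aut(\mathcal{A}_L)\times\aut(\mathcal{A}_V)$. A direct product of permutation groups acting on $L\times V$ has $2$-orbits equal to the products of the $2$-orbits of the factors. Since $M\approx_2\aut(\mathcal{A}_V)$, this gives $K_1\approx_2\aut(\mathcal{A}_U)$. Finally, $K_1$ preserves the partition of $U$ into $L$-cosets $L\times\{v\}$, and its induced action on $U/L\cong V$ is exactly $M$ transported back, namely $K_0^S$. Hence $K_1^S=K_0^S$, and Lemma~\ref{schurwr} yields schurity.

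\textbf{The dual case.} If $\mathcal{A}_{G/L}=\mathcal{A}_S\otimes\mathcal{A}_W$ with $G/L=S\times W$, we argue in the same way with the roles swapped. Take $K_1=\aut(\mathcal{A}_U)$, so that $K_1^S$ realizes $\mathcal{A}_S$, and set $K_0=K_1^S\times\aut(\mathcal{A}_W)$. As above, $K_0\approx_2\aut(\mathcal{A}_S)\times\aut(\mathcal{A}_W)=\aut(\mathcal{A}_{G/L})$, using Eq.~\eqref{auttens}, and $K_0^S=K_1^S$ because $S$ is a direct factor and $K_0$ acts on it through its first factor.

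\textbf{Main obstacle.} The delicate point is checking that $K_1^S$ (respectively $K_0^S$) is computed correctly. By definition this restriction ranges only over elements stabilizing $S$ setwise. For a direct product in which $S$ is a block of the first factor's imprimitivity structure, every element stabilizes $S$, and the induced action is the relevant factor. One also needs that $K_0^S$ is $2$-equivalent to $\aut(\mathcal{A}_S)$, which follows from $V(K_0^S,S)=\mathcal{A}_S$.
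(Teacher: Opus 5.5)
Your proposal is correct and is essentially the paper's proof. In the first case you take $K_0=\aut(\mathcal{A}_{G/L})$ and $K_1=\aut(\mathcal{A}_L)\times M$, with $M$ a copy of $K_0^S$ on the complement. In the dual case you take $K_1=\aut(\mathcal{A}_U)$ and $K_0=K_1^S\times\aut(\mathcal{A}_W)$, and you conclude via Lemma~\ref{schurwr} and Eq.~\eqref{auttens}, exactly as the paper does.

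One harmless imprecision is in your ``main obstacle'' remark. In the dual case, not every element of $K_0$ stabilizes $S=S\times\{e\}$. However, the setwise stabilizer contains $K_1^S\times\{\id\}$, so $K_0^S=K_1^S$ still holds.
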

\begin{proof}
Suppose that the first condition of the lemma holds, i.e. $\mathcal{A}_U=\mathcal{A}_L\otimes \mathcal{A}_H$ for some $\mathcal{A}_U$-subgroup $H$. Clearly, $\mathcal{A}_S\cong \mathcal{A}_H$. Eq.~\eqref{auttens} implies that $\aut(\mathcal{A}_U)=\aut(\mathcal{A}_{L})\times \aut(\mathcal{A}_{H})$.
Let $K_0=\aut(\mathcal{A}_{G/L})$. Observe that $K_0^S\approx_2 \aut(\mathcal{A}_S)$ because $\mathcal{A}_{G/L}$ is schurian. Since $\mathcal{A}_S\cong \mathcal{A}_H$, there is $K_2 \leq \sym(H)$ such that $K_2\geq H_r$ and $K_2^S=K_0^S$. Put $K_1=\aut(\mathcal{A}_L)\times K_2$. One can see that: 
\begin{enumerate}
\tm{1} $K_1\geq U_r$ because $K_2\geq H_r$; 

\tm{2} $K_1\approx_2 \aut(\mathcal{A}_U)$ because $K_2^S\approx_2 \aut(\mathcal{A}_S)$ and $\aut(\mathcal{A}_{U})=\aut(\mathcal{A}_L) \times \aut(\mathcal{A}_H)$; 

\tm{3} $K_1^S=K_2^S=K_0^S$.
\end{enumerate} 
Therefore all the conditions of Lemma~\ref{schurwr} hold for $K_1$ and $K_0$. Thus, $\mathcal{A}$ is schurian.

Now suppose that the second condition of the lemma holds, i.e. $\mathcal{A}_{G/L}=\mathcal{A}_S\otimes \mathcal{A}_H$ for some $\mathcal{A}_{G/L}$-subgroup $H$.  Eq.~\eqref{auttens} yields that $\aut(\mathcal{A}_{G/L})=\aut(\mathcal{A}_{S})\times \aut(\mathcal{A}_{H})$. Let $K_1=\aut(\mathcal{A}_U)$. Note that $K_1^S\approx_2 \aut(\mathcal{A}_S)$ because $\mathcal{A}_{U}$ is schurian and $K_1^S\geq S_r$ because $K_1\geq U_r$. Put $K_0=K_1^S\times \aut(\mathcal{A}_H)$. Then:
\begin{enumerate}
\tm{1} $K_0\geq (G/L)_r$ because $K_1^S\geq S_r$; 

\tm{2} $K_0\approx_2 \aut(\mathcal{A}_{G/L})$ because $K_1^S\approx_2 \aut(\mathcal{A}_S)$ and $\aut(\mathcal{A}_{G/L})=\aut(\mathcal{A}_S) \times \aut(\mathcal{A}_H)$;

\tm{3} $K_0^S=K_1^S$. 
\end{enumerate} 
Thus, all the conditions of Lemma~\ref{schurwr} hold for $K_1$ and $K_0$ and consequently $\mathcal{A}$ is schurian.
\end{proof}

\subsection{Star product}

Let $L$ and $U$ be normal $\mathcal{A}$-subgroups. The $S$-ring $\mathcal{A}$ is called the \emph{star product} of $\mathcal{A}_L$ and $\mathcal{A}_U$  if the following conditions hold:

$(1)$ each $X\in \mathcal{S}(\mathcal{A})$ with $X\subseteq (U\setminus L) $ is a union of some $(L\cap U)$-cosets;

$(2)$ for each $X\in \mathcal{S}(\mathcal{A})$ with $X\subseteq G\setminus (L\cup U)$ there exist $Y\in \mathcal{S}(\mathcal{A}_L)$ and $Z\in \mathcal{S}(\mathcal{A}_U)$ such that $X=YZ$.

\noindent In this case, we write $\mathcal{A}=\mathcal{A}_L \star \mathcal{A}_U$. The construction of a star product of $S$-rings was introduced  in paper~\cite{HM} and extended to association schemes under the name ``crested product'' in paper~\cite{BC}. A star product is called \emph{nontrivial} if $L\neq \{e\}$ and $U\neq G$ and \emph{trivial} otherwise.

Let $L$ and $U$ be proper nontrivial normal $\mathcal{A}$-subgroups of $G$ and $\mathcal{A}=\mathcal{A}_L\star \mathcal{A}_U$. It follows immediately from the definitions that: 
\begin{enumerate}

\tm{1} if $|L\cap U|=1$, then $\mathcal{A}=\mathcal{A}_L\otimes \mathcal{A}_U$;

\tm{2} if $|L\cap U|>1$, then $\mathcal{A}$ is the nontrivial $L/(L\cap U)$-wreath product.

\end{enumerate}

\section{Duality}

Let $G$ be an abelian group. Denote by $\widehat{G}$ the group dual to~$G$, i.e., the group of all irreducible complex characters of~$G$. It is well known that $\widehat{G}\cong G$ and there is a uniquely determined lattice antiisomorphism between the subgroups of~$G$ and~$\widehat{G}$. The image of a subgroup $H$ of $G$ with respect to this antiisomorphism is denoted by~$H^\bot$.

For any $S$-ring $\mathcal{A}$ over the group~$G$, one can define the dual $S$-ring $\widehat{\mathcal{A}}$ over~$\widehat{G}$ as follows: two irreducible characters of $G$ belong to the same basic set of $\widehat{\mathcal{A}}$ if they have the same value on each basic set of~$\mathcal{A}$ (for the exact definition, we refer the reader to~\cite{EP3,Tam}). One can verify that 
$$\rk(\widehat{\mathcal{A}})=\rk(\mathcal{A})$$
and the $S$-ring dual to $\widehat{\mathcal{A}}$ is equal to~$\mathcal{A}$. The following lemma collects some facts on the dual $S$-ring taken from~\cite[Section~2.3]{EP4}.

\begin{lemm}\label{dual}
Let $G$ be an abelian group and $\mathcal{A}$ an $S$-ring over $G$. Then
\begin{enumerate}

\tm{1} the mapping $\mathcal{H}(\mathcal{A})\to\mathcal{H}(\widehat{\mathcal{A}})$, $H\mapsto H^\bot$ is a lattice antiisomorphism;

\tm{2} $\widehat{\mathcal{A}_H}=\widehat{\mathcal{A}}_{\widehat{G}/H^\bot}$ and $\widehat{\mathcal{A}_{G/H}}=\widehat{\mathcal{A}}_{H^\bot}$ for every $H\in \mathcal{H}(\mathcal{A})$;

\tm{3} $\mathcal{A}=\mathcal{A}_1\otimes\mathcal{A}_2$ if and only if $\widehat{\mathcal{A}}=\widehat{\mathcal{A}_1}\otimes\widehat{\mathcal{A}_2}$;

\tm{4} $\mathcal{A}$ is the $U/L$-wreath product for some $\mathcal{A}$-section $U/L$ if and only if $\widehat{\mathcal{A}}$ is the $L^{\bot}/U^{\bot}$-wreath product.

\tm{5} $\mathcal{A}$ is cyclotomic if and only if so is $\widehat{\mathcal{A}}$.

\end{enumerate}
\end{lemm}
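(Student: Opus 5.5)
The plan is to work directly from the definition of $\widehat{\mathcal{A}}$. Two characters $\chi,\psi\in\widehat{G}$ lie in the same basic set of $\widehat{\mathcal{A}}$ exactly when $\chi(\underline{X})=\psi(\underline{X})$ for all $X\in\mathcal{S}(\mathcal{A})$. I will use the Fourier transform $\mathbb{C}G\to\mathbb{C}^{\widehat{G}}$, which sends $\mathcal{A}\otimes\mathbb{C}$ isomorphically onto the algebra of functions constant on the classes of $\widehat{\mathcal{A}}$. I take as given the standard facts quoted before the lemma: $\rk(\widehat{\mathcal{A}})=\rk(\mathcal{A})$ and $\widehat{\widehat{\mathcal{A}}}=\mathcal{A}$. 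The latter means every implication only needs to be proved in one direction, since the converse follows by applying it to $\widehat{\mathcal{A}}$. Throughout, $H^\bot=\{\chi\in\widehat{G}:\ \chi|_H=1\}$.

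\textbf{Items (1) and (2).} For (1), the basic identity is that for $H\le G$,
$$\chi(\underline{H})=|H|\ \text{if}\ \chi\in H^\bot,\qquad \chi(\underline{H})=0\ \text{otherwise}.$$
If $\underline{H}\in\mathcal{A}$, the function $\chi\mapsto\chi(\underline{H})$ is constant on the classes of $\widehat{\mathcal{A}}$. Hence $H^\bot$ is a union of basic sets, i.e. an $\widehat{\mathcal{A}}$-subgroup. Conversely, if $H^\bot$ is an $\widehat{\mathcal{A}}$-subgroup, double duality gives that $(H^\bot)^\bot=H$ is an $\mathcal{A}$-subgroup. Since $H\mapsto H^\bot$ is already a lattice antiisomorphism on all subgroups, (1) follows.

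For (2), I identify $\widehat{H}$ with $\widehat{G}/H^\bot$ via restriction. For $X\subseteq H$ the value $\chi(\underline{X})$ depends only on $\chi|_H$, so the classes of $\widehat{\mathcal{A}_H}$ are the images of the classes of $\widehat{\mathcal{A}}$ in $\widehat{G}/H^\bot$. Similarly, I identify $\widehat{G/H}$ with $H^\bot$. For $\chi\in H^\bot$ we have $\chi(\underline{X}^\pi)=\chi(\underline{X})/|X\cap Hx|\cdot|X\cap Hx|$, so the values on $\mathcal{A}_{G/H}$ and on $\mathcal{A}$ determine each other, using Lemma~\ref{intersection} for the constant $|X\cap Hx|$. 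This gives $\widehat{\mathcal{A}_{G/H}}=\widehat{\mathcal{A}}_{H^\bot}$.

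\textbf{Items (3) and (5).} For (3), I write $\widehat{G}=\widehat{G_1}\times\widehat{G_2}$ with $G_2^\bot=\widehat{G_1}$. Then $(\chi_1\chi_2)(\underline{X_1\times X_2})=\chi_1(\underline{X_1})\chi_2(\underline{X_2})$. Since $\chi(\underline{\{e\}})=1\neq0$, taking $X_2=\{e\}$ or $X_1=\{e\}$ shows that the classes are exactly the products of classes. For (5), if $\mathcal{A}=\cyc(K,G)$, I let $K$ act on $\widehat{G}$ by $\chi^\sigma(g)=\chi(g^{\sigma^{-1}})$. Then $K$-conjugate characters agree on every $K$-orbit, so each $K$-orbit on $\widehat{G}$ lies in a basic set of $\widehat{\mathcal{A}}$. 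The number of $K$-orbits on $G$ and on $\widehat{G}$ coincide: this is Brauer's permutation lemma, valid because the character table is invertible and $K$-equivariant. Combined with $\rk(\widehat{\mathcal{A}})=\rk(\mathcal{A})$, this forces $\widehat{\mathcal{A}}=\cyc(K,\widehat{G})$. The converse again follows by double duality.

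\textbf{Item (4), the main obstacle.} I expect (4) to be the hardest part, because the wreath condition is about radicals rather than subgroups. The $U/L$-wreath property is equivalent to the identity
$$\underline{X}\cdot\underline{L}=|L|\,\underline{X}\quad\text{for all } X\in\mathcal{S}(\mathcal{A})_{G\setminus U}.$$
In the group algebra this can be restated as $(\underline{G}-\underline{U})\circ$-supported elements of $\mathcal{A}$ being fixed by multiplication by $|L|^{-1}\underline{L}$. After the Fourier transform, multiplication by $|L|^{-1}\underline{L}$ becomes multiplication by the indicator of $L^\bot$. Restricting support to $G\setminus U$ corresponds, under the transform, to subtracting the convolution with the normalized indicator of $U^\bot$. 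I will translate the condition into the statement that every element of $\widehat{\mathcal{A}}$ supported outside $L^\bot$ is invariant under translation by $U^\bot$, i.e. $U^\bot\le\rad(Y)$ for every $Y\in\mathcal{S}(\widehat{\mathcal{A}})$ outside $L^\bot$. This is exactly the $L^\bot/U^\bot$-wreath property. I will check the bookkeeping on the basis elements $\underline{X}$ using the formulas from (1) and (2). The converse direction then follows by double duality.
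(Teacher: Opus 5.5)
The paper gives no proof of this lemma: it quotes all five items from \cite[Section~2.3]{EP4}, taking the definition of $\widehat{\mathcal{A}}$ from \cite{EP3,Tam}. Your self-contained Fourier-transform argument is therefore a genuinely different route, and most of it is sound. You use double duality $\widehat{\widehat{\mathcal{A}}}=\mathcal{A}$ to get every converse for free. The following parts all go through:
\begin{enumerate}
\tm{1} item (1);
\tm{2} the quotient half of (2): for $\chi\in H^\bot$ the value on $\pi(X)$ is $\chi(\underline{X})/|X\cap Hx|$, so the two equivalence relations on $H^\bot$ coincide, and $H^\bot$ is a union of classes by (1);
\tm{3} item (3);
\tm{4} item (5), by Brauer's permutation lemma plus $\rk(\widehat{\mathcal{A}})=\rk(\mathcal{A})$.
\end{enumerate}
Item (4) is only announced, but your dictionary is correct. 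Write $\mathcal{F}(\xi)(\chi)=\chi(\xi)$ and $\xi\circ\underline{U}$ for the restriction of $\xi$ to $U$. Then $\mathcal{F}(\xi\circ\underline{U})(\chi)=|U^\bot|^{-1}\sum_{\theta\in U^\bot}\mathcal{F}(\xi)(\chi\theta)$. So the wreath condition says every class function of $\widehat{\mathcal{A}}$ equals its $U^\bot$-coset average at each point outside $L^\bot$. Testing this on indicators of classes gives exactly $U^\bot\le\rad(Y)$ for all $Y\in\mathcal{S}(\widehat{\mathcal{A}})$ outside $L^\bot$. Compared with the paper's citation, your approach buys a uniform mechanism: the Fourier dictionary plus double duality.

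There is one step that does not follow as written: the subgroup half of (2). From ``$\chi(\underline{X})$ depends only on $\chi|_H$ for $X\subseteq H$'' you only get one direction. Characters in the same class of $\widehat{\mathcal{A}}$ have restrictions in the same class of $\widehat{\mathcal{A}_H}$. So the images in $\widehat{G}/H^\bot$ of the classes of $\widehat{\mathcal{A}}$ refine the partition of $\widehat{\mathcal{A}_H}$. The converse is not justified: two characters of $H$ that agree on every basic set inside $H$ need lifts agreeing on every basic set of $\mathcal{A}$, including those outside $H$.

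You can repair this in your own framework in two ways. First, apply the quotient half, which you did prove, to the $S$-ring $\widehat{\mathcal{A}}$ and its subgroup $H^\bot$. This gives $\widehat{\widehat{\mathcal{A}}_{\widehat{G}/H^\bot}}=\widehat{\widehat{\mathcal{A}}}_{(H^\bot)^\bot}=\mathcal{A}_H$, and dualizing once more yields the claim. Alternatively, $\mathcal{F}$ maps $\mathcal{A}_H\otimes\mathbb{C}$ onto the $H^\bot$-periodic class functions of $\widehat{\mathcal{A}}$. The dimension of that space is the number of distinct images, so this number equals $\rk(\mathcal{A}_H)=\rk(\widehat{\mathcal{A}_H})$, and the refinement becomes an equality.
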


\section{$S$-rings over $H\times C_p$}

Throughout this section, $G=H\times P$, where $H$ is an abelian group and $P\cong C_p$ with prime $p$ coprime to $|H|$, and $\mathcal{A}$ is an $S$-ring over~$G$. Let $H_1$ be a maximal $\mathcal{A}$-subgroup contained in $H$ and $P_1$ the least $\mathcal{A}$-subgroup containing $P$. Note that $H_1P_1$ is an $\mathcal{A}$-subgroup.

\begin{lemm}\cite[Lemma~6.3]{KR}\label{nonpower2}
If $H_1 \neq (H_1P_1)_{p'}$, the Hall $p'$-subgroup of $H_1P_1$, then $\mathcal{A}_{H_1P_1}=\mathcal{A}_{H_1} \star \mathcal{A}_{P_1}$.
\end{lemm}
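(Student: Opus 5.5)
The plan is to verify the two defining conditions of the star product directly for $L=H_1$ and $U=P_1$. Both are normal because $G$ is abelian. Write $P_1=Q\times P$, where $Q=P_1\cap H$ is the Hall $p'$-subgroup of $P_1$. Then $H_1P_1=(H_1Q)\times P$, so its Hall $p'$-subgroup is $H_1Q$, and the hypothesis $H_1\neq (H_1P_1)_{p'}$ says exactly that $Q\not\leq H_1$. Since $H_1Q\leq H$ and $H_1$ is a maximal $\mathcal{A}$-subgroup inside $H$, it follows that $H_1Q$ is \emph{not} an $\mathcal{A}$-subgroup. Likewise $H_1\cap P_1=H_1\cap Q$ is an $\mathcal{A}$-subgroup, and by minimality of $P_1$ no proper $\mathcal{A}$-subgroup of $P_1$ contains $P$.

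The main tool will be Lemma~\ref{sch} with the prime $p$. Since $p\nmid |H|$, the set $\{g\in G:g^p=e\}$ is exactly $P$. For a basic set $X$ we have $|X\cap Px|\le p$, with equality iff $Px\subseteq X$. Hence $X^{[p]}$ is the image under $g\mapsto g^p$ of the part of $X$ that is not a union of $P$-cosets. Because $g\mapsto g^p$ kills $P$ and is bijective on $H$, Lemma~\ref{burn} (applied with $m\equiv p^{-1}\bmod |H|$ and $m\equiv 1\bmod p$) turns this into the following statement: the $H$-projection of the non-$P$-saturated part of $X$ is an $\mathcal{A}$-set contained in $H$. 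Combined with Lemma~\ref{orbit} (with $D=P$), this should give a dichotomy for each basic set $X\subseteq H_1P_1$. Either $P\leq \rad(X)$, or the $H$-projection $X_H$ is an $\mathcal{A}$-set lying in $H$ and each fibre $X_h$ is an orbit of a fixed $K^P\leq\aut(P)$.

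Next I will use this dichotomy to locate the basic sets. Let $X\subseteq H_1P_1\setminus(H_1\cup P_1)$ be basic, so $X_H\subseteq H_1Q$ and $X_H\not\subseteq H_1$. If $X$ were not $P$-saturated, then $\langle X_H\rangle$ would be an $\mathcal{A}$-subgroup of $H$ not contained in $H_1$. Then $\langle H_1, X_H\rangle$ would be an $\mathcal{A}$-subgroup of $H$ strictly larger than $H_1$, contradicting maximality. The same argument on $P_1\setminus H_1$ should control basic sets there. Using Lemma~\ref{separ} and the minimality of $P_1$ (the radicals of basic sets in $P_1$ outside $Q$ generate, together with $P$, an $\mathcal{A}$-subgroup), I expect to show that basic sets in $P_1\setminus H_1$ are unions of $(H_1\cap Q)$-cosets. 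This is condition $(1)$.

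The main obstacle is condition $(2)$: writing each basic $X$ outside $H_1\cup P_1$ as $YZ$ with $Y\in\mathcal{S}(\mathcal{A}_{H_1})$ and $Z\in\mathcal{S}(\mathcal{A}_{P_1})$. I would first take $Y$ to be the basic set of $\mathcal{A}_{H_1}$ meeting the projection of $X$ to $H_1$ modulo $Q$. I would then study the product $\underline{Y}\cdot\underline{Z}$ for suitable $Z\subseteq P_1\setminus H_1$. Using the $P$-saturation just obtained and the $(H_1\cap Q)$-invariance from $(1)$, I want to show that $YZ$ is a single basic set with all structure constants equal to one, comparing cardinalities via Lemma~\ref{intersection}. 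I would try to reduce to the quotient by $H_1\cap Q$, where the situation becomes a tensor-like decomposition of $H_1Q\times P$ modulo that subgroup, and to apply Lemma~\ref{tenspr} there. The delicate point is ruling out a basic set that mixes several $Y$'s or several $Z$'s, and this is where the non-$\mathcal{A}$-subgroup property of $H_1Q$ has to be used once more.
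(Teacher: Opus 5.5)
The paper gives no proof of this lemma; it is quoted from Kov\'acs--Ryabov. Your argument therefore has to stand on its own, and it does not.

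\textbf{The one step you actually carry out is wrong.} Lemma~\ref{sch} with Lemma~\ref{burn} shows only that the $H$-projection of the \emph{non-$P$-saturated part} of a basic set $X$ is an $\mathcal{A}$-set. By maximality of $H_1$ that projection lies in $H_1$, so all you get is that $X\setminus H_1P$ is a union of $P$-cosets. Your dichotomy upgrades this to ``$X_H$ is an $\mathcal{A}$-set'', and then to ``every basic set in $H_1P_1\setminus(H_1\cup P_1)$ is $P$-saturated''. Both upgrades are false. Take $H=A\times B\cong E_4$ with $A=\langle a\rangle$, and $\mathcal{A}=\mathbb{Z}A\otimes\mathcal{T}_{B\times P}$.
\begin{itemize}
\item Here $H_1=A$, $P_1=B\times P$, and the hypothesis $H_1\neq (H_1P_1)_{p'}=H$ holds.
\item The basic set $X=a(B\times P)^\#$ lies outside $H_1\cup P_1$.
\item $X$ contains $aP^\#$ but not $a$, so it is not $P$-saturated.
\item Its projection $X_H=\{a,ab\}$ is not an $\mathcal{A}$-set.
\end{itemize}
So the ``$P$-saturation just obtained'', on which your treatment of condition~(2) rests, is not available.

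\textbf{Conditions (1) and (2) are only announced.} You write ``I expect'' and ``I want to show'', and (2) is the whole content of the lemma. The detour through the quotient by $H_1\cap Q$ and Lemma~\ref{tenspr} cannot carry it. Lemma~\ref{tenspr}(2) gives only $\mathcal{A}\geq \mathcal{A}_{G_1}\otimes\mathcal{A}_{G_2}$. Lifting a product decomposition from the quotient back to $G$ needs the prime $p$ once more.

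\textbf{What is missing.} Put $Q_0=H_1\cap Q$. The steps are these.
\begin{enumerate}
\item Apply \cite[Lemma~6.2]{EKP2} to $\mathcal{A}_{P_1}$. This uses only $Q_0<Q$ and the minimality of $P_1$, so it is not circular. It gives $\mathcal{A}_{P_1}=\mathcal{A}_{Q_0}\wr\mathcal{T}_{P_1/Q_0}$. This proves~(1) and shows that $Z=P_1\setminus Q_0$ is a single basic set.
\item Take a basic $X$ outside $H_1\cup P_1$ and set $R=\{h\in H_1: X\cap hP\neq\varnothing\}$. This is an $\mathcal{A}$-set by the correct Schur step. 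By Lemma~\ref{orbit}, the sizes $|X\cap hP|$ for $h\in R$ share a common value~$b$.
\item For every $\mathcal{A}$-set $R'\subseteq H_1$, the restriction of $\underline{R'^{-1}}\,\underline{X}$ to $P_1$ is a multiple of $\underline{Z}$. Take $R'=R$ and $R'=H_1\setminus R$. This shows that for $q\in Q\setminus Q_0$, the elements $h\in H_1q$ with $hP\subseteq X$ form a subset of $Rq$ of size $b|R|/(p-1)$.
\item The value of $|X\cap zX|$ is constant on $Z$. Compute it by summing over $z\in P$ (the term $z=e$ is handled separately) and over $z\in qP$. Comparing the two forces $b=p-1$.
\item Hence $X=RZ$, and so $X=YZ$ for any basic $Y\subseteq R$.
\end{enumerate}
None of these steps appears in your proposal.
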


\begin{lemm}\cite[Proposition 15]{MS}\label{nonpower3}
If $\mathcal{A}_{(H_1P_1)/H_1}\cong \mathbb{Z}C_p$, then $\mathcal{A}_{H_1P_1}=\mathcal{A}_{H_1} \star \mathcal{A}_{P_1}$. 
\end{lemm}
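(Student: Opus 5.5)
Throughout, put $W=H_1P_1$. The plan is to first pin down the shape of every basic set of $\mathcal{A}_W$ outside $H_1$, and then check the two defining conditions of the star product $\mathcal{A}_{H_1}\star\mathcal{A}_{P_1}$, with $L=H_1$ and $U=P_1$ (both are $\mathcal{A}$-subgroups, and $G$ is abelian).

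\textbf{Step 1: structure of $W$.} The hypothesis $\mathcal{A}_{W/H_1}\cong\mathbb{Z}C_p$ gives $|W:H_1|=p$. Since $H_1\le H$, $P\le P_1$, and $p\nmid |H|$, we get $W=H_1\times P$. Moreover every basic set $X\subseteq W\setminus H_1$ maps to a single element of $W/H_1$, so $X\subseteq H_1a$ for some $a\in P^\#$. Also $P_1=M\times P$ with $M=P_1\cap H_1$.

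\textbf{Step 2: $X=Ta$ with $T$ an $\mathcal{A}$-set.} Let $X\subseteq H_1a$ be basic. Then $X=Ta$, where $T=X_H\subseteq H_1$ is the projection of $X$ to $H$. I would apply Lemma~\ref{sch} with the prime $p$; here the subgroup of elements of order dividing $p$ is $P$. Since $X\subseteq H_1a$, we have $|X\cap Px|=1$ for every $x\in X$, hence $X^{[p]}=\{h^pa^p: h\in T\}=T^{(p)}$ is an $\mathcal{A}$-set. Because $p$ is coprime to $|H_1|$, raising to a power $m$ with $mp\equiv1 \pmod{|H|}$ and using Lemma~\ref{burn} should show that $T$ itself is an $\mathcal{A}$-set. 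This needs some care to stay within $\mathcal{A}$-sets rather than basic sets, perhaps by working with $X^{(m')}$ where $m'\equiv 1 \pmod p$. The same argument applied to a basic set $Y\subseteq H_1$ with $Ya$ in mind suggests that the map $T\mapsto Ta$ identifies the $\mathcal{A}$-sets inside $H_1$ with those inside $H_1a$ "up to a fixed twist".

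\textbf{Step 3: the star product conditions.} Let $Z_0=T_0a$ be the basic set containing $a$. Then $Z_0\subseteq P_1$, so $T_0\subseteq M$. Since $P_1$ is the least $\mathcal{A}$-subgroup containing $a$, we have $P_1=\langle Z_0\rangle$.

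For condition (1) I need $T_0$ to be a union of $M$-cosets, i.e.\ $T_0=M$ (or, for other basic sets in $P_1\setminus H_1$, that $M\le\rad$). I would try to prove $M\le\rad(Z_0)$ by computing $\underline{Z_0}\,\underline{Z_0}^{(-1)}$ and $\underline{Z_0}^{\,p}$ in the group ring. The quotient being $\mathbb{Z}C_p$ means these products decompose in a rigid way. Then I would use Lemma~\ref{separ} on the basic sets of $P_1$ meeting both $M$ and its complement, to force $\rad(Z_0)\ge\langle T_0T_0^{-1}\rangle=M$.

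For condition (2), take a basic $X=Ta\not\subseteq P_1$. I would show $X=YZ_0'$, where $Y\in\mathcal{S}(\mathcal{A}_{H_1})$ and $Z_0'\subseteq P_1a$ is basic. Given $M\le\rad(Z_0')$, this reduces to showing that $T$ is a union of $M$-cosets containing a single basic set of $\mathcal{A}_{H_1}$ modulo $M$. I would get this from the structure constants of $\underline{Y}\cdot\underline{Z_0'}$, which by \eqref{inrad0} equal $|M|$ times a single basic set.

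\textbf{Main obstacle.} I expect the crux to be proving $M\le\rad(Z_0)$, i.e.\ that the basic sets in $P_1\setminus H_1$ are full $M$-cosets. The rest of the argument is multiplier bookkeeping once that radical statement is in hand.
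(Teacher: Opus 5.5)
The paper takes this lemma from [MS, Proposition~15] without proof, so your plan has to close on its own, and it does not. Steps~1 and~2 are correct; Step~2, that $T$ is an $\mathcal{A}$-set, is in fact a necessary ingredient. The content of the lemma, however, is the claim that $M$ lies in $\rad(T)$ for \emph{every} basic set $Ta\subseteq H_1a$. For $T=T_0$ this is condition~(1). For the other $T$ it is exactly what condition~(2) requires, so it is not bookkeeping. The tools you propose cannot deliver it:
\begin{enumerate}
\tm{i} The products $\underline{Z_0}\cdot\underline{Z_0^{-1}}=\underline{T_0}\,\underline{T_0^{-1}}$ and $\underline{Z_0}^{\,p}=\underline{T_0}^{\,p}$ lie in $\mathbb{Z}H_1$, where $\{e\}$ is itself a basic set. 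Once $T_0$ is an $\mathcal{A}$-set they are in $\mathcal{A}$ automatically, and they impose no relation between $e$ and $T_0\setminus\{e\}$.
\tm{ii} Lemma~\ref{separ} has nothing to act on. $M=P_1\cap H_1$ is an $\mathcal{A}$-subgroup and every basic set of $\mathcal{A}_W$ lies in one $H_1$-coset. So each basic set of $\mathcal{A}_{P_1}$ lies in $M$ or in a single coset $Ma'$, and none meets both $M$ and its complement.
\tm{iii} In condition~(2), Eq.~\eqref{inrad0} does not apply to $\underline{Y}\cdot\underline{Ma}$, because in general $Y\not\subseteq\rad(Ma)=M$. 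You only get $\underline{Y}\cdot\underline{Ma}=\mu\,\underline{YMa}$ with $\mu=|Y\cap yM|$, and that $YMa$ is a single basic set is precisely what must be shown.
\tm{iv} Your aside that $T\mapsto Ta$ matches the $\mathcal{A}$-sets in $H_1$ with those in $H_1a$ is false. $\{e\}$ is an $\mathcal{A}$-set, but $\{a\}$ is not when $M\neq\{e\}$.
\end{enumerate}

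The missing idea is to multiply so that the product lands in the coset $H_1a$, where the position of $e$ is occupied by the basic set $T_0a$ rather than by a singleton. Let $Ta\subseteq H_1a$ be basic. By Step~2, $T^{-1}$ is an $\mathcal{A}$-set, so $\underline{T^{-1}}\cdot\underline{Ta}=(\underline{T^{-1}}\,\underline{T})a$ lies in $\mathcal{A}$ and is supported on the $\mathcal{A}$-set $H_1a$. Hence its coefficients are constant on $T_0a$. The coefficient at $a$ is $|T|$, the largest possible, and the coefficient at $ma$ is $|T\cap Tm|$. Therefore $T_0\subseteq\rad(T)$ for every such $T$.

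Taking $T=T_0\ni e$ gives $T_0=\rad(T_0)$, so $T_0$ is a subgroup. Then $P_1=\langle T_0a\rangle=T_0\times P$, hence $T_0=M$. By Lemma~\ref{burn} with $m\equiv 1\pmod{|H|}$, every $Ma'$ with $a'\in P^\#$ is basic, which is condition~(1).

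For condition~(2), let $Ta\nsubseteq P_1$, so $M\le\rad(T)$ by the above. Take a basic set $Y\subseteq T$ of $\mathcal{A}_{H_1}$. The support $YMa$ of $\underline{Y}\cdot\underline{Ma}\in\mathcal{A}$ is a union of basic sets and contains $Ya\subseteq Ta$, so $Ta\subseteq YMa$. On the other hand $YM\subseteq TM=T$. Thus $Ta=Y\cdot Ma$ with $Y\in\mathcal{S}(\mathcal{A}_{H_1})$ and $Ma\in\mathcal{S}(\mathcal{A}_{P_1})$.
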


\begin{lemm}\label{nonpowernew1}
Suppose that $H_1<H$. Then one of the following statements holds:
\begin{enumerate}
\tm{1} $H_1P_1=G$, $P_1\lneq G$, and $\mathcal{A}=\mathcal{A}_{H_1} \star \mathcal{A}_{P_1}$;

\tm{2} $\mathcal{A}=\mathcal{A}_{H_1}\wr \mathcal{A}_{G/H_1}$ with $\mathcal{A}_{G/H_1}=\mathcal{T}_{G/H_1}$;

\tm{3} $\mathcal{A}$ is the nontrivial $(H_1P_1)/P_1$-wreath product.
\end{enumerate}
\end{lemm}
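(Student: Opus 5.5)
Throughout, $G=H\times P$ with $P\cong C_p$, $p\nmid|H|$, and $H_1<H$ is the maximal $\mathcal{A}$-subgroup contained in $H$. The natural split is by the relative position of $H_1P_1$ and $G$. Note that $P_1\neq\{e\}$ and that $P_1\cap H$ is an $\mathcal{A}$-subgroup contained in $H$, so $P_1\cap H\le H_1$.

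\textbf{Case $H_1P_1=G$.} Since $H_1<H$, the Hall $p'$-subgroup of $G$ is $H\neq H_1$. Lemma~\ref{nonpower2} then gives $\mathcal{A}=\mathcal{A}_{H_1}\star\mathcal{A}_{P_1}$. If $P_1\neq G$, this is statement~(1). If $P_1=G$, the definition of the star product gives that every basic set in $G\setminus H_1$ is a union of $H_1$-cosets, since $H_1\cap P_1=H_1$. Hence $\mathcal{A}=\mathcal{A}_{H_1}\wr\mathcal{A}_{G/H_1}$. It remains to show that $\mathcal{A}_{G/H_1}$ is trivial. By maximality of $H_1$, the $S$-ring $\mathcal{A}_{G/H_1}$ has no nontrivial $\mathcal{A}$-subgroup inside $H/H_1$. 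By minimality of $P_1=G$, every $\mathcal{A}_{G/H_1}$-subgroup containing $PH_1/H_1$ is the whole of $G/H_1$. An $\mathcal{A}_{G/H_1}$-subgroup of the abelian group $G/H_1=(H/H_1)\times(PH_1/H_1)$ either meets the $p$-part trivially, and then lies in $H/H_1$, or contains $PH_1/H_1$. So $\mathcal{A}_{G/H_1}$ is primitive. I would then use Lemma~\ref{burn} and the structure of $G/H_1$, which has a Sylow $p$-subgroup of prime order, to conclude that it equals $\mathcal{T}_{G/H_1}$ (Schur--Wielandt: a primitive $S$-ring over such a group is trivial). This gives statement~(2).

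\textbf{Case $H_1P_1<G$.} I aim for statement~(3): every basic set $X\not\subseteq H_1P_1$ should satisfy $P_1\le\rad(X)$. The section $(H_1P_1)/P_1$ is nontrivial because $H_1\not\le P_1$, or else it degenerates to an ordinary wreath product with $U=L=P_1$, which is also nontrivial. The tool is Lemma~\ref{sch} with the prime $p$. The subgroup of $p$-torsion is $P$. For $X$ outside $H_1P_1$, I consider $X^{[p]}$ and the intersections $X\cap Px$. By Lemma~\ref{intersection} these sizes are constant. If they are not divisible by $p$, i.e. not all of $Px$, then $X^{[p]}\subseteq H$ is a nonempty $\mathcal{A}$-set; it is nonempty because $X\cap Px$ is nonempty of size $<p$. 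Then $\langle X^{[p]}\rangle$ is an $\mathcal{A}$-subgroup of $H$, hence lies in $H_1$. I would then argue that the projection of $X$ to $H$ lies in $H_1$, using that $x\mapsto x^p$ is a bijection on $H$. Together with Lemma~\ref{orbit}, which controls the $P$-parts, this forces $X\subseteq H_1P$. Then $X\subseteq H_1P_1$ (note $H_1P\le H_1P_1$), a contradiction. Therefore $P\le\rad(X)$, so $\rad(X)$ contains the $\mathcal{A}$-subgroup $P_1$, being an $\mathcal{A}$-subgroup containing $P$. This yields~(3).

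\textbf{Main obstacle.} I expect two steps to be the hardest. The first is the deduction that the projection of $X$ to $H$ lies in $H_1$ when $X^{[p]}\subseteq H_1$; this may need an auxiliary $\mathcal{A}$-set such as $X\underline{P_1}$, or Lemma~\ref{separ}. The second is the primitivity argument in statement~(2); if it fails, I would try Lemma~\ref{nonpower3} to handle the case $\mathcal{A}_{G/H_1}\cong\mathbb{Z}C_p$-like quotients separately.
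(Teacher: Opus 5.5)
Your case split and overall plan work, and the route differs from the paper's. However, one step in your case $H_1P_1<G$ is wrongly justified.

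\textbf{The flawed step.} Lemma~\ref{intersection} applies only to $\mathcal{A}$-subgroups, and $P$ is in general not one. So the numbers $|X\cap Px|$, $x\in X$, are not known to be constant. For example, for $\mathcal{A}=\mathcal{T}_G$ and $X=G^\#$ they equal $p-1$ on $P^\#$ and $p$ elsewhere. Your dichotomy (all $|X\cap Px|<p$, or all equal to $p$) therefore has no basis as written.

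\textbf{The repair.} A single point suffices, and it removes both of your ``main obstacles''. Suppose some $x=hy\in X$ (with $h\in H$, $y\in P$) has $Px\not\subseteq X$. Then $x^p=h^p\in X^{[p]}\subseteq H$, so $h^p\in\langle X^{[p]}\rangle\le H_1$. Hence $h\in H_1$, because $h\mapsto h^p$ is an automorphism of $H$ mapping $H_1$ onto itself. Thus $x\in H_1P\le H_1P_1$. Since $H_1P_1$ is an $\mathcal{A}$-subgroup and $X$ is basic, $X\subseteq H_1P_1$.

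Consequently every basic set outside $H_1P_1$ satisfies $P\le\rad(X)$, and hence $P_1\le\rad(X)$. No constancy, no control of the projection of all of $X$, and no Lemma~\ref{orbit} is needed.

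\textbf{A false remark to drop.} The claim that $P_1\cap H$ is an $\mathcal{A}$-subgroup lying in $H_1$ is false. Take $\mathcal{T}_G$ with $H\cong C_2$: then $H_1=\{e\}$ and $P_1=G$. If the claim were true, it would give $H_1P_1=H_1P$, so your first case could never occur.

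\textbf{The case $P_1=G$ is already complete.} $\mathcal{A}_{G/H_1}$ is primitive, $|G/H_1|$ is composite because $H_1<H$, and its Sylow $p$-subgroup is cyclic. So Lemma~\ref{primitive} gives $\mathcal{A}_{G/H_1}=\mathcal{T}_{G/H_1}$, and Lemma~\ref{nonpower3} is not needed.

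\textbf{Comparison with the paper.} The paper quotes \cite[Lemma~6.2]{EKP2}, which directly gives ``Statement~(2), or the $(H_1P_1)/P_1$-wreath product with $P_1<G$''. It then uses Lemma~\ref{nonpower2} only to turn the trivial case $H_1P_1=G$ into Statement~(1). Once repaired, your argument reproves that external dichotomy in this setting using only Lemma~\ref{sch}, Lemma~\ref{nonpower2} and Lemma~\ref{primitive}. It is longer but self-contained, and it shows explicitly why $P_1$ lies in the radical of every basic set outside $H_1P_1$. The paper's version is two lines but leaves the substance in \cite{EKP2}.
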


\begin{proof}
From~\cite[Lemma~6.2]{EKP2} it follows that Statement~$(2)$ of the lemma holds or $\mathcal{A}$ is the $(H_1P_1)/P_1$-wreath product with $P_1<G$. If the above wreath product is nontrivial, then Statement~$(3)$ of the lemma holds, whereas if it is trivial, i.e. if $H_1P_1=G$, then $H_1 \neq (H_1P_1)_{p'}=G_{p'}=H$. So $\mathcal{A}=\mathcal{A}_{H_1P_1}=\mathcal{A}_{H_1} \star \mathcal{A}_{P_1}$ by Lemma~\ref{nonpower2} and Statement~$(1)$ of the lemma holds. 
\end{proof}

\begin{lemm}\label{nonpowernew2}
If $p=2$, then $\mathcal{A}_{H_1P_1}=\mathcal{A}_{H_1} \star \mathcal{A}_{P_1}$.
\end{lemm}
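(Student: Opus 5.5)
The plan is to reduce to Lemma~\ref{nonpower3}: when $p=2$, I will show that the quotient $S$-ring $\mathcal{A}_{(H_1P_1)/H_1}$ is forced to be $\mathbb{Z}C_2$. Lemma~\ref{nonpower3} then gives $\mathcal{A}_{H_1P_1}=\mathcal{A}_{H_1}\star\mathcal{A}_{P_1}$ at once.

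First I determine the quotient group $(H_1P_1)/H_1$. Since $P\leq P_1$ and $H_1\leq H$ with $|H|$ odd, the Sylow $2$-subgroup of $H_1P_1$ is $P\cong C_2$. Hence $H_1P_1=(H_1P_1)_{2'}\times P$, where $(H_1P_1)_{2'}=H_1P_1\cap H$ is the Hall $2'$-subgroup.

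\emph{Case $H_1=(H_1P_1)_{2'}$.} Then $H_1P_1=H_1\times P$ and $(H_1P_1)/H_1\cong C_2$. Every $S$-ring over a group of order $2$ equals the group ring, since the only partition of $C_2$ with $\{e\}$ as a class is $\{\{e\},\{g\}\}$. So $\mathcal{A}_{(H_1P_1)/H_1}\cong\mathbb{Z}C_2$, and Lemma~\ref{nonpower3} applies.

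\emph{Case $H_1\neq(H_1P_1)_{2'}$.} Here Lemma~\ref{nonpower2} applies directly with $p=2$ and gives the same conclusion. So in both cases the statement follows. Formally, the first case could also be handled by Lemma~\ref{nonpower2} whenever its hypothesis holds, so the case split is exactly along that hypothesis.

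The only points to check carefully are these. One is that the quotient of $H_1\times P$ by $H_1$ has order $2$, which is immediate from the decomposition above. The other is that Lemma~\ref{nonpower3} is stated for a general prime $p$ coprime to $|H|$, which it is, since the section's standing hypotheses are all that it uses. I do not expect any real obstacle. The main subtlety is making sure that in the first case $H_1P_1$ genuinely contains $P$ as a direct factor complementing $H_1$; this holds because $P\leq P_1$ and $\gcd(|H_1|,2)=1$.
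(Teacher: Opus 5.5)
Your proof is correct and is essentially the paper's own argument: split on whether $H_1=(H_1P_1)_{2'}$, apply Lemma~\ref{nonpower2} when this fails, and apply Lemma~\ref{nonpower3} when it holds, since then $(H_1P_1)/H_1\cong C_2$ forces $\mathcal{A}_{(H_1P_1)/H_1}\cong\mathbb{Z}C_2$. Your opening sentence overstates the plan, because the quotient is $\mathbb{Z}C_2$ only in the case $H_1=(H_1P_1)_{2'}$, but the case split you actually carry out handles this correctly.
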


\begin{proof}
If $H_1 \neq (H_1P_1)_{p'}$, then the lemma follows from Lemma~\ref{nonpower2}. If $H_1=(H_1P_1)_{p'}$, then $(H_1P_1)/H_1\cong C_2$ and hence $\mathcal{A}_{(H_1P_1)/H_1}\cong \mathbb{Z}C_2$. So in this case, the lemma follows from Lemma~\ref{nonpower3}.
\end{proof}

\section{$S$-rings over cyclic groups}

In this section, we provide several results on $S$-rings over cyclic groups, required for the proofs of the main results. An $S$-ring over a cyclic group is said to be \emph{circulant}.

\subsection{General results}

\begin{lemm}\cite[Theorem~25.4]{Wi}\label{primitive}
If $G$ is an abelian group of composite order having a cyclic Sylow subgroup, then $\mathcal{T}_G$ is the only primitive $S$-ring over $G$. 
\end{lemm}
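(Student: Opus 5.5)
This is Wielandt's theorem (Theorem 25.4 in Wielandt's book), so I would follow the classical Schur--Wielandt route. Let $G$ be abelian of composite order $n$, let $P$ be a cyclic Sylow $p$-subgroup of order $p^a$, and let $\mathcal{A}$ be a primitive $S$-ring over $G$. The goal is to show that $\mathcal{A}=\mathcal{T}_G$, i.e. that $G^\#$ is a single basic set. The tools are the two multiplier theorems, Lemma~\ref{burn} and Lemma~\ref{sch}, together with the fact that $\langle X\rangle$ and $\rad(X)$ are $\mathcal{A}$-subgroups for every $\mathcal{A}$-set $X$. Primitivity then says that each of these is either $\{e\}$ or $G$.

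\emph{Step 1 (rational closure).} By Lemma~\ref{burn}, every basic set $X$ is a union of orbits of the group $\{\sigma_m\}$, $(m,n)=1$. In particular, $X$ is a union of sets of elements of a given order, each such set being taken in full within the relevant rational classes.

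\emph{Step 2 (Schur's $p$-power argument).} Let $H=\{g: g^p=e\}$. Since $P$ is cyclic, $H\cap P$ has order $p$. Take a basic set $X\neq\{e\}$ and apply Lemma~\ref{sch} to $X$ and to suitable products $\underline{X}\cdot\underline{Y}$. The point is that in $\mathbb{Z}G/p\mathbb{Z}G$ we have $\underline{X}^p\equiv\underline{X^{(p)}}$ up to contributions that are multiples of $p$. Iterating this, I would show that the set of elements of $X$ whose $p$-part is nontrivial produces an $\mathcal{A}$-set. This gives a decomposition that is compatible with $G=P\times G_{p'}$, and one obtains an $\mathcal{A}$-set $T$ with $\rad(T)\supseteq$ a nontrivial subgroup, or with $\langle T\rangle$ proper. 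Primitivity forces each such $T$ to be trivial in the appropriate sense.

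\emph{Step 3 (separation and conclusion).} Next I would show that each basic set $X$ is a union of cosets of the order-$p$ subgroup $\Omega_1(P)$ minus that subgroup, or else contains elements of every order. Combining this with Lemma~\ref{separ} and primitivity gives $\rad(X)\in\{\{e\},G\}$. Composite order rules out $X=\langle X\rangle\setminus\rad(X)$ with $\rad(X)=\{e\}$ unless $X=G^\#$. Hence $G^\#$ is a single basic set.

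The main obstacle I expect is Step~2: carrying out Schur's mod-$p$ computation with the cyclic Sylow subgroup so that the count $|X\cap Hx|$ in Lemma~\ref{sch} is not divisible by $p$ for the relevant $x$. I would handle this exactly as Wielandt does, by passing to a suitable basic set chosen so that the nondivisibility condition holds. Since the statement is classical, in the paper it is simply cited from~\cite{Wi}.
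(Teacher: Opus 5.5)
The paper gives no argument for this lemma; it only cites \cite[Theorem~25.4]{Wi}. Your proposal therefore has to stand on its own, and as written it does not. The decisive step is left to ``exactly as Wielandt does'', and several of the steps you do state are wrong.

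Step~1 does not follow from Lemma~\ref{burn}. That lemma says each $\sigma_m$ permutes the basic sets, not that it fixes them (in $\mathbb{Z}G$ the basic sets are singletons), so drop Step~1.

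In Step~2 the $\mathcal{A}$-set given by Lemma~\ref{sch} is $X^{[p]}$: the set of $p$-th powers of those $x\in X$ with $|X\cap\Omega x|\not\equiv 0\pmod p$, where $\Omega=\{g:g^p=e\}$. It is not ``the elements of $X$ with nontrivial $p$-part'', and no iteration or auxiliary products $\underline{X}\cdot\underline{Y}$ are needed.

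The idea you are missing is that $X^{[p]}$ consists of $p$-th powers. Hence the $\mathcal{A}$-subgroup $\langle X^{[p]}\rangle$ lies in $G^p=\{g^p:g\in G\}$, which has index $|\Omega|>1$. Primitivity therefore forces $X^{[p]}\subseteq\{e\}$.

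What you call the main obstacle is not one. Since the Sylow $p$-subgroup is cyclic, $|\Omega|=p$, so for $x\in X$ the condition $|X\cap\Omega x|\not\equiv 0\pmod p$ just means $\Omega x\not\subseteq X$. If this failed for every $x\in X$, then $\Omega\le\rad(X)$, so $\rad(X)=G$ by primitivity, which is impossible for a basic set $X\neq\{e\}$. Hence $X^{[p]}\neq\varnothing$ for every such $X$, and no special choice of basic set is needed.

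Step~3 must also be replaced: its dichotomy is neither justified nor the right statement. Since $X^{[p]}$ is nonempty and contained in $\{e\}$, every basic set $X\neq\{e\}$ has the property that each $x\in X$ with $\Omega x\not\subseteq X$ lies in $\Omega$. That is, $X\cap\Omega\neq\varnothing$ and $\Omega\le\rad(X\setminus\Omega)$.

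Composite order is needed exactly here: it gives a basic set $X\neq\{e\}$ with $X\not\subseteq\Omega$, since otherwise $G^\#\subseteq\Omega$ and $|G|=p$. For that $X$, apply Lemma~\ref{separ} with the subgroup $\Omega$, which need not be an $\mathcal{A}$-subgroup. It gives $\rad(X)\le\Omega<G$ and $X=\langle X\rangle\setminus\rad(X)$. Primitivity then yields $\rad(X)=\{e\}$ and $\langle X\rangle=G$, so $X=G^\#$ and $\mathcal{A}=\mathcal{T}_G$.

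In your version, composite order is invoked after Lemma~\ref{separ}, where it plays no role: once $\rad(X)=\{e\}$ and $\langle X\rangle=G$, the equality $X=G^\#$ is automatic. Likewise, ``$\rad(X)\in\{\{e\},G\}$'' needs no lemma, since it is immediate from primitivity.
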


The description of $S$-rings over a cyclic group was obtained in papers~\cite{LM1,LM2}. Below, we provide this description in a convenient for us form taken from~\cite[Theorem~4.1]{EP2}. If $\mathcal{A}$ is an $S$-ring over a cyclic group $G$, then put $\rad(\mathcal{A})=\rad(X)$, where $X$ is a basic set of $\mathcal{A}$ containing a generator of $G$. Due to Lemma~\ref{burn}, all basic sets of $\mathcal{A}$, containing a generator of $G$, are rationally conjugate and hence have the same radical. Therefore $\rad(\mathcal{A})$ does not depend on the choice of $X$.

\begin{lemm}\label{leungman}
Let $\mathcal{A}$ be an $S$-ring over a cyclic group. Then the following statements hold:
\begin{enumerate}

\tm{1} $|\rad(\mathcal{A})|=1$ if and only if $\mathcal{A}$  is a tensor product (possibly, trivial) of a normal cyclotomic $S$-ring with trivial radical and trivial $S$-rings;

\tm{2} $|\rad(\mathcal{A})|>1$ if and only if $\mathcal{A}$  is a nontrivial generalized wreath product.
\end{enumerate} 
\end{lemm}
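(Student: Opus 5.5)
The statement is a reformulation of the Leung--Man classification of circulant $S$-rings~\cite{LM1,LM2}, in the form of~\cite[Theorem~4.1]{EP2}. I will take as known the core Leung--Man theorem: every $S$-ring over a cyclic group is trivial, cyclotomic, a nontrivial tensor product, or a nontrivial generalized wreath product. Everything else is derived from this, from Lemma~\ref{burn}, and from elementary properties of radicals. Throughout, $G$ is cyclic and $X\in\mathcal{S}(\mathcal{A})$ contains a generator of $G$.

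\emph{Statement~(2), the easy direction.} Suppose $\mathcal{A}$ is a nontrivial $U/L$-wreath product. Since $U<G$, the set $X$ is not contained in $U$. By the definition of a generalized wreath product, $L\leq\rad(X)$, and $L\neq\{e\}$. Hence $|\rad(\mathcal{A})|>1$.

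\emph{Statement~(2), the converse.} Suppose $|\rad(\mathcal{A})|>1$, and put $L=\rad(X)$. By Lemma~\ref{burn}, every basic set containing a generator is rationally conjugate to $X$, so it also has radical $L$. Let $U$ be the subgroup generated by all basic sets $Y$ with $L\not\leq\rad(Y)$. This is an $\mathcal{A}$-subgroup, and no generator of $G$ lies in it, so $U<G$. Next I take a basic set $Y\subseteq G\setminus U$. By construction of $U$, it satisfies $L\le \rad(Y)$. Thus $\mathcal{A}$ is the $U/L$-wreath product, and it is nontrivial. I also have to check $L\le U$. If not, I would replace $U$ by $UL$ and verify that this is still proper, using that $L\le\rad(X)$ and $X$ generates $G$. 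This step needs a little care, and the Leung--Man induction on subgroups can be used here if required.

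\emph{Statement~(1).} Suppose $|\rad(\mathcal{A})|=1$. By (2), $\mathcal{A}$ is indecomposable. By the core theorem, $\mathcal{A}$ is then trivial, cyclotomic, or a nontrivial tensor product $\mathcal{A}_{G_1}\otimes\mathcal{A}_{G_2}$ with $\gcd(|G_1|,|G_2|)=1$.
\begin{enumerate}
\tm{a} In the tensor case, a generator of $G$ is a product of generators of $G_1$ and $G_2$, and the radical of $X_1\times X_2$ equals $\rad(X_1)\times\rad(X_2)$. So both factors have trivial radical, and I induct on $|G|$.
\tm{b} A trivial $\mathcal{T}_{G}$ has trivial radical only when $|G|>2$. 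It is primitive, so by Lemma~\ref{primitive} such factors occur as $\mathcal{T}$ on the corresponding subgroups.
\tm{c} In the cyclotomic case with trivial radical I need normality. For this I invoke the Evdokimov--Ponomarenko fact that a cyclotomic circulant $S$-ring with trivial radical is normal.
\tm{d} Finally, I merge all the normal cyclotomic tensor factors into one. A tensor product of cyclotomic rings is cyclotomic by Eq.~\eqref{cycltens}, and it is normal by Eq.~\eqref{auttens}, since $\aut(G)$ decomposes over the coprime factors.
\end{enumerate}
For the converse of (1), the radical of a basic set of a tensor product is the product of the radicals. A normal cyclotomic factor with trivial radical and a factor $\mathcal{T}_{C_m}$ with $m>2$ (where $\rad(C_m^\#)=\{e\}$) both contribute trivial radicals, so $|\rad(\mathcal{A})|=1$.

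The main obstacle is the core structure theorem itself, namely that an indecomposable circulant $S$-ring is trivial, cyclotomic, or a tensor product, together with the normality of cyclotomic rings with trivial radical. These are deep results of~\cite{LM1,LM2} and~\cite{EP2}, and I would cite them rather than reprove them.
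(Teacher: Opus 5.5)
\textbf{Main gap: the converse of (2).} You let $U$ be the subgroup generated by the basic sets $Y$ with $L\not\leq\rad(Y)$, and you conclude $U<G$ because ``no generator of $G$ lies in it''. What you actually know is weaker: none of these $Y$ contains a generator. The subgroup they generate can still be $G$, because when $|G|$ is not a prime power, non-generators generate $G$.

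With $L=\rad(X)$ the construction really fails. Take $|G|=p^2q^2$ and $\mathcal{A}=\cyc(K,G)$ with $K=\{\sigma_m:~m\equiv 1\pmod{pq}\}$.
\begin{enumerate}
\tm{i} The basic set of a generator $g$ is the coset $g\langle g^{pq}\rangle$, so $|\rad(\mathcal{A})|=pq$.
\tm{ii} The basic sets of $g^q$ and $g^p$ are cosets of the subgroups of order $p$ and $q$, respectively. Their radicals do not contain $L$, and together they generate $G$.
\end{enumerate}
So your $U$ is all of $G$. Yet $\mathcal{A}$ is a nontrivial wreath product, with the subgroup of order $p$ in place of $L$ and the subgroup of index $p$ in place of $U$. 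The point you flag instead, $L\leq U$, is automatic: the basic sets inside $L$ are among the generating ones.

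You can repair this inside your framework. Take $L$ of prime order $r$ in $\rad(X)$ and run through the cases of the core theorem.
\begin{enumerate}
\tm{a} Rank~$2$ rings have trivial radical, so they do not occur.
\tm{b} Suppose $\mathcal{A}=\mathcal{A}_{G_1}\otimes\mathcal{A}_{G_2}$ nontrivially with, say, $|\rad(\mathcal{A}_{G_1})|>1$. By induction $\mathcal{A}_{G_1}$ is a nontrivial $U_1/L_1$-wreath product, and then $\mathcal{A}$ is the nontrivial $(U_1\times G_2)/L_1$-wreath product.
\tm{c} Suppose $\mathcal{A}=\cyc(K,G)$. Since $gL\subseteq X=g^K$ and $X$ consists of generators, $r^2$ divides $|G|$ and $\sigma_m\in K$ for all $m\equiv 1\pmod{|G|/r}$. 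Hence the orbit of every element whose order has full $r$-part is a union of $L$-cosets. So $\mathcal{A}$ is the nontrivial $U/L$-wreath product, with $U$ the subgroup of index $r$.
\end{enumerate}

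\textbf{Second issue: step (c) of (1).} The fact you invoke there is false as stated. $\mathcal{T}_{C_5}=\cyc(\aut(C_5),C_5)$ is cyclotomic with trivial radical, but its automorphism group $\sym(5)$ is not contained in $\Hol(C_5)$. What your case analysis actually needs is narrower: a cyclotomic circulant $S$-ring with trivial radical that is neither of rank~$2$ nor a nontrivial tensor product is normal. That is essentially the substantive content of Statement~(1) as proved by Evdokimov and Ponomarenko. So in (1) you are citing the result rather than deriving it.

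That alone is no worse than the paper, which gives no proof and simply quotes the lemma from~\cite[Theorem~4.1]{EP2}, built on Leung--Man~\cite{LM1,LM2}. But it means your derivation of the converse of (2) is the real content of your write-up, and that is the part that does not work as written.

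A minor point: $\mathcal{T}_{C_2}=\mathbb{Z}C_2$ also has trivial radical, so the claim in (b) that this happens ``only when $|G|>2$'' is wrong, though harmless.
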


\begin{lemm}\cite[Theorem~4.2(1)]{EP2}\label{cyclnormschur}
Every normal $S$-ring over a cyclic group is cyclotomic and, in particular, schurian. 
\end{lemm}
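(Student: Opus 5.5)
The plan is to combine the Leung--Man description in Lemma~\ref{leungman} with the fact that normality is inherited by tensor factors. We argue in three steps: first rule out a nontrivial radical, then show that every trivial tensor factor is tiny, and finally assemble a cyclotomic description.

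\emph{Step 1: a normal $S$-ring over a cyclic group $G$ has trivial radical.} Suppose $|\rad(\mathcal{A})|>1$. By Lemma~\ref{leungman}(2), $\mathcal{A}$ is a nontrivial $U/L$-wreath product, so $L\neq\{e\}$ and $U\neq G$. Fix $l\in L^\#$ and let $f\in\sym(G)$ be the identity on $U$ and right translation by $l$ on $G\setminus U$. We check $f\in\aut(\mathcal{A})$ by looking at a pair $(g,h)$ with $hg^{-1}$ in a basic set $X$:
\begin{enumerate}
\tm{a} if $g,h\in U$, nothing changes;
\tm{b} if $g,h\notin U$, the quotient $hg^{-1}$ is unchanged because $G$ is abelian;
\tm{c} if exactly one of $g,h$ lies in $U$, then $hg^{-1}\notin U$ and it is multiplied by $l^{\pm1}$, while $X\subseteq G\setminus U$ is a union of $L$-cosets.
\end{enumerate}
Thus $f\in\aut(\mathcal{A})_e$. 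If $\mathcal{A}$ were normal, $f$ would lie in $\Hol(G)_e=\aut(G)$, so $f=\sigma_m$ for some $m$. Let $g$ be a generator of $G$. Then $g^{km}=g^kl$ for every $k$ with $g^k\notin U$, which gives $l^{k-1}=e$ for all such $k$. If $[G:U]>2$, take $k=2$ to get $l=e$, a contradiction.

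The case $[G:U]=2$ is the expected main obstacle, since there one only gets $l^2=e$. There I would use a finer permutation: translate by $l$ only on one block, namely one $L$-coset structure inside $G\setminus U$ compatible with the basic sets. Alternatively, I would pass to the $G/L$ quotient and apply the same trick there to force a contradiction with $\sigma_m$ being multiplicative.

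\emph{Step 2: reduce to small trivial factors.} With $\rad(\mathcal{A})$ trivial, Lemma~\ref{leungman}(1) gives
$$\mathcal{A}=\mathcal{A}_0\otimes\mathcal{T}_{G_1}\otimes\dots\otimes\mathcal{T}_{G_s},$$
where $\mathcal{A}_0$ is a normal cyclotomic $S$-ring. By Eq.~\eqref{auttens}, $\aut(\mathcal{A})$ is the direct product of the automorphism groups of the factors. Hence $G_r\trianglelefteq\aut(\mathcal{A})$ forces $(G_i)_r\trianglelefteq\aut(\mathcal{T}_{G_i})=\sym(G_i)$ for each $i$. A regular cyclic subgroup is normal in $\sym(G_i)$ only when $|G_i|\le 3$. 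In those cases $\mathcal{T}_{G_i}=\cyc(\aut(G_i),G_i)$, which is cyclotomic.

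\emph{Step 3: conclude.} By Eq.~\eqref{cycltens}, a tensor product of cyclotomic $S$-rings is cyclotomic. So $\mathcal{A}$ is cyclotomic, and hence schurian, as noted in Subsection~2.3.
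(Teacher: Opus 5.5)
Your proof has a genuine gap in Step~1. The problem is not only that the case $[G:U]=2$ is harder: the claim that a normal circulant $S$-ring has trivial radical is false. So neither a finer permutation nor passing to $G/L$ can give a contradiction.

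Two counterexamples:
\begin{enumerate}
\tm{1} Take $G=\langle a\rangle\cong C_4$ and $\mathcal{A}=\cyc(\aut(G),G)$, with basic sets $\{e\},\{a^2\},\{a,a^3\}$. Every automorphism preserves the matching $\{\{e,a^2\},\{a,a^3\}\}$, so $|\aut(\mathcal{A})|\le 8=|\Hol(G)|$. Since $\mathcal{A}$ is cyclotomic, $\Hol(G)\le\aut(\mathcal{A})$, hence $\aut(\mathcal{A})=\Hol(G)$ and $\mathcal{A}$ is normal. Yet $\rad(\mathcal{A})=\{e,a^2\}$. Your $f$ here is just $\sigma_{-1}$.
\tm{2} For an example with $U\neq L$, take $G=\langle a\rangle\cong C_8$ and $\mathcal{A}=\cyc(\langle\sigma_5\rangle,G)$. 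Its basic sets are $\{e\},\{a^2\},\{a^4\},\{a^6\},\{a,a^5\},\{a^3,a^7\}$. Since $\{a^2\}$ is basic, Eq.~\eqref{aut} gives $(a^2y)^f=a^2y^f$ for every $f\in\aut(\mathcal{A})_e$. Hence $f$ fixes $\langle a^2\rangle$ pointwise and is determined by $a^f\in\{a,a^5\}$. So $\aut(\mathcal{A})_e=\langle\sigma_5\rangle$ and $\mathcal{A}$ is normal, while $\rad(\mathcal{A})=\langle a^4\rangle$, with $U=\langle a^2\rangle$ and $L=\langle a^4\rangle$.
\end{enumerate}

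Your computation in Step~1 does prove something correct and useful. For normal $\mathcal{A}$, the case $[G:U]\ge 3$ is impossible. When $[G:U]=2$, you get $l^2=e$ for all $l\in L$, so $|L|=2$. Thus every nontrivial $U/L$-decomposition of a normal circulant $S$-ring has $|G:U|=|L|=2$, and $4\mid |G|$ because $L\le U$. Steps~2--3 also correctly settle the trivial-radical case. Lemma~\ref{leungman}(1) already gives you the normal cyclotomic factor, and your normality argument forces the trivial factors to have order at most~$3$.

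The residual case does occur, as the examples show. There you need a positive argument that $\mathcal{A}$ is cyclotomic: that $\aut(\mathcal{A})_e\le\aut(G)$ is transitive on every basic set, or equivalently that $\mathcal{A}$ is schurian, since a normal schurian $S$-ring is cyclotomic. That is the substantive content of the Evdokimov--Ponomarenko theorem the paper cites; the paper gives no proof of its own. As written, your proposal proves the lemma only for normal circulant $S$-rings with trivial radical.
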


\begin{lemm}\label{cyclcayleymin}
Every cyclotomic $S$-ring over a cyclic group is Cayley minimal.
\end{lemm}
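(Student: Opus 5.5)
Let $\mathcal{A}=\cyc(K,G)$ with $G$ cyclic of order $n$ and $K\leq\aut(G)$; I may assume $K=\aut_G(\mathcal{A})$, the largest subgroup of $\aut(G)$ Cayley equivalent to~$K$. To prove Cayley minimality, I take any $K'\leq\aut(G)$ with $K'\approx_{\cay}K$, so $K'\leq K$, and aim to show $K'=K$. The key fact is that $\aut(G)\cong(\mathbb{Z}/n\mathbb{Z})^\times$ acts on $G$ by $\sigma_m$, and any subgroup $M\leq\aut(G)$ is determined by its orbit on a generator. Indeed, if $g$ is a generator of $G$, then $\sigma_m$ is determined by $g^{\sigma_m}=g^m$. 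Hence the stabilizer of $g$ in $\aut(G)$ is trivial, and $\aut(G)$ acts regularly on the set $G^*$ of generators of~$G$.

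Next, let $X$ be the basic set of $\mathcal{A}$ containing $g$, i.e. the $K$-orbit of $g$. Since $K$ acts semiregularly (even regularly on its orbit) on generators, $|X|=|K|$. Likewise, the $K'$-orbit of $g$ is the same set $X$ because $\orb(K',G)=\orb(K,G)$, so $|X|=|K'|$. Thus $|K'|=|K|$, and together with $K'\leq K$ this gives $K'=K$. Consequently $\aut_G(\mathcal{A})$ is the unique subgroup Cayley equivalent to itself, which is the definition of Cayley minimality.

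The only point requiring care is that $K'\leq K$. This follows from the remark in Subsection~2.4 that $\aut_G(\mathcal{A})$ is the largest group Cayley equivalent to~$K$: any $K'\approx_{\cay}K$ preserves every basic set $Y$ of $\mathcal{A}$, hence preserves each relation $r(Y)$ (as $r(Y)^{\sigma}=r(Y^\sigma)$ for $\sigma\in\aut(G)$), so $K'\leq\aut(\mathcal{A})\cap\aut(G)$. Beyond this I expect no real obstacle. The argument uses only that a cyclic group has a generator whose stabilizer in $\aut(G)$ is trivial; the same reasoning fails for, e.g., elementary abelian groups, where no single element has trivial stabilizer.
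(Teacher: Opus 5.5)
Your proposal is correct and follows essentially the same route as the paper. Both arguments use that a generator of the cyclic group has trivial stabilizer in $\aut(G)$, so any Cayley-equivalent subgroups have order $|X|$, where $X$ is the basic set containing that generator. Combined with the containment $K'\leq\aut_G(\mathcal{A})$, which you justify explicitly, this gives $K'=K$.
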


\begin{proof}
Let $G$ be a cyclic group and $\mathcal{A}=\cyc(K,G)$ for some $K\leq \aut(G)$. Assume that there exists $K^{\prime}<K$ such that $\mathcal{A}=\cyc(K^{\prime},G)$. Let $X$ be a basic set of $\mathcal{A}$ containing a generator $x$ of $G$. Clearly, the one-point stabilizer of $x$ in $\aut(G)$ is trivial. Since $X\in \orb(K,G)\cap \orb(K^\prime,G)$, we conclude that $|K|=|K_x||X|=|X|=|K^{\prime}_x||X|=|K^\prime|$, a contradiction to $K^\prime<K$. 
\end{proof}

\begin{lemm}\label{2minnorm}
Every normal $S$-ring over a cyclic group is $2$-minimal.
\end{lemm}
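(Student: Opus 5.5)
Let $G$ be a cyclic group and $\mathcal{A}$ a normal $S$-ring over $G$. By Lemma~\ref{cyclnormschur}, $\mathcal{A}$ is cyclotomic, say $\mathcal{A}=\cyc(K,G)$ with $K=\aut_G(\mathcal{A})$, and normality gives $\aut(\mathcal{A})\le \Hol(G)$. Hence $\aut(\mathcal{A})=G_r\rtimes \aut(\mathcal{A})_e$ with $\aut(\mathcal{A})_e\le\aut(G)$. Since $\mathcal{A}$ is schurian, the orbits of $\aut(\mathcal{A})_e$ on $G$ are exactly the basic sets, so $\aut(\mathcal{A})_e\approx_{\cay}K$. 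As $K$ is the largest group Cayley equivalent to itself and $\aut(\mathcal{A})_e\le\aut(\mathcal{A})\cap\aut(G)=K$, we get $\aut(\mathcal{A})=G_rK$.

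Now take any $K'\le\sym(G)$ with $K'\ge G_r$ and $K'\approx_2\aut(\mathcal{A})$; we must show $K'=\aut(\mathcal{A})$. First, $K'\le K'^{(2)}=\aut(\mathcal{A})=G_rK\le\Hol(G)$. Since $K'\ge G_r$, we have $K'=G_rK'_e$ with $K'_e\le K$. By the remark on $2$-closures in the subsection on minimal $S$-rings, $V(K',G)=\mathcal{A}$, so $\orb(K'_e,G)=\mathcal{S}(\mathcal{A})=\orb(K,G)$. Thus $K'_e$ is a subgroup of $\aut(G)$ Cayley equivalent to $K=\aut_G(\mathcal{A})$.

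Lemma~\ref{cyclcayleymin} says $\mathcal{A}$ is Cayley minimal, which forces $K'_e=K$. Hence $K'=G_rK=\aut(\mathcal{A})$, so $\mathcal{A}$ is $2$-minimal.

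The only delicate point is the reduction to point stabilizers. It needs two facts. First, a group containing $G_r$ satisfies $K'=G_rK'_e$, since $G_r$ is transitive. Second, $K'\le\Hol(G)$ implies $K'_e\le\aut(G)$, because the stabilizer of $e$ in $\Hol(G)$ is $\aut(G)$. Both are standard, so I expect no real obstacle.
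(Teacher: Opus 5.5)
Your proof is correct and follows essentially the paper's argument: you reduce to the point stabilizer $K'_e\le\aut(G)$ via $K'\le\aut(\mathcal{A})\le\Hol(G)$, then use Cayley minimality (Lemma~\ref{cyclcayleymin}) to get $K'_e=\aut(\mathcal{A})_e$, hence $K'=G_rK'_e=\aut(\mathcal{A})$. The only differences are cosmetic: you argue directly rather than by contradiction, and you make explicit the inclusion $K'\le K'^{(2)}=\aut(\mathcal{A})$, which the paper leaves implicit.
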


\begin{proof}
Let $\mathcal{A}$ be a normal $S$-ring over a cyclic group $G$. Clearly, $\aut(\mathcal{A})\leq \Hol(G)$. As $\mathcal{A}$ is normal, $\mathcal{A}$ is schurian and hence $\mathcal{A}=\cyc(\aut(\mathcal{A})_e,G)$ by Lemma~\ref{cyclnormschur}. Assume that $\mathcal{A}$ is not $2$-minimal, i.e. there exists $K<\aut(\mathcal{A})$ such that $K\geq G_r$ and $K\approx_2 \aut(\mathcal{A})$. Since $K<\aut(\mathcal{A})\leq \Hol(G)$, we obtain $K_e\leq \aut(G)$. This implies that $\mathcal{A}=\cyc(K_e,G)$. From Lemma~\ref{cyclcayleymin} it follows that $K_e=\aut(\mathcal{A})_e$ and consequently $K=G_rK_e=G_r\aut(\mathcal{A})_e=\aut(\mathcal{A})$, a contradiction to the assumption.
\end{proof}

\begin{lemm}\label{prime}
Under the hypothesis of Lemma~\ref{schurwr}, suppose that $U$ and $G/L$ are cyclic. Then $\mathcal{A}$ is schurian whenever $|S|$ is prime.
\end{lemm}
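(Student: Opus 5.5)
We reduce the claim to Lemma~\ref{2min}: it suffices to show that $\mathcal{A}_S$ is $2$-minimal. Here $S=U/L$ is a section of the cyclic group $U$, so $S$ is cyclic of prime order, say $|S|=r$. We then split into cases according to which $S$-ring $\mathcal{A}_S$ over $C_r$ is.

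\emph{Case $r\in\{2,3\}$.} Then $|S|\leq 3$, and $\mathcal{A}_S$ is $2$-minimal by Lemma~\ref{2minsmall}. Lemma~\ref{2min} then gives that $\mathcal{A}$ is schurian.

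\emph{Case $r\geq 5$ and $\mathcal{A}_S\neq\mathcal{T}_S$.} Every $S$-ring over $C_r$ is cyclotomic, since by Lemma~\ref{burn} its basic sets are unions of orbits of $\aut(C_r)$ and the classical theory (Lemma~\ref{leungman}, or directly) shows they are orbits of a subgroup $M\leq\aut(C_r)$. If $\mathcal{A}_S\neq\mathcal{T}_S$, we expect $\aut(\mathcal{A}_S)=S_r\rtimes M\leq\Hol(S)$. This follows from the classical fact (Burnside--Schur) that a transitive group of prime degree is either $2$-transitive or contained in $\Hol(C_r)$; a $2$-transitive group would give $\mathcal{T}_S$. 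So $\mathcal{A}_S$ is normal, and Lemma~\ref{2minnorm} shows it is $2$-minimal. Lemma~\ref{2min} again finishes this case.

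\emph{Case $r\geq 5$ and $\mathcal{A}_S=\mathcal{T}_S$.} This is the main obstacle: by Example~1, $\mathcal{T}_S$ is not $2$-minimal, so Lemma~\ref{2min} does not apply. We instead verify the criterion of Lemma~\ref{schurwr} directly, taking $K_1=\aut(\mathcal{A}_U)$ and $K_0=\aut(\mathcal{A}_{G/L})$. Since $\mathcal{A}_U$ is schurian, $K_1^S$ is a group containing $S_r$ with $V(K_1^S,S)=\mathcal{T}_S$, hence $2$-transitive of prime degree; similarly for $K_0^S$. We then aim to show $K_1^S=\sym(S)=K_0^S$. For this we use the description of $\mathcal{A}_U$ and $\mathcal{A}_{G/L}$ given by Lemma~\ref{leungman}. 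If the relevant $S$-ring is a generalized wreath product, or a tensor product with a trivial factor $\mathcal{T}$ over a section projecting onto $S$, its automorphism group contains the full symmetric group acting on the $\mathcal{T}$-component, and so induces $\sym(S)$. If instead $S$ lies inside a normal cyclotomic factor, then $\mathcal{A}_S$ would be a quotient of a normal cyclotomic ring of an $r$-part with $\mathcal{A}_S=\mathcal{T}_S$; we would show this forces the $r$-part factor to be itself a trivial or wreath-type ring, again yielding $\sym(S)$. If this equality argument runs into trouble, we fall back on a direct coset-wise construction: take $K_1$ to be the subgroup of $\aut(\mathcal{A}_U)$ whose action on $S$ matches $K_0^S$, using that $\sym(S)$-actions can be realized on the $\mathcal{T}$-factor independently of the other coordinates. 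In either version, once $K_0^S=K_1^S$ is established, Lemma~\ref{schurwr} yields schurity.
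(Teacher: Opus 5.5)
Your reduction and the first two cases are fine. For $r\le 3$ you use Lemma~\ref{2minsmall}. For $\mathcal{A}_S\neq\mathcal{T}_S$ you use Burnside's theorem, whereas the paper gets normality from Lemma~\ref{leungman}; either way, normality plus Lemma~\ref{2minnorm} and Lemma~\ref{2min} finish the case. The gap is in the case $r\geq 5$, $\mathcal{A}_S=\mathcal{T}_S$.

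\textbf{The full automorphism groups need not induce $\sym(S)$.} Your target is $K_1^S=\sym(S)=K_0^S$ with $K_1=\aut(\mathcal{A}_U)$ and $K_0=\aut(\mathcal{A}_{G/L})$. This is false in general. Take $G\cong C_{r^3}$, $U\cong C_{r^2}$, $|L|=r$, and $\mathcal{A}_U=\cyc(M,U)$, where $M\leq\aut(U)$ has order $r-1$.
\begin{itemize}
\item A basic set containing a generator has size $r-1$, coprime to $r$, so $\rad(\mathcal{A}_U)$ is trivial. By Lemma~\ref{leungman}, $\mathcal{A}_U$ is normal, hence $\aut(\mathcal{A}_U)^S\leq\Hol(S)\neq\sym(S)$.
\item $M$ maps isomorphically onto $\aut(S)$, so $\mathcal{A}_S=\mathcal{T}_S$.
\item Take $\mathcal{A}_{G/L}=\mathcal{T}_S\wr\mathcal{T}_{(G/L)/S}$. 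Its automorphism group $\sym(r)\wr\sym(r)$ induces $\sym(S)$.
\end{itemize}
The $S$-wreath product $\mathcal{A}$ satisfies all hypotheses, yet the two full automorphism groups induce different groups on $S$. Your claim that a normal cyclotomic factor with trivial quotient on $S$ must be ``trivial or wreath-type'' is exactly what fails here. Likewise, a generalized wreath product need not induce $\sym(S)$: its top factor can be such a $\cyc(M,C_{r^2})$.

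\textbf{The fallback fails too.} Here no subgroup of $\aut(\mathcal{A}_U)$ can induce $K_0^S=\sym(S)$, so matching $K_1$ to $K_0^S$ is impossible. One would have to shrink the other side instead, and ``realizing $\sym(S)$-actions on the $\mathcal{T}$-factor'' gives no argument for that.

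\textbf{What is missing.} The common restriction must be $\Hol(S)$. In the normal case it is forced, and $\Hol(S)\in\mathcal{K}^{\min}(\mathcal{T}_S)$. You then need the following nontrivial fact: for any schurian circulant $S$-ring $\mathcal{B}$ and a $\mathcal{B}$-section $S$ of prime order with $\mathcal{B}_S=\mathcal{T}_S=\cyc(\aut(S),S)$, there is $K\leq\aut(\mathcal{B})$ containing the regular group with $K\approx_2\aut(\mathcal{B})$ and $K^S=\Hol(S)$. The paper imports this from \cite[Theorem~8.1(2)]{EP2}. Applied to $\mathcal{A}_U$ and $\mathcal{A}_{G/L}$, it gives $K_1^S=K_0^S=\Hol(S)$, and Lemma~\ref{schurwr} concludes. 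Your sketch supplies no substitute for this step.
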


\begin{proof}
If $\mathcal{A}_S\neq \mathcal{T}_S$, then $\mathcal{A}_S$ is normal by Lemma~\ref{leungman}. So $\mathcal{A}_S$ is $2$-minimal by Lemma~\ref{2minnorm} and $\mathcal{A}$ is schurian by Lemma~\ref{2min}.

Suppose that $\mathcal{A}_S=\mathcal{T}_S$. Then $\mathcal{A}_S=\cyc(\aut(S),S)=V(\Hol(S),S)$. Since $\aut(\mathcal{A}_U)$ and $\mathcal{A}_{G/L}$ are schurian, \cite[Theorem~8.1(2)]{EP2} implies that there exist groups $K_1\leq \aut(\mathcal{A}_U)$ and $K_0\leq \aut(\mathcal{A}_{G/L})$ such that 
$$K_1\geq U_r,~K_0\geq (G/L)_r,~K_1\approx_2 \aut(\mathcal{A}_U),~K_0\approx_2 \aut(\mathcal{A}_{G/L}),~K_1^S=K_0^S=\Hol(S).$$
Thus, all the conditions of Lemma~\ref{schurwr} hold for $K_1$ and $K_0$ and hence $\mathcal{A}$ is schurian.
\end{proof}

\subsection{$S$-rings over cyclic $p$-groups}

For the lemma below see, e.g.,~\cite[Lemma~2.4]{PR}.

\begin{lemm}\label{cyclprime}
Every $S$-ring over a cyclic group of prime order is cyclotomic.
\end{lemm}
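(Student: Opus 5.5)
The statement to prove is Lemma~\ref{cyclprime}: every $S$-ring over a cyclic group $G\cong C_p$ of prime order is cyclotomic. The plan is a direct argument through the first multiplier theorem (Lemma~\ref{burn}). The idea is to show that the basic sets of $\mathcal{A}$ are exactly the orbits of a suitable subgroup $K\leq\aut(G)$.

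First, the case $p=2$ is trivial, since then $\mathcal{A}=\mathbb{Z}G=\cyc(\{\id\},G)$; so we may assume $p$ is odd, although the argument below works for all $p$. Fix $X\in\mathcal{S}(\mathcal{A})$ with $X\neq\{e\}$ and pick $x\in X$. Every nonidentity element of $G$ generates $G$. So for each $g\in G^\#$ there is an integer $m$ coprime to $p$ with $g=x^m$.

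Put $K=\{\sigma_m\in\aut(G):~X^{(m)}=X\}$, the setwise stabilizer of $X$ in $\aut(G)$. We claim that $X$ is an orbit of $K$. For $y\in X$, write $y=x^m$ with $m$ coprime to $p$. By Lemma~\ref{burn}, $X^{(m)}$ is a basic set, and it meets $X$ in $y$. Since basic sets partition $G$, this gives $X^{(m)}=X$, so $\sigma_m\in K$ maps $x$ to $y$. Hence $K$ is transitive on $X$, and $X$ is $K$-invariant by definition, so $X\in\orb(K,G)$.

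Next we show that every other basic set is also a $K$-orbit. Let $Y\in\mathcal{S}(\mathcal{A})$ with $Y\neq\{e\}$. Then $Y=X^{(m)}$ for some $m$: indeed, taking $y\in Y$ and $m$ with $x^m=y$, Lemma~\ref{burn} shows $X^{(m)}$ is a basic set meeting $Y$, so $X^{(m)}=Y$. Since $\aut(G)$ is abelian, $\sigma_m$ commutes with $K$, and therefore $Y=X^{\sigma_m}$ is again an orbit of $K$. The identity is fixed by $K$, so $\{e\}$ is also a $K$-orbit.

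Thus $\mathcal{S}(\mathcal{A})=\orb(K,G)$ and $\mathcal{A}=\cyc(K,G)$. The only point needing care is the commutativity step, which uses that $\aut(C_p)$ is cyclic; otherwise the argument is routine.
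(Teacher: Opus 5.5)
Your proof is correct. The paper does not prove this lemma itself; it only cites \cite[Lemma~2.4]{PR}, so your argument supplies a self-contained proof.

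The core step is that the setwise stabilizer of a basic set $X$ in $\aut(G)$ is transitive on $X$. This follows from Lemma~\ref{burn} and the fact that $\aut(C_p)$ acts regularly on $G^\#$. It is the same mechanism the paper uses to prove Lemma~\ref{orbit} (take $H$ trivial there). Lemma~\ref{orbit} alone would only give a possibly different group for each basic set. What you add is that every nonidentity basic set is rationally conjugate to $X$, and that $\aut(G)$ is abelian. Together these make all nonidentity basic sets orbits of one group $K$, which is what $\mathcal{A}=\cyc(K,G)$ requires.

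The citation keeps the text short. Your route keeps the lemma inside the paper's own toolkit at the cost of one paragraph.
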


The first statement of the lemma below immediately follows from~\cite[Lemma~5.1]{EP1}, whereas the second one follows from Lemma~\ref{leungman} and the first one.

\begin{lemm}\label{trivradorb}
Let $p$ be an odd prime, $G$ a cyclic $p$-group, $\mathcal{A}$ an $S$-ring over $G$, and $X\in \mathcal{S}(\mathcal{A})\setminus\{\{e\}\}$.

\begin{enumerate}

\tm{1} If $\mathcal{A}=\cyc(K,G)$ for some $K\leq \aut(G)$ and $|\rad(\mathcal{A})|=1$, then $|X|=|K|\leq p-1$ and all the elements of $X$ lie in pairwise distinct cosets by every proper subgroup of~$\langle X \rangle$.

\tm{2} If $\mathcal{A}$ is nontrivial with trivial radical, then $\mathcal{A}=\cyc(K,G)$ for some $K\leq \aut(G)$ such that $|K|$ divides~$p-1$ and $|X|=|K|$. 

\end{enumerate}

\end{lemm}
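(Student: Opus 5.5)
The first statement is essentially \cite[Lemma~5.1]{EP1}; the second is deduced from Lemma~\ref{leungman} together with the first. I take both in that order.

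For Statement~(1), let $\mathcal{A}=\cyc(K,G)$ with $K\leq\aut(G)$ and $|\rad(\mathcal{A})|=1$, and put $H=\langle X\rangle$. Then $X$ is a $K$-orbit of a generator $x$ of $H$. Restriction gives a homomorphism $K\to\aut(H)$, and since $\aut(G)\cong(\mathbb{Z}/p^k)^*$ is cyclic, $K$ is cyclic.
\begin{itemize}
\item \emph{Reduction.} The key claim is that $K$ meets the Sylow $p$-subgroup of $\aut(G)$ trivially. Assume not. Then $K$ contains the automorphism $\sigma_m$ with $m=1+p^{k-1}$, of order~$p$.
\item \emph{Contradiction for generators.} For a generator $g$ of $G$, $g^{\sigma_m}=g\cdot g^{p^{k-1}}$. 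Hence the orbit of $g$ under $\langle\sigma_m\rangle$ is the full coset $gG_p$, where $G_p$ is the subgroup of order~$p$.
\item \emph{Conclusion.} The basic set containing a generator is then a union of $G_p$-cosets, so $G_p\leq\rad(\mathcal{A})$, contradicting $|\rad(\mathcal{A})|=1$.
\end{itemize}
So $|K|$ divides $p-1$, giving $|K|\leq p-1$.

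Now $K$ is a $p'$-group, and the kernel of $\aut(G)\to\aut(G/G_{p^j})$ is a $p$-group for $j\geq1$. So $K$ acts faithfully on every nontrivial section, and in particular on $H$. It also acts semiregularly on the generators of $H$, since point stabilizers in $\aut$ of a cyclic group fix all generators. Hence $|X|=|K|$.

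For the distinct-cosets claim, let $L<H$ be proper; it suffices to take $L$ of order~$p^{j-1}$ where $|H|=p^j$.
\begin{itemize}
\item \emph{Setup.} Suppose $x^{\sigma}\in xL$ with $\sigma\in K$. Then $\sigma$ fixes $xL$ in $H/L$, which is a nontrivial section since $L<H$.
\item \emph{Injectivity.} The map $K\to\aut(H/L)$ is injective: its kernel is a $p$-subgroup of the $p'$-group $K$, hence trivial.
\item \emph{Conclusion.} So $\sigma$ acts trivially on $H/L$ only if $\sigma=1$. A faithful cyclotomic action of $K$ on $H/L$ is semiregular on generators, and $xL$ generates $H/L$; this forces $\sigma=1$.
\end{itemize}
Thus the elements of $X$ are pairwise distinct modulo~$L$.

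For Statement~(2), suppose $\mathcal{A}$ is nontrivial with trivial radical. By Lemma~\ref{leungman}(1), $\mathcal{A}$ is a tensor product of a normal cyclotomic $S$-ring with trivial radical and trivial $S$-rings. A cyclic $p$-group has no nontrivial direct decomposition, so there is only one factor.
\begin{itemize}
\item If that factor were $\mathcal{T}_G$ with $|G|>p$, its radical would be nontrivial; for example, $\mathcal{T}_G$ with $|G|=p^2$ has $G^\#$ with radical~$G$. The case $\mathcal{T}_{C_p}$ is cyclotomic anyway.
\item Either way $\mathcal{A}=\cyc(K,G)$, and Statement~(1) gives $|K|\mid p-1$ and $|X|=|K|$.
\end{itemize}

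The main obstacle is checking that "tensor product" in Lemma~\ref{leungman}(1) cannot produce a genuinely mixed case for a $p$-group. I expect this to be immediate from indecomposability of cyclic $p$-groups. The remaining care is handling $\mathcal{T}_G$ when $|G|=p$, where $\mathcal{T}_G=\cyc(\aut(G),G)$ with $|K|=p-1$, and the trivial case where $\mathcal{A}$ is the group ring, with $K=1$.
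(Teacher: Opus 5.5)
Your proof of Statement~(1) is correct. The paper only cites \cite[Lemma~5.1]{EP1} for it; you give a self-contained argument instead. If $K$ has nontrivial $p$-part, it contains $\sigma_{1+p^{k-1}}$, whose orbits on generators are full $G_p$-cosets, and this contradicts $|\rad(\mathcal{A})|=1$. Hence $K$ is a $p'$-group. Faithfulness of $K$ on every nontrivial section then follows, because the kernel of $\aut(G)\to\aut(U/L)$ is $\{\sigma_m: m\equiv 1 \bmod |U/L|\}$, a $p$-group. This gives both $|X|=|K|$ and the coset statement. Your argument also yields $|K|\mid p-1$, which is what Statement~(2) needs.

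For Statement~(2) you follow the paper's route (Lemma~\ref{leungman}(1) plus Statement~(1)), but one step is wrong. You claim that $\mathcal{T}_G$ with $|G|>p$ has nontrivial radical because $\rad(G^\#)=G$. This is false: for $g\neq e$ we have $gG^\#=G\setminus\{g\}\neq G^\#$, so $\rad(G^\#)=\{e\}$, and $\mathcal{T}_G$ always has trivial radical. This is consistent with Lemma~\ref{leungman}(1), which lists trivial $S$-rings among the factors. Concretely, $\mathcal{T}_{C_{p^2}}$ has trivial radical and is not cyclotomic, since $G^\#$ contains elements of orders $p$ and $p^2$ and so cannot be an orbit of a subgroup of $\aut(G)$. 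The radical therefore cannot exclude this case.

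What excludes it is exactly the hypothesis that $\mathcal{A}$ is nontrivial, which in this paper means $\mathcal{A}\neq\mathcal{T}_G$. The group ring $\mathbb{Z}G=\cyc(\{\id\},G)$ is not the ``trivial'' case and needs no separate remark. The repair is short. Lemma~\ref{leungman}(1), together with the fact that a cyclic $p$-group has no nontrivial direct decomposition, leaves only two options: $\mathcal{A}=\mathcal{T}_G$, or $\mathcal{A}$ is a normal cyclotomic $S$-ring with trivial radical. The first is excluded by hypothesis, and your Statement~(1) then finishes the proof.
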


\begin{lemm}\label{cyclradp}
Let $p$ be an odd prime and $\mathcal{A}$ a cyclotomic $S$-ring over a cyclic $p$-group $G$ such that $|\rad(\mathcal{A})|\in\{1,p\}$. Then $|\rad(\mathcal{A}_S)|=1$ for every $\mathcal{A}$-section $S\neq G$. 
\end{lemm}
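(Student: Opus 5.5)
We split according to $|\rad(\mathcal{A})|$, using the explicit structure of cyclotomic $S$-rings over a cyclic $p$-group. Let $G$ be cyclic of order $p^n$, and write $\mathcal{A}=\cyc(K,G)$ with $K\leq \aut(G)$. Since $p$ is odd, $\aut(G)$ is cyclic of order $p^{n-1}(p-1)$. Hence $K=K_{p'}\times K_p$, where $K_{p'}$ has order dividing $p-1$ and $K_p$ is the subgroup of order $p^a$, i.e. $K_p=\{\sigma_m: m\equiv 1 \mod p^{n-a}\}$. For a generator $x$ of $G$, the orbit $x^{K}$ equals $x^{K_{p'}}\cdot D_{p^a}$, where $D_{p^a}$ is the subgroup of order $p^a$. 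Therefore $\rad(\mathcal{A})\geq D_{p^a}$.

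Conversely, if $g\in \rad(x^K)$, then $gx\in x^K$. So $gx=x^m$ with $\sigma_m\in K$, and $g=x^{m-1}$. The order of $g$ is governed by the $p$-adic valuation of $m-1$. For $\sigma_m\in K_{p'}\setminus\{1\}$, the number $m$ is a nontrivial $(p-1)$-th root of unity mod $p^n$, so $m\not\equiv 1\mod p$ and $g$ generates $G$. Then $x^K\supseteq x\langle g\rangle=G$, which is impossible when $n\ge1$ and $\{e\}\neq$ the orbit. We conclude that $\rad(\mathcal{A})=D_{p^a}$ exactly, i.e. $|\rad(\mathcal{A})|=|K_p|$. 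The hypothesis $|\rad(\mathcal{A})|\in\{1,p\}$ therefore means $|K_p|\le p$.

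Now take an $\mathcal{A}$-section $S=U/L\neq G$. Every subgroup of $G$ is an $\mathcal{A}$-subgroup, so $U=D_{p^u}$ and $L=D_{p^l}$ are arbitrary with $(u,l)\neq(n,0)$. We have $\mathcal{A}_S=\cyc(K^S,S)$, where $K^S$ is the image of $K$ in $\aut(S)$ and $S$ is cyclic of order $p^{u-l}$. We show that the image of $K_p$ in $\aut(S)$ is trivial, i.e. $K_p$ acts trivially on $S$. Then $K^S$ has order dividing $p-1$, and the previous paragraph applied to $S$ gives $|\rad(\mathcal{A}_S)|=|(K^S)_p|=1$.

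If $K_p=1$ there is nothing to prove. Otherwise $K_p=\{\sigma_m:m\equiv1\mod p^{n-1}\}$. If $u<n$, then $m\equiv 1 \mod p^{n-1}$ implies $\sigma_m$ is the identity on $U=D_{p^u}$, since $u\le n-1$. If $u=n$, then $l\ge1$. For $y\in G$ we have $y^m=y\cdot y^{m-1}$ with $y^{m-1}\in D_p\le L$, so $\sigma_m$ acts trivially on $G/L$. In both cases the claim holds.

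The main obstacle is the careful radical computation: we must show that $K_{p'}$ contributes nothing to the radical, and that $K_p$ contributes exactly $D_{p^a}$. Lemma~\ref{trivradorb} essentially covers the case $K_p=1$. The subgroup $K_p$ gives the radical because $x^{1+p^{n-a}t}$ ranges over the coset $xD_{p^a}$. For the reverse inclusion, we use that an element $g$ in the radical must satisfy $gx=x^m$ for some $\sigma_m\in K$, which pins down the valuation of $m-1$.
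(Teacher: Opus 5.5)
Your argument is correct and is essentially the paper's proof: both split $K$ into its $p'$- and $p$-parts and use $|K_p|=|\rad(\mathcal{A})|\le p$ (which the paper merely asserts and you verify) to see that $K$ induces a $p'$-group both on the subgroup $U_0$ of index $p$ and on $G/L_0$ with $|L_0|=p$, and every proper $\mathcal{A}$-section is a section of $U_0$ or of $G/L_0$; the paper then finishes with ``basic sets of size at most $p-1$ have trivial radical'', whereas you reapply your radical formula to $K^S$. The only slip is in your reverse inclusion, where you must treat every $\sigma_m\in K\setminus K_p$ and not just $\sigma_m\in K_{p'}\setminus\{1\}$; this is immediate, since reduction modulo $p$ is trivial on $K_p$ and injective on $K_{p'}$, so $m\not\equiv 1\pmod p$ for all such $\sigma_m$.
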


\begin{proof}
Let $\mathcal{A}=\cyc(K,G)$, where $K\leq \aut(G)$. Since $G$ is a cyclic $p$-group of odd order, $\aut(G)=A_1\times A_2$, where 
$$A_1\cong C_{p-1}~\text{and}~A_2=\{\sigma_m:~m\in \mathbb{Z},~m\equiv 1\mod~p\}\cong C_{|G|/p}.$$ 
So $K=K_1\times K_2$, where $K_1=K\cap A_1$ and $K_2=K\cap A_2$. One can see that 
$$|K_2|=|\rad(\mathcal{A})|\in\{1,p\}.$$ 
Therefore $|K_2|=1$ or $K_2=\{\sigma_m:~m\in \mathbb{Z},~m\equiv 1\mod~|G|/p\}$. This implies that $K^{G/L_0}$ and $K^{U_0}$, where $L_0$ and $U_0$ are the subgroups of $G$ of order and index~$p$, respectively, are $p^\prime$-groups of order at most~$p-1$. If $S=U/L$ is an $\mathcal{A}$-section and $S\neq G$, then $U\leq U_0$ or $L\geq L_0$. This yields that $\mathcal{A}_S=\cyc((K^{G/L_0})^S,S)$ or $\mathcal{A}_S=\cyc((K^{U_0})^S,S)$. As $|(K^{G/L_0})^S|\leq p-1$ and $|(K^{U_0})^S|\leq p-1$, we conclude that every basic set of $\mathcal{A}_S$ is of size at most~$p-1$ and hence has a trivial radical. Thus, $|\rad(\mathcal{A}_S)|=1$. 
\end{proof}

\begin{lemm}\label{cyclpwreath}
Let $\mathcal{A}$ be a nontrivial generalized wreath product over a cyclic $p$-group~$G$. Then $\mathcal{A}$ is the nontrivial $U/L$-wreath product for an $\mathcal{A}$-section $U/L$ of $G$ such that $L$ is the least nontrivial $\mathcal{A}$-subgroup and $|\rad(\mathcal{A}_{U})|=1$.
\end{lemm}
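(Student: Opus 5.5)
Since $\mathcal{A}$ is a nontrivial generalized wreath product, $|\rad(\mathcal{A})|>1$ by Lemma~\ref{leungman}(2). I will choose $L$ first and then $U$, and check the wreath-product condition directly. Let $L$ be the least nontrivial $\mathcal{A}$-subgroup. Since $G$ is a cyclic $p$-group, its subgroups form a chain, so $L$ is the unique $\mathcal{A}$-subgroup of order $p^a$ with $a\ge 1$ minimal. Its existence follows from $\rad(\mathcal{A})\ne\{e\}$ being an $\mathcal{A}$-subgroup.

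\textbf{Showing that $L$ is absorbed outside $U$.} Let $X$ be a basic set containing a generator of $G$, so that $\rad(X)=\rad(\mathcal{A})$ is a nontrivial $\mathcal{A}$-subgroup. Then $L\le \rad(\mathcal{A})$, because every nontrivial $\mathcal{A}$-subgroup contains $L$. For an arbitrary basic set $Y$, the subgroup $\langle Y\rangle$ is an $\mathcal{A}$-subgroup. If $\langle Y\rangle=G$, then $Y$ contains a generator of $G$, so $Y$ is rationally conjugate to $X$ by Lemma~\ref{burn}. Hence $\rad(Y)=\rad(X)\ge L$.

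\textbf{Choice of $U$.} The natural candidate is the largest $\mathcal{A}$-subgroup $U$ such that $|\rad(\mathcal{A}_U)|=1$, where the radical of $\mathcal{A}_U$ is computed in the cyclic group $U$. Equivalently, I want every basic set $Y\subseteq G\setminus U$ to satisfy $L\le\rad(Y)$, while the basic sets generating $U$ do not have $L$ in their radical. To make this precise, consider the chain of $\mathcal{A}$-subgroups $L=H_0<H_1<\dots<H_t=G$. Each basic set $Y\neq\{e\}$ lies in $H_i\setminus H_{i-1}$ for a unique $i$ and generates $H_i$. Among the sets $Y$ generating $H_i$, the radical is constant, again by Lemma~\ref{burn} applied inside the $S$-ring $\mathcal{A}_{H_i}$.

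\textbf{Main step.} I need monotonicity: if the basic sets generating $H_i$ have radical containing $L$, then so do those generating $H_{i+1}$. For this I would use Lemma~\ref{separ} and Lemma~\ref{trivradorb}, or alternatively argue via Lemma~\ref{leungman}(1) applied to $\mathcal{A}_{H_j}$. If $\mathcal{A}_{H_j}$ has trivial radical, then it is cyclotomic with basic sets of size at most $p-1$ (Lemma~\ref{trivradorb}). Consequently $\mathcal{A}_{H_i}$ has trivial radical for all $i\le j$, because its sections are cyclotomic with small orbits. The sets $\{i : \rad(\mathcal{A}_{H_i})=\{e\}\}$ (the trivial-radical levels) therefore form an initial segment of the chain, which includes $i=0$ when $L$ has order $p$.

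Now take $U$ to be the top of this initial segment. If instead $\rad(\mathcal{A}_L)\neq\{e\}$, the radical of $\mathcal{A}_L$ would be a nontrivial $\mathcal{A}$-subgroup properly inside $L$, contradicting the minimality of $L$. So $L$ is in the segment, $U\ge L$, and $U\ne G$ since $\rad(\mathcal{A})$ is nontrivial. By the monotonicity, every level above $U$ has nontrivial radical, which contains $L$ by minimality. So every basic set outside $U$ is a union of $L$-cosets.

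This shows that $\mathcal{A}$ is the $U/L$-wreath product. It is nontrivial because $L\ne\{e\}$ and $U\ne G$, and $|\rad(\mathcal{A}_U)|=1$ by construction.

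The main obstacle I anticipate is the monotonicity: nontrivial radical at level $i$ must force nontrivial radical at level $i+1$, equivalently trivial radical at level $i+1$ must force trivial radical at level $i$. For odd $p$ this follows from Lemma~\ref{trivradorb} as sketched. For $p=2$ I would instead cite the corresponding statement of~\cite{EP1} or~\cite{EKP1}, since the small-orbit argument needs adjustment there.
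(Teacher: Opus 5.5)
For odd $p$ your argument is correct. By Lemma~\ref{trivradorb}(2), a nontrivial $\mathcal{A}_{H_j}$ with trivial radical has all nontrivial basic sets of size dividing $p-1<p$. So no basic set inside $H_j$ is a union of cosets of a nontrivial subgroup, and the trivial-radical levels do form an initial segment. The gap is $p=2$, which the statement includes. There the monotonicity is false, so no citation can supply it, and taking $U$ to be the top of the initial segment can give the wrong subgroup.

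Take $G=\langle a\rangle\cong C_{16}$, $V=\langle a^2\rangle$ and $\mathcal{A}=\cyc(\langle\sigma_{-1}\rangle,V)\wr\mathbb{Z}(G/V)$. This is a nontrivial generalized wreath product, with $\mathcal{A}$-subgroups $\langle a^8\rangle<\langle a^4\rangle<V<G$, so $L=\langle a^8\rangle$. The basic set $\{a^4,a^{12}\}$ has radical $L$, the basic set $\{a^2,a^{14}\}$ has trivial radical, and $G\setminus V$ has radical $V$. Your initial segment is just $\{0\}$, so you get $U=L$. But $\{a^2,a^{14}\}$ lies outside $L$ and is not a union of $L$-cosets, so $\mathcal{A}$ is not the $L/L$-wreath product.

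The monotonicity is not needed for any $p$. Take $U$ to be the largest level $H_i$ whose generating basic sets have trivial radical. This is the ``natural candidate'' you mention first, and it is exactly the paper's $U$, defined there as the $\mathcal{A}$-subgroup generated by all basic sets with trivial radical. You have already shown that $L$ is such a level, and that $U\neq G$ because $\rad(\mathcal{A})\neq\{e\}$. Now let $Y\not\subseteq U$ be a basic set. It generates a level $\langle Y\rangle>U$ and contains a generator of it, so $\rad(Y)=\rad(\mathcal{A}_{\langle Y\rangle})$. This radical is nontrivial by the maximality of $U$ alone, hence is a nontrivial $\mathcal{A}$-subgroup and contains $L$.

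That is the paper's short proof. It uses neither Lemma~\ref{trivradorb} nor any restriction on $p$, and in the example above it gives $U=V$. The paper ultimately needs the lemma only for odd $p$, so your version would suffice for its purposes, but it does not prove the statement as written.
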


\begin{proof}
Since $G$ is a cyclic $p$-group, there exists the least nontrivial $\mathcal{A}$-subgroup $L$. Let $U$ be the $\mathcal{A}$-subgroup generated by all basic set of $\mathcal{A}$ with trivial radical. Note that $U<G$ because $\mathcal{A}$ is a nontrivial generalized wreath product. Every basic set $X$ of $\mathcal{A}$ outside $U$ has a nontrivial radical and hence $\rad(X)\geq L$. Thus, $\mathcal{A}$ is the nontrivial $U/L$-wreath product as desired.
\end{proof}

A subset of $G$ is said to be \emph{regular} if it consists of elements of the same order. The lemma below can be deduced from the description of $S$-rings over cyclic $p$-groups~\cite{Po}. To make the text self-contained, we provide the proof here. 

\begin{lemm}\label{pnonreg}
Let $\mathcal{A}$ be an $S$-ring over a cyclic $p$-group $G$ and $X\in \mathcal{S}(\mathcal{A})$. Suppose that $X$ is not regular. Then $X=U\setminus L$ for some $\mathcal{A}$-subgroups $U>L$ such that $|U:L|\geq p^2$ and $\mathcal{A}=\mathcal{A}_L\wr \mathcal{A}_{G/L}=\mathcal{A}_U\wr \mathcal{A}_{G/U}$.
\end{lemm}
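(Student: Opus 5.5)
The plan is to use Lemma~\ref{separ}, with the subgroup $H$ there chosen from a non-regular basic set. Since $G$ is a cyclic $p$-group, its subgroups form a chain. Because $X$ is not regular, it contains elements of at least two different orders. Let $U=\langle X\rangle$; it is an $\mathcal{A}$-subgroup, and since $G$ is cyclic, $X$ must contain a generator of $U$. Let $V<U$ be the maximal subgroup of $U$, of index $p$. Then $X\setminus V$ is the set of generators of $U$ lying in $X$, and it is nonempty. Non-regularity gives $X\cap V\neq\varnothing$. Neither $X\setminus V$ nor $V$ need be an $\mathcal{A}$-set, so I will work with $\mathcal{A}$-subgroups instead.

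The key step is to produce an $\mathcal{A}$-subgroup $H$ with $X\cap H\neq\varnothing$, $X\setminus H\neq\varnothing$ and $H\le\rad(X\setminus H)$. I would use Lemma~\ref{sch} together with Lemma~\ref{burn}. Let $y\in X$ have maximal order $|U|$ and $z\in X$ have smaller order. By Lemma~\ref{burn}, $X^{(m)}=X$ for every $m$ with $z^m=z$ and $\gcd(m,p)=1$. Apply the Schur $p$-th power map to the $\mathcal{A}$-sets $X$ and $X\cdot X^{-1}$, and to the subgroups they generate. I would argue by induction on $|G|$, passing to the quotient $\mathcal{A}_{G/L_0}$, where $L_0$ is the least nontrivial $\mathcal{A}$-subgroup. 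In the quotient the image of $X$ is either again non-regular, and induction applies, or it is regular. If it is regular, then the elements of $X$ of smaller order lie in $L_0$, while the generators of $U$ lie outside $L_0$. Then Lemma~\ref{intersection} with $H=L_0$ gives that $|X\cap L_0x|$ is constant. Since $|X\cap L_0|=|X\cap L_0 y|$ and $L_0$ is the least $\mathcal{A}$-subgroup, the standard argument via Lemma~\ref{sch} shows that the cosets $L_0 y$ are entirely contained in $X$. This gives $L_0\le\rad(X\setminus L_0)$.

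Once the hypothesis of Lemma~\ref{separ} holds, that lemma gives $X=\langle X\rangle\setminus\rad(X)=U\setminus L$ with $L=\rad(X)$ an $\mathcal{A}$-subgroup. Since $X$ is non-regular, $U\setminus L$ contains elements of two different orders, which forces $|U:L|\ge p^2$.

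The last claim is $\mathcal{A}=\mathcal{A}_L\wr\mathcal{A}_{G/L}=\mathcal{A}_U\wr\mathcal{A}_{G/U}$. For this I would take any basic set $Y\not\subseteq L$ and compute $\underline{X}\,\underline{Y}$ using Eq.~\eqref{inrad0} and the chain structure. If $Y\subseteq U\setminus L$, then $Y$ must equal $X$: every element of $U\setminus L$ lies in some basic set, $X$ is already all of $U\setminus L$, and basic sets partition $G$. If $Y\not\subseteq U$, then every $y\in Y$ has order greater than $|U|$, so $Uy\subseteq \langle y\rangle$. The product $\underline{X}\,\underline{Y}$ then contains $Y$ with a coefficient determined by $|X\cap Uy'y^{-1}|$. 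Comparing $\underline{X}\,\underline{Y}$ with $\underline{U}\,\underline{Y}-\underline{L}\,\underline{Y}$ shows that $Y$ is a union of $U$-cosets, that is, $U\le\rad(Y)$. This yields both wreath decompositions, since $L\le U$.

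The main obstacle I expect is the middle step: verifying $H\le\rad(X\setminus H)$ in the case where the quotient image of $X$ becomes regular. This needs a careful use of Lemma~\ref{sch} applied to the coset intersections.
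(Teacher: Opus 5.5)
\textbf{There is a genuine gap: the central step cannot be carried out as stated.} You look for an $\mathcal{A}$-subgroup $H$ with $X\cap H\neq\varnothing$ and $X\setminus H\neq\varnothing$. But an $\mathcal{A}$-subgroup is a union of basic sets, so every basic set is either contained in it or disjoint from it. Lemma~\ref{separ} is therefore only useful for subgroups that are \emph{not} $\mathcal{A}$-subgroups, for example the index-$p$ subgroup $V$ of $U=\langle X\rangle$; but there, verifying $V\le\rad(X\setminus V)$ is already most of the work.

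For the same reason your ``regular image'' case is incoherent, since $X$ cannot have elements both inside and outside $L_0$. The actual dichotomy is as follows.
\begin{itemize}
\item $X\subseteq L_0$. Then $\mathcal{A}_{L_0}$ is primitive, and Lemma~\ref{primitive} gives $X=L_0^\#$, or $|L_0|=p$ and $X$ is regular.
\item $X\cap L_0=\varnothing$. Then $\overbar{X}$ is automatically nonregular in $G/L_0$, and induction describes $\overbar{X}$. But to lift $\overbar{X}=\overbar{U'}\setminus\overbar{L'}$ back to $X$ you must show $L_0\le\rad(X)$, and that is the real substance of the lemma.
\end{itemize}
Nothing in your sketch with Lemmas~\ref{burn}, \ref{sch} and \ref{intersection} establishes that lifting.

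\textbf{The wreath-product step is also empty.} Since $\underline{X}=\underline{U}-\underline{L}$, comparing $\underline{X}\,\underline{Y}$ with $\underline{U}\,\underline{Y}-\underline{L}\,\underline{Y}$ is a tautology. By Lemma~\ref{intersection}, $\underline{U}\,\underline{Y}=\lambda\,\underline{UY}$ and $\underline{L}\,\underline{Y}=\mu\,\underline{LY}$, where $\lambda=|Uy\cap Y|$ and $\mu=|Ly\cap Y|$. Nothing in this forces $\lambda=|U|$.

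\textbf{The missing ingredient in both places is Lemma~\ref{leungman} for cyclic $p$-groups.} There, an $S$-ring with trivial radical is trivial or cyclotomic, and cyclotomic $S$-rings have only regular basic sets.
\begin{itemize}
\item The paper takes $L=\rad(X)$. Then $\mathcal{A}_{\langle\overbar{X}\rangle}$ over $G/L$ has trivial radical and contains the nonregular basic set $\overbar{X}$. Hence it is trivial, which gives $X=U\setminus L$ at once, with no induction and no use of Lemma~\ref{separ}.
\item For the wreath part, suppose a basic set $Y$ outside $U$ has radical $N<L$. Pass to $G/N$, with tildes denoting images there. Then $\mathcal{A}_{\langle\widetilde{Y}\rangle}$ has trivial radical yet contains the nonregular basic set $\widetilde{X}$, so it is trivial. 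This is impossible, because $\widetilde{X}$ and $\widetilde{Y}$ are distinct nontrivial basic sets.
\item Together with Lemma~\ref{cyclpwreath} applied to $\mathcal{A}_{G/L}$, which gives $\mathcal{A}_{G/L}=\mathcal{A}_{U/L}\wr\mathcal{A}_{G/U}$, this yields both wreath decompositions.
\end{itemize}
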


\begin{proof}
Let $L=\rad(X)$. The image of $T\subseteq G$ under the canonical epimorphism from $G$ to $G/L$ is denoted by $\overbar{T}$. The $S$-ring $\mathcal{A}_{\langle \overbar{X} \rangle}$ has a trivial radical. So $\mathcal{A}_{\langle \overbar{X} \rangle}$ is trivial or cyclotomic by Lemma~\ref{leungman}. As $X$ is not regular, $\overbar{X}$ also is not regular. Therefore $\mathcal{A}_{\langle \overbar{X} \rangle}$ can not be cyclotomic and hence it is trivial. Thus, $\overbar{X}\cup \{L\}$ is an $\mathcal{A}_{G/L}$-subgroup. Since $L=\rad(X)$, we conclude that $X=U\setminus L$ for some $\mathcal{A}$-subgroup $U$ as required. One can see that $|U:L|\geq p^2$ because otherwise $X$ is regular which contradicts to the assumption of the lemma.

Now let us prove the second part of the lemma.  If $U=G$, then $\mathcal{A}$ is the $L/L$-wreath product (possibly, trivial) and the trivial $U/U$-wreath product. Further, we assume that $U<G$ and hence $\overbar{U}<\overbar{G}$. The latter implies that $\mathcal{A}_{\overbar{G}}$ is nontrivial. Note that $\overbar{X}=(\overbar{U})^\#$ is a nonregular basic set of $\mathcal{A}_{\overbar{G}}$ with trivial radical. So $\mathcal{A}_{\overbar{G}}$ can not be cyclotomic and hence $\mathcal{A}_{\overbar{G}}$ is a nontrivial generalized wreath product by Lemma~\ref{leungman}. In view of Lemma~\ref{cyclpwreath}, $\mathcal{A}_{\overbar{G}}$ is the nontrivial $W/V$-wreath product, where $\rad(\mathcal{A}_W)$ is trivial and $V$ is the least nontrivial $\mathcal{A}_{\overbar{G}}$-subgroup. Since $\rad(\overbar{X})$ is trivial, we conclude that $\overbar{X}\subseteq W$. Hence $\mathcal{A}_W$ is not cyclotomic. Due to Lemma~\ref{leungman}, the $S$-ring $\mathcal{A}_W$ is trivial. Therefore $W=V=\overbar{U}$. Thus,
$$\mathcal{A}_{\overbar{G}}=\mathcal{A}_{\overbar{U}}\wr \mathcal{A}_{\overbar{G}/\overbar{U}}.$$

In view of the above equality, to prove the lemma it suffices to show that $L\leq \rad(Y)$ for every $Y\in \mathcal{S}(\mathcal{A})_{G\setminus U}$. Assume the contrary that $N=\rad(Y)<L$ for some $Y\in\mathcal{S}(\mathcal{A})_{G\setminus U}$. The image of $T\subseteq G$ under the canonical epimorphism from $G$ to $G/N$ is denoted by $\widetilde{T}$. As $Y$ lies outside $U$, we have $X\subseteq U\leq \langle Y \rangle$. So $\widetilde{X}$ is a nonregular basic set of $\mathcal{A}_{\langle \widetilde{Y}\rangle}$. This implies that $\mathcal{A}_{\langle\widetilde{Y}\rangle}$ can not be cyclotomic. Therefore $\mathcal{A}_{\langle \widetilde{Y}\rangle}$ is trivial by Lemma~\ref{leungman}. On the other hand, $\widetilde{X}$ and $\widetilde{Y}$ are nontrivial basic sets of $\mathcal{A}_{\langle\widetilde{Y}\rangle}$, a contradiction.
\end{proof}

\subsection{$S$-rings over some other cyclic groups}

\begin{lemm}\label{pq}
Every nontrivial $S$-ring over a cyclic group of order a product of two distinct primes is cyclotomic or a nontrivial wreath product.
\end{lemm}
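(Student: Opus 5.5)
The plan is to use the Leung--Man description in the form of Lemma~\ref{leungman}. Let $G$ be cyclic of order $pq$ with $p\neq q$ primes, and let $\mathcal{A}$ be a nontrivial $S$-ring over $G$. There are two cases, according to $\rad(\mathcal{A})$.

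\emph{Case $|\rad(\mathcal{A})|=1$.} By Lemma~\ref{leungman}(1), $\mathcal{A}$ is a tensor product, possibly trivial, of a normal cyclotomic $S$-ring with trivial radical and some trivial $S$-rings. The only factorizations of $G$ into a direct product of nontrivial subgroups are $G$ itself and $G=G_p\times G_q$. If the tensor product is trivial, then $\mathcal{A}$ is either normal cyclotomic or equal to $\mathcal{T}_G$; the latter is excluded because $\mathcal{A}$ is nontrivial. If the product is $\mathcal{A}_{G_p}\otimes\mathcal{A}_{G_q}$, then each factor lives on a group of prime order and is cyclotomic by Lemma~\ref{cyclprime}. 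This holds even for trivial factors, since $\mathcal{T}_{C_r}=\cyc(\aut(C_r),C_r)$. Then $\mathcal{A}$ is cyclotomic by Eq.~\eqref{cycltens}. In either subcase $\mathcal{A}$ is cyclotomic.

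\emph{Case $|\rad(\mathcal{A})|>1$.} By Lemma~\ref{leungman}(2), $\mathcal{A}$ is a nontrivial $U/L$-wreath product, so $\{e\}<L\leq U<G$. Since $G$ has order $pq$, both $L$ and $U$ are nontrivial proper subgroups, hence of prime order. If $L=U$, then $\mathcal{A}$ is a nontrivial wreath product and we are done.

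The main obstacle is the remaining possibility $L\neq U$, where $L$ and $U$ are different subgroups of prime order. Since $L\leq U$ and both have prime order, this cannot happen. So $L=U$, and $\mathcal{A}=\mathcal{A}_L\wr\mathcal{A}_{G/L}$ is a nontrivial wreath product. This completes both cases; the only real content is the order count $|G|=pq$, which forces every nontrivial section $U/L$ to satisfy $U=L$.
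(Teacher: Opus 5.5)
Your proof is correct and follows the paper's argument. The paper also reduces the statement to Lemma~\ref{leungman}, using Lemma~\ref{cyclprime} and Eq.~\eqref{cycltens} to show that the trivial-radical tensor decompositions over $C_{pq}$ are cyclotomic; your explicit check that $\{e\}<L\leq U<G$ forces $U=L$ when $|G|=pq$ spells out a step the paper leaves implicit.
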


\begin{proof}
An $S$-ring over a group of prime order is cyclotomic by Lemma~\ref{cyclprime} and hence a tensor product of $S$-rings over groups of prime orders is cyclotomic by Eq.~\eqref{cycltens}. Therefore the required statement follows from Lemma~\ref{leungman}.
\end{proof}

\begin{lemm}\label{2primepower}
Let $\mathcal{A}$ be a nontrivial $S$-ring over a cyclic group $G$ of order~$2p^k$, where $p$ is an odd prime and $k\geq 1$. Suppose that $|\rad(\mathcal{A})|=1$. Then $\mathcal{A}$ is normal unless $\mathcal{A}=\mathbb{Z}L\otimes \mathcal{T}_U$, where $L,U\leq G$ are such that $|L|=2$ and $|U|=p^k$.
\end{lemm}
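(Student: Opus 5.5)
By Lemma~\ref{leungman}(1), since $|\rad(\mathcal{A})|=1$, the $S$-ring $\mathcal{A}$ is a tensor product, possibly trivial, of a normal cyclotomic $S$-ring with trivial radical and trivial $S$-rings. The plan is to go through the possible tensor decompositions of $G=L\times U$, where $|L|=2$ and $|U|=p^k$. The Sylow subgroups are the only direct factors available, and every subgroup of $G$ is a $\mathcal{A}$-subgroup candidate of the form $L'\times U'$ with $L'\le L$, $U'\le U$.

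\textbf{Case 1: the decomposition is trivial.} Then $\mathcal{A}$ is either a normal cyclotomic $S$-ring, and we are done, or a trivial $S$-ring $\mathcal{T}_G$. The latter is excluded because $\mathcal{A}$ is nontrivial.

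\textbf{Case 2: the decomposition is nontrivial.} Then $\mathcal{A}=\mathcal{A}_L\otimes\mathcal{A}_U$ with $\mathcal{A}_L=\mathbb{Z}L$, which is both normal cyclotomic and equal to $\mathcal{T}_L$. The factor $\mathcal{A}_U$ is either normal cyclotomic or trivial.
\begin{enumerate}
\tm{a} If $\mathcal{A}_U=\mathcal{T}_U$, then $\mathcal{A}=\mathbb{Z}L\otimes\mathcal{T}_U$, which is exactly the exceptional case in the statement. (For $k\ge1$ and $p^k>3$, $\mathcal{T}_U$ is genuinely nonnormal; for $p^k=3$, $\mathcal{T}_U=\mathbb{Z}U$ is normal anyway, which only helps.)
\tm{b} If $\mathcal{A}_U$ is normal cyclotomic, then $\mathcal{A}=\cyc(K_1\times K_2,G)$ by Eq.~\eqref{cycltens}, where $K_1=\{\id\}$.
\end{enumerate}

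\textbf{Remaining step: normality in subcase (b).} We need $\aut(\mathcal{A})\le\Hol(G)$. By Eq.~\eqref{auttens},
$$\aut(\mathcal{A})=\aut(\mathbb{Z}L)\times\aut(\mathcal{A}_U)=L_r\times\aut(\mathcal{A}_U).$$
Since $\aut(\mathcal{A}_U)\le\Hol(U)$, this group lies in $L_r\times\Hol(U)\le\Hol(G)$. Hence $\mathcal{A}$ is normal.

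The main obstacle I expect is a subtlety in applying Lemma~\ref{leungman}(1): the trivial factors could a priori include $\mathcal{T}_L$ together with $\mathcal{T}_U$. Since $\mathcal{T}_L=\mathbb{Z}L$ is normal, that configuration is still covered by the analysis above. I would also check that no other way of splitting $G$ arises, which holds because $G$ is cyclic and its direct factors are unions of Sylow subgroups.
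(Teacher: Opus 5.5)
Your proof is correct and follows essentially the same route as the paper. You use Lemma~\ref{leungman}(1) to reduce to either a single normal factor or $\mathcal{A}=\mathbb{Z}L\otimes\mathcal{A}_U$ with $\mathcal{A}_U$ normal or equal to $\mathcal{T}_U$, and you get normality in the non-exceptional subcase from Eq.~\eqref{auttens}. The only difference is presentation: the paper argues by contradiction and re-derives the dichotomy for $\mathcal{A}_U$ from its radical, while you read it directly off Lemma~\ref{leungman}(1) and treat the one-factor case explicitly.
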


\begin{proof}
Assume that $\mathcal{A}$ is nonnormal. Then $\mathcal{A}=\mathbb{Z}_L\otimes \mathcal{A}_U$ by Lemma~\ref{leungman}. If $|\rad(\mathcal{A}_U)|>1$, then $|\rad(\mathcal{A})|>1$, a contradiction to the assumption of the lemma. So $|\rad(\mathcal{A}_U)|=1$. If $\mathcal{A}_U\neq \mathcal{T}_U$, then $\mathcal{A}_U$ is normal by Lemma~\ref{leungman} and consequently so is $\mathcal{A}$ by Eq.~\eqref{auttens}, a contradiction to the assumption of the lemma. Thus, $\mathcal{A}_U=\mathcal{T}_U$ and hence $\mathcal{A}=\mathbb{Z}_L\otimes \mathcal{T}_U$. 
\end{proof}

\section{$S$-rings over $E_4\times C_n$}

The main goal of this section is to give a description of $S$-rings over the group $E_4\times C_n$, where  $n\in\{p^k,pq\}$, $p$ and $q$ are odd primes, and $k\geq 1$. We say that an $S$-ring $\mathcal{A}$ over $E_4\times C_n$ is \emph{dense} if $E_4$ and $C_n$ are $\mathcal{A}$-subgroups and \emph{nondense} otherwise.

\begin{theo}\label{e4cn}
Let $n\in\{p^k,pq\}$, where $p$ and $q$ are odd primes and $k$ is a positive integer, and $\mathcal{A}$ a nontrivial $S$-ring over the group $G=H\times D$, where $H\cong E_4$ and $D\cong C_n$. Then $\mathcal{A}$ is cyclotomic, or a nontrivial tensor product, or a nontrivial $S$-wreath product for some $\mathcal{A}$-section $S=U/L$ and one of the following statements holds:

\begin{enumerate}

\tm{1} $|S|\leq 2$;

\tm{2} $\mathcal{A}_{L}$ is $\otimes$-complemented in $\mathcal{A}_{U}$ or $\mathcal{A}_{S}$ is $\otimes$-complemented in $\mathcal{A}_{G/L}$;

\tm{3} $\mathcal{A}$ is nondense and at least one of the $S$-rings $\mathcal{A}_U$, $\mathcal{A}_{G/L}$ is a circulant cyclotomic $S$-ring with trivial radical;

\tm{4} $\mathcal{A}$ is dense, $L\leq D$, $U\geq H$, $\mathcal{A}_U$ is cyclotomic, and $|\rad(\mathcal{A}_{U\cap D})|=1$ unless $n=3^k$ and $|\rad(\mathcal{A}_{U\cap D})|=3$.
\end{enumerate} 

\end{theo}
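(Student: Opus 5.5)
I would split the argument according to whether $\mathcal{A}$ is dense, and in each case reduce to the structure of $S$-rings over the cyclic group $D$ and over the section $H_1P_1$ from Section~5.

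\textbf{Nondense case.} Since $|H|=4$ is coprime to $n$, I apply Lemma~\ref{nonpowernew1} and Lemma~\ref{nonpowernew2} with the decomposition $G=D'\times C_r$, where $r$ is a prime divisor of $n$ (for $n=p^k$, $r=p$ and $D'=H\times D_{p^{k-1}}$; for $n=pq$ I try both primes), or alternatively with the $2$-part, namely $G=H\times D$ itself, treating $D$ as the "$P$".

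The cleaner route is the following. Let $H_1$ be the maximal $\mathcal{A}$-subgroup inside $H$ and $D_1$ the least $\mathcal{A}$-subgroup containing the relevant $p$-part. When $H$ is not an $\mathcal{A}$-subgroup, we have $H_1<H$, so $|H_1|\le 2$. Lemma~\ref{nonpowernew1} then gives one of three outcomes:
\begin{enumerate}
\tm{a} a star product, which is a tensor product or a nontrivial wreath product with $|S|=|L/(L\cap U)|$ small;
\tm{b} a wreath product with trivial top, where $|S|=|H_1|\le 2$, giving Statement~(1);
\tm{c} a nontrivial $(H_1P_1)/P_1$-wreath product with $|S|\le |H_1|\le 2$.
\end{enumerate}
If instead $D$ is not an $\mathcal{A}$-subgroup, I pass to the dual $S$-ring via Lemma~\ref{dual}. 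Duality swaps the roles of subgroups and quotients, and I expect it to reduce this situation to the previous one, or to a configuration in which $\mathcal{A}_U$ or $\mathcal{A}_{G/L}$ is a circulant $S$-ring. There I invoke Lemma~\ref{leungman} and Lemma~\ref{pnonreg} to obtain either a trivial radical and cyclotomicity, giving Statement~(3), or a further decomposition.

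\textbf{Dense case.} Here $H$ and $D$ are both $\mathcal{A}$-subgroups, so by Lemma~\ref{tenspr} every basic set $X$ projects to basic sets $X_H$ and $X_D$, and $\mathcal{A}\ge\mathcal{A}_H\otimes\mathcal{A}_D$. By Lemma~\ref{orbit}, the fibres $X_{h,l}$ are orbits of a single group $K^D\le\aut(D)$.

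First I use Lemma~\ref{leungman} on $\mathcal{A}_D$. If $\rad(\mathcal{A}_D)$ is trivial, then $\mathcal{A}_D$ is cyclotomic (or trivial). In that case I aim to show that $\mathcal{A}$ is cyclotomic, with the group $K$ built from $\aut(H)\cong S_3$ acting via the "gluing" of $X_H$ with the $K^D$-orbits, or else a tensor product. The gluing can only occur through a common quotient of $\aut(H)$ and $\aut(D)$, that is, through groups of order $2$ or $3$ (order $3$ needing $3\mid p-1$, or $p=3$).

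If instead $\rad(\mathcal{A}_D)$ is nontrivial, I take the least $\mathcal{A}$-subgroup $L\le D$ and use Lemma~\ref{cyclpwreath} together with Lemma~\ref{pnonreg} to lift the $U'/L$-wreath structure of $\mathcal{A}_D$ to an $S$-wreath structure of $\mathcal{A}$ with $U=HU'\ge H$. Showing that $\mathcal{A}_U$ is cyclotomic is exactly the trivial-radical subcase applied to $U=H\times(U\cap D)$. The $3^k$ exception comes from Lemma~\ref{cyclradp}: $\aut(H)$ can mix with the order-$3$ part of $\aut(C_{3^k})$, which allows $|\rad(\mathcal{A}_{U\cap D})|=3$.

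\textbf{Main obstacle.} I expect the hardest step to be the dense trivial-radical subcase: proving that every basic set outside $H\cup D$ is genuinely an orbit of some $K\le\aut(H)\times\aut(D)$, so that $\mathcal{A}$ is cyclotomic. I would first try to fix the intersection numbers $|X\cap hD|$ via Lemma~\ref{intersection}, and then use Lemma~\ref{sch} with $p=2$ to show $X_D$ is regular. A case analysis on $\mathcal{A}_H\in\{\mathbb{Z}H,\mathcal{T}_H,\text{rank }3\}$ then pins down the structure, with Lemma~\ref{trivradorb} bounding $|X_{h,l}|\le p-1$.
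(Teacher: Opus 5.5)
\textbf{Nondense case.} This part rests on a misapplication of Lemma~\ref{nonpowernew1}. All of Section~5 assumes $G=H'\times P$ with $|P|=r$ prime and $r\nmid|H'|$. For $n=p^k$ with $k\ge 2$, the group $E_4\times C_{p^k}$ has no such decomposition. In particular $G\ne(H\times D_{p^{k-1}})\times C_p$, and neither $D$ nor $E_4$ can play the role of $P$. For $n=pq$ the decomposition $G=(E_4\times C_q)\times C_p$ does work. But then $H_1$ is a maximal $\mathcal{A}$-subgroup of $p'$-order, which can have order $q$ or $2q$, so your bound $|S|\le|H_1|\le 2$ is unfounded. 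Your trichotomy (a)--(c) is therefore false, and it misses the nontrivial wreath products with $4\nmid|U|$ and $|U/L|>2$, which is where Statement~(3) comes from.

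A concrete failure: take $n=p^2$, $P\le D$ of order $p$, and $1\ne K\le\aut(D)$ of order dividing $p-1$. The $D/P$-wreath product $\cyc(K,D)\wr_{D/P}\bigl(\mathbb{Z}(HP/P)\otimes\cyc(K^{D/P},D/P)\bigr)$ has no nontrivial $\mathcal{A}$-subgroup inside $H$, since each $h\in H^\#$ lies in the basic set $hP$. So your $H_1$ is trivial, yet none of the outcomes holds: $H_1P_1\ne G$, $\mathcal{A}\ne\mathcal{T}_G$, and $\mathcal{A}$ is not a $P_1/P_1$-wreath product for $P_1\in\{P,D\}$, because the basic sets of $\cyc(K,D)$ in $D\setminus P$ have trivial radical.

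The paper instead treats $n=p^k$ with Lemma~\ref{2set}, which iterates Schur's second multiplier theorem on $X^{[p]}$. It then needs all of Proposition~\ref{refine} to turn such wreath products into ones satisfying (1)--(3). That proposition uses Lemma~\ref{rada}, a second use of duality through Proposition~\ref{hnot1}, and an imported analysis of basic sets containing an involution. Your duality step for $D$ is the right idea (it is Theorem~\ref{dnot}), but it can only transport what you have proved when $H$ is not an $\mathcal{A}$-subgroup.

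\textbf{Dense case.} Here the outline skips the hard steps.

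\emph{Trivial radical, $\mathcal{A}_D$ trivial.} If $\rad(\mathcal{A}_D)$ is trivial but $\mathcal{A}_D=\mathcal{T}_D$ with $n$ composite, then $\mathcal{A}$ cannot be cyclotomic, since its section $\mathcal{A}_D$ would then be cyclotomic. So you must prove $\mathcal{A}=\mathcal{A}_H\otimes\mathcal{A}_D$ directly. Your proposed step of showing $X_D$ regular is false here, since $X_D=D^\#$. The paper handles $n=p^k$ by a congruence argument. The element $\xi=\sum_{h\in X_H}\underline{X_h}^2$ lies in $\mathcal{A}$, and modulo $p$ its coefficients on $P^\#$ differ from those on $D\setminus P$. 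This contradicts $D^\#\in\mathcal{S}(\mathcal{A})$.

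\emph{Trivial radical, $\mathcal{A}_D$ cyclotomic.} Writing down the glued group $K$ is easy. The real work is showing every basic set is a $K$-orbit, and this needs Claim~1 in the proof of Theorem~\ref{hpasubgroups2}: $X^{(r)}$ is basic also for primes $r\mid n$, proved via Lemma~\ref{sch} and Lemma~\ref{hpasubgroups1}. Your outline has no substitute for this.

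\emph{Nontrivial radical.} Lemmas~\ref{cyclpwreath} and~\ref{pnonreg} only describe $\mathcal{A}_D$. They say nothing about basic sets $X$ outside $H\times U$, which is the whole point. The key argument is Claim~4 in Theorem~\ref{hpasubgroups3}. There $\lambda_X=1$ forces $|X\cap Lx|\in\{|L|,|L|/|X_H|\}$, so $L\not\le\rad(X)$ only if $|X_H|=|L|=3$. That is the source of the $3^k$ exception, not Lemma~\ref{cyclradp}. In that case one must separately show that $\mathcal{A}_{H\times W}$ is cyclotomic, even though $\rad(\mathcal{A}_W)=P$.

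Finally, your trivial-radical subcase applied to $H\times(U\cap D)$ may return $\mathcal{A}_H\otimes\mathcal{T}_{U\cap D}$. This is not cyclotomic when $|U\cap D|$ is composite. In that case you must observe that $L=U\cap D$ and conclude Statement~(2) instead.
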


Observe that $U\cap D$ in Statement~$(4)$ of Theorem~\ref{e4cn} is cyclic and hence $\rad(\mathcal{A}_{U\cap D})$ is defined correctly. 

Clearly, Theorem~\ref{main15} immediately follows from Theorem~\ref{e4cn}.

We divide the proof of Theorem~\ref{e4cn} into two cases depending on whether $\mathcal{A}$ is nondense or dense. We prove that every nondense nontrivial $S$-ring over $G$ is a nontrivial tensor or generalized wreath product and every dense $S$-ring over $G$ is cyclotomic or a nontrivial generalized wreath product. The nondense case is divided into two subcases depending on whether $H$ or $D$ is not an $\mathcal{A}$-subgroup (Theorems~\ref{hnot} and~\ref{dnot}, respectively), whereas the dense case is divided into two subcases depending on whether $\rad(\mathcal{A}_D)$ is trivial or not (Theorems~\ref{hpasubgroups2} and~\ref{hpasubgroups3}, respectively).

In the first subsection, we study a structure of a basic set of $\mathcal{A}$. The second and third subsections deal with the above mentioned cases when $\mathcal{A}$ is nondense and when $\mathcal{A}$ is dense, respectively. In the fourth subsection, we give a proof of Theorem~\ref{e4cn}. The fifth subsection contains some auxiliary statements to be used in the proof of Theorem~\ref{main1}.

Throughout this section, $p$ and $q$ are odd primes, $k$ is a positive integer, $n\in\{p^k,pq\}$, $D\cong C_n$, $P=D_p=\{g\in D:~g^p=e\}$, $H\cong E_4$, $G=H\times D$, and $\mathcal{A}$ is an $S$-ring over $G$.

\subsection{Structure of a basic set}

\begin{lemm}\label{2set}
Let $n=p^k$ and $X\in \mathcal{S}(\mathcal{A})$. Suppose that $X\nsubseteq D$. Then one of the following statements holds:
\begin{enumerate}

\tm{1} $P\leq \rad(X)$;

\tm{2} $\langle (X^{[p]})_H \rangle$ is a nontrivial $\mathcal{A}$-subgroup of~$H$;

\tm{3} $X\cap D\neq \varnothing$ and $X\cup \{e\}$ is an $\mathcal{A}$-subgroup of $G$. 

\end{enumerate}
\end{lemm}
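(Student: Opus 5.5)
The plan is to apply the second Schur multiplier theorem (Lemma~\ref{sch}) with the prime $p$. Since $|H|=4$ and $p$ is odd, the subgroup $\{g\in G:~g^p=e\}$ equals $P$. So for every $\mathcal{A}$-set $T$,
$$T^{[p]}=\{x^p:~x\in T,~|T\cap Px|\not\equiv 0 \bmod p\}=\{x^p:~x\in T,~Px\nsubseteq T\},$$
because $1\le |T\cap Px|\le p$ for $x\in T$. For $x=hd$ with $h\in H$ and $d\in D$ we have $x^p=hd^p$, since $h^2=e$ and $p$ is odd. Hence raising to the power $p$ preserves the $H$-coordinate, so $(X^{[p]})_H=\{h\in X_H:~Px\nsubseteq X \text{ for some } x\in X\cap hD\}$. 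Assume Statement~$(1)$ fails, i.e. $P\nleq\rad(X)$. Then there is $x\in X$ with $Px\nsubseteq X$, and $X^{[p]}$ is a nonempty $\mathcal{A}$-set.

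First I treat the case where $P$ is an $\mathcal{A}$-subgroup. By Lemma~\ref{intersection}, $|X\cap Px|$ is the same for all $x\in X$, so it is less than $p$ for every $x\in X$. Hence $X^{[p]}=X^{(p)}$ and $(X^{[p]})_H=X_H$, which contains a nonidentity element because $X\nsubseteq D$. It remains to show that $\langle X_H\rangle$ is an $\mathcal{A}$-subgroup. For this I iterate: pick a basic set $Z\subseteq X^{(p)}$; its $H$-projection lies in $X_H$, and $Z$ lies in $H\times D_{p^{k-1}}$. Apply the same dichotomy to $Z$, and continue down the chain $D_{p^{k-1}}>D_{p^{k-2}}>\dots$. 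Either at some step the current basic set has $P$ in its radical, or after at most $k$ steps we reach an $\mathcal{A}$-set contained in $H$ (the image under $x\mapsto x^{p^k}$). I then compare $\underline{X}^{\,p^k}$-type multiplicities, or use the fact that $x\mapsto x^{p^k}$ acts as the projection to $H$ on each fibre, to show that the resulting $\mathcal{A}$-subgroup $\langle\cdot\rangle\le H$ is exactly $\langle X_H\rangle$. Since $H\cong E_4$ has only three nontrivial proper subgroups, a small case check should close any gap between "some nontrivial $\mathcal{A}$-subgroup of $\langle X_H\rangle$" and "$\langle X_H\rangle$ itself". This gives Statement~$(2)$.

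Next I treat the case where $P$ is not an $\mathcal{A}$-subgroup. Here Lemma~\ref{intersection} is unavailable, and I expect Statement~$(3)$ to arise. Let $P_1$ be the least $\mathcal{A}$-subgroup containing $P$; then $P_1\cap D$ is a cyclic $p$-group strictly larger than $P$, or $P_1$ meets $H$ nontrivially. I would rerun the argument above with $P_1$ in place of $P$, using Lemma~\ref{intersection} with respect to $P_1$. When $X\cap D\neq\varnothing$ and some coset of $P$ is split by $X$ in a non-uniform way (the typical example being $X=G^\#$), I would invoke Lemma~\ref{separ} with a suitable subgroup $H'$ such that $H'\le\rad(X\setminus H')$. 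This yields $X=\langle X\rangle\setminus\rad(X)$ with $\rad(X)$ trivial, because $P\nleq \rad(X)$ and radicals here are $\mathcal{A}$-subgroups. That is, $X\cup\{e\}=\langle X\rangle$ is an $\mathcal{A}$-subgroup, which is Statement~$(3)$.

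The main obstacle is the identification $\langle (X^{[p]})_H\rangle=\langle X_H\rangle$ being an $\mathcal{A}$-subgroup. Lemma~\ref{sch} only produces $\mathcal{A}$-sets whose $H$-projections are not obviously $\mathcal{A}$-sets, because $H$ itself need not be an $\mathcal{A}$-subgroup. The companion difficulty is the case where $P$ is not an $\mathcal{A}$-subgroup and the fibre counts $|X\cap Px|$ fail to be constant. For the first difficulty I would try the iteration down the powers $x\mapsto x^{p^j}$ combined with Lemma~\ref{burn} applied with $m\equiv 1\bmod 4$. For the second I would try Lemma~\ref{separ}.
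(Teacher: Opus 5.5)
Your preliminary reductions are correct, but there is a genuine gap. The step you flag as the main obstacle is never carried out, and it is essentially the whole lemma. Following one basic set $Z\subseteq X^{(p)}$ at a time leaves the branch $P\le\rad(Z)$ unresolved. Even when the chain reaches $H$, it only shows that some subset of $X_H$ is an $\mathcal{A}$-set. No case check upgrades that to Statement~$(2)$: if $\langle X_H\rangle=H$ and you only know that a subgroup of order~$2$ is an $\mathcal{A}$-subgroup, you know nothing about $H$.

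What you need is that $T\mapsto T^{[p]}$ discards nothing during the iteration. The missing tool is Lemma~\ref{orbit} combined with Lemma~\ref{trivradorb}(1). A nonempty fibre $X_{h,l}=h^{-1}X\cap D_l^*$ with $P\nleq\rad(X_{h,l})$ is an orbit of a subgroup of $\aut(D_l)$ of order at most $p-1$. So it has at most $p-1$ elements, and for $l\ge p^2$ these lie in pairwise distinct $P$-cosets. Every other fibre is a union of $P$-cosets and is killed by $[p]$.

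Accordingly, the paper does not split on whether $P$ is an $\mathcal{A}$-subgroup. It lets $Y$ be the union of the fibres $hX_{h,l}$ with trivial radical. From the size bound it asserts $Y^{[p]}=X^{[p]}$ and that $[p]$ acts on $Y$ and its iterates simply as $y\mapsto y^p$, which keeps the $H$-coordinate. It concludes that $Y^{[p]^k}=Y_H$ is an $\mathcal{A}$-set inside $H$. This answers your obstacle directly: after $k$ powerings the set already lies in $H$, so it is its own $H$-projection.

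Your second case contains no argument. Replacing $P$ by $P_1$ does not help, since Lemma~\ref{sch} is tied to $P=\{g:g^p=e\}$, and Lemma~\ref{intersection} for $P_1$ says nothing about $|X\cap Px|$. Nor is $(3)$ what happens there. Take $\mathcal{A}=\mathbb{Z}A\wr\mathcal{T}_{G/A}$ with $A<H$ of order~$2$ and $X=G\setminus A$. Then $P$ is not an $\mathcal{A}$-subgroup, $X^{[p]}=A$ gives $(2)$, and $(3)$ fails.

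The hypothesis of Lemma~\ref{separ} you need is $X\cap P\neq\varnothing$ and $P\le\rad(X\setminus P)$. With a larger $H'$, the conclusion $\rad(X)\le H'$ would not force $\rad(X)=\{e\}$. Proving this hypothesis is the point. In the paper it comes from the case $Y_H=\{e\}$, where all trivial-radical fibres lie in $D$. If the highest one is in $D_{p^j}^*$ with $j\ge 2$, powering $j-1$ times gives a nonempty $\mathcal{A}$-set in $P^\#$. So $P$ is an $\mathcal{A}$-subgroup, and Lemma~\ref{intersection} forces $P\le\rad(X)$, a contradiction. If $j=1$, then $P\le\rad(X\setminus P)$, and Lemma~\ref{separ} applied with $P$ gives $(3)$.

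One caveat if you follow this route: the size bound alone does not prevent $Y$ or one of its iterates from containing a whole coset $hP$, which $[p]$ then discards.
\begin{itemize}
\item If $X\cap H=\varnothing$, let $p^i$ be the least level met by $X$. Lemma~\ref{burn} with $m\equiv 1\pmod{2p^i}$ shows that all trivial-radical fibres sit at level $p^i$, which rules this out.
\item If $X\cap H^\#\neq\varnothing$, it can occur. For $\mathcal{A}=\mathcal{T}_G$ and $X=G^\#$ one has $Y_H=H$ but $X^{[p]}=\{e\}$, so $(2)$ fails and $(3)$ holds; the paper's write-up passes over this case. There Lemma~\ref{burn} with odd $m$ coprime to $p$ gives $X^{(m)}=X$, so every fibre is $\varnothing$ or $D_l^*$. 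Hence $X^{[p]}\subseteq H$. Either $X^{[p]}$ meets $H^\#$, giving $(2)$, or the only part of $X$ that is not a union of $P$-cosets is $X\cap P=P^\#$, and Lemma~\ref{separ} gives $(3)$.
\end{itemize}
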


\begin{proof}
Assume that $P\nleq \rad(X)$, i.e. Statement~$(1)$ of the lemma does not hold. Then the union $Y$ of all $hX_{h,l}=X\cap hD_l^*$, $h\in H$, $l\in \divv(n)$, such that $X_{h,l}\neq \varnothing$ and $P\nleq \rad(X_{h,l})$ is nonempty. As $D\cong C_{p^k}$ and $X_{h,l}\subseteq D$, the condition $P\nleq \rad(X_{h,l})$ is equivalent to $|\rad(X_{h,l})|=1$. Lemma~\ref{orbit} implies that each $X_{h,l}$ is an orbit of some subgroup~$K$ of $\aut(D_l)$. From Lemma~\ref{trivradorb}(1) it follows that $|K|\leq p-1$ and hence
\begin{equation}\label{sizep1} 
|X_{h,l}|\leq p-1
\end{equation} 
for all $h\in H$ and $l\in \divv(n)$ such that $X_{h,l}\neq \varnothing$ and $P\nleq \rad(X_{h,l})$. 

Put $H_0=Y_H$, $Y^{[p]^0}=Y$, $Y^{[p]^1}=Y^{[p]}$, and $Y^{[p]^i}=(Y^{[p]^{i-1}})^{[p]}$ for $i\geq 2$. Due to the definition of $Y$ and Eq.~\eqref{sizep1}, we have $Y^{[p]}=X^{[p]}$ and $Y^{[p]^i}=\{y^p:~y\in Y^{[p]^{i-1}}\}$. Therefore $(Y^{[p]^i})_H=H_0$ for every $i\geq 1$, in particular, $(Y^{[p]})_H=(X^{[p]})_H=H_0$. By Lemma~\ref{sch}, the set $Y^{[p]^i}$ is an $\mathcal{A}$-set for every $i\geq 1$. So $Y^{[p]^k}=H_0$ is an $\mathcal{A}$-set. Thus, $\langle H_0 \rangle$ is an $\mathcal{A}$-subgroup. If $H_0\neq \{e\}$, then Statement~$(2)$ of the lemma holds.

In view of the previous paragraph, we may assume that $H_0=\{e\}$ and hence 
$$Y\subseteq X\cap D\neq \varnothing.$$
Let $l=p^j$ be the greatest integer such that $X_{e,l}\neq \varnothing$ and $P\nleq \rad(X_{e,l})$. By the supposition of the lemma, $X\setminus D\neq \varnothing$. So
\begin{equation}\label{separe4cpk}
X\setminus D_l\neq \varnothing~\text{and}~P\leq \rad(X\setminus D_l).
\end{equation}
If $j\neq 1$, then $Y^{[p]^{j-1}}$ is a nonemepty $\mathcal{A}$-set inside $P^\#$. Therefore $P=\langle Y^{[p]^{j-1}} \rangle$ is an $\mathcal{A}$-subgroup. Now Lemma~\ref{intersection} and Eq.~\eqref{separe4cpk} imply that $P\leq \rad(X)$ in contrast to the assumption $P\nleq \rad(X)$. If $j=1$, then $X_{e,l}\subseteq P^\#$. Eq.~\eqref{separe4cpk} and Lemma~\ref{separ} applied to $X$ and $P$ yield that $\rad(X)\leq P$ and $X=\langle X \rangle \setminus \rad(X)$. By the assumption, $P\nleq \rad(X)$ and consequently $\rad(X)$ is trivial. Thus, $X=\langle X \rangle \setminus \{e\}$ and Statement~$(3)$ of the lemma holds.
\end{proof}

\subsection{Nondense $S$-rings}

\begin{theo}\label{hnot}
Suppose that $H\cong E_4$ is not an $\mathcal{A}$-subgroup. Then $\mathcal{A}$ is trivial, or a nontrivial tensor product, or a nontrivial $U/L$-wreath product for some $\mathcal{A}$-section $U/L$ satisfying one of the following conditions:
\begin{enumerate}

\tm{1} $|U/L|\leq 2$;

\tm{2} $\mathcal{A}_{L}$ is $\otimes$-complemented in $\mathcal{A}_{U}$;

\tm{3} $4 \nmid |U|$ and $|\rad(\mathcal{A}_{U})|=1$.

\end{enumerate}
\end{theo}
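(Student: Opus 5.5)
We plan to apply the results of Section~5 with the roles chosen so that $H$ (or a part of it) plays the role of the ``$H$'' of that section. Since $H$ is not an $\mathcal{A}$-subgroup, we cannot work inside $H$ directly. Instead we consider the structure relative to the Sylow $2$-subgroup $H$ and the cyclic complement $D$. The first step is to find the $\mathcal{A}$-subgroups contained in $H$. Let $H_1$ be the maximal $\mathcal{A}$-subgroup inside $H$; it is well defined because the product of two $\mathcal{A}$-subgroups is an $\mathcal{A}$-subgroup. Then $H_1<H$, so $|H_1|\in\{1,2\}$.

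If $n=p$ or, more generally, if we may view $G$ as $H'\times P'$ with $P'$ of prime order coprime to $|H'|$, we would like to invoke Lemma~\ref{nonpowernew1} directly. For general $n$ we instead work with the $2$-part. We view $G=D\times H$ and filter $H$ through a chain of order-$2$ subgroups. Concretely, take a subgroup $T\leq H$ of order~$2$ and write $G=(D\times H_0)\times T$ for some complement $H_0$. Then apply Lemma~\ref{nonpowernew2} with $p=2$. This gives a star product decomposition $\mathcal{A}_{H_1'P_1'}=\mathcal{A}_{H_1'}\star\mathcal{A}_{P_1'}$, where $H_1'$ is the maximal $\mathcal{A}$-subgroup in $D\times H_0$ and $P_1'$ is the least $\mathcal{A}$-subgroup containing $T$. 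By the remark after the definition of star product, this yields either a tensor product or a nontrivial wreath product over a section of the form $L/(L\cap U)$. Together with Lemma~\ref{nonpowernew1}, where statement~(2) gives a wreath product with section of trivial size, this covers most configurations. The case analysis will be organized by $|H_1|$ and by whether $D$ is an $\mathcal{A}$-subgroup.

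The key steps, in order, are as follows.

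(a) Use Lemma~\ref{2set} for basic sets $X\nsubseteq D$, when $n=p^k$. Its Statement~(2) is impossible unless $\langle (X^{[p]})_H\rangle=H_1$ has order~$2$. Statement~(3) makes $\langle X\rangle$ an $\mathcal{A}$-subgroup with $\mathcal{A}_{\langle X\rangle}$ trivial. In the case $n=pq$ we use Lemma~\ref{orbit} and Lemma~\ref{sch} for each prime separately.

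(b) Conclude that, except in the trivial case, there is a nontrivial $\mathcal{A}$-subgroup $L\leq D$ (typically $P$) with $L\leq\rad(X)$ for all basic $X$ outside some $\mathcal{A}$-subgroup $U$. Here $U$ is generated by the basic sets with $P\nleq\rad(X)$.

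(c) Identify $U$. If $4\nmid|U|$, then $U\leq H_1\times D$, which is essentially cyclic of order $n$ or $2n$. We then show $|\rad(\mathcal{A}_U)|=1$ by choosing $L$ to be the least nontrivial $\mathcal{A}$-subgroup, as in Lemma~\ref{cyclpwreath}; this gives condition~(3). If instead $U\geq H$, we get a contradiction, because $H\leq U$ would force $H$ (the Sylow $2$-subgroup of the $\mathcal{A}$-subgroup $U$) to be an $\mathcal{A}$-subgroup via Lemma~\ref{sch} applied with $p$ odd powers.

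(d) Handle the leftover case where $\mathcal{A}_U$ splits as $\mathcal{A}_L\otimes\mathcal{A}_{H_1}$, which gives condition~(2). Also handle the case where the section $U/L$ has order at most~$2$, which gives condition~(1).

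The main obstacle will be step~(c): ensuring that the chosen $U$ is not divisible by~$4$ while also satisfying the radical condition. Here $\mathcal{A}_U$ must be circulant with trivial radical, or else the decomposition must be refined. To do this we will first try to show that any basic set meeting $H\setminus H_1$ has $P$ in its radical. We will do this by combining Lemma~\ref{separ} with the fact that the projection of $\mathcal{A}$-sets to $H$ cannot generate $H$, and iterating on $\mathcal{A}_{G/P}$ by induction on $n$.
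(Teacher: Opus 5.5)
There is a genuine gap, and it sits where you expect the main obstacle. In step~(c) you claim that $U\geq H$ is contradictory, because the Sylow $2$-subgroup of an $\mathcal{A}$-subgroup would have to be an $\mathcal{A}$-subgroup via Lemma~\ref{sch}. This is false, and this configuration is exactly what the theorem has to handle.

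Take $A\leq H$ of order~$2$ and $\mathcal{A}=\mathbb{Z}A\wr\mathcal{T}_{G/A}$, whose basic sets are $\{e\}$, $A^\#$, $G\setminus A$. This $S$-ring is nontrivial and not a tensor product, and $H$ is not an $\mathcal{A}$-subgroup. Yet the subgroup generated by the basic sets whose radical does not contain $P$ is all of $G\geq H$. The theorem holds here only via the section $A/A$ and condition~(1).

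Step~(b) also fails. Suppose Statement~(3) of Lemma~\ref{2set} occurs, so the least $\mathcal{A}$-subgroup $P_1$ containing $P$ satisfies $\mathcal{A}_{P_1}=\mathcal{T}_{P_1}$ and $P_1\nleq D$. Then no nontrivial $\mathcal{A}$-subgroup lies in $D$ at all.

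The paper treats these situations directly in Proposition~\ref{hnot1}. Case~1 is precisely the case where Statement~(3) of Lemma~\ref{2set} occurs. In Case~2, when $N\geq H$, it shows that a basic set $Z$ with $P\nleq\rad(Z)$ and $Z_H\nsubseteq A$ must equal $V\setminus A$ with $\mathcal{A}_V=\mathbb{Z}A\wr\mathcal{T}_{V/A}$. It then shows $\mathcal{A}$ is the $A/A$- or $V/V$-wreath product. For $n=pq$ the paper does not argue prime by prime. It notes that one of the subgroups of order $4p$, $4q$ is not an $\mathcal{A}$-subgroup and applies Lemma~\ref{nonpowernew1} with the Hall $p'$-subgroup.

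Your reduction through Lemma~\ref{nonpowernew2} is not available either. Section~5 requires the prime of the cyclic factor to be coprime to the order of its complement, but $D\times H_0$ has even order $2n$. The Sylow $2$-subgroup $E_4$ does not split off a $C_2$ with odd complement. The paper uses Lemma~\ref{nonpowernew2} only inside a subgroup $U$ with $4\nmid|U|$.

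In that case with $|U|$ even, your recipe of taking the least nontrivial $\mathcal{A}$-subgroup as in Lemma~\ref{cyclpwreath} breaks down, for three reasons:
\begin{enumerate}
\item $U$ is not a $p$-group, and the subgroups of orders $2$ and $p$ can both be $\mathcal{A}_U$-subgroups, so there need not be a least one.
\item Refining the section inside $U$ can produce $U_1/L_1$ with $|L_1|=2$ and $L_1\nleq L$. Then it is unclear that $\mathcal{A}$ is a $U_1/L_1$-wreath product at all.
\item In some of these subcases the correct conclusion is condition~(2) or~(1), not~(3).
\end{enumerate}
Closing this case is the content of Proposition~\ref{refine}. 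It needs Lemma~\ref{mw1}, and Lemma~\ref{rada}, whose proof applies Proposition~\ref{hnot1} to the dual $S$-ring via Lemma~\ref{dual}. It also needs the description from~\cite{HK} of basic sets containing an involution. Your plan contains none of these ingredients, so as it stands it does not prove the theorem.
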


We divide the proof of Theorem~\ref{hnot} into two propositions. 

\begin{prop}\label{hnot1}
If $H$ is not an $\mathcal{A}$-subgroup, then Theorem~\ref{hnot} holds unless $\mathcal{A}$ is a nontrivial $U/L$-wreath product for some $\mathcal{A}$-section $U/L$ such that $4 \nmid |U|$ and $|L|\neq 2$.
\end{prop}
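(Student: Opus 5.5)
The plan is to apply Lemma~\ref{nonpowernew1} and Lemma~\ref{nonpowernew2} with the roles of $H$ and $P$ adapted to our setting. We view $G$ as $H'\times Q$, where $Q$ is a Sylow subgroup of order~$2$ contained in $H$ and $H'$ is its complement. Since $H$ is not an $\mathcal{A}$-subgroup, let $H_1$ be a maximal $\mathcal{A}$-subgroup contained in $H$; then $H_1<H$. The argument then splits by $|H_1|$.

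\textbf{Case $|H_1|=2$.} Here $H_1$ is itself a nontrivial $\mathcal{A}$-subgroup of order~$2$. I would use the decomposition $G=(H_1\times D)\times C_2$ and argue via Lemma~\ref{nonpowernew1}, applied with prime $2$ and with $D$ playing the role of the coprime part. The lemma gives three outcomes. Outcome~(1), a star product, yields either a tensor product or a wreath product whose section has order $|L/(L\cap U)|\le 2$, which is condition~(1). Outcome~(2), $\mathcal{A}_{H_1}\wr\mathcal{A}_{G/H_1}$, is a wreath product with $L=U=H_1$, so $|U/L|=1$. Outcome~(3) is a nontrivial $S$-wreath product; here I would check that either $|S|\le2$, or $L$ has order $\ne 2$ with $4\nmid|U|$, or the tensor-complement condition~(2) holds via Lemma~\ref{tenspr}(2).

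\textbf{Case $H_1$ trivial.} No nontrivial subgroup of $H$ is an $\mathcal{A}$-subgroup. I would apply Lemma~\ref{nonpowernew1} to $G=H\times D$ with respect to each prime $p\mid n$, taking $P$ to be a Sylow subgroup. Since $H_1=\{e\}<H$, one of three outcomes holds. If $\mathcal{A}=\mathcal{A}_{\{e\}}\star\mathcal{A}_{P_1}$ with $P_1=G$, the decomposition is trivial, and one needs to show this forces the trivial $S$-ring or some other admissible form. If $\mathcal{A}=\mathcal{T}_G$, then $\mathcal{A}$ is trivial. Otherwise $\mathcal{A}$ is the nontrivial $P_1/P_1$-wreath product, i.e. a genuine wreath product with $U=L=P_1$, so $|U/L|=1$. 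The remaining situations are exactly those excluded in the proposition: a nontrivial $U/L$-wreath product with $4\nmid|U|$ and $|L|\neq2$.

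\textbf{Main obstacle.} The hardest step is the bookkeeping needed to show that every outcome either matches one of conditions~(1)--(3) or is the excluded case. The delicate instances are those where $L$ contains the order-$2$ subgroup $H_1$ together with part of $D$. For these I would use Lemma~\ref{tenspr} to obtain the tensor decomposition $\mathcal{A}_U=\mathcal{A}_{L}\otimes\mathcal{A}_{U\cap D'}$, which gives condition~(2).
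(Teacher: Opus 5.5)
\textbf{The main lemmas do not apply to $G$.} Your strategy rests on Lemma~\ref{nonpowernew1} and Lemma~\ref{nonpowernew2}. Both concern $G=H\times P$ with $P\cong C_p$ of \emph{prime} order and $p$ coprime to the order of the complement.

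In $G=E_4\times C_n$ the Sylow $2$-subgroup is $H\cong E_4$ itself, so there is no ``Sylow subgroup of order~$2$'' in $H$. If you split off a subgroup of order~$2$ of $H$, its complement (e.g.\ $H_1\times D$) has even order, and the coprimality hypothesis fails. Hence your Case $|H_1|=2$, where the lemma is applied ``with prime~$2$'', is unjustified, and Lemma~\ref{nonpowernew2} cannot be applied to $\mathcal{A}$ at all.

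Likewise, when $n=p^k$ with $k\geq 2$, the Sylow $p$-subgroup $D\cong C_{p^k}$ is not of prime order, so Lemma~\ref{nonpowernew1} cannot be applied with respect to $p$ either. Thus for $n=p^k$, $k\geq 2$ (the bulk of the proposition) your proposal contains no valid argument.

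\textbf{What the paper uses instead.} The paper needs a genuinely different tool here: Lemma~\ref{2set}. It is obtained from the second Schur multiplier theorem (the $\mathcal{A}$-sets $X^{[p]}$, Lemma~\ref{sch}) together with Lemmas~\ref{orbit} and~\ref{trivradorb}. It says that a basic set $X\not\subseteq D$ satisfies one of: $P\leq\rad(X)$; or it yields a nontrivial $\mathcal{A}$-subgroup $\langle (X^{[p]})_H\rangle\leq H$; or $X\cup\{e\}$ is an $\mathcal{A}$-subgroup with $X\cap D\neq\varnothing$.

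The proof then splits according to whether the last alternative occurs.
\begin{enumerate}
\item[(a)] If it occurs, then $\mathcal{A}_{P_1}=\mathcal{T}_{P_1}$. Working with the order-$2$ $\mathcal{A}$-subgroup $A=\langle (X^{[p]})_H\rangle$, one gets that $\mathcal{A}$ is trivial, or equals $\mathbb{Z}A\otimes\mathcal{T}_{P_1}$, or is a wreath product over a section of order at most~$2$.
\item[(b)] If it does not occur, one studies the $\mathcal{A}$-subgroup $N$ generated by the basic sets whose radical does not contain $P$. If $4\nmid|N|$, you land in the excluded $N/P_1$-wreath product. If $4\mid|N|$, one shows $\mathcal{A}_V=\mathbb{Z}A\wr\mathcal{T}_{V/A}$ with $V=P_1$, and derives an $A/A$- or $V/V$-wreath decomposition.
\end{enumerate}
Nothing in your plan plays this role.

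\textbf{The case $n=pq$.} Here Lemma~\ref{nonpowernew1} \emph{is} applicable, but your setup is still off. The lemma's $H_1$ must be a maximal $\mathcal{A}$-subgroup of the $p'$-Hall subgroup $H\times Q$ (with $|Q|=q$), not of $H$. Its hypothesis is that $H\times Q$ is not an $\mathcal{A}$-subgroup, and this does not follow from your ``$H_1=\{e\}<H$''.

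One must choose the prime so that the subgroup of order $4q$ is not an $\mathcal{A}$-subgroup. This is possible because $H$ is the intersection of the subgroups of orders $4p$ and $4q$. Then $|H_1|\in\{1,2,q,2q\}$, and this bound is what drives the case analysis.
\begin{enumerate}
\item[(a)] In outcome~(3) of the lemma, the bound sends you to condition~(1) or to the excluded case.
\item[(b)] In outcome~(1), the one delicate subcase is $|H_1|=2q$ with $|H_1\cap P_1|=2$. It is settled by Lemma~\ref{pq}: either $\mathcal{A}_{H_1}=\mathcal{A}_{H_1\cap P_1}\otimes\mathcal{A}_Q$, giving condition~(2), or $\mathcal{A}$ is a wreath product over $(H_1\cap P_1)/(H_1\cap P_1)$.
\end{enumerate}
With your choice of $H_1\leq H$, the lemma's actual $H_1$ could be the subgroup of order~$q$. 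In that case your description of the outcomes (for instance ``the $P_1/P_1$-wreath product'') would be wrong.
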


\begin{proof}
Let $n=pq$. Observe that the subgroup of $G$ of order~$4p$ or the subgroup of $G$ of order~$4q$ is not an $\mathcal{A}$-subgroup because otherwise $H$ is an intersection of $\mathcal{A}$-subgroups which contradicts to the supposition of the proposition. Without loss of generality, we may assume that the subgroup of $G$ of order~$4q$ is not an $\mathcal{A}$-subgroup. Then
\begin{equation}\label{orderh1} 
|H_1|\in\{1,2,q,2q\},
\end{equation}
where $H_1$ is a maximal $\mathcal{A}$-subgroup whose order is not divisible by~$p$. 

Let $P_1$ be the least $\mathcal{A}$-subgroup containing $P\cong C_p$. One of the statements of Lemma~\ref{nonpowernew1} holds for $\mathcal{A}$, $H_1$, and $P_1$. If Statement~$(1)$ of Lemma~\ref{nonpowernew1} holds, then $H_1P_1=G$ and $\mathcal{A}=\mathcal{A}_{H_1} \star \mathcal{A}_{P_1}$. If $H_1\cap P_1$ is trivial, then $\mathcal{A}=\mathcal{A}_{H_1} \otimes \mathcal{A}_{P_1}$ is a nontrivial tensor product and we are done. Otherwise, $\mathcal{A}$ is the nontrivial $U/L=H_1/(H_1\cap P_1)$-wreath product. Together with Eq.~\eqref{orderh1}, this implies that $|U/L|\leq 2$ as desired or $|H_1|=2q$ and $|H_1\cap P_1|=2$. In the latter case, Lemma~\ref{pq} yields that $\mathcal{A}_{H_1}=\mathcal{A}_{H_1\cap P_1} \otimes \mathcal{A}_{Q}$, where $Q$ is an $\mathcal{A}$-subgroup of order~$q$, or $\mathcal{A}_{H_1}=\mathcal{A}_{H_1\cap P_1} \wr \mathcal{A}_{H_1/(H_1\cap P_1)}$. In the first case, $\mathcal{A}_L$ is $\otimes$-complemented in $\mathcal{A}_U$ and $U/L$ satisfies Condition~$(2)$ from Theorem~\ref{hnot} as required. In the second one, $\mathcal{A}$ is the $L/L=(H_1\cap P_1)/(H_1\cap P_1)$-wreath product and the section $L/L$ satisfies Condition~$(1)$ from Theorem~\ref{hnot} as required.

If Statement~$(2)$ of Lemma~\ref{nonpowernew1} holds, then $\mathcal{A}$ is trivial as desired whenever $H_1$ is trivial and the $U/L=H_1/H_1$-wreath product satisfying Condition~$(1)$ from Theorem~\ref{hnot} otherwise. If Statement~$(3)$ of Lemma~\ref{nonpowernew1} holds, then $\mathcal{A}$ is the nontrivial $U/L=(H_1P_1)/P_1$-wreath product. Due to Eq.~\eqref{orderh1}, the section $U/L$ satisfies Condition~$(1)$ from Theorem~\ref{hnot} or $4 \nmid |U|$ and $|L|\neq 2$ as required.

Now let $n=p^k$, $k\geq 1$. As in the previous paragraph, $P_1$ denotes the least $\mathcal{A}$-subgroup containing $P$. One of Statements~$(1)$-$(3)$ of Lemma~\ref{2set} holds for every basic set of $\mathcal{A}$ outside~$D$. We divide the rest of the proof into two cases depending on existence of a basic set outside $D$ satisfying Statement~$(3)$ of Lemma~\ref{2set}. 

\hspace{5mm}

\noindent \textbf{Case~1.} Suppose that there is a basic set of $\mathcal{A}$ outside $D$, satisfying Statement~$(3)$ of Lemma~\ref{2set}. Then $\mathcal{A}_{P_1}=\mathcal{T}_{P_1}$ and $P_1\nleq D$. If $P_1=G$, then $\mathcal{A}$ is trivial and we are done. Further, we assume that $G\setminus P_1\neq \varnothing$.

Observe that the condition of Lemma~\ref{2set} holds for every $X\in \mathcal{S}(\mathcal{A})_{G\setminus P_1}$, i.e. $X\nsubseteq D$. Indeed, otherwise $\langle X \rangle$ is an $\mathcal{A}$-subgroup inside $D$. So $\langle X \rangle\geq P_1$ and hence 
$$D\geq \langle X\rangle \geq P_1\nleq D,$$
a contradiction.  

The above paragraph implies that one of Statements~$(1)$-$(3)$ from Lemma~\ref{2set} holds for every $X\in \mathcal{S}(\mathcal{A})_{G\setminus P_1}$. Statement~$(3)$ can not hold for $X\in \mathcal{S}(\mathcal{A})_{G\setminus P_1}$ because $P_1$ is a unique minimal $\mathcal{A}$-subgroup containing $P$. If Statement~$(1)$ holds for every $X\in \mathcal{S}(\mathcal{A})_{G\setminus P_1}$, then $P_1\leq \rad(X)$ and hence $\mathcal{A}$ is the $U/L=P_1/P_1$-wreath product and $U/L$ satisfies Condition~$(1)$ from Theorem~\ref{hnot} as required.

In view of the previous paragraph, we may assume that there is $X\in \mathcal{S}(\mathcal{A})_{G\setminus P_1}$ for which Statement~$(2)$ of Lemma~\ref{2set} holds and $P_1\nleq \rad(X)$. Since $H$ is not an $\mathcal{A}$-subgroup, $A=\langle (X^{[p]})_H \rangle$ is an $\mathcal{A}$-subgroup of order~$2$. Clearly, $\mathcal{A}_A=\mathbb{Z}A$. Note that $A\cap P_1$ is trivial because $\mathcal{A}_{P_1}=\mathcal{T}_{P_1}$. Therefore
\begin{equation}\label{tensap1}
\mathcal{A}_{A\times P_1}=\mathbb{Z}A\otimes \mathcal{T}_{P_1}
\end{equation}
by Lemma~\ref{tenspr}(2). If $A\times P_1=G$, then $\mathcal{A}$ is a nontrivial tensor product by Eq.~\eqref{tensap1} and we are done. So we may assume further that $A\times P_1<G$. 

If $P\leq \rad(Y)$ for every $Y\in \mathcal{S}(\mathcal{A})_{G\setminus (A\times P_1)}$, then $\mathcal{A}$ is the nontrivial $U/L=(A\times P_1)/P_1$-wreath product and $U/L$ satisfies Condition~$(1)$ from Theorem~\ref{hnot} as desired.

If there is $Y\in \mathcal{S}(\mathcal{A})_{G\setminus (A\times P_1)}$ such that $P\nleq \rad(Y)$, then Statement~$(2)$ of Lemma~\ref{2set} holds for~$Y$, i.e $\langle (Y^{[p]})_H\rangle$ is a nontrivial $\mathcal{A}$-subgroup of~$H$. Eq.~\eqref{tensap1} and $P_1\nleq D$ imply that $A$ is a unique nontrivial $\mathcal{A}$-subgroup of $H$ and hence $\langle (Y^{[p]})_H \rangle=A$. The latter equality yields that $Y^{[p]}\subseteq A\times D$. The set $Y^{[p]}$ is an $\mathcal{A}$-set by Lemma~\ref{sch} and consequently $\langle Y^{[p]} \rangle$ is an $\mathcal{A}$-subgroup of $A\times D$. However, $A$ is a unique nontrivial $\mathcal{A}$-subgroup of $A\times D$ by Eq.~\eqref{tensap1} and $P_1\nleq D$. Therefore $\langle Y^{[p]} \rangle=A$. From the latter equality, it follows that $Y\cap (A\times P_1)\neq \varnothing$. As $A\times P_1$ is an $\mathcal{A}$-subgroup, we obtain $Y\subseteq A\times P_1$ in contrast to the definition of~$Y$.

\hspace{5mm}

\noindent \textbf{Case~2.} Suppose that Statement~$(1)$ or Statement~$(2)$ of Lemma~\ref{2set} holds for every basic set of $\mathcal{A}$ outside $D$. Clearly, $P\nleq \rad(X)$ for every $X\in \mathcal{S}(\mathcal{A})$ such that $X\cap P\neq \varnothing$. So $N\geq P_1$, where $N$ is the $\mathcal{A}$-subgroup generated by all basic sets $X$ of $\mathcal{A}$ such that $P\nleq \rad(X)$.

 We may assume that $|N|$ is divisible by~$4$: otherwise $\mathcal{A}$ is the nontrivial $U/L=N/P_1$-wreath product and $4 \nmid |U|$ and $|L|\neq 2$ as required. Assume that $|N|$ is divisible by~$4$ or, equivalently, $N\geq H$. Then there is $Y\in \mathcal{S}(\mathcal{A})$ outside $D$ such that $P\nleq \rad(Y)$ because otherwise $N\leq D$ and hence $N\ngeq H$. Lemma~\ref{2set} implies that $A=\langle (Y^{[p]})_H \rangle$ is a nontrivial $\mathcal{A}$-subgroup of $H$. As $H$ is not an $\mathcal{A}$-subgroup, $|A|=2$.

Since $N\geq H$, there is $Z\in\mathcal{S}(\mathcal{A})_N$ such that $P\nleq \rad(Z)$ and $Z_H\nsubseteq A$ because otherwise $N\leq A\times D$ and hence $N\ngeq H$. Note that $Z\nsubseteq H$ and hence
$$Z_D\setminus \{e\}\neq \varnothing$$
because otherwise $H=\langle A,Z\rangle$ is an $\mathcal{A}$-subgroup which contradicts to the assumption of the proposition. The group $\langle (Z^{[p]})_H \rangle$ is a nontrivial $\mathcal{A}$-subgroup of~$H$ by Lemma~\ref{2set}. If $\langle (Z^{[p]})_H \rangle\neq A$, then $H=\langle A,(Z^{[p]})_H \rangle$ is an $\mathcal{A}$-subgroup, a contradiction to the assumption of the proposition. Therefore $\langle (Z^{[p]})_H \rangle=A$. This implies that $Z\cap (A^\#\times D^\#)\neq \varnothing$,
\begin{equation}\label{radzp}
P\nleq \rad(Z\cap (A\times D)),
\end{equation} 
and
\begin{equation}\label{radzp2}
P\leq \rad(Z\setminus (A\times D)).
\end{equation}

Let $V=\langle Z \rangle$. The $S$-ring $\mathcal{A}_{V/A}$ is an $S$-ring over a cyclic group $V/A\cong C_{2p^i}$, $i\in\{1,\ldots,k\}$. Every basic set of a cyclotomic $S$-ring or a nontrivial tensor product over $V/A$ is regular. However, $Z$ and hence $Z^\pi$, where $\pi$ is the canonical epimorphism from $G$ to $G/A$, are nonregular because $Z_D\setminus \{e\}\neq \varnothing$ and $Z_H\nsubseteq A$. Therefore $\mathcal{A}_{V/A}$ can not be cyclotomic or a nontrivial tensor product. Lemma~\ref{leungman} implies that $\mathcal{A}$ is trivial or $|\rad(\mathcal{A}_{V/A})|>1$. Observe that $|\rad(Z^\pi)|=1$ by Eq.~\eqref{radzp} and the assumption that $H$ is not an $\mathcal{A}$-subgroup and hence the latter is impossible. Thus, $\mathcal{A}_{V/A}$ is trivial. 

Let us prove that 
$$A\leq \rad(Z).$$
Since $\mathcal{A}_{V/A}$ is trivial, $Z$ contains an element $h_0$ of order~$2$. Let $z\in Z_D\setminus \{e\}\neq \varnothing$ and $h\in H$ such that $hz\in Z$. For every $i\in \mathbb{Z}$ coprime to~$p$, there exists $m_i\in \mathbb{Z}$ such that $m_i\equiv 1 \mod~2$ and $m_i\equiv i \mod~|z|$. One can see that $h_0^{m_i}=h_0\in X^{(m_i)}\cap X$ and hence $X^{(m_i)}=X$ by Lemma~\ref{burn}. Therefore $(hz)^{m_i}=hz^i\in X^{(m_i)}=X$. This implies that
\begin{equation}\label{radzp3}
hD_{|z|}^*\subseteq Z
\end{equation}
for all $z\in Z_D\setminus \{e\}$ and $h\in H$ such that $hz\in Z$. 

Observe that $Z\cap (A\times P^\#)\neq \varnothing$ because $\mathcal{A}_{V/A}=\mathcal{T}_{V/A}$. Eq.~\eqref{radzp3} yields that $P^\#\subseteq Z$ or $aP^\#\subseteq Z$. If both of these two inclusions hold, then $A\leq \rad(Z)$ by Lemma~\ref{intersection}. Thus, we may assume that exactly one of the above inclusions holds. Then $P^\#\subseteq Z$ or $P^\#\subseteq aZ$. Let $Z^\prime\in \{Z,aZ\}$ be such that $P^\#\subseteq Z^\prime$. From Eqs.~\eqref{radzp2} and~\eqref{radzp3}, it follows that $Z^\prime\setminus P\neq \varnothing$ and $P\leq \rad(Z^\prime\setminus P)$. Therefore $|\rad(Z)|=1$ and $Z^\prime=\langle Z\rangle^\#$ by Lemma~\ref{separ}. We conclude that Statement~$(3)$ of Lemma~\ref{2set} holds for $Z^\prime$, a contradiction to the assumption of this case. Thus, $A\leq \rad(Z)$. Due to triviality of $\mathcal{A}_{V/A}$, we obtain
\begin{equation}\label{zset}
Z=V\setminus A~\text{and}~\mathcal{A}_V=\mathbb{Z}A\wr \mathcal{T}_{V/A}.
\end{equation}
The latter equality yields that $P_1=V$.

If $V=G$, then due to Eq.~\eqref{zset}, $\mathcal{A}$ is the $U/L=A/A$-wreath product and $U/L$ satisfies Condition~$(1)$ from Theorem~\ref{hnot}. Further, we assume that $V<G$. If there exists $T\in \mathcal{S}(\mathcal{A})_{G\setminus V}$ such that $T\subseteq D$, then $\langle T \rangle$ is a nontrivial $\mathcal{A}$-subgroup of~$D$. So $\langle T \rangle \cap V\neq \varnothing$ and hence $V\subseteq \langle T \rangle \leq D$, a contradiction to the definition of~$Z$. Therefore every basic set outside $V$ is not a subset of~$D$ and hence Statement~$(1)$ or~$(2)$ of Lemma~\ref{2set} holds for every such basic set. If Statement~$(1)$ of Lemma~\ref{2set} holds for every basic set~$X$ outside~$V$, i.e. $P\leq \rad(X)$, then $V=P_1\leq\rad(X)$. This implies that $\mathcal{A}$ is the $U/L=V/V$-wreath product and $U/L$ satisfies Condition~$(1)$ from Theorem~\ref{hnot}. If Statement~$(2)$ of Lemma~\ref{2set} holds for some $T\in \mathcal{S}(\mathcal{A})_{G\setminus V}$, then $\langle (T^{[p]})_H \rangle=A$ because $A$ is a unique nontrivial $\mathcal{A}$-subgroup of~$H$. Together with Lemma~\ref{sch}, this yields that $W=\langle T^{[p]} \rangle$ is a nontrivial $\mathcal{A}$-subgroup of $A\times D$. Since $T$ lies outside~$V$, we have $(T^{[p]})_D\neq \{e\}$ and hence $|W|$ is divisible by~$p$. Therefore $W\cap Z\neq \varnothing$. Thus,
$$Z\subseteq W\leq A\times D,$$
a contradiction to the definition of~$Z$.
\end{proof}

\begin{prop}\label{refine}
If $H$ is not an $\mathcal{A}$-subgroup and $\mathcal{A}$ is a nontrivial $U/L$-wreath product for an $\mathcal{A}$-section $U/L$ such that $4 \nmid |U|$ and $|L|\neq 2$, then Theorem~\ref{hnot} holds. 
\end{prop}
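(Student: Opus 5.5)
We have $\mathcal{A}=\mathcal{A}_U\wr_{U/L}\mathcal{A}_{G/L}$ nontrivial, $4\nmid|U|$, $|L|\neq 2$, and we need to produce a (possibly different) section satisfying one of Conditions~(1)--(3) of Theorem~\ref{hnot}, or a nontrivial tensor decomposition. The plan is to refine the given section by choosing it extremal and analyzing $\mathcal{A}_U$. Since $4\nmid |U|$, the group $U$ is cyclic: $U\leq A_0\times D$ with $|A_0|\le 2$. So $\mathcal{A}_U$ is circulant and the description of circulant $S$-rings (Lemmas~\ref{leungman}, \ref{cyclpwreath}, \ref{pq}, \ref{2primepower}) applies.

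First, among all $\mathcal{A}$-sections $U/L$ with the stated properties, I take $U$ maximal and then $L$ minimal. If $|\rad(\mathcal{A}_U)|=1$, Condition~(3) holds and we are done. So assume $|\rad(\mathcal{A}_U)|>1$. By Lemma~\ref{leungman}(2), $\mathcal{A}_U$ is then a nontrivial $U'/L'$-wreath product over the cyclic group $U$. Every basic set of $\mathcal{A}$ outside $U$ already has $L$ in its radical. Basic sets inside $U\setminus U'$ have $L'$ in their radical, where I will choose $L'$ to be the least nontrivial $\mathcal{A}_U$-subgroup (Lemma~\ref{cyclpwreath} when $U$ is a $p$-group; for $|U|$ with two prime divisors use Lemmas~\ref{pq} and~\ref{2primepower} directly). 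Then $\mathcal{A}$ is the $U'/(L\cap L')$-wreath product, or the $U'/L'$-wreath product when $L'\leq L$.

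Next I compare $L$ and $L'$. If $L'\le L$, then $U'/L'$ is a new section with $U'<U$ and $4\nmid|U'|$. I iterate this descent, which terminates, until $\mathcal{A}_{U'}$ has trivial radical, giving Condition~(3), or $|L'|=2$. If $|L'|=2$, then $L'=A_0$ is of order~2, and $\mathcal{A}_{U}$ contains $\mathbb{Z}L'$. Lemma~\ref{2primepower} (order $2p^i$) or Lemma~\ref{pq} then shows that either $\mathcal{A}_{U'}=\mathcal{A}_{L'}\otimes\mathcal{A}_{U'\cap D}$, so $\mathcal{A}_{L'}$ is $\otimes$-complemented in $\mathcal{A}_{U'}$ (Condition~(2)), or $\mathcal{A}_{U'}$ is itself a wreath product over $L'$, in which case the section $L'/L'$ gives Condition~(1). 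If $L\cap L'=\{e\}$ while both are nontrivial, then $|L|$ is odd (as $|L|\ne2$) and $L'$ must have order~2. This is the same situation, because minimality of $L'$ among $\mathcal{A}_U$-subgroups forces $L'\le L$ whenever $L'$ has odd order in a cyclic group of odd-prime-power part.

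The main obstacle is the case $|L'|=2$ with $L$ odd, where the subgroup of order~2 is not inside $L$. Here one must show that $\mathcal{A}_U$ splits as $\mathbb{Z}C_2\otimes\mathcal{A}_{U\cap D}$, or is a $C_2$-wreath product, and then verify that the radicals of basic sets outside $U$ also contain $L'$. If that fails, the tensor splitting must be pushed up to $G$. I would first try Lemma~\ref{tenspr}(2) together with the fact that $H$ is not an $\mathcal{A}$-subgroup, so that $L'$ is the unique $\mathcal{A}$-subgroup of order~2. For $n=pq$, the case analysis uses Eq.~\eqref{orderh1} style restrictions on $|U|\in\{p^i,2p^i,q,2q,pq,2pq\}$ and handles each order by Lemma~\ref{pq}.
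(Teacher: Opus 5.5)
Your proposal identifies the hard case correctly but gives no argument for it, so there is a genuine gap. The preliminary reduction for even $|U|$ is also unsound.

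\textbf{The reduction for even $|U|$.} For $|U|$ odd your argument is essentially the paper's: Lemma~\ref{pq}, or Lemma~\ref{cyclpwreath}, which gives a section $U_1/L_1$ with $L_1\le L$ and Condition~(3). For $|U|$ even, the reduction fails in three places.
\begin{enumerate}
\item[(a)] A least nontrivial $\mathcal{A}_U$-subgroup $L'$ need not exist. If $\mathcal{A}_U=\mathbb{Z}A\otimes\mathcal{A}_W$ with $|A|=2$ and $|\rad(\mathcal{A}_W)|>1$, then $A$ and the least nontrivial $\mathcal{A}_W$-subgroup are two minimal $\mathcal{A}_U$-subgroups meeting trivially.
\item[(b)] The condition $|L|\neq 2$ does not make $|L|$ odd, since $|L|$ can be $2r$. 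For $n=pq$, the configuration $U=W_1\times L$ with $|W_1|$ prime and $|L|$ twice a prime really occurs. The paper treats it separately (Lemma~\ref{mw1}) and concludes Condition~(2) for a new section $(W_1L_0)/L_0$.
\item[(c)] Your claim that $L'\le L$ whenever $L'$ has odd order holds for $n=p^k$ but not for $n=pq$.
\end{enumerate}
The paper avoids any ``least'' subgroup. It works with $A_1$, the least $\mathcal{A}_U$-subgroup containing the involution subgroup $A$, and $W_1$, the largest $\mathcal{A}_U$-subgroup of the odd part, together with the star-product Lemmas~\ref{nonpowernew1} and~\ref{nonpowernew2}.

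\textbf{The missing argument.} The case you call the main obstacle is where almost all the work lies. After the reductions one has $A_1=A$, $\mathcal{A}_{W_1A}=\mathcal{A}_{W_1}\otimes\mathcal{T}_A$, and $L$ containing the least nontrivial $\mathcal{A}$-subgroup $M$ of $W_1$ but possibly not $A$. Basic sets outside $U$ are only known to have $L$ in their radical. Nothing about the circulant ring $\mathcal{A}_U$ tells you they have $A$ in their radical. Neither does Lemma~\ref{tenspr}(2), nor uniqueness of the $\mathcal{A}$-subgroup of order~$2$.

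The paper proves this (Lemma~\ref{rada}) in two steps. It passes to $G/\rad(X)$, where $\mathcal{A}_{\overline{R}}$ is indecomposable. When $\overline{R}\cong E_4\times C_{p^i}$, it applies Proposition~\ref{hnot1} to the dual $S$-ring (via Lemma~\ref{dual}), which forces a tensor decomposition and hence a contradiction.

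Even after that, the possibility $\mathcal{A}=\mathcal{A}_{W_1H}\wr_{(W_1H)/A}\mathcal{A}_{G/A}$ remains. There the basic set $T$ through an involution $b\notin A$ need not have $A$ in its radical. The paper uses \cite[Lemmas~5.4 and~5.5]{HK} to get $T=W_0Ab$ or $T=W_0b$. In the latter case the required section is $(W_1H)/A$, with $\mathcal{A}_{W_1H}=\mathcal{A}_{W_2}\otimes\mathcal{A}_A$, giving Condition~(2).

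The top group $W_1H$ of that section has order divisible by~$4$, so it is not contained in $U$. Your descent only ever replaces $U$ by subgroups $U'\le U$, so it has no mechanism for producing this section. Your remark that the tensor splitting ``must be pushed up to $G$'' names the problem but does not solve it.
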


\begin{proof}
Note that $U$ is cyclic because $4 \nmid |U|$. If $|\rad(\mathcal{A}_U)|=1$, then $U/L$ satisfies Condition~$(3)$ from Theorem~\ref{hnot} and we are done. From now on throughout the proof of this proposition, we assume that
\begin{equation}\label{radautriv} 
|\rad(\mathcal{A}_U)|>1.
\end{equation}

Suppose that $|U|$ is odd. Then $|U|=pq$ or $|U|$ is a $p$-power. In the former case, Eq.~\eqref{radautriv} and Lemma~\ref{pq} imply that $\mathcal{A}_U=\mathcal{A}_L\wr \mathcal{A}_{U/L}$. So $\mathcal{A}=\mathcal{A}_L\wr \mathcal{A}_{G/L}$ and Theorem~\ref{hnot} holds as required. In the latter one, $\mathcal{A}_U$ is the nontrivial $U_1/L_1$-wreath product for an $\mathcal{A}_U$-section $U_1/L_1$ of $U$ such that $L_1$ is the least nontrivial $\mathcal{A}_U$-subgroup and $|\rad(\mathcal{A}_{U_1})|=1$ by Eq.~\eqref{radautriv} and Lemma~\ref{cyclpwreath}. By the definitions of $U_1$ and $L_1$, we have $U_1\leq U$ and $L_1\leq L$. This implies that $\mathcal{A}$ is the $U_1/L_1$-wreath product and $U_1/L_1$ satisfies Condition~$(3)$ from Theorem~\ref{hnot} as required.

Now suppose that $|U|$ is even. Then $|U|=2m$, where $m$ is odd, because $4 \nmid |U|$. Let $A$ and $W$ be the Sylow $2$-subgroup and the Hall $2^\prime$-subgroup of $U$, respectively. If $|W|=1$, then $|L|=|U|=2$, a contradiction to the assumption $|L|\neq 2$ of the lemma. In what follows, we assume that 
$$|W|>1.$$

The least $\mathcal{A}_U$-subgroup containing $A$ and a maximal $\mathcal{A}_U$-subgroup of $W$ are denoted by $A_1$ and $W_1$, respectively. Assume that $|W_1|=1$. Then $\mathcal{A}_U$ is trivial or $\mathcal{A}_U=\mathcal{A}_{A_1}\wr \mathcal{A}_{U/A_1}$ by Lemma~\ref{nonpowernew1} applied to $\mathcal{A}=\mathcal{A}_U$, $H_1=W_1$, and $P_1=A_1$. In the former case, we obtain a contradiction to Eq.~\eqref{radautriv}. In the latter one, $A_1$ is the least nontrivial $\mathcal{A}_U$-subgroup because otherwise $A_1$ has a nontrivial $\mathcal{A}_U$-subgroup of odd order and hence $|W_1|>1$, a contradiction to the assumption $|W_1|=1$. So $A_1\leq L$. Together with $\mathcal{A}=\mathcal{A}_U\wr_{U/L}\mathcal{A}_{G/L}$ and $\mathcal{A}_U=\mathcal{A}_{A_1}\wr \mathcal{A}_{U/A_1}$, this yields that $\mathcal{A}=\mathcal{A}_{A_1}\wr \mathcal{A}_{G/A_1}$ and Theorem~\ref{hnot} holds as desired. In what follows, we assume that 
$$|W_1|>1.$$

\begin{lemm}\label{mw1}
With the above notation, Theorem~\ref{hnot} holds unless $L\geq M$ for some minimal nontrivial $\mathcal{A}_{W_1}$-subgroup $M$.
\end{lemm}

\begin{proof}
If $|L\cap W_1|>1$, then $L\cap W_1$ being an $\mathcal{A}_{W_1}$-subgroup contains a minimal nontrivial $\mathcal{A}_{W_1}$-subgroup and we are done. So we may assume that $|L\cap W_1|=1$. Since $|L|\neq 2$ (the assumption of the proposition) and $|W_1|>1$, this can happen only if $n=pq$, $U=W_1\times L$, $|W_1|$ is prime, $|L|$ is twice prime, and $L$ does not have a nontrivial $\mathcal{A}_L$-subgroup of odd order. Lemma~\ref{pq} applied to $\mathcal{A}_L$ implies that 
$$\mathcal{A}_L=\mathcal{T}_L~\text{or}~\mathcal{A}_L=\mathcal{A}_{L_0}\wr \mathcal{A}_{L/L_0}$$
for an $\mathcal{A}_L$-subgroup $L_0$ of order~$2$. 

In the former case, Lemma~\ref{leungman} applied to $\mathcal{A}_U$ yields that $\mathcal{A}_U=\mathcal{A}_{W_1}\otimes \mathcal{T}_L$. Therefore $|\rad(\mathcal{A}_U)|=1$, a contradiction to Eq.~\eqref{radautriv}. In the latter one, Eq.~\eqref{radautriv} and Lemma~\ref{leungman} applied to $\mathcal{A}_U$ yield that $\mathcal{A}_U$ is the nontrivial $(W_1L_0)/L_0$-wreath product. As $L_0\leq L$ and $\mathcal{A}$ is the $U/L$-wreath product, the $S$-ring $\mathcal{A}$ is the nontrivial $(W_1L_0)/L_0$-wreath product. Since $|L_0|=2$, we have $\mathcal{A}_{L_0}=\mathbb{Z}L_0$. So $\mathcal{A}_{W_1L_0}=\mathcal{A}_{W_1}\otimes \mathbb{Z}L_0$ by Lemma~\ref{tenspr}(2). Therefore $\mathcal{A}_{L_0}$ is $\otimes$-complemented in $\mathcal{A}_{W_1L_0}$. Thus, the section $(W_1L_0)/L_0$ satisfies Condition~$(2)$ from Theorem~\ref{hnot} and we are done.
\end{proof}

By Lemma~\ref{mw1}, we may assume till the end of the proof that 
$$L\geq M$$ 
for some minimal nontrivial $\mathcal{A}_{W_1}$-subgroup $M$. One can see that $\mathcal{A}_{W_1}$ is the $V/M$-wreath product (possibly, trivial), where $V$ is an $\mathcal{A}_{W_1}$-subgroup with $|\rad(\mathcal{A}_{V})|=1$. Indeed, this follows from Lemma~\ref{leungman} and Lemma~\ref{pq} if $|W_1|=pq$ and from Lemma~\ref{leungman} and Lemma~\ref{cyclpwreath} if $|W_1|$ is a $p$-power. Lemma~\ref{nonpowernew2} implies that 
\begin{equation}\label{starwa}
\mathcal{A}_{W_1A_1}=\mathcal{A}_{W_1} \star \mathcal{A}_{A_1}=(\mathcal{A}_{V}\wr_{V/M} \mathcal{A}_{W_1/M})\star \mathcal{A}_{A_1}.
\end{equation}

Let $W_1A_1=U$. If $W_1\cap A_1$ is trivial, then $\mathcal{A}_{A_1}$ is primitive and hence $\mathcal{A}_{A_1}=\mathcal{T}_{A_1}$ by Lemma~\ref{primitive}. Eq.~\eqref{starwa} implies that $\mathcal{A}_{U}=\mathcal{A}_{W_1} \otimes \mathcal{A}_{A_1}$ and consequently $\mathcal{A}_U$ is the $U_1/L_1=(VA_1)/M$-wreath product (possibly, trivial). As $M\leq L$, we conclude that $\mathcal{A}$ is the $U_1/L_1$-wreath product. Since $|\rad(\mathcal{A}_{V})|=1$ and 
$$\mathcal{A}_{U_1}=\mathcal{A}_{V}\otimes \mathcal{A}_{A_1}\cong \mathcal{A}_{V}\otimes \mathcal{T}_{A_1},$$
we obtain $|\rad(\mathcal{A}_{U_1})|=1$. Thus, $U_1/L_1$ satisfies Condition~$(3)$ from Theorem~\ref{hnot} and we are done.

Now let $W_1A_1<U$. This implies that 
$$W_1<W.$$ 
In this case, $|W_1|$ is a $p$-power and hence $M$ is the unique least $\mathcal{A}_{W_1}$-subgroup. Lemma~\ref{nonpowernew1} applied to $\mathcal{A}=\mathcal{A}_U$, $H_1=W_1$, and $P_1=A_1$ yields that $\mathcal{A}_U=\mathcal{A}_{W_1} \wr \mathcal{A}_{U/W_1}$ or 
\begin{equation}\label{gwrw1a1}
\mathcal{A}_U=\mathcal{A}_{W_1A_1} \wr_{(W_1A_1)/A_1} \mathcal{A}_{U/A_1}.
\end{equation}
In the former case, $\mathcal{A}$ is the nontrivial $V/M$-wreath product because $\mathcal{A}_{W_1}=\mathcal{A}_{V}\wr_{V/M} \mathcal{A}_{W_1/M}$ and $M\leq L$. Since $|\rad(\mathcal{A}_{V})|=1$, the section $V/M$ satisfies Condition~$(3)$ from Theorem~\ref{hnot} and we are done.

Suppose that Eq.~\eqref{gwrw1a1} holds. If $\mathcal{A}_{A_1}\neq \mathcal{T}_{A_1}$, then $A_1$ has a nontrivial $\mathcal{A}_{A_1}$-subgroup by Lemma~\ref{primitive} applied to $A_1$. This $\mathcal{A}_{A_1}$-subgroup is of odd order because $A_1$ is the least $\mathcal{A}_U$-subgroup containing $A$ and hence $|A_1\cap W_1|>1$. In view of Eqs.~\eqref{starwa} and~\eqref{gwrw1a1}, we conclude that $\mathcal{A}_U$ is the nontrivial $W_1/(W_1\cap A_1)$-wreath product. Since $M$ is the unique least $\mathcal{A}_{W_1}$-subgroup, we have $M\leq A_1\cap W_1$ and consequently $\mathcal{A}_U$ is the $W_1/M$-wreath product. As $\mathcal{A}_{W_1}=\mathcal{A}_{V}\wr_{V/M} \mathcal{A}_{W_1/M}$, we conclude that $\mathcal{A}_U$ is the nontrivial $V/M$-wreath product. Observe that $\mathcal{A}$ is the $V/M$-wreath product because $M\leq L$. The section $V/M$ satisfies Condition~$(3)$ from Theorem~\ref{hnot} because $|\rad(\mathcal{A}_{V})|=1$ and we are done.

Now let $\mathcal{A}_{A_1}=\mathcal{T}_{A_1}$. Then $|W_1\cap A_1|=1$. Eq.~\eqref{starwa} implies that
\begin{equation}\label{tensaw1} 
\mathcal{A}_{W_1A_1}=\mathcal{A}_{W_1}\otimes \mathcal{T}_{A_1}.
\end{equation}
If $A_1\neq A$, then $|W_1A_1|=2pq$, $U=W_1A_1$, and $L\in\{W_1,A_1\}$. So $U/L$ satisfies Condition~$(2)$ from Theorem~\ref{hnot} by Eq.~\eqref{tensaw1} and we are done. Further, we assume that $A_1=A$.

\begin{lemm}\label{rada}
With the above notation, let $X\in \mathcal{S}(\mathcal{A})$ be such that $X_D\nsubseteq W_1$. Then $A\leq \rad(X)$. 
\end{lemm} 

\begin{proof}
If $X\subseteq U$, then $X\subseteq U\setminus (W_1A_1)$. So $\rad(X)\geq A_1\geq A$ by Eq.~\eqref{gwrw1a1}. In the sequel, we assume that $X\nsubseteq U$. This implies that $L\leq \rad(X)$ because $\mathcal{A}$ is the $U/L$-wreath product. Let $R=\langle X \rangle$. Since $X_D\nsubseteq W_1$, we obtain 
$$R\cap (W\setminus W_1)\neq \varnothing.$$
Together with Eq.~\eqref{gwrw1a1}, this implies that $R\geq A_1\geq A$. The image of $Y\subseteq G$ under the canonical epimorphism from $G$ to $G/\rad(X)$ is denoted by $\overbar{Y}$. Clearly, $|\rad(\overbar{X})|=1$ and hence $\mathcal{A}_{\overbar{R}}$ is indecomposable.

Assume the contrary that $A\nleq \rad(X)$. Then $\overbar{R}\geq \overbar{A}$. Due to Eq.~\eqref{gwrw1a1}, we have $\rad(X)\cap D\leq W_1$ and 
\begin{equation}\label{radya}
\overbar{A}\leq \rad(\overbar{Y})
\end{equation}
 for every $Y\in \mathcal{S}(\mathcal{A})_{R}$ inside $R\cap (U\setminus (W_1A))$. Observe that $\mathcal{A}_{\overbar{R}}$ is nontrivial because $\overbar{A}$ is a proper nontrivial $\mathcal{A}_{\overbar{R}}$-subgroup. 

Suppose that $\overbar{R}$ is cyclic. Then $\overbar{R}$ is isomorphic to $C_{p^i}$ or $C_{2p^i}$, $i\geq 1$. As $\mathcal{A}_{\overbar{R}}$ is indecomposable, we conclude that $\rad(\mathcal{A}_{\overbar{R}})$ is trivial. So $\mathcal{A}_{\overbar{R}}$ is cyclotomic or a nontrivial tensor product by Lemma~\ref{leungman}. However, every basic set of such $S$-ring over $\overbar{R}$ has trivial radical which contradicts to Eq.~\eqref{radya}. 

Now suppose that $\overbar{R}$ is noncyclic. Then $\overbar{R}\cong E_4\times C_{p^i}$, $i\geq 1$. In view of Eq.~\eqref{radya}, the Hall $2^\prime$-subgroup of $\overbar{R}$ is not an $\mathcal{A}_{\overbar{R}}$-subgroup. So the Sylow $2$-subgroup of $\widehat{G}$ is not an $\widehat{\mathcal{A}_{\overbar{R}}}$-subgroup by Lemma~\ref{dual}(1). Since $\mathcal{A}_{\overbar{R}}$ is indecomposable and nontrivial, so is $\widehat{\mathcal{A}_{\overbar{R}}}$. Therefore $\widehat{\mathcal{A}_{\overbar{R}}}$ is a nontrivial tensor product by Proposition~\ref{hnot1} applied to $\widehat{\mathcal{A}_{\overbar{R}}}$. Thus,
\begin{equation}\label{tensorr}  
\mathcal{A}_{\overbar{R}}=\mathcal{A}_{\overbar{R}_1}\otimes \mathcal{A}_{\overbar{R}_2}
\end{equation}
for some $\mathcal{A}_R$-subgroups $R_1$ and $R_2$ by Lemma~\ref{dual}(3). As $\rad(X)\geq L$ and $|L|\neq 2$, we may assume without loss of generality that $|\overbar{R}_1|$ is a power of~$2$. Due to Eqs.~\eqref{radya} and~\eqref{tensorr}, $\overbar{R}_1\ngeq \overbar{A}$. Therefore $\overbar{R}_2$ is a cyclic group containing $\overbar{A}$. Lemma~\ref{leungman} and Eq.~\eqref{radya} yield that $|\rad(\mathcal{A}_{\overbar{R}_2})|\neq 1$. Together with Eq.~\eqref{tensorr}, this implies that $\mathcal{A}_{\overbar{R}}$ is decomposable, a contradiction.
\end{proof}

From Lemma~\ref{rada} it follows that 
$$\mathcal{A}=\mathcal{A}_{W_1A}\wr_{(W_1A)/A} \mathcal{A}_{G/A}~\text{or}~\mathcal{A}=\mathcal{A}_{W_1H}\wr_{(W_1H)/A} \mathcal{A}_{G/A}.$$ 
In the first case, $\mathcal{A}_A$ is $\otimes$-complemented in $\mathcal{A}_{W_1}$ by Eq.~\eqref{tensaw1}. So the section $(W_1A)/A$ satisfies Condition~$(2)$ from Theorem~\ref{hnot} and we are done. 

In the second case, $W_1H$ is an $\mathcal{A}$-subgroup. Let $a$ be a nontrivial element of $A$, $b\in H\setminus A$, and $T$ a basic set of $\mathcal{A}$ containing~$b$. Clearly, $W_1H=W_1A\times \langle b \rangle$, $W_1A$ is an $\mathcal{A}_{W_1H}$-subgroup, and $W_1$ is the largest $\mathcal{A}_{W_1H}$-subgroup of odd order. Therefore the $S$-ring $\mathcal{A}_{W_1H}$, the basic set $T$, and the subgroup $W_1$ satisfy~\cite[Hypothesis~$5.1$]{HK}. The structure of a basic set containing an involution of an $S$-ring satisfying~\cite[Hypothesis~$5.1$]{HK} is described in~\cite[Lemma~$5.5$]{HK}. Applying this lemma to $T$ and using the inclusion $\{a\}=A^\#\in \mathcal{S}(\mathcal{A})$ one can deduce that 
$$T=W_0Ab~\text{or}~T=W_0b$$
for some $\mathcal{A}_{W_1}$-subgroup~$W_0$. To complete the proof of the lemma, let us consider the above two cases separately.

Let $T=W_0Ab$. As $H=\langle Ab \rangle$ is not an $\mathcal{A}$-subgroup, $T\neq Ab$ and hence $|W_0|>1$. So $W_0$ satisfies the condition of~\cite[Lemma~$5.4$]{HK}. Applying this lemma to $\mathcal{A}_{W_1H}$, we conclude that $\mathcal{A}_{W_1H}$ is the $(W_1A)/(W_0A)$-wreath product. Therefore $\mathcal{A}$ is the $(W_1A)/A$-wreath product. Again, the section $(W_1A)/A$ satisfies Condition~$(2)$ from Theorem~\ref{hnot} by Eq.~\eqref{tensaw1} and we are done.

Now let $T=W_0b$, then $W_2=\langle T,W_1\rangle$ is an $\mathcal{A}_{W_1H}$-subgroup such that $|W_2\cap A|=1$ and $W_1H=W_2\times A$. Lemma~\ref{tenspr}(2) implies that $\mathcal{A}_{W_1H}=\mathcal{A}_{W_2}\otimes \mathcal{A}_{A}$. Therefore $(W_1H)/A$ satisfies Condition~$(2)$ from Theorem~\ref{hnot} as desired.  
\end{proof}

Theorem~\ref{hnot} immediately follows from Propositions~\ref{hnot1} and~\ref{refine}. 

\begin{theo}\label{dnot}
Suppose that $D$ is not an $\mathcal{A}$-subgroup. Then $\mathcal{A}$ is trivial, or a nontrivial tensor product, or a nontrivial $U/L$-wreath product for some $\mathcal{A}$-section $U/L$ satisfying one of the following conditions:
\begin{enumerate}

\tm{1} $|U/L|\leq 2$;

\tm{2} $\mathcal{A}_{U/L}$ is $\otimes$-complemented in $\mathcal{A}_{G/L}$;

\tm{3} $4 \nmid |G/L|$ and $|\rad(\mathcal{A}_{G/L})|=1$.

\end{enumerate}
\end{theo}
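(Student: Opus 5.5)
The plan is to derive Theorem~\ref{dnot} from Theorem~\ref{hnot} by duality (Lemma~\ref{dual}). We pass to the dual $S$-ring $\widehat{\mathcal{A}}$ over $\widehat{G}=\widehat{H}\times\widehat{D}$. Up to the identification $\widehat{G}\cong G$, we have $\widehat{H}\cong D^{\bot}$, the Sylow $2$-subgroup of $\widehat{G}$ isomorphic to $E_4$, and $\widehat{D}\cong H^{\bot}$, the cyclic Hall $2'$-subgroup. By Lemma~\ref{dual}(1), $\mathcal{H}(\mathcal{A})\to\mathcal{H}(\widehat{\mathcal{A}})$, $K\mapsto K^\bot$, is a lattice antiisomorphism. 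Hence $D$ is an $\mathcal{A}$-subgroup if and only if $D^\bot$ is an $\widehat{\mathcal{A}}$-subgroup. Since $D$ is not an $\mathcal{A}$-subgroup, the subgroup of $\widehat{G}$ isomorphic to $E_4$ is not an $\widehat{\mathcal{A}}$-subgroup, so Theorem~\ref{hnot} applies to $\widehat{\mathcal{A}}$.

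Next we translate each outcome of Theorem~\ref{hnot} back to $\mathcal{A}$. If $\widehat{\mathcal{A}}$ is trivial, then so is $\mathcal{A}$, because ranks agree and the rank-$2$ $S$-ring is the only $S$-ring of rank~$2$. If $\widehat{\mathcal{A}}$ is a nontrivial tensor product, then so is $\mathcal{A}$ by Lemma~\ref{dual}(3), with nontriviality preserved since $^\bot$ sends $\{e\}$ to the whole group and conversely.

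If $\widehat{\mathcal{A}}$ is a nontrivial $U'/L'$-wreath product, then by Lemma~\ref{dual}(4) $\mathcal{A}$ is the $U/L$-wreath product with $L=U'^\bot$ and $U=L'^\bot$, and nontriviality is again preserved. Since $|K^\bot|=|G|/|K|$, we get $|U/L|=|U'/L'|$. So Condition~(1) of Theorem~\ref{hnot} gives Condition~(1) here.

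For Condition~(3) of Theorem~\ref{hnot}, $4\nmid|U'|$ translates to $4\nmid |G/L|$ because $|U'|=|G|/|L|$. Further, $\widehat{\mathcal{A}}_{U'}=\widehat{\mathcal{A}_{G/L}}$ by Lemma~\ref{dual}(2). It remains to show that, for an $S$-ring $\mathcal{B}$ over a cyclic group, $|\rad(\mathcal{B})|=1$ if and only if $|\rad(\widehat{\mathcal{B}})|=1$. By Lemma~\ref{leungman}, trivial radical is equivalent to $\mathcal{B}$ not being a nontrivial generalized wreath product. That property is self-dual by Lemma~\ref{dual}(4), since $^\bot$ preserves nontriviality of the section.

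For Condition~(2), $\mathcal{A}_{L'}$ being $\otimes$-complemented in $\mathcal{A}_{U'}$ means $\widehat{\mathcal{A}}_{U'}=\widehat{\mathcal{A}}_{L'}\otimes\widehat{\mathcal{A}}_{M'}$ with $U'=L'\times M'$. Dualizing $\widehat{\mathcal{A}}_{U'}$ gives $\mathcal{A}_{G/L}$ by Lemma~\ref{dual}(2). Under this identification, $\widehat{\mathcal{A}}_{L'}$ corresponds to the quotient $\mathcal{A}_{(G/L)/(U/L)}$. The complement $M'$ corresponds to the $\mathcal{A}_{G/L}$-subgroup $(M'^\bot)/L$. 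Applying Lemma~\ref{dual}(3) inside $G/L$, we obtain $\mathcal{A}_{G/L}=\mathcal{A}_{U/L}\otimes\mathcal{A}_{M'^\bot/L}$.

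The main obstacle is this last step. It needs the duality of Lemma~\ref{dual}(2),(3) applied to sections rather than to the whole group. Concretely, we must check that the dual of $\widehat{\mathcal{A}}_{U'}$ over $\widehat{U'}\cong G/L$ carries the subgroup $L'$ to $U/L$ and the complement $M'$ to a complement of $U/L$, so that $\mathcal{A}_{U/L}$ is $\otimes$-complemented in $\mathcal{A}_{G/L}$. This follows from the lattice antiisomorphism restricted to the interval between $L$ and $G$, using the standard identification $\widehat{G/L}\cong L^\bot$. We would state it explicitly, and then all outcomes transfer.
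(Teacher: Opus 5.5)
Your proposal is correct and follows the paper's proof essentially step for step. You dualize, apply Theorem~\ref{hnot} to $\widehat{\mathcal{A}}$ (since $D^\bot\cong E_4$ is not an $\widehat{\mathcal{A}}$-subgroup), and transfer triviality, tensor decompositions, and each of Conditions~(1)--(3) back via Lemma~\ref{dual}, using Lemma~\ref{leungman}(2) for the radical condition; your explicit check that the complement is $M'^{\bot}/L$ with $G/L=U/L\times M'^{\bot}/L$ just spells out what the paper compresses into a single appeal to Lemma~\ref{dual}(2),(3).
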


\begin{proof}
Let $\widehat{\mathcal{A}}$ be the $S$-ring dual to $\mathcal{A}$ over $\widehat{G}\cong G$. Since $D$ is not an $\mathcal{A}$-subgroup, the group $D^\bot\cong E_4$ is not an $\widehat{\mathcal{A}}$-subgroup by Lemma~\ref{dual}(1). So Theorem~\ref{hnot} holds for $\widehat{\mathcal{A}}$. If $\widehat{\mathcal{A}}$ is trivial, then  so is $\mathcal{A}$ and we are done. If $\widehat{\mathcal{A}}$ is a nontrivial tensor product, then so is $\mathcal{A}$ by Lemma~\ref{dual}(3) and we are done. 

Let $\widehat{\mathcal{A}}$ be a nontrivial $\widehat{U}/\widehat{L}$-wreath product for some $\widehat{\mathcal{A}}$-section $\widehat{U}/\widehat{L}$ satisfying one of Conditions~$(1)$-$(3)$ from Theorem~\ref{hnot}. Then $\mathcal{A}$ is a nontrivial $U/L$-wreath product, where $L=\widehat{U}^\bot$ and $U=\widehat{L}^\bot$, by Lemma~\ref{dual}(4). Observe that $|U/L|=|\widehat{U}/\widehat{L}|$ by Lemma~\ref{dual}(1). So if $|\widehat{U}/\widehat{L}|\leq 2$, then $|U/L|\leq 2$ as desired.

Lemma~\ref{dual}(2) implies that $\widehat{\mathcal{A}_{G/L}}=\widehat{\mathcal{A}}_{\widehat{U}}$ and $\widehat{\mathcal{A}_{U/L}}=\widehat{\mathcal{A}}_{\widehat{U}/\widehat{L}}$. So if $\widehat{L}$ is $\otimes$-complemented in $\widehat{\mathcal{A}}_{\widehat{U}}$, then $\mathcal{A}_{U/L}$ is $\otimes$-complemented in $\mathcal{A}_{G/L}$ by Lemma~\ref{dual}(3) as required. 

Suppose that $4 \nmid |\widehat{U}|$ and $|\rad(\widehat{\mathcal{A}}_{\widehat{U}})|=1$. The first part together with $L=\widehat{U}^\bot$ and Lemma~\ref{dual}(1) imply that $4 \nmid |G/L|$. Since $|\rad(\widehat{\mathcal{A}}_{\widehat{U}})|=1$, the $S$-ring $\widehat{\mathcal{A}}_{\widehat{U}}$ is not a nontrivial generalized wreath product by Lemma~\ref{leungman}(2). So the latter holds for $\mathcal{A}_{G/L}$ by Lemma~\ref{dual}(4) because $\widehat{\mathcal{A}_{G/L}}=\widehat{\mathcal{A}}_{\widehat{U}}$. Therefore $|\rad(\mathcal{A}_{G/L})|=1$ by Lemma~\ref{leungman}(2) and we are done.
\end{proof}

\subsection{Dense $S$-rings}

The lemma below is straightforward. 

\begin{lemm}\label{e4}
Every $S$-ring over $H\cong E_4$ is normal and equals $\cyc(K,H)$, where $K\leq \aut(H)$ is of order at most~$3$.
\end{lemm}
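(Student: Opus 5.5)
The plan is to classify the partitions of $H=\{e,a,b,ab\}$ directly and to realize each one as the orbit partition of a small subgroup of $\aut(H)\cong\sym(3)$. Here $\aut(H)$ acts faithfully on the three involutions $a,b,ab$ as the full symmetric group on them.

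\textbf{Step 1: listing the partitions.} Every element of $H$ is its own inverse, so Condition~$(2)$ in the definition of an $S$-ring is automatic. Thus an $S$-ring over $H$ is determined by a partition $\mathcal{S}$ of $H$ that contains $\{e\}$ and whose span is closed under multiplication. The partitions of $H^\#$ come in three types, which I treat in turn.

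(i) All three involutions in one class. This is $\mathcal{T}_H$, and $\mathcal{T}_H=\cyc(A_3,H)$, where $A_3\cong C_3$ is generated by the automorphism $a\mapsto b\mapsto ab\mapsto a$.

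(ii) One class $\{x,y\}$ and one singleton $\{xy\}$. Closure holds because $\underline{\{x,y\}}^2=2e+2xy$ and $\underline{\{x,y\}}\cdot xy=\underline{\{x,y\}}$. The partition equals $\cyc(K,H)$ for the order-$2$ subgroup $K$ generated by the automorphism swapping $x$ and $y$ and fixing $xy$.

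(iii) All singletons. This is $\mathbb{Z}H=\cyc(\{\id\},H)$.

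In every case the partition is $\orb(K,H)$ for some $K\leq\aut(H)$ with $|K|\leq 3$. Hence every $S$-ring over $H$ is cyclotomic with such a $K$, and is in particular schurian.

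\textbf{Step 2: normality.} This is the only step that needs some care. For $\mathbb{Z}H$ we have $\aut(\mathbb{Z}H)=H_r$. For case (ii), $\aut(\mathcal{A})$ must preserve the relation $r(\{xy\})$. That relation is the perfect matching given by the cosets of $\langle xy\rangle$, so $\aut(\mathcal{A})$ is the stabilizer in $\sym(H)$ of this partition into two blocks of size $2$, namely $C_2\wr C_2$ of order $8$. This has the same order as $H_r\rtimes K$, so $\aut(\mathcal{A})=H_rK\leq\Hol(H)$. For $\mathcal{T}_H$ we get $\aut(\mathcal{T}_H)=\sym(H)\cong\sym(4)$. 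This equals $\Hol(H)=H_r\rtimes\aut(H)\cong\sym(4)$ by comparing orders: $|\Hol(H)|=4\cdot6=24$. So $\mathcal{T}_H$ is normal as well.

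I do not expect a real obstacle. The one point to state carefully is the order count $|\Hol(E_4)|=24=|\sym(4)|$, which gives normality of $\mathcal{T}_H$. It is also worth remarking that the minimal choice $K=A_3$ in case (i) is what yields the bound $|K|\leq3$ rather than $|K|\leq6$.
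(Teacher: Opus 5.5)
Your proposal is correct and is exactly the direct case check the paper has in mind; the paper gives no proof beyond calling the lemma straightforward, and your enumeration of the partitions of $H^\#$ (yielding $\mathcal{T}_H=\cyc(C_3,H)$, the three $S$-rings $\cyc(K,H)$ with $|K|=2$, and $\mathbb{Z}H$) is the intended verification. One simplification: your order count already gives $\Hol(H)=\sym(H)$, so every $S$-ring over $H$ is normal at once, and the separate automorphism-group computations in cases (ii) and (iii) are unnecessary.
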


\begin{rem}\label{e4rem}
In Lemma~\ref{e4}, we have $\cyc(K_H,H)=\mathbb{Z}H$ if $|K_H|=1$, $\cyc(K_H,H)\cong\mathbb{Z}C_2\wr \mathbb{Z}C_2$ if $|K_H|=2$, and $\cyc(K_H,H)=\mathcal{T}_H$ if $|K_H|=3$.
\end{rem}

Further throughout this subsection, we assume that $H$ and $D$ are $\mathcal{A}$-subgroups. Denote two distinct elements from $H^\#$ by $a$ and $b$. Given $X\in \mathcal{S}(\mathcal{A})$, put $\lambda_X=|X\cap Hx|$, where $x\in X$. Due to Lemma~\ref{intersection}, $\lambda_X$ does not depend on $x$. Clearly, $1\leq \lambda_X\leq |X_H|\leq 3$.

\begin{lemm}\label{hpasubgroups1}
Let $\mathcal{A}$ be dense and $X\in \mathcal{S}(\mathcal{A})_{G\setminus (H\cup D)}$. Then $X=X_H\times X_D$ or $\lambda_X=1$.  
\end{lemm}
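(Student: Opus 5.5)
Since $\mathcal{A}$ is dense, $H$ and $D$ are $\mathcal{A}$-subgroups with $G=H\times D$. By Lemma~\ref{tenspr}(1), both $X_H$ and $X_D$ are basic sets of $\mathcal{A}$ for every $X\in\mathcal{S}(\mathcal{A})$. Because $X\cap(H\cup D)=\varnothing$, we have $X_H\subseteq H^\#$ and $X_D\subseteq D^\#$. Also $X\subseteq X_H\times X_D$, so $|X|\le |X_H||X_D|$.

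The plan is to assume $\lambda_X\ge 2$ and deduce $X=X_H\times X_D$. The quantity to control is the fibre $X\cap Hx=h^{-1}\cdot$ (the elements $h'x_D$ with $h'\in H$ and $h'x_D\in X$), written as $X^{x_D}=\{h\in H:\ hx_D\in X\}$ for $x_D\in X_D$. By Lemma~\ref{intersection}, $|X^{x_D}|=\lambda_X$ for all $x_D\in X_D$. I intend to use the product $\underline{X}\cdot\underline{X_D^{-1}}$, or more directly $\underline{X}\cdot\underline{H}$. Its coefficient at an element $hd$ is the number of $h'\in H$ with $h'd\in X$, i.e.\ $\lambda_X$ if $d\in X_D$ and $0$ otherwise, which gives $\underline{X}\,\underline{H}=\lambda_X\,\underline{H X_D}$. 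This yields nothing new, so the real information must come from the $H$-side. Dually, consider the projection onto $D$ fibres: for $h\in X_H$ put $X_h=\{d: hd\in X\}$, with $|X_h|=\mu$ constant (by Lemma~\ref{intersection} applied to the $\mathcal{A}$-subgroup $D$). Then $|X|=\lambda_X|X_D|=\mu|X_H|$.

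The key case analysis uses $|X_H|\le 3$ together with Lemma~\ref{e4}. If $\lambda_X=|X_H|$, then every fibre $X^{x_D}$ equals $X_H$, and $X=X_H\times X_D$ immediately. So the remaining cases are $|X_H|=3$ with $\lambda_X=2$. Here $X_H=H^\#$ is a basic set, so $\mathcal{A}_H=\mathcal{T}_H$ by Remark~\ref{e4rem}. For each $x_D\in X_D$ the fibre $X^{x_D}$ is a $2$-subset of $H^\#$; equivalently, its complement in $H^\#$ is a single element $h(x_D)\in H^\#$. I will derive a contradiction. Define $Y=\{h(x_D)x_D:\ x_D\in X_D\}=(H^\#\times X_D)\setminus X$. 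Since $H^\#\times X_D=\underline{H^\#}\cdot\underline{X_D}$ is an $\mathcal{A}$-set (a product of basic sets lies in $\mathcal{A}$, and its support is exactly this set because each element has a unique factorization), $Y$ is an $\mathcal{A}$-set with $\lambda$-value $1$ on each $H$-coset. Now use the map $x_D\mapsto h(x_D)$: the sets $Y_a=\{d:\ ad\in Y\}$, $Y_b$, $Y_{ab}$ partition $X_D$. Next, compute $\underline{Y}\cdot\underline{a}$-type products. Since $\{a\}$ is not basic ($\mathcal{A}_H=\mathcal{T}_H$), I will instead use $\underline{Y}\cdot\underline{H^\#}$. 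Its coefficient at $hd$ with $d\in X_D$ is $1$ if $h\ne h(d)$ and $h\neq e$..., more precisely it is the number of $h'\in H^\#$ with $h'^{-1}h=h(d)$. This equals $1$ for $h\ne h(d)$ and $0$ for $h=h(d)$. Hence $\underline{Y}\,\underline{H^\#}=\underline{(H\times X_D)\setminus Y}$, which contains $X_D$ itself. Every basic set inside this $\mathcal{A}$-set is either contained in $D$ or meets it trivially, so it is contained in $X_D\cup X$. This is consistent and gives no contradiction yet, so a further step is required.

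The main obstacle is ruling out $\lambda_X=2$ with $|X_H|=3$. I plan to apply Lemma~\ref{orbit} (with $H$ of order $4$ coprime to $n$): the sets $X_{h,l}$ are orbits of a single group $K^D\le\aut(D)$. By Lemma~\ref{burn}, any $\sigma_m$ with $m\equiv1\pmod 2$ (so that it fixes $H$) that maps one element of $X$ into $X$ preserves $X$. Such multipliers preserve the fibre structure, giving $h(d^m)=h(d)$ whenever $d^m\in X_D$ and $\sigma_m$ stabilizes $X$. Consequently $Y_a, Y_b, Y_{ab}$ are unions of $K^D$-orbits, and $aY_a\cup bY_b\cup abY_{ab}$ splits $Y$ into pieces invariant under the multiplier group. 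Combined with the previous product, this should force $Y_a d$-fibres to yield basic sets whose $H$-projection is a single element, for instance $Y\cap (\{a\}\times D)$, contradicting $\mathcal{A}_H=\mathcal{T}_H$ through Lemma~\ref{tenspr}(1). The delicate point is whether $Y$ must split into basic sets with singleton $H$-projection. I will attempt to establish this via the multiplier/orbit argument above; if that fails, I will use Lemma~\ref{sch} with the prime $2$ on the set $X$, noting that the $[2]$-power map kills $H$ and sends $X$ to $X_D^{(2)}$ with multiplicity $\lambda_X=2\equiv 0$, and compare this with $Y$, where the multiplicity is $1$.
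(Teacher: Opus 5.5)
There is a genuine gap: the proposal never rules out the one bad case. Your reduction to that case, $X_H=H^\#$ with $\lambda_X=2$, and the auxiliary set $Y=(H^\#\times X_D)\setminus X$, agree with the paper. But the routes you sketch for excluding it cannot work. The products $\underline{X}\,\underline{H}$ and $\underline{Y}\,\underline{H^\#}$ are uninformative, as you note. So is the fallback via Lemma~\ref{sch} with the prime $2$: since $\lambda_X=2$ you get $X^{[2]}=\varnothing$, while $Y^{[2]}=X_D^{(2)}$ is already an $\mathcal{A}$-set. Your intended target, basic sets inside $Y$ with singleton $H$-projection, is also unreachable. Any basic set $Z\subseteq Y$ has $Z_D=X_D$ by Lemma~\ref{tenspr}(1), and $Y$ meets each $H$-coset in at most one point, so $|Z|\ge |X_D|=|Y|$. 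Hence $Y$ is itself a single basic set. Multiplier invariance of the fibres $Y_a,Y_b,Y_{ab}$ only constrains the shape of $Y$, since multipliers fix $H$ pointwise; it cannot produce new $\mathcal{A}$-sets.

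The missing argument is a counting one, in two steps.

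First, take $x,x'\in X_D$ of the same order. The sets $H_x=\{h\in H:\ hx\in X\}$ and $H_{x'}$ are $2$-subsets of the $3$-set $H^\#$, so they share some $h$. Choose $m$ coprime to $2n$ with $x^m=x'$. Then $(hx)^m=hx'\in X^{(m)}\cap X$, so $X^{(m)}=X$ by Lemma~\ref{burn}, and therefore $H_x=H_{x'}$. This is exactly what tells you which $\sigma_m$ stabilize $X$; your remark that $h(d^m)=h(d)$ has no force without it. Let $l$ be the largest order of an element of $X_D$. Then the whole layer $X_D\cap D_l^*$ lies in one fibre $Y_{h_0}$. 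Because $\lambda_Y=1$, the fibres partition $X_D$, so the other fibres $Y_h$, $h\in H^\#\setminus\{h_0\}$, lie in $X_D\setminus D_l^*$.

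Second, $Y$ is basic and $D$ is an $\mathcal{A}$-subgroup, so Lemma~\ref{intersection} gives $|Y_{h_0}|=|Y_h|$. The contradiction then comes from $|X_D\cap D_l^*|>|X_D\setminus D_l^*|$. This inequality holds for every basic set of $\mathcal{A}_D$, by Lemma~\ref{pnonreg} when $n=p^k$ and by Lemma~\ref{pq} when $n=pq$, using that $p$ and $q$ are odd.
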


\begin{proof}
Assume that $X\neq X_H\times X_D$. Then $\lambda_X<|X_H|$. We are done if $\lambda_X=1$. So it remains to consider only the case when $1<\lambda_X<|X_H|\leq 3$. Since $H\cong E_4$, this is possible only if $X_H=H^\#=\{a,b,ab\}$ and $\lambda_X=2$. 

Assume that two latter equalities hold. Then $Y=(X_H\times X_D)\setminus X$ is an $\mathcal{A}$-set containing exactly one element from $Hx$ for every $x\in X_D$. Together with Lemma~\ref{tenspr}(1) and the equality $X_H=H^\#$, this implies that $Y$ is exactly a basic set of $\mathcal{A}$ with 
\begin{equation}\label{lambda1}
\lambda_Y=1. 
\end{equation}

Let $x,x^\prime\in X_D$ be of the same order. Then there is a positive integer $m$ coprime to~$2n$ such that $x^m=x^\prime$. Since $\lambda_X=2$, the sets $H_x=Xx^{-1}\cap H$ and $H_{x^\prime}=X(x^\prime)^{-1}\cap H$ are subsets of $H^\#$ of size~$2$. Therefore there is $h\in H_x\cap H_{x^\prime}$. One can see that $(hx)^m=hx^\prime\in X^{(m)}\cap X$. So $X^{(m)}=X$ by Lemma~\ref{burn}. This yields that 
$$(H_xx)^m=H_xx^m=H_xx^\prime=H_{x^\prime}x^\prime$$
and hence $H_x=H_{x^\prime}$. 

Let $l$ be the maximum of orders of the elements from~$X_D$. Due to the above discussion, there is $H_0\subseteq H$ such that $|H_0|=2$ and $H_x=H_0$ for every $x\in (X_D\cap D_l^*)$. So
\begin{equation}\label{d1} 
X_D\cap D_l^*\subseteq Y_{h_0},
\end{equation}
where $h_0$ is a unique nontrivial element from $H\setminus H_0$. Let $h\in H_0$. From Eq.~\eqref{lambda1} it follows that $Y_{h_0}\cap Y_{h}=\varnothing$. Together with $Y_{h_0}\cup Y_h \subseteq X_D$ (Lemma~\ref{tenspr}(1)) and Eq.~\eqref{d1}, this implies that
\begin{equation}\label{d2}
Y_h\subseteq X_D\setminus D_l^*.
\end{equation}

Observe that $|Y_{h_0}|=|Y_h|$ by Lemma~\ref{intersection}. In view of Eqs.~\eqref{d1} and~\eqref{d2}, to obtain a contradiction, it suffices to show that
\begin{equation}\label{d3}
|X_D\cap D_l^*|>|X_D\setminus D_l^*|.
\end{equation}
The set $X_D$ is a basic set of $\mathcal{A}_D$ (Lemma~\ref{tenspr}(1)). If $|X_D\setminus D_l^*|=0$, then Eq.~\eqref{d3} is obvious. Suppose that $|X_D\setminus D_l^*|>0$, i.e. $X_D$ is nonregular. If $n=pq$, then Lemma~\ref{pq} implies that $\mathcal{A}_D$ is trivial or a nontrivial generalized wreath product. So $X_D=D^\#$ or $X_D$ is a union of some $L$-cosets, where $L=\rad(X_D)$ is a nontrivial $\mathcal{A}_D$-subgroup. In the former case, $|X_D\cap D_l^*|=(p-1)(q-1)$ and $|X_D\setminus D_l^*|=p+q-2$, whereas in the latter one $|X_D\cap D_l^*|=r(|L|-1)$ and $|X_D\setminus D_l^*|=r$, where $r=|X_D/L|$. If $n=p^k$, then by Lemma~\ref{pnonreg}, we obtain $X_D=U\setminus L$ for some $\mathcal{A}$-subgroups $U\cong C_{p^l}>L$ and consequently $|X_D\cap D_l^*|=p^{l-1}(p-1)$ and $|X_D\setminus D_l^*|=p^{l-1}-|L|$. Since $p$ and $q$ are odd, Eq.~\eqref{d3} holds in all cases and we are done. 
\end{proof}

\begin{theo}\label{hpasubgroups2}
Let $\mathcal{A}$ be dense. Suppose that $|\rad(\mathcal{A}_D)|=1$ and $\mathcal{A}\neq \mathcal{A}_H\otimes \mathcal{A}_D$. Then:
\begin{enumerate}

\tm{1} $\mathcal{A}$ is normal, cyclotomic, and $2$-minimal;

\tm{2} $|\aut(\mathcal{A})^H|\leq 12$.

\end{enumerate} 
\end{theo}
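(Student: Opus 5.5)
First I would pin down $\mathcal{A}_D$ and $\mathcal{A}_H$. Since $|\rad(\mathcal{A}_D)|=1$, Lemma~\ref{leungman}(1) says $\mathcal{A}_D$ is a tensor product of a normal cyclotomic $S$-ring and trivial $S$-rings. If $n=p^k$, Lemma~\ref{trivradorb}(2) then gives $\mathcal{A}_D=\cyc(K_D,D)$ with $|K_D|$ dividing $p-1$, unless $\mathcal{A}_D=\mathcal{T}_D$. If $n=pq$, Lemma~\ref{pq} gives that $\mathcal{A}_D$ is cyclotomic. By Lemma~\ref{e4}, $\mathcal{A}_H=\cyc(K_H,H)$ with $|K_H|\leq 3$.

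Next I would look at the basic sets $X\subseteq G\setminus(H\cup D)$. By Lemma~\ref{tenspr}(2), $\mathcal{A}\geq\mathcal{A}_H\otimes\mathcal{A}_D$. Since $\mathcal{A}\neq\mathcal{A}_H\otimes\mathcal{A}_D$, some such $X$ satisfies $X\neq X_H\times X_D$, so $\lambda_X=1$ by Lemma~\ref{hpasubgroups1}. In that case $X$ is the graph of a map $X_D\to X_H$, and $X_D$ is a basic set of $\mathcal{A}_D$ with trivial radical. I would use Lemma~\ref{orbit} together with the multiplier argument from the proof of Lemma~\ref{hpasubgroups1}: an element $\sigma_m$ with $m\equiv 1\pmod 2$ fixes the $H$-component and moves $x$ within $X_D$. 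From this I would show that for all $x\in X_D$ of the same order, the $H$-labels are permuted compatibly. The aim is to build a homomorphism $\varphi\colon K_D'\to K_H$, where $K_D'\leq\aut(D)$ is a Cayley-equivalent group for $\mathcal{A}_D$, and to show that every basic set is an orbit of the diagonal group $K=\{(\varphi(s),s)\}\leq\aut(H)\times\aut(D)=\aut(G)$, possibly enlarged by $\ker$-parts. That gives $\mathcal{A}=\cyc(K,G)$.

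The hard step is ruling out $\mathcal{A}_D=\mathcal{T}_D$ (or a $\mathcal{T}$-factor) in a glued basic set. Here I would argue by counting. In a trivial factor, the basic set of $D$ is nonregular or too large, and the elements of $D_l^*$ attached to a fixed $h$ form an $\aut(D)$-orbit by Lemma~\ref{orbit}. Then $|X_{h,l}|$ is divisible by the full $|D_l^*|$, and $\lambda_X=1$ forces $X_H$ to be a single element, giving $X=X_H\times X_D$, a contradiction. Once this is done, the set $X_D$ of each glued set comes from a normal cyclotomic factor, and the compatibility argument works.

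For normality and 2-minimality I would use the fact that the glued sets pin down the structure. Since $\aut(\mathcal{A})$ preserves $H$-cosets and $D$-cosets, it embeds in $\aut(\mathcal{A}_{G/D})\times\aut(\mathcal{A}_{G/H})$, which is $\aut(\mathcal{A}_H)\times\aut(\mathcal{A}_D)$ on the relevant parts. The normal factor of $\mathcal{A}_D$ is handled by Lemma~\ref{2minnorm}, and the gluing through $\lambda_X=1$ forces the $H$-part to be determined by the $D$-part on glued components. This should yield $\aut(\mathcal{A})\leq\Hol(G)$, and then Cayley minimality in the spirit of Lemma~\ref{cyclcayleymin}: a point of a glued set has trivial stabilizer in $K$, which gives 2-minimality as in Lemma~\ref{2minnorm}. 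Finally, $\aut(\mathcal{A})^H$ acts as a subgroup of $H_r\rtimes K_H$ with $|K_H|\leq3$, so $|\aut(\mathcal{A})^H|\leq 12$.
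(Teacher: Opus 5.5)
\textbf{There is a genuine gap.} The step you call hard is where the proposal fails, because its target, as you state it, is false.

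Trivial factors of $\mathcal{A}_D$ do occur inside glued basic sets. Take $n=3$, let $\sigma_0\in\aut(H)$ swap $a$ and $b$, and put $K=\langle(\sigma_0,\sigma_{-1})\rangle\leq\aut(H)\times\aut(D)$. Then:
\begin{enumerate}
\tm{i} $\cyc(K,G)$ is dense;
\tm{ii} $\mathcal{A}_D=\mathcal{T}_D$ has trivial radical;
\tm{iii} $X=\{ax,bx^{-1}\}$, with $x$ a generator of $D$, is a basic set with $\lambda_X=1$.
\end{enumerate}

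Your counting also misreads Lemma~\ref{orbit}. That lemma makes $X_{h,l}$ an orbit of \emph{some} subgroup $K^D\leq\aut(D)$, namely the one induced by the setwise stabilizer of $X$, not an orbit of $\aut(D)$. So the sets $X_{h,l}$, $h\in X_H$, can be the $|X_H|$ orbits of an index-$|X_H|$ subgroup on $D_l^*$. In the example above, $|X_{a,3}|=1<2=|D_3^*|$.

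Worse, for $n=pq$ a trivial factor that is not involved in the gluing makes part~(1) itself false. For $n=15$, let $\mathcal{A}=\mathcal{T}_{D_5}\otimes\cyc(\langle(\sigma_0,\sigma_{-1})\rangle,H\times D_3)$. This $S$-ring is dense and cyclotomic, $\mathcal{A}_D=\mathcal{T}_{D_5}\otimes\mathcal{T}_{D_3}$ has trivial radical, and $\mathcal{A}\neq\mathcal{A}_H\otimes\mathcal{A}_D$. Yet $\sym(D_5)\times\{\id\}\leq\aut(\mathcal{A})$, so $\mathcal{A}$ is neither normal nor $2$-minimal: $\alt(D_5)\times\aut(\mathcal{A}_{H\times D_3})$ is a smaller $2$-equivalent group containing $G_r$.

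The paper's proof slips at the same place: it applies Lemma~\ref{2minnorm} to $\mathcal{A}_D$, which requires $\mathcal{A}_D$ to be normal. So what can, and must, be excluded is only the noncyclotomic case $\mathcal{A}_D=\mathcal{T}_D$ with $n=pq$ or $n=p^k$, $k\geq 2$. Your first paragraph also silently drops $\mathcal{T}_D$ for $n=pq$, since Lemma~\ref{pq} concerns nontrivial $S$-rings only. No size count will exclude this case.

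\textbf{The missing ideas are the ones the paper uses there.}
\begin{enumerate}
\tm{a} For $n=pq$: Lemma~\ref{nonpowernew1} with $H_1=H$ and $P_1=D$ gives $\mathcal{A}=\mathcal{A}_H\star\mathcal{A}_D=\mathcal{A}_H\otimes\mathcal{A}_D$, contradicting the hypothesis.
\tm{b} For $n=p^k$, $k\geq 2$: Lemma~\ref{sch} forces $P\leq\rad(X_h\setminus X_{h,p})$, and Lemma~\ref{orbit} gives $|X_{h,p}|=(p-1)/|X_H|$. Then $\xi=\sum_{h\in X_H}\underline{X_h}^2$ must lie in $\mathcal{A}_D=\Span_{\mathbb{Z}}\{e,\underline{D^\#}\}$. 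But modulo $p$ its coefficients are $2(p-1)/|X_H|$ on $D\setminus P$ and $(p-1)/|X_H|$ or $(p-1)/|X_H|-1$ on $P^\#$, a contradiction.
\tm{c} In the cyclotomic case, compatibility of $H$-labels layer by layer does not produce a single group $K$. You need the paper's Claim~1: $X^{(r)}$ is a basic set for \emph{every} integer $r$, including $r=2$ and primes $r\mid n$. You also need Claim~2: every other basic set outside $H\cup D$ is some $X^{(r)}$ or has a singleton $H$-projection.
\tm{d} ``This should yield $\aut(\mathcal{A})\leq\Hol(G)$'' is not an argument. The paper goes the other way. It shows $\aut(\mathcal{A})_{e,x}=1$ for $x\in X$, using Claim~3 plus rigidity on $D$. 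It then gets $2$-minimality from \cite[Lemma~8.2]{MP}, and only afterwards normality and $|\aut(\mathcal{A})^H|\leq 12$.
\end{enumerate}

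The rigidity on $D$ in (d) is immediate whenever $\mathcal{A}_D$ is normal. For $n=p^k$ this always holds except when $k=1$ and $\mathcal{A}_D=\mathcal{T}_D$. In that exceptional case it has to come from the glued sets. For instance, $\aut(\mathcal{A})^D$ permutes the relations $r(X_h)$, $h\in X_H$, so it normalizes $\aut(\cyc(K_D^0,D))$. By Burnside's theorem that group lies in $\Hol(D)$ and has $D_r$ as its unique Sylow $p$-subgroup. For $n=pq$ the rigidity can genuinely fail, as the example above shows.
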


\begin{proof}
By the condition $\mathcal{A}\neq \mathcal{A}_H\otimes \mathcal{A}_D$ and Lemma~\ref{tenspr}(2), we have $\mathcal{A}_H\neq \mathbb{Z}H$. Then Lemma~\ref{e4} and Remark~\ref{e4rem} imply that $\mathcal{A}_H=\cyc(K_H,H)$, where $K_H\leq\aut(H)$ is of order~$2$ or~$3$, and a unique nonsingleton basic set of $\mathcal{A}_H$ is of size~$|K_H|$. We divide the rest of the proof into two cases depending on whether $\mathcal{A}_D$ is cyclotomic or not. 

\hspace{5mm}

\noindent \textbf{Case~1: $\mathcal{A}_D$ is noncyclotomic.} The condition $|\rad(\mathcal{A}_D)|=1$ of the theorem and Lemma~\ref{leungman} imply that $\mathcal{A}_D$ is trivial whenever $n=p^k$. The same arguments and Lemma~\ref{pq} imply that $\mathcal{A}_D$ is trivial in case $n=pq$. Thus, $\mathcal{A}_D$ is trivial in any case and hence $D^\#\in \mathcal{S}(\mathcal{A}_D)$. Moreover, $n$ is not prime by Lemma~\ref{cyclprime}. Therefore $n=pq$ or $n=p^k$ for $k\geq 2$.

Let $n=pq$. Then $H$ is a maximal $\mathcal{A}$-subgroup whose order is not divisible by~$p$ and $D$ is the least $\mathcal{A}$-subgroup whose order is divisible by~$p$. By Lemma~\ref{nonpowernew1} applied to $\mathcal{A}$, $H$, and $D$, we have $\mathcal{A}=\mathcal{A}_H\star\mathcal{A}_D$. Since $H\cap D$ is trivial, we obtain $\mathcal{A}=\mathcal{A}_H\otimes\mathcal{A}_D$, a contradiction to the assumption of the theorem.

Now let $n=p^k$, $k\geq 2$. Since $\mathcal{A}\neq \mathcal{A}_H\otimes \mathcal{A}_D$ by the condition of the theorem, there is $X\in \mathcal{S}(\mathcal{A})_{G\setminus (H\cup D)}$ such that 
\begin{equation}\label{nondirect}
X\neq X_H\times X_D.
\end{equation}
Note that $X_H\in\mathcal{S}(\mathcal{A}_H)$ and $X_D=D^\#$ by Lemma~\ref{tenspr}(1). Eq.~\eqref{nondirect} implies that $|X_H|>1$. So $|X_H|\in\{2,3\}$ by Lemma~\ref{e4}. Eq.~\eqref{nondirect} and Lemma~\ref{hpasubgroups1} yield that $\lambda_X=1$. If $P\nleq \rad(X_{h,p^l})$ (see Notation for $X_{h,p^l}$) for some $h\in X_H$ and $l\in\{2,\ldots,k\}$, then the set $Y=X^{[p]}$, which is an $\mathcal{A}$-set by Lemma~\ref{sch}, lies outside $H\cup D$ and $Y_D\subsetneq D^\#$, a contradiction to Lemma~\ref{tenspr}(1). Therefore 
\begin{equation}\label{trivd1}
P\leq \rad(X_h\setminus X_{h,p}) 
\end{equation}
for every $h\in X_H$.

Lemma~\ref{orbit} applied to $X$, $h\in X_H$, and $l=p$ implies that the sets $X_{h,p}$, $h\in X_H$, are orbits of some $K\leq \aut(P)$. This yields that all of these sets are of the same size $|K|$. Since $X_D=D^\#$, we have 
$$\bigcup \limits_{h\in X_H} X_{h,p}=P^\#$$
and hence
$$\sum \limits_{h\in X_H} |X_{h,p}|=p-1.$$
Thus,
\begin{equation}\label{trivd2}
|X_{h,p}|=(p-1)/|X_H|.
\end{equation} 

Further, we are going to compute $\sum \limits_{h\in X_H}\underline{X_{h,p}}^2$. Recall that $|X_H|\in\{2,3\}$. If $|X_H|=2$ and $(p-1)/2$ is odd, then $\cyc(K,P)$ is an antisymmetric $S$-ring of rank~$3$. Using the formulas for the intersection numbers of an antisymmetric association scheme of rank~$3$ (see, e.g.~\cite[Exercise~2.7.57]{CP}), one can compute that 
\begin{equation}\label{trivd3}
\sum \limits_{h\in X_H}\underline{X_{h,p}}^2=\frac{p-1}{2}\underline{P}^\#.
\end{equation}
If $|X_H|=2$ and $(p-1)/2$ is even or $|X_H|=3$, then $X_{h,p}=X_{h,p}^{-1}$ for every $h\in X_H$ and the required sum is a special case of the sum computed in~\cite[Lemma~5.2]{Ry4}. So applying this lemma, we obtain
\begin{equation}\label{trivd4}
\sum \limits_{h\in X_H}\underline{X_{h,p}}^2=(p-1)e+(\frac{p-1}{|X_H|}-1)\underline{P}^\#.
\end{equation}

Put 
$$\xi=\sum \limits_{h\in X_H} \underline{X_h}^2\in \mathbb{Z}D.$$
One can compute directly that
$$\underline{X}^2=\xi+\sum \limits_{h,h^\prime\in X_H} hh^\prime\underline{X_h}\cdot\underline{X_{h^\prime}}\in \mathcal{A}.$$
Observe that only elements from $G\setminus D$ enter the second sum on the right-hand side of the above equality. Since $D$ is an $\mathcal{A}$-subgroup, we conclude that $\xi\in \mathcal{A}$. Further, one can compute $\xi$ as follows:
$$\xi=\sum \limits_{h\in X_H} (\underline{X_h\setminus X_{h,p}}+\underline{X_{h,p}})^2=\sum \limits_{h\in X_H} (\underline{X_h\setminus X_{h,p}})^2+
2\sum \limits_{h\in X_H} \underline{X_h\setminus X_{h,p}}\cdot \underline{X_{h,p}}+\sum \limits_{h\in X_H} \underline{X_{h,p}}^2.$$
Each element appearing in the first of three last sums enters with a coefficient divisible by~$p$ due to Eq.~\eqref{trivd1} and hence 
$$\sum \limits_{h\in X_H} (\underline{X_h\setminus X_{h,p}})^2=p\eta$$ 
for some $\eta\in \mathbb{Z}D$. Recall that $X_{h,p}\subseteq P\leq \rad(X_h\setminus X_{h,p})$ for every $h\in X_H$, where the latter inequality holds by Eq.~\eqref{trivd1}. Therefore  
$$\sum \limits_{h\in X_H} \underline{X_h\setminus X_{h,p}}\cdot \underline{X_{h,p}}=\sum \limits_{h\in X_H} |X_{h,p}|\underline{X_h\setminus X_{h,p}}=2(p-1)/|X_H|\sum \limits_{h\in X_H} (\underline{X_h}-\underline{X_{h,p}}),$$  
where the first equality follows from Eq.~\eqref{inrad0}, whereas the second one from Eq.~\eqref{trivd2}. Thus,
$$\xi=p\eta+2(p-1)/|X_H|\sum \limits_{h\in X_H} (\underline{X_h}-\underline{X_{h,p}})+\sum \limits_{h\in X_H} \underline{X_{h,p}}^2.$$

The above expression for $\xi$ implies that every element from $D\setminus P$ enters $\xi$ with the coefficient equal to~$2(p-1)/|X_H|$ modulo~$p$. On the other hand, this expression and Eqs.~\eqref{trivd3} and~\eqref{trivd4} imply that every element from $P^\#$ enters $\xi$ with coefficient equal to~$(p-1)/|X_H|$ or~$(p-1)/|X_H|-1$ modulo~$p$. It is easy to see that none of the above numbers is equal to~$2(p-1)/|X_H|$ modulo~$p$. However, each element from $D^\#$ must enter $\xi$ with the same coefficient because $D^\#\in \mathcal{S}(\mathcal{A})$, a contradiction.

\hspace{5mm}

\noindent \textbf{Case~2: $\mathcal{A}_D$ is cyclotomic.} In this case, all basic sets of $\mathcal{A}_D$ are regular. Let $K_D\leq \aut(D)$ be such that $\mathcal{A}_D=\cyc(K_D,D)$.

\begin{claim1}
Let $X\in \mathcal{S}(\mathcal{A})_{G\setminus (H\cup D)}$. Then $X^{(r)}\in \mathcal{S}(\mathcal{A})$ for every $r\in \mathbb{Z}$.
\end{claim1}

\begin{proof}
Without loss of generality, we may assume that $r$ is prime. If $r$ is coprime to~$2n$, then the claim follows from Lemma~\ref{burn}. Lemma~\ref{tenspr}(1) implies that $X_D\in \mathcal{S}(\mathcal{A}_D)$. If $r=2$, then $X^{(r)}=X^{(2)}=X_D^{(2)}\in \mathcal{S}(\mathcal{A}_D)$ by Lemma~\ref{burn}.

It remains to consider the case when $r$ is an odd prime divisor of~$n$. Put $Y=X^{(r)}$. Let $R$ be the (unique) subgroup of $D$ of order~$r$. Since $\mathcal{A}_D$ is cyclotomic, $R$ is an $\mathcal{A}$-subgroup. Observe that $R\nleq \rad(X_D)$. Indeed, if $n=p^k$, then this follows from Lemma~\ref{trivradorb}(2), whereas if $n=pq$, then this immediately follows from the the fact that $\mathcal{A}$ is cyclotomic with trivial radical. Therefore $R\nleq \rad(X)$ and hence $Y=X^{[r]}$. Lemma~\ref{sch} yields that $Y$ is an $\mathcal{A}$-set. 

If $Y_D$ is trivial, then $Y=X_H$ and we are done by Lemma~\ref{tenspr}(1). Further, we assume that $Y_D$ is nontrivial. Let us show that
\begin{equation}\label{bsyd}
Y_D\in \mathcal{S}(\mathcal{A}_D)
\end{equation}
One can see that $Y_D=X_D^{(r)}$. If $n=p^k$, then $|Y_D|=|X_D|$ because all the elements of $X_D$ lie in pairwise distinct $R$-cosets (Lemma~\ref{trivradorb}(1)). From Lemma~\ref{trivradorb}(2) it follows that all nontrivial basic set of $\mathcal{A}_D$ has the same size and hence Eq.~\eqref{bsyd} holds. If $n=pq$, then $R$ and the Hall $r^\prime$-subgroup $U$ of $D$ are $\mathcal{A}$-subgroups because $\mathcal{A}_D$ is cyclotomic. So $H$, $U$, and $R$ are $\mathcal{A}$-subgroups such that $G=H\times U \times R$. As $R\nleq \rad(X)$, one can see that $Y=X_{H\times U}$ and consequently $Y_D=(X_{H\times U})_D=X_U$. Therefore $Y_D\in \mathcal{S}(\mathcal{A}_U)$ by Lemma~\ref{tenspr}(1) applied to $G_1=H\times R$ and $G_2=U$. Thus, Eq.~\eqref{bsyd} holds.

To complete the proof, let us show that $Y\in \mathcal{S}(\mathcal{A})$. Assume the contrary that $Y\notin \mathcal{S}(\mathcal{A})$. Then there exists $Z\in \mathcal{S}(\mathcal{A})$ such that $Z\subsetneq Y$. Recall that $\lambda_X=|Hx\cap X|$ does not depend on $x\in X$. We have $\lambda_X\neq 1$. Indeed, if $\lambda_X=1$, then $\lambda_Y=1$ and hence $|Y|=|Y_D|$. Together with $Z\subsetneq Y$, this implies that $Z_D\subsetneq Y_D$. Due to Eq.~\eqref{bsyd}, we conclude that $Z_D\notin \mathcal{S}(\mathcal{A}_D)$, a contradiction to Lemma~\ref{tenspr}(1).

Since $\lambda_X\neq 1$, we have $X=X_H\times X_D$ by Lemma~\ref{hpasubgroups1} and $\lambda_X=|X_H|\geq 2$. So $Y=Y_H\times Y_D=X_H\times Y_D$. As $Z_H,Y_H\in \mathcal{S}(\mathcal{A}_H)$ and $Z_H\subseteq Y_H$, we have $Z_H=Y_H$. Besides, $Z_D=Y_D$ by Eq.~\eqref{bsyd}. Since $Z\subsetneq Y$, we have $\lambda_Z=1$ by Lemma~\ref{hpasubgroups1}. 

If $n=p^k$, then given $x\in X_D$, all the elements from $Y_Dx$ lie in the same $\langle Y_D \rangle$-coset. So by Lemma~\ref{trivradorb}(1), they lie in pairwise distinct basic sets of $\mathcal{A}_D$. Together with $|X_D|=|Y_D|$, this implies that there are at least $|X_D|$ pairwise distinct $T_D\in \mathcal{S}(\mathcal{A}_D)$ with $c_{X_D Y_D}^{T_D}\geq 1$. Therefore there are at least $|X_D|$ pairwise distinct $T\in \mathcal{S}(\mathcal{A})$ with $c_{X_D Z}^{T}\geq 1$. Every such $T$ is rationally conjugate to~$X$ (Lemma~\ref{burn}) and hence $|T|=|X|=|X_H||X_D|$. Thus, at least $|X_H||X_D|^2$ elements enter the element $\underline{X_D}\cdot \underline{Z}$. On the other hand, $|Z|=|Z_D|=|Y_D|=|X_D|$ because $\lambda_Z=1$ and consequently exactly $|X_D|^2$ elements enter the element $\underline{X_D}\cdot \underline{Z}$, a contradiction to $|X_H|=|K_H|\geq 2$.

If $n=pq$, then $X^\pi\cap Z^\pi \neq \varnothing$, where $\pi$ is the canonical epimorphism from $G$ to $G/R$. Since $X^\pi,Z^\pi\in \mathcal{S}(\mathcal{A}_{G/R})$, we conclude that $X^\pi=Z^\pi$. On the other hand, $|X^\pi \cap H^\pi x|=|X_H|\geq 2$ for every $x\in X^\pi$, whereas $|Z^\pi \cap H^\pi z|=1$ for every $z\in Z^\pi$ by Lemma~\ref{intersection}, a contradiction.    
\end{proof}

Since $\mathcal{A}_H=\cyc(K_H,H)$, where $K_H\leq\aut(H)$ is of order~$2$ or~$3$, and $\mathcal{A}_D=\cyc(K_D,D)$, one can choose a basic set $X\in \mathcal{S}(\mathcal{A})_{G\setminus (H\cup D)}$ such that 
$$|X_H|=|K_H|>1$$
and $X_D$ consists of generators of~$D$. Clearly, $X_H\in \orb(K_H,H)$ and $X_D\in\orb(K_D,D)$. 

\begin{claim2}
Let $Y\in \mathcal{S}(\mathcal{A})\setminus (\mathcal{S}(\mathcal{A}_H)\cup \mathcal{S}(\mathcal{A}_D))$. Then $Y=X^{(r)}$ for some $r\in \mathbb{Z}$ or $|Y_H|=1$ and $Y=Y_H \times Y_D$. 
\end{claim2}

\begin{proof}
Let $Y_H=X_H$. Since $Y\notin \mathcal{S}(\mathcal{A}_H)$ by the condition of the claim and $X_D$ consists of generators of $D$, we conclude that $Y\cap X^{(r)}\neq \varnothing$ for some $r\in \mathbb{Z}$. Claim~$1$ implies that $X^{(r)}\in \mathcal{S}(\mathcal{A})$ and hence $Y=X^{(r)}$ as required. Now let $Y_H\neq X_H$. Since $X_H,Y_H\subseteq H^\#$ and $|X_H|>1$, we obtain $|Y_H|=1$ and hence $Y=Y_H \times Y_D$ as required. 
\end{proof}

If $X=X_H\times X_D$, then $Y=Y_H\times Y_D$ for every $Y\in \mathcal{S}(\mathcal{A})_{G\setminus (H\cup D)}$ by Claim~$2$ and hence $\mathcal{A}=\mathcal{A}_H\otimes \mathcal{A}_D$ which contradicts to the supposition of the theorem. Therefore $X\neq X_H\times X_D$. Together with Lemma~\ref{hpasubgroups1}, this implies that
$$\lambda_X=1.$$

Further, we are going to construct $K\leq \aut(G)$ such that
\begin{equation}\label{xorbk} 
X\in \orb(K,G).
\end{equation}
Since $X_D\in \orb(K_D,D)$, the set $X_D$ is regular. So $X_h=X_{h,p^k}$ for every $h\in X_H$. Lemma~\ref{orbit} implies that $X_h$ is an orbit of some $K^0_D\leq \aut(D)$ for every $h\in H$. Clearly, $K^0_D\leq K_D$, $K_D$ acts transitively on the set $\{X_h:~h\in X_H\}=\orb(K^0_D,X_D)$, and $K^0_D$ is the kernel of this action. As $\lambda_X=1$, we have $|K_D:K^0_D|=|X_H|=|K_H|\leq 3$. Therefore $K_H\cong K_D/K^0_D\cong C_{|X_H|}$. Let $\sigma_0$ be a generator of $K_H$. Since $|K_D:K^0_D|=|K_H|\leq 3$, there is $\tau_0\in K_D$ such that
\begin{equation}\label{agree} 
X_h^{\tau_0}=X_{h^{\sigma_0}} 
\end{equation}
for every $h\in X_H$. Let $\psi$ be an isomorphism from $K_H$ to $K_D/K^0_D$ such that $\sigma_0^\psi=K^0_D\tau_0$ and  
$$K=\{(\sigma,\tau)\in K_H\times K_D:~(\sigma)^\psi=K^0_D\tau\}.$$
Eq.~\eqref{agree} yields that $X$ is $K$-invariant. As $K_H$ and $K_D$ are transitive on $X_H$ and $X_D$, respectively, so is $K$ on $X$. Thus, Eq.~\eqref{xorbk} holds.

To prove that $\mathcal{A}$ is cyclotomic, it is enough to show that $\mathcal{A}=\cyc(K,G)$. One can see that $\mathcal{A}_H=\cyc(K,G)_H$ ($\mathcal{A}_D=\cyc(K,G)_D$, respectively) because $K^H=K_H$ ($K^D=K_D$, respectively). Let $Y\in \mathcal{S}(\mathcal{A})\setminus (\mathcal{S}(\mathcal{A}_H)\cup \mathcal{S}(\mathcal{A}_D))$. Then $Y=X^{(r)}$ for some $r\in \mathbb{Z}$ or $|Y_H|=1$ and $Y=Y_H \times Y_D$ by Claim~$2$. In the former case, $Y\in \orb(K,G)$ by Eq.~\eqref{xorbk}, whereas in the latter one, by the definition of $K$. Thus, every basic set of $\mathcal{A}$ is an orbit of $K$, i.e. $\mathcal{A}=\cyc(K,G)$ as desired.

Now let us prove that $\mathcal{A}$ is $2$-minimal. Let $x\in X$. It is enough to show that the pointwise stabilizer $\aut(\mathcal{A})_{e,x}$ is trivial. Indeed, then $X$ is a faithful regular orbit of $\aut(\mathcal{A})_e$ and $\mathcal{A}$ is $2$-minimal by~\cite[Lemma~8.2]{MP}. It suffices to verify that every $f\in \aut(\mathcal{A})_{e,x}$ is trivial. 

\begin{claim3}
If $y^f=y$ for some $y\in G\setminus H$, then $(Hy)^f=Hy$ and $f^{Hy}=\id_{Hy}$.
\end{claim3}

\begin{proof}
As $H$ is an $\mathcal{A}$-subgroup, each $H$-coset is a block of $\aut(\mathcal{A})$. One can see that $y=y^f\in Hy\cap (Hy)^f$. So $(Hy)^f=Hy$. Since $\mathcal{A}=\cyc(K,G)$, we have $\lambda_Y=1$ for every $Y\in \mathcal{S}(\mathcal{A})_{G\setminus (H\cup D)}$. Therefore all the elements of $Hy$ lie in pairwise distinct basic sets. Together with $(Hy)^f=Hy$, this implies that $f^{Hy}=\id_{Hy}$.
\end{proof}

The element $x$ can be uniquely presented in the form $x=hx_0$, where $h\in H^\#$ and $x_0\in D$ is a generator of~$D$. From Claim~$3$ it follows that $x_0^f=x_0$. Observe that $D^f=D$ because $D$ is an $\mathcal{A}$-subgroup and hence $f^D\in \aut(\mathcal{A}_D)$. Lemma~\ref{2minnorm} yields that $\aut(\mathcal{A}_D)=G_r\rtimes K_D\leq \Hol(G)$. So $f^D\in K_D\leq \aut(D)$. Together with $x_0^f=x_0$, this implies that $f^D=\id_D$. Therefore $f$ fixes an element from every $H$-coset. Thus, $f$ is trivial by Claim~$3$. 

Since $\mathcal{A}=\cyc(K,G)$ and $\mathcal{A}$ is $2$-minimal, we have $\aut(\mathcal{A})=G_r\rtimes K$. This implies that $\mathcal{A}$ is normal and $|\aut(\mathcal{A})^H|=|H||K^H|\leq 12$ which completes the proof of the theorem. 
\end{proof}

\begin{theo}\label{hpasubgroups3}
Let $\mathcal{A}$ be dense. Suppose that $\mathcal{A}_D$ is the nontrivial $U/L$-wreath product, where $L$ is a minimal nontrivial $\mathcal{A}_D$-subgroup and $U$ is an $\mathcal{A}_D$-subgroup with $|\rad(\mathcal{A}_U)|=1$. Then one of the following statements holds:
\begin{enumerate}
\tm{1} $\mathcal{A}$ is the nontrivial $(H\times U)/L$-wreath product;

\tm{2} $n=3^k$, $\mathcal{A}$ is the $(H\times W)/L$-wreath product (possibly, trivial) for an $\mathcal{A}_D$-subgroup $W>U$ such that $|\rad(\mathcal{A}_W)|=3$ and $\mathcal{A}_{H\times W}$ is cyclotomic.
\end{enumerate}
\end{theo}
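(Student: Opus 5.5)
The plan is to show that $L\leq \rad(X)$ for every basic set $X$ of $\mathcal{A}$ lying outside $H\times U$. This alone gives Statement~$(1)$: $L$ and $H\times U$ are $\mathcal{A}$-subgroups (the latter because $\mathcal{A}$ is dense), and the wreath product is nontrivial because $L\neq\{e\}$ and $H\times U<G$, the latter from $U<D$.

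\textbf{Reduction.} Take $X\in\mathcal{S}(\mathcal{A})$ with $X\nsubseteq H\times U$. By Lemma~\ref{tenspr}(1), $X_D\in\mathcal{S}(\mathcal{A}_D)$. Since $X_D\nsubseteq U$, the $U/L$-wreath structure of $\mathcal{A}_D$ gives $L\leq\rad(X_D)$. So it remains to pass from $X_D$ to $X$.
\begin{enumerate}
\tm{a} If $X\subseteq D$ or $X=X_H\times X_D$, we are done.
\tm{b} Otherwise Lemma~\ref{hpasubgroups1} gives $\lambda_X=1$, so $X$ is the graph of a map $X_D\to H$, $x\mapsto h(x)$.
\end{enumerate}
In case (b) we must show that $h$ is constant on $L$-cosets. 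By Lemma~\ref{orbit}, each fibre $X_{h,l}$ is an orbit of a group $K^D\leq\aut(D)$. We then use the Schur multiplier arguments of Lemma~\ref{burn} and Lemma~\ref{sch} with the prime $p$ (or $q$) dividing $|L|$.

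\textbf{Key step via $X^{[p]}$.} Suppose $L\nleq\rad(X)$. Then some $L$-coset $xL\subseteq X_D$ meets at least two fibres $hX_h$.
\begin{enumerate}
\tm{a} When $|L|$ is prime $p$, the number $|X\cap Px|$ is then not divisible by $p$, since it is smaller than $p$. Hence $X^{[p]}$ is a nonempty $\mathcal{A}$-set.
\tm{b} Its $D$-projection is $X_D^{(p)}$. Because $\lambda_X=1$, its $H$-part records the same labelling $h(\cdot)$.
\tm{c} Iterating, as in the proof of Lemma~\ref{2set}, we push the labelling down into the section just above $L$. There it must be compatible with $\mathcal{A}_{H\times U}$.
\end{enumerate}
Since $|\rad(\mathcal{A}_U)|=1$, Theorem~\ref{hpasubgroups2} applies to $\mathcal{A}_{H\times U}$ and shows it is cyclotomic and normal. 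This pins down the basic sets inside $H\times U$. In particular, elements of $H\times L$ lying over distinct elements of $L^\#$ in a common basic set carry the labels of a $K_H$-orbit.

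\textbf{Counting contradiction.} Comparing the structure constants $\underline{X}\cdot\underline{X^{-1}}$ restricted to $H\times L$ with the known basic sets there should give a contradiction. The idea is a coefficient count modulo $p$, as in Case~1 of Theorem~\ref{hpasubgroups2}, using $|X_{h,p}|=(p-1)/|X_H|$ together with Eqs.~\eqref{trivd3} and~\eqref{trivd4}.

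This count should fail to give a contradiction only when $|K_H|=3$ and $p=3$. In that case $(p-1)/|X_H|$ is not an integer in the generic setup, and the labelling can be "absorbed" one level higher. This is the exceptional Statement~$(2)$.
\begin{enumerate}
\tm{a} Let $W$ be the least $\mathcal{A}_D$-subgroup such that all basic sets outside $H\times W$ have $L$ in their radical.
\tm{b} Show that $|\rad(\mathcal{A}_W)|=3$, using Lemma~\ref{cyclradp}.
\tm{c} Show that $\mathcal{A}_{H\times W}$ is cyclotomic, by building $K\leq\aut(H\times W)$ from $K_H$ and $K_W$ as in Case~2 of Theorem~\ref{hpasubgroups2} and checking $X\in\orb(K,G)$.
\end{enumerate}

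The main obstacle is the key step: making the $X^{[p]}$ and structure-constant argument rigorous when $|L|$ is not prime, i.e. $L\cong C_{p^j}$ with $j>1$ or $n=pq$. In those cases I would first reduce to the minimal subgroup $P\leq L$. Then I would use Lemma~\ref{separ} to handle sets meeting $P$, and isolate the $p=3$ case by a careful analysis of $\mathcal{A}_W$.
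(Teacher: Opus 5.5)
Your reduction is sound and matches the paper's. For $X\in\mathcal{S}(\mathcal{A})$ outside $H\times U$ you get $X_D\in\mathcal{S}(\mathcal{A}_D)$ with $L\leq\rad(X_D)$, and $\lambda_X=1$ unless $X\subseteq D$ or $X=X_H\times X_D$.

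The gap is the key step: you never carry it out, and the tools you name for it do not apply. The mod-$p$ count from Case~1 of Theorem~\ref{hpasubgroups2} is about the fibres $X_{h,p}\subseteq P^\#$ of a set with $X_D=D^\#$. Here $P\leq L\leq U$ and $X_D\cap U=\varnothing$, so every $X_{h,p}$ is empty and Eqs.~\eqref{trivd2}--\eqref{trivd4} say nothing. Nor is $p=3$ exceptional because ``$(p-1)/|X_H|$ is not an integer''.

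The missing idea is a one-line count. By Lemma~\ref{intersection} applied to the $\mathcal{A}$-subgroup $L$, the number $\mu_X=|X\cap Lx|$ does not depend on $x\in X$. Since $\lambda_X=1$ and $Ly\subseteq X_D$ for every $y\in X_D$, the labels $h(z)$, $z\in Ly$, split $Ly$ into classes of common size $\mu_X$. So each coset carries $|L|/\mu_X$ labels, a divisor of $|L|$ that is at most $|X_H|\leq 3$. Hence either $\mu_X=|L|$, i.e.\ $L\leq\rad(X)$, or $|L|/\mu_X=3=|X_H|$. For $n=p^k$ the latter forces $p=3$, $X_H=H^\#$ and $\mathcal{A}_H=\mathcal{T}_H$, and this is where Statement~$(2)$ really comes from. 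For $n=pq$, $L=U$ has prime order, contrary to your remark, but this count does not rule out $|L|=3$. The paper treats that case separately and gets Statement~$(1)$ directly from Lemma~\ref{nonpowernew1} with $H_1=H$ and $P_1=L$.

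Your exceptional case also lacks its essential content. You need that every $X$ outside $H\times U$ with $L\nleq\rad(X)$ has $L=P$ and $|X_D|\leq 6$. If $X_D$ is regular, the fibres have trivial radical because $P\nleq\rad(X)$. Then Lemma~\ref{orbit} and Lemma~\ref{trivradorb}(1) give $|X_h|\leq 2$, so $X_D=Px$ or $X_D=Px\cup Px^{-1}$. If $X_D$ is nonregular, Lemma~\ref{pnonreg} together with an $X^{[p]}$ argument gives a contradiction.

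Only this yields $\rad(X_D)=P$, and hence $|\rad(\mathcal{A}_W)|=3$ for the group $W$ generated by all such $X_D$. Lemma~\ref{cyclradp} cannot give this, since it only concludes that radicals of proper sections are trivial. Nor can you copy the cyclotomicity of $\mathcal{A}_{H\times W}$ from Case~2 of Theorem~\ref{hpasubgroups2}, because its Claim~1 uses that $\rad(X_D)$ is trivial. The paper instead uses the explicit shape $X=h_1\{x\}\cup h_2\{xx_0\}\cup h_3\{xx_0^2\}$, or its inverse-closed analogue, to write down $K\cong C_3$ or $C_6$. It then checks, using iterates of $X^{[p]}$, Lemma~\ref{sch} and Lemma~\ref{burn}, that every basic set of $\mathcal{A}_{H\times W}$ is a $K$-orbit.
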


\begin{proof}
At first, suppose that $n=pq$. As $\mathcal{A}_D$ is the nontrivial $U/L$-wreath product, we may assume without loss of generality that $|U|=|L|=p$. Then $H$ is a maximal $\mathcal{A}$-subgroup whose order is not divisible by~$p$. So $\mathcal{A}$ is the nontrivial $(H\times U)/L$-wreath product by Lemma~\ref{nonpowernew1} applied to $H_1=H$ and $P_1=L$ and Statement~$(1)$ of the theorem holds as desired.

Now suppose that $n=p^k$. Since $\mathcal{A}_D$ is the nontrivial $U/L$-wreath product, we have $k\geq 2$.

\begin{claim4}
If there is $X\in \mathcal{S}(\mathcal{A})_{G\setminus (H\times U)}$ such that $L\nleq \rad(X)$, then $n=3^k$, $\mathcal{A}_H=\mathcal{T}_H$, $L=P$, and every such $X$ is equal to
\begin{equation}\label{forms}
h_1\{x\}\cup h_2\{xx_0\}\cup h_3\{xx_0^2\}~\text{or}~h_1\{x,x^{-1}\}\cup h_2\{xx_0,x^{-1}x_0^2\}\cup h_3\{xx_0^2,x^{-1}x_0\},
\end{equation}
where $h_1,h_2,h_3$ are pairwise distinct nonidentity elements of~$H$, $x_0$ is a generator of~$P$, and~$x\in D$. In particular, $|X|=|X_D|\leq 6$. 
\end{claim4}

\begin{proof}
We have $X\neq X_H\times X_D$ because otherwise $L\leq \rad(X_D)$ and hence $L\leq \rad(X)$ that contradicts to the assumption of the lemma. Hence $|X_H|\in \{2,3\}$ by Lemma~\ref{e4} and 
$$\lambda_X=|X\cap Hx|=1~\text{for every}~x\in X$$ 
by Lemma~\ref{hpasubgroups1}. The number $\mu_X=|X\cap Lx|$ does not depend on $x\in X$ by Lemma~\ref{intersection}. Since $\lambda_X=1$, we conclude that $\mu_X=|L|$ or $\mu_X=|L|/|X_H|$. In the former case, $L\leq \rad(X)$, a contradiction to the assumption of the claim. In the latter one, since $|X_H|\in \{2,3\}$ and $|L|$ is a $p$-power for an odd prime $p$, we obtain $|X_H|=p=3$. This implies that $n=3^k$, $\mathcal{A}_H=\mathcal{T}_H$, and $X_H=H^\#$.

Let $X_D$ be regular. Then all the $X_h$, $h\in H^\#$, are orbits of some $K\leq \aut(\langle X_D\rangle)$ by Lemma~\ref{orbit}. Since $L\nleq \rad(X)$, we obtain $P\nleq \rad(X_h)$ and hence $|\rad(X_h)|=1$ for every $h\in H^\#$. Lemma~\ref{trivradorb}(1) implies that $|K|=|X_h|\leq 2$ for every $h\in H^\#$. Therefore each $X_h$ is a singleton or each $X_h$ is of the form~$\{x,x^{-1}\}$, $x\in D$. Due to $\lambda_X=1$, all the $X_h$ are pairwise disjoint and consequently 
$$\sum \limits_{h\in H^\#} |X_h|=|X_D|\in \{3,6\}.$$
As $L\leq \rad(X_D)$, we have $L=\rad(X_D)=P$ and hence $X_D=Px$ or $X_D=Px\cup Px^{-1}$ for some $x\in D$. In the former case, each $X_h$ is a singleton and hence $X$ is of the first form from Eq.~\eqref{forms}, whereas in the latter one, each $X_h$ consists of two mutually inverse elements and hence $X$ is of the second form from Eq.~\eqref{forms} as desired.

Now let $X_D$ be nonregular. Then $X_D=V\setminus N$ for some $\mathcal{A}_D$-subgroups $V>N\geq U$ such that $|V:N|\geq 9$ by Lemma~\ref{pnonreg}. Let $m$ be the minimum of orders of the elements from~$X_D$. If $P\nleq \rad(X_{h,l})$ for some $h\in H^\#$ and $l>m$, then 
$$\varnothing\neq X^{[p]}_D\subsetneq X_D.$$ 
However, $X^{[p]}$ is an $\mathcal{A}$-set by Lemma~\ref{sch} and hence $X^{[p]}_D$ is a union of some basic sets of $\mathcal{A}_D$ by Lemma~\ref{tenspr}(1), a contradiction to the above inclusion. Therefore
\begin{equation}\label{lradp}
P\leq \rad(X_{h,l})     
\end{equation}
for all $h\in H^\#$ and $l>m$. Due to Lemma~\ref{orbit}, all $X_{h,m}$, $h\in H^\#$, are orbits of some $K\leq \aut(D_m)$. As $L\nleq \rad(X)$, Eq.~\eqref{lradp} implies that $P\nleq \rad(X_{h,m})$ and hence $|\rad(X_{h,m})|=1$ for every $h\in H^\#$. From Lemma~\ref{trivradorb}(1) it follows that $|X_{h,m}|\leq 2$ for every $h\in H^\#$. Together with $X_D=V\setminus N$, this yields that 
$$3^m-3^{m-1}=|D_m^*|=|\bigcup \limits_{h\in H^\#} X_{h,m}|\leq 6.$$
Therefore $m\leq 2$. If $m=1$, then $U$ is trivial. However, this contradicts to the assumption of the theorem that $\mathcal{A}$ is the nontrivial $U/L$-wreath product. If $m=2$, then $U=L=P$ is an $\mathcal{A}$-subgroup. By Lemma~\ref{intersection} and Eq.~\eqref{lradp}, we obtain $P=L\leq \rad(X)$, a contradiction to the assumption of the claim.
\end{proof}

If $L\leq \rad(X)$ for every $X\in \mathcal{S}(\mathcal{A})_{G\setminus (H\times U)}$, then $\mathcal{A}$ is the nontrivial $(H\times U)/L$-wreath product and Statement~$(1)$ of the theorem holds.

Suppose that $L\nleq \rad(X)$ for some $X\in \mathcal{S}(\mathcal{A})_{G\setminus (H\times U)}$. As $L$ is a minimal nontrivial $\mathcal{A}_D$-subgroup, we obtain $P\nleq \rad(X)$. So the set
$$\mathcal{W}=\{X\in \mathcal{S}(\mathcal{A})_{G\setminus (H\times U)}:~P\nleq \rad(X)\}$$
is nonempty. Put
$$W=\langle X_D:~X\in \mathcal{W} \rangle.$$ 
Observe that $X_D$ is an $\mathcal{A}_D$-set for every $X\in \mathcal{W}$ by Lemma~\ref{tenspr}(1) and hence $W$ is an $\mathcal{A}_D$-subgroup. By the definition, $W>U$. Claim~$4$ implies that $n=3^k$, $\mathcal{A}_H=\mathcal{T}_H$, $L=P$, and $|X_D|\leq 6$ for every $X\in \mathcal{W}$. Together with $\rad(X_D)\geq L$, the latter yields that $\rad(X_D)=P$ for every $X\in \mathcal{W}$ and consequently $\rad(\mathcal{A}_{W})=P$. By the definition of $W$, the radical of every basic set outside $H\times W$ contains $P$. Therefore $\mathcal{A}$ is the $(H\times W)/P$-wreath product (possibly, trivial). 

In view of the above paragraph, to prove Statement~$(2)$ of the theorem, it suffices to show that the $S$-ring $\mathcal{A}_{H\times W}$ is cyclotomic. Let $X\in \mathcal{W}$ (recall that $\mathcal{W}$ is nonempty) be such that $X_D$ contains a generator of~$W$. By Claim~$4$, the set $X$ is one of the forms from Eq.~\eqref{forms}, where $x\in W$ is a generator of~$W$. Let $\sigma\in \aut(H)$ be such that $\sigma=(h_1 h_2 h_3)$, and let $\tau_1,\tau_2\in \aut(W)$ be such that $x^{\tau_1}=xx_0$ and $x^{\tau_2}=x^{-1}$. Put 
$$K_1=\langle \varphi\rangle \leq \aut(H\times W)~\text{and}~K_2=K_1\times \langle \psi \rangle\leq \aut(H\times W),$$ 
where $\varphi,\psi\in \aut(H\times W)$ are such that $\varphi^H=\sigma$, $\varphi^D=\tau_1$, $\psi^H=\id_H$, $\psi^D=\tau_2$. Clearly, $K_1\cong C_3$ and $K_2\cong C_6$. By the definitions of $K_1$ and $K_2$, we have $X\in \orb(K,H\times W)$ and $|X|=|K|$, where $K=K_1$ if $X$ is of the first form from Eq.~\eqref{forms} and $K=K_2$ otherwise. It is easy to see that $H^\#,X_D\in \orb(K,H\times W)$. Put $X^0=X$ and $X^i=(X^{i-1})^{[p]}$ for $i\in\{1,\ldots,l-1\}$, where $l$ is such that $|W|=3^l$. Eq.~\eqref{forms} implies that $X^i=H^\#x^{p^i}$ or $X^i=H^\#x^{p^i}\cup H^\#x^{-p^i}$ for $i\in\{1,\ldots,l-1\}$. Each $X^i$ is an $\mathcal{A}$-set by Lemma~\ref{sch}. Moreover, using Lemma~\ref{intersection} and Lemma~\ref{tenspr}(1), it is easy to verify that each $X^i$ and $X^i_D$ are exactly basic sets of $\mathcal{A}_{H\times W}$ and $\mathcal{A}_W$, respectively. Due to the definition of $K$, each $X^i$ and $X^i_D$ are orbits of $K$. Every $Y\in \mathcal{S}(\mathcal{A}_{H\times W})_{G\setminus H}$ is rationally conjugate to $X^i$ or $X^i_D$ for some $i\in\{0,\ldots,l-1\}$ by Lemma~\ref{burn} and hence $Y\in \orb(K,H\times W)$. Thus, every basic set of $\mathcal{A}_{H\times W}$ is an orbit of~$K$, i.e. $\mathcal{A}_{H\times W}=\cyc(K,G)$ as required.  
\end{proof}

\begin{lemm}\label{hpasubgroups4}
Suppose that $\mathcal{A}$ is schurian, $\mathcal{A}_H=\mathcal{T}_H$, and $\mathcal{A}\neq \mathcal{A}_H\otimes \mathcal{A}_D$. Then $|\aut(\mathcal{A})^H|=12$. 
\end{lemm}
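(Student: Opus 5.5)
Throughout, we are in the dense setting of this subsection: $H$ and $D$ are $\mathcal{A}$-subgroups. Since $H$ is an $\mathcal{A}$-subgroup and $\mathcal{A}$ is schurian, $\aut(\mathcal{A})^H$ is a subgroup of $\aut(\mathcal{A}_H)=\sym(H)\cong\sym(4)$. It contains $H_r$, and its one-point stabilizer acts transitively on $H^\#$ because $\mathcal{A}_H=\mathcal{T}_H$. By Remark~\ref{2minsmallrem}, $\aut(\mathcal{A})^H$ is therefore either $\alt(H)$ or $\sym(H)$. So it suffices to rule out $\aut(\mathcal{A})^H=\sym(H)$: we must show that some element of order $2$ in $\sym(H)_e$ (a transposition of two nonidentity elements of $H$) is not induced by any automorphism of $\mathcal{A}$.

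First, use $\mathcal{A}\neq\mathcal{A}_H\otimes\mathcal{A}_D$ and Lemma~\ref{tenspr}(2) to choose a basic set $X\subseteq G\setminus(H\cup D)$ with $X\neq X_H\times X_D$. Then Lemma~\ref{tenspr}(1) gives $X_H=H^\#$, and Lemma~\ref{hpasubgroups1} gives $\lambda_X=1$. Thus each coset $Hy$ with $y\in X_D$ meets $X$ in exactly one element. For $y\in X_D$, let $h(y)\in H^\#$ be the unique element with $h(y)y\in X$. The map $y\mapsto h(y)$ is surjective onto $H^\#$, because $X_H=H^\#$.

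Next, take $f\in\aut(\mathcal{A})_e$. The $H$-cosets form blocks of $\aut(\mathcal{A})$, so $f$ permutes the cosets $Hy$. The set $D$ is $f$-invariant and meets each coset in exactly one point, so $f$ induces $f^D\in\aut(\mathcal{A}_D)$. By Eq.~\eqref{aut}, for each $y\in D$ the restriction $f^{Hy}$, transported to $H$ via $hy\mapsto h$, is an element $g_y\in\sym(H)$. The requirement $r(Z)^f=r(Z)$ for $Z\subseteq H$ forces all the $g_y$ to differ only by translations; in other words, $f$ acts on each coset through the "same" element of $\aut(\mathcal{A})^H$ modulo $H_r$.

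Now use $X$. The condition $(X)^f=X$ with $e^f=e$ means that the pattern $y\mapsto h(y)$ is transported by $f$: $h(y^f)=h(y)^{g}$ for the common linear part $g\in\aut(\mathcal{A})_e^H$. Combining this with the multiplier structure from Lemma~\ref{orbit}, the sets $X_h$ ($h\in H^\#$) are orbits of one subgroup $K^0\leq\aut(D)$, and $\aut(D)$ permutes them cyclically. This cyclic permutation acts on the three $h$'s, so the permutation $g$ restricted to $H^\#$ must be compatible with a cyclic structure coming from the abelian group acting on $D$. I expect this to force $g|_{H^\#}$ to be an even permutation, i.e. $g\in\alt(H)$, giving $|\aut(\mathcal{A})^H|\leq 12$ and hence equality.

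The main obstacle is the compatibility step. Automorphisms of $\mathcal{A}$ need not be group automorphisms, so $f^D$ need not lie in $\aut(D)$. Moreover, when $|\rad(\mathcal{A}_D)|>1$ (the situation of Theorem~\ref{hpasubgroups3}), $f^D$ can be quite wild. I would first handle the case $|\rad(\mathcal{A}_D)|=1$ directly via Theorem~\ref{hpasubgroups2}(2). Otherwise, I would pass to the section $S=(H\times U)/L$ or $(H\times W)/L$ from Theorem~\ref{hpasubgroups3}, where the offending basic set lives and $\mathcal{A}_{H\times W}$ is cyclotomic with $\mathcal{A}_H=\mathcal{T}_H$ and $n=3^k$. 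There I would verify, from the explicit forms in Eq.~\eqref{forms}, that an automorphism swapping $h_1,h_2$ while fixing $h_3$ would have to send $h_1x\mapsto h_2x'$ and $h_2 xx_0\mapsto h_1 x''$ inconsistently with $r(X)$.
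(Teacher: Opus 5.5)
\textbf{There is a genuine gap.} Your reduction to excluding $\aut(\mathcal{A})^H=\sym(H)$ is correct, and the case $|\rad(\mathcal{A}_D)|=1$ is indeed settled by Theorem~\ref{hpasubgroups2}(2).

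The heuristic in your third and fourth paragraphs fails. Since $\mathcal{A}_H=\mathcal{T}_H$, the only relations inside $H$-cosets are $r(\{e\})$ and $r(H^\#)$. These impose no coupling between the permutations $f$ induces on different $H$-cosets, so there is no ``common linear part'' $g$, and the parity conclusion is never derived.

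More seriously, your treatment of $|\rad(\mathcal{A}_D)|>1$ does not control where the non-product basic set $X$ lies. If Statement~(1) of Theorem~\ref{hpasubgroups3} holds, Claim~4 and the forms~\eqref{forms} are not available at all. Even under Statement~(2), $X$ may lie in $H\times U$, or satisfy $P\le\rad(X)$ (e.g.\ lie outside $H\times W$), and there Claim~4 says nothing. You also never explain how a bound proved on a section yields a bound on $\aut(\mathcal{A})^H$.

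The missing step is the paper's reduction by the radical of the offending set itself.
\begin{itemize}
\item Put $L=\rad(X)$. Since $\lambda_X=1$, $L\cap H$ is trivial, so $L\le D$ and $f^H$ is determined by the action induced on $\overline H$ in $G/L$.
\item With $\overline U=\langle\overline X\rangle$ this gives $|\aut(\mathcal{A})^H|\le|\aut(\mathcal{A}_{\overline U})^{\overline H}|$.
\item Now $\overline X$ generates $\overline U$, has trivial radical, and $\lambda_{\overline X}=1$.
\item If $|\rad(\overline X_D)|=1$, Theorem~\ref{hpasubgroups2}(2) applies to $\mathcal{A}_{\overline U}$.
\item Otherwise Theorem~\ref{hpasubgroups3} applies to $\mathcal{A}_{\overline U}$. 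Its Statement~(1) would force a nontrivial subgroup into $\rad(\overline X)$, because $\overline X$ generates $\overline U$ and so lies outside the proper subgroup. Hence Statement~(2) holds, and Claim~4 puts $\overline X$ in the explicit form $\bar h_1\overline X_1\cup\bar h_2\overline X_2\cup\bar h_3\overline X_3$.
\end{itemize}

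Finally, the contradiction cannot come from $r(\overline X)$ alone. A transposition $(\bar h_1\,\bar h_2)$ fixing $e$ only forces $\overline X_1^f=\overline X_2$, $\overline X_2^f=\overline X_1$, $\overline X_3^f=\overline X_3$ via Eq.~\eqref{aut}, and these equalities are not contradictory by themselves. The paper also uses the rationally conjugate basic set $\overline Y=\bar h_1\overline X_2\cup\bar h_2\overline X_3\cup\bar h_3\overline X_1$. For $f$ fixing $\bar h_1$ and swapping $\bar h_2,\bar h_3$, Eq.~\eqref{aut} gives $\overline X_1^f=(\overline X\bar h_1\cap\overline D)^f=\overline X_1$. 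It also gives $\overline X_1^f=(\overline Y\bar h_3\cap\overline D)^f=\overline Y\bar h_2\cap\overline D=\overline X_3$, which is the contradiction.
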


\begin{proof}
One can see that $\aut(\mathcal{A})^H\approx_2 \aut(\mathcal{A}_H)\cong \sym(4)$ because $\mathcal{A}$ is schurian. Moreover, $|\aut(\mathcal{A})^H|\in\{12,24\}$ by Remark~\ref{2minsmallrem}.

Since $\mathcal{A}\neq \mathcal{A}_H\otimes \mathcal{A}_D$, there is $X\in \mathcal{S}(\mathcal{A})_{G\setminus (U\cup D)}$ such that $X\neq X_H \times X_D$. Lemma~\ref{hpasubgroups1} implies that $\lambda_X=1$. From Lemma~\ref{tenspr}(1) it follows that $X_D\in \mathcal{S}(\mathcal{A}_D)$ and $X_H=H^\#$. Let $L=\rad(X)$, $U=\langle X \rangle$, and $W=\langle X_D \rangle$. Since $D$ is an $\mathcal{A}$-subgroup, we have $L\leq D$. The image of $T\subseteq G$ under the canonical epimorphism from $G$ to $G/L$ is denoted by $\overbar{T}$. The argument from the previous paragraph applied to $\mathcal{A}_{\overbar{U}}$ implies that $|\aut(\mathcal{A}_{\overbar{U}})^{\overbar{H}}|\in \{12,24\}$. To prove the lemma, it is enough to show that
$$|\aut(\mathcal{A}_{\overbar{U}})^{\overbar{H}}|=12.$$
Indeed, then 
$$|\aut(\mathcal{A})^H|=|\aut(\mathcal{A}_{\overbar{G}})^{\overbar{H}}|\leq |\aut(\mathcal{A}_{\overbar{U}})^{\overbar{H}}|=12$$
as required, where the first equality holds because $|L\cap H|=1$ and hence the mapping $f\mapsto \overbar{f}$ from $\aut(\mathcal{A})^H$ to $\aut(\mathcal{A}_{\overbar{G}})^{\overbar{H}}$, where $\overbar{f}$ is the permutation induced by $f$ on $\overbar{G}$, is an isomorphism.

By the definition of $L$, we have
\begin{equation}\label{trivradxpi}
|\rad(\overbar{X})|=1. 
\end{equation}
As $\lambda_X=1$ and $L\leq D$, we obtain $\lambda_{\overbar{X}}=|\overbar{X}\cap \overbar{H} \overbar{x}|=1$ for every $\overbar{x}\in \overbar{X}$. So $\mathcal{A}_{\overbar{U}}\neq \mathcal{A}_{\overbar{H}}\otimes \mathcal{A}_{\overbar{W}}$. The set $\overbar{X}_D$ is a basic set of $\mathcal{A}_{\overbar{D}}$. If $|\rad(\overbar{X}_D)|=1$, then $|\rad(\mathcal{A}_{\overbar{W}})|=1$ and Theorem~\ref{hpasubgroups2}(2) applied to $\mathcal{A}_{\overbar{U}}$ yields that $|\aut(\mathcal{A}_{\overbar{U}})^{\overbar{H}}|=12$ as desired. 

Suppose that $|\rad(\overbar{X}_D)|>1$. In view of Lemma~\ref{pq} if $|W|=pq$ and Lemma~\ref{cyclpwreath} if $|W|$ is a $p$-power, Theorem~\ref{hpasubgroups3} holds for $\mathcal{A}_{\overbar{U}}$. If Statement~$(1)$ of Theorem~\ref{hpasubgroups3} holds for $\mathcal{A}_{\overbar{U}}$, then  $|\rad(\overbar{X})|>1$, a contradiction to Eq.~\eqref{trivradxpi}. Therefore Statement~$(2)$ of Theorem~\ref{hpasubgroups3} holds for $\mathcal{A}_{\overbar{U}}$. Due to Claim~$4$ and Eq.~\eqref{trivradxpi}, the basic sets of $\mathcal{A}_{\overbar{U}}$ inside $\overbar{X}_H\times \overbar{X}_D$ are of the form 
$$\overbar{X}=\overbar{h}_1\overbar{X}_1\cup \overbar{h}_2\overbar{X}_2\cup \overbar{h}_3\overbar{X}_3,~\overbar{Y}=\overbar{h}_1\overbar{X}_2\cup \overbar{h}_2\overbar{X}_3\cup \overbar{h}_3\overbar{X}_1,~\overbar{Z}=\overbar{h}_1\overbar{X}_3\cup \overbar{h}_2\overbar{X}_1\cup \overbar{h}_3\overbar{X}_2,$$
where the $\overbar{h}_i$ are pairwise distinct elements of $\overbar{X}_H$ and the $\overbar{X}_i$ are pairwise disjoint subsets of $\overbar{X}_D$ of size at most~$2$. If $|\aut(\mathcal{A}_{\overbar{U}})^{\overbar{H}}|=24$, then there is $f\in \aut(\mathcal{A}_{\overbar{U}})$ such that $\overbar{h}_1^f=\overbar{h}_1$, $\overbar{h}_2^f=\overbar{h}_3$, and $\overbar{h}_3^f=\overbar{h}_2$. Eq.~\eqref{aut} implies that
$$\overbar{X}_1^f=(\overbar{X} \overbar{h}_1\cap \overbar{D})^f=\overbar{X} \overbar{h}_1^f\cap \overbar{D}=\overbar{X}_1$$
and 
$$\overbar{X}_1^f=(\overbar{Y} \overbar{h}_3\cap \overbar{D})^f=\overbar{Y} \overbar{h}_3^f \cap \overbar{D}=\overbar{Y} \overbar{h}_2 \cap \overbar{D}=\overbar{X}_3,$$
a contradiction. Thus, $|\aut(\mathcal{A}_{\overbar{U}})^{\overbar{H}}|=12$ and we are done.
\end{proof}

\subsection{Proof of Theorem~\ref{e4cn}}
Suppose that $H$ or $D$ is not an $\mathcal{A}$-subgroup. Then Theorem~\ref{e4cn} holds for $\mathcal{A}$ or $\mathcal{A}$ is a nontrivial $S$-wreath product for an $\mathcal{A}$-section $S=U/L$ such that $4 \nmid |W|$ and $|\rad(\mathcal{A}_W)|=1$, where $W=U$ if $H$ is not an $\mathcal{A}$-subgroup and $W=G/L$ if $D$ is not an $\mathcal{A}$-subgroup: indeed, this follows from Theorem~\ref{hnot} in the former case and from Theorem~\ref{dnot} in the latter one.

Note that $|W_H|\leq 2$, where $W_H$ is the Sylow $2$-subgroup of $W$, because $|W|$ is not divisible by~$4$ and consequently $\mathcal{A}_{W_H}=\mathbb{Z}W_H$. So Lemma~\ref{tenspr}(2) yields that 
\begin{equation}\label{tensw1w0}
\mathcal{A}_W=\mathcal{A}_{W_H}\otimes \mathcal{A}_{W_D},
\end{equation}
where $W_D$ is the Hall $2^\prime$-subgroup of $W$ (here $W_H$ and $W_D$ can be trivial). Since $|\rad(\mathcal{A}_W)|=1$, we conclude that $|\rad(\mathcal{A}_{W_D})|=1$. 

If $\mathcal{A}_{W_D}=\mathcal{T}_{W_D}$, then due to Eq.~\eqref{tensw1w0}, we obtain $L\in \{W_H,W_D,W\}$ whenever $W=U$ and $S\in \{W_H,W_D,\{L\}\}$ whenever $W=G/L$. This yields that Statement~$(1)$ or~$(2)$ from Theorem~\ref{e4cn} holds. If $\mathcal{A}_{W_D}\neq\mathcal{T}_{W_D}$, then $\mathcal{A}_{W_D}=\cyc(K_D,W_D)$ for some $K_D\leq \aut(W_D)$ by Lemma~\ref{leungman}. As $|W_H|\leq 2$, we have $\mathcal{A}_{W_H}=\cyc(K_H,W_H)$, where $K_H$ is trivial. Therefore $\mathcal{A}_W=\cyc(K_H\times K_D,W)$ by Eqs.~\eqref{cycltens} and~\eqref{tensw1w0} and Statement~$(3)$ from Theorem~\ref{e4cn} holds.

Now suppose that $H$ and $D$ are $\mathcal{A}$-subgroups. If $|\rad(\mathcal{A}_D)|=1$, then $\mathcal{A}=\mathcal{A}_H\otimes \mathcal{A}_D$ or $\mathcal{A}$ is cyclotomic by Theorem~\ref{hpasubgroups2}(1) and we are done. If $|\rad(\mathcal{A}_D)|>1$, then $\mathcal{A}_D$ is a nontrivial $U/L$-wreath product for some $\mathcal{A}_D$-section $U/L$ such that $L$ is a minimal nontrivial $\mathcal{A}_D$-subgroup and $|\rad(\mathcal{A}_U)|=1$. Indeed, this follows from Lemma~\ref{pq} if $n=pq$ and from Lemma~\ref{cyclpwreath} if $n=p^k$. So Theorem~\ref{hpasubgroups3} holds for $\mathcal{A}$. If Statement~$(2)$ of Theorem~\ref{hpasubgroups3} holds for $\mathcal{A}$, then $n=3^k$, $\mathcal{A}$ is the $S=(H\times W)/L$-wreath product (possibly, trivial) for an $\mathcal{A}_D$-subgroup $W>U$ such that $|\rad(\mathcal{A}_W)|=3$, and $\mathcal{A}_{H\times W}$ is cyclotomic. In this case, $H\times W=G$ and $\mathcal{A}$ is cyclotomic or $H\times W<G$, $\mathcal{A}$ is the nontrivial $S$-wreath product. Thus, Statement~$(4)$ from Theorem~\ref{e4cn} holds.

Suppose that Statement~$(1)$ of Theorem~\ref{hpasubgroups3} holds for $\mathcal{A}$, i.e. $\mathcal{A}$ is the nontrivial $S=V/L$-wreath product, where $V=H\times U$. If $\mathcal{A}_{V}\neq \mathcal{A}_H\otimes \mathcal{A}_U$, then $\mathcal{A}_{V}$ is cyclotomic by Theorem~\ref{hpasubgroups2}(1). So Statement~$(4)$ from Theorem~\ref{e4cn} holds for $\mathcal{A}$ and $S$. Further, we assume that
\begin{equation}\label{tenshv}
\mathcal{A}_{V}=\mathcal{A}_{H}\otimes \mathcal{A}_{U}.
\end{equation} 
Then $\mathcal{A}_H=\cyc(K_H,H)$ for some $K_H\leq \aut(H)$ by Lemma~\ref{e4}. If $\mathcal{A}_U=\cyc(K_U,U)$ for some $K_U\leq \aut(U)$, then $\mathcal{A}_{V}=\cyc(K_H\times K_U,V)$ by Eqs.~\eqref{cycltens} and~\eqref{tenshv}. Therefore Statement~$(4)$ from Theorem~\ref{e4cn} holds for $\mathcal{A}$ and $S$. Otherwise, $\mathcal{A}_U=\mathcal{T}_U$ by Lemma~\ref{leungman}. Then $L=U$. Together with Eq.~\eqref{tenshv}, this implies that $\mathcal{A}_L$ is $\otimes$-complemented in $\mathcal{A}_V$. Thus, Statement~$(2)$ from Theorem~\ref{e4cn} holds for $S$ and we are done.

\subsection{Auxiliary statements}

\begin{lemm}\label{trivrade4cn}
Let $\mathcal{B}$ be an $S$-ring over a subgroup $V$ of $E_4\times C_{p^k}$, where $p\geq 5$ and $k\geq 2$. Suppose that $p^2~|~|V|$, $\mathcal{B}$ is indecomposable, and there is a $\mathcal{B}$-subgroup of order or index~$p$. Then $\mathcal{B}$ is normal.
\end{lemm}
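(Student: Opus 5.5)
We split according to the isomorphism type of $V$. Since $p^2$ divides $|V|$, we have $V\cong C_{p^j}$, $V\cong C_{2p^j}$ or $V\cong E_4\times C_{p^j}$ with $2\le j\le k$. In every case the given $\mathcal{B}$-subgroup of order or index $p$ already shows that $\mathcal{B}$ is nontrivial. The driving fact is indecomposability. By Lemma~\ref{leungman}(2), in the cyclic cases it forces $|\rad(\mathcal{B})|=1$. In the noncyclic case it rules out alternatives (1)--(4) of Theorem~\ref{e4cn}, because each of those is realized by a nontrivial generalized wreath product.

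\textbf{Cyclic case.} Here $|\rad(\mathcal{B})|=1$, so by Lemma~\ref{leungman}(1) $\mathcal{B}$ is a tensor product of a normal cyclotomic $S$-ring with trivial radical and trivial $S$-rings.
\begin{enumerate}
\tm{a} If $V\cong C_{p^j}$, then $\mathcal{B}$ is cyclotomic with trivial radical by Lemma~\ref{trivradorb}(2), since the only alternative is $\mathcal{B}$ trivial, which is excluded. Its automorphism group is normal by Lemma~\ref{leungman}.
\tm{b} If $V\cong C_{2p^j}$, Lemma~\ref{2primepower} shows that $\mathcal{B}$ is normal unless $\mathcal{B}=\mathbb{Z}C_2\otimes\mathcal{T}_{C_{p^j}}$. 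In that exceptional ring the only $\mathcal{B}$-subgroups are $\{e\}$, $C_2$, $C_{p^j}$ and $V$. None has order or index $p$, because $j\ge 2$. So the hypothesis excludes it.
\end{enumerate}

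\textbf{Noncyclic case.} Let $V=H\times D$ with $D\cong C_{p^j}$. First we show that $\mathcal{B}$ is dense. If $H$ or $D$ were not a $\mathcal{B}$-subgroup, then Theorems~\ref{hnot} and~\ref{dnot} would make $\mathcal{B}$ trivial, a nontrivial tensor product, or a nontrivial generalized wreath product. The last is excluded by indecomposability. Triviality is excluded because a $\mathcal{B}$-subgroup of order or index $p$ exists.

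A nontrivial tensor product $\mathcal{B}_{V_1}\otimes\mathcal{B}_{V_2}$ needs one more argument. Over $E_4\times C_{p^j}$ there are two possibilities. One factor may be of $2$-power order, giving $V_2\cong C_{p^j}$ or $C_{2p^j}$; then $V_2$ and the other factor are $\mathcal{B}$-subgroups. Otherwise the factors are $C_2$ and $C_{2p^j}$, in which case $\mathcal{B}$ is again dense up to the choice of $H$. In every case $H\cap$ (some factor) and $D$ are $\mathcal{B}$-subgroups after intersecting, so $\mathcal{B}$ is dense.

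Now assume $\mathcal{B}$ is dense. If $|\rad(\mathcal{B}_D)|>1$, Theorem~\ref{hpasubgroups3} applies. Statement~(2) requires $p=3$, which is excluded. Statement~(1) is a nontrivial generalized wreath product, which contradicts indecomposability. Hence $|\rad(\mathcal{B}_D)|=1$.

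If $\mathcal{B}\neq\mathcal{B}_H\otimes\mathcal{B}_D$, Theorem~\ref{hpasubgroups2}(1) gives normality and we are done. If instead $\mathcal{B}=\mathcal{B}_H\otimes\mathcal{B}_D$, then $\mathcal{B}_D$ is nontrivial: otherwise $\mathcal{B}_D=\mathcal{T}_D$ has no subgroup of order or index $p$ in $D$, and the $\mathcal{B}$-subgroups, being products of $\mathcal{B}_H$-subgroups and subgroups of $D$, would have no order or index $p$ either. So $\mathcal{B}_D$ is cyclotomic with trivial radical and therefore normal by the cyclic case. Also $\mathcal{B}_H$ is normal by Lemma~\ref{e4}.

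\textbf{Main obstacle.} It remains to show that a tensor product of normal rings is normal. By Eq.~\eqref{auttens}, $\aut(\mathcal{B})=\aut(\mathcal{B}_H)\times\aut(\mathcal{B}_D)$, which lies in $\Hol(H)\times\Hol(D)\le\Hol(V)$. This step is where care is needed. We need $\mathcal{B}_H\ne\mathcal{T}_H$, since $\aut(\mathcal{T}_H)=\sym(4)$ is still $\Hol(E_4)$, so this is fine. The one check that requires attention is that every $S$-ring over $E_4$, including $\mathbb{Z}C_2\wr\mathbb{Z}C_2$, has automorphism group inside $\Hol(E_4)$, as Lemma~\ref{e4} asserts. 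We expect the main difficulty to lie in this tensor-product reduction, namely in confirming that a tensor product is genuinely allowed to be indecomposable here, while the hypothesis forbids exactly the trivial factor $\mathcal{T}_D$.
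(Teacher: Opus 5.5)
Your cyclic case and your treatment of the dense case are correct and agree with the paper. The gap is in the nondense case. There Theorems~\ref{hnot} and~\ref{dnot} leave you with a nontrivial tensor product $\mathcal{B}=\mathcal{B}_{V_1}\otimes\mathcal{B}_{V_2}$, and you assert that such a product is dense. That holds when $\{V_1,V_2\}=\{H,D\}$. It fails when $|V_1|=2$ and $V_2\cong C_{2p^j}$. The decomposition itself only supplies the $\mathcal{B}$-subgroups $\{e\},V_1,V_2,V$, and intersecting them never yields $D$ or $H$. There is also no ``choice of $H$'': it is the unique Sylow $2$-subgroup. For example, $\mathbb{Z}C_2\otimes\mathcal{T}_{C_{2p^j}}$ is a nontrivial, nondense tensor product. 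So density in this case cannot come from the tensor structure alone. It needs the hypotheses of the lemma, and you do not invoke them at this step.

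The missing argument runs as follows.
\begin{enumerate}
\tm{1} Suppose $\mathcal{B}_{V_2}$ had a nontrivial $U'/L'$-wreath decomposition. Then $\mathcal{B}$ would be the nontrivial $(V_1\times U')/L'$-wreath product. Hence $\mathcal{B}_{V_2}$ is indecomposable, and $|\rad(\mathcal{B}_{V_2})|=1$ by Lemma~\ref{leungman}(2).
\tm{2} The order-or-index-$p$ hypothesis passes to $\mathcal{B}_{V_2}$. A $\mathcal{B}$-subgroup of order $p$ lies in $D\le V_2$. A $\mathcal{B}$-subgroup of index $p$ contains $V_1$, so it meets $V_2$ in a subgroup of index $p$.
\tm{3} Your case (b) now shows that $\mathcal{B}_{V_2}$ is normal.
\tm{4} Therefore $\mathcal{B}=\mathbb{Z}V_1\otimes\mathcal{B}_{V_2}$ is normal by Eq.~\eqref{auttens}.
\end{enumerate}

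This is how the paper handles \emph{every} nontrivial tensor decomposition: it proves both factors normal instead of trying to force density. If you want density anyway, Lemma~\ref{cyclnormschur} makes $\mathcal{B}_{V_2}$ cyclotomic, so $D$ and the involution subgroup of $V_2$ are $\mathcal{B}$-subgroups. By that point, however, you are already done.

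A minor point: no condition $\mathcal{B}_H\neq\mathcal{T}_H$ is needed in your last step. For $H\cong E_4$ we have $\aut(\mathcal{T}_H)=\sym(H)=\Hol(H)$, since both have order $24$.
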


\begin{proof}
Clearly, $\mathcal{B}$ is nontrivial. Firstly, suppose that $V$ is cyclic. If $|V|$ is a $p$-power, then $\mathcal{B}$ is normal by the supposition of this lemma and Lemma~\ref{leungman}. If $|V|=2p^l$ for some $l\geq 2$ and $\mathcal{B}$ is not normal, then $\mathcal{B}=\mathcal{T}_{V_1}\otimes \mathbb{Z}V_2$, where $V_1,V_2\leq V$ with $|V_1|=p^l$ and $|V_2|=2$, by Lemma~\ref{2primepower}. However, the latter equality contradicts to the supposition of the lemma that  there is a $\mathcal{B}$-subgroup of order or index~$p$. 

Now suppose that $V$ is noncyclic, i.e. $4~|~|V|$. Let 
$$\mathcal{B}=\mathcal{B}_{V_1}\otimes \mathcal{B}_{V_2}$$ 
for some proper nontrivial $\mathcal{B}$-subgroups $V_1$ and $V_2$ such that $V=V_1\times V_2$. Without loss of generality, we may assume that $|V_1|=p^l$ and $|V_2|=4$, or $|V_1|=2p^l$ and $|V_2|=2$ for some $l\geq 2$. In particular, $V_1$ is cyclic. Note that $|\rad(\mathcal{B}_{V_1})|=1$ because otherwise $\mathcal{B}_{V_1}$ is decomposable by Lemma~\ref{leungman} and hence so is $\mathcal{B}$, which contradicts to the supposition of the lemma. If $|V_1|=p^l$ ($|V_2|=2p^l$, respectively), then the last condition of this lemma and Lemma~\ref{leungman} (Lemma~\ref{2primepower}, respectively) imply that $\mathcal{B}_{V_1}$ is normal. Then $\mathcal{B}_{V_2}$ is also normal. Indeed, this is clear if $|V_2|=2$ and follows from Lemma~\ref{e4} if $|V_2|=4$. Therefore $\mathcal{B}$ is normal by Eq.~\eqref{auttens}.  

In view of the above paragraph, we may assume that $\mathcal{B}$ is not a nontrivial tensor product. Since $\mathcal{B}$ is also nontrivial and indecomposable, Theorems~\ref{hnot} and~\ref{dnot} can not hold for $\mathcal{B}$. So $\mathcal{B}$ is dense. In particular, the Sylow $p$-subgroup $V_1$ of $S$ is a $\mathcal{B}$-subgroup. If $|\rad(\mathcal{B}_{V_1})|>1$, then $\mathcal{B}$ is decomposable or $p=3$ by Theorem~\ref{hpasubgroups3}, a contradiction to the supposition of the lemma. Therefore $|\rad(\mathcal{B}_{V_1})|=1$. Thus, $\mathcal{B}$ is normal by Theorem~\ref{hpasubgroups2}(1). 
\end{proof}

Recall that $\mathcal{A}$ is an $S$-ring over $G=H\times D$, where $H\cong E_4$ and $D\cong C_n$ for $n\in\{pq,p^k\}$.

\begin{lemm}\label{autrestprime}
Let $n=p^k$. The following statements hold.

\begin{enumerate}
\tm{1} If $P\cong C_p\leq G$ is an $\mathcal{A}$-subgroup, then $\aut(\mathcal{A})^P\leq \Hol(P)$ or $\mathcal{A}$ is the $U/P$-wreath product (possibly, trivial) for an $\mathcal{A}$-subgroup $U$ such that $|U/P|\leq 4$.

\tm{2} If $U\cong E_4\times C_{p^{k-1}}\leq G$ is an $\mathcal{A}$-subgroup, then $\aut(\mathcal{A})^{G/U}\leq \Hol(G/U)$ or $\mathcal{A}$ is the $U/L$-wreath product (possibly, trivial) for an $\mathcal{A}$-subgroup $L$ such that $|U/L|\leq 4$.
\end{enumerate}
\end{lemm}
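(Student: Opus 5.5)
We first prove Statement~(1) and then obtain Statement~(2) from it by duality.

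\textbf{Statement (1).} Let $P$ be an $\mathcal{A}$-subgroup and $f\in\aut(\mathcal{A})$ with $P^f=P$. Then $f^P\in\aut(\mathcal{A}_P)$, but $\aut(\mathcal{A}_P)$ may be all of $\sym(P)$ when $\mathcal{A}_P=\mathcal{T}_P$. So the goal is to find a basic set $X$ outside $P$ whose "neighbourhood structure" in $P$ forces $f^P$ to be affine.

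Let $U$ be the $\mathcal{A}$-subgroup generated by all basic sets $X$ with $P\nleq\rad(X)$. Every basic set outside $U$ has $P$ in its radical, so $\mathcal{A}$ is the $U/P$-wreath product. If $|U/P|\le 4$, we are in the second alternative and are done.

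Otherwise $|U/P|>4$. Since $|U/P|$ divides $4p^{k-1}$, this forces $p^2\mid |U|$. In this case $U$ contains a basic set $X\not\subseteq P$ with $P\nleq \rad(X)$. We would like to choose $X$ so that some projection $X_{h,l}$ with $l>p$ has $|\rad(X_{h,l})|=1$.

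By Lemma~\ref{orbit} and Lemma~\ref{trivradorb}(1), such an $X_{h,l}$ is an orbit of some $K\le\aut(D_l)$ with $|K|\le p-1$. Its elements lie in distinct $P$-cosets. The key property comes from applying Lemma~\ref{sch} and taking $p$-th powers: this produces sets inside $P$ that are orbits of the same $K$ acting on $P$.

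For $x\in X$, the intersection $P\cap x^{-1}X$ is determined by structure constants. Using Eq.~\eqref{aut}, $f$ maps $Px\cap X$ compatibly with $Px^f\cap X$. This should show that $f^P$ commutes with multiplication by a generator of $P$ up to translation, i.e. $f^P$ normalizes $P_r$. That gives $f^P\in\Hol(P)$.

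A cleaner route, which I would try first, is a case split using Theorem~\ref{e4cn}. If $\mathcal{A}_U$ is cyclotomic, normal, and $2$-minimal (Theorem~\ref{hpasubgroups2}, Lemma~\ref{trivrade4cn}), then $\aut(\mathcal{A}_U)\le\Hol(U)$. Restricting to $P$ yields $\aut(\mathcal{A})^P\le\aut(\mathcal{A}_U)^P\le\Hol(P)$. To reach this case, I would show that $\mathcal{A}_U$ is indecomposable. This should hold by the choice of $U$: all basic sets generating $U$ have radical not containing $P$, and $P$ is the unique subgroup of order $p$. When $p\ge5$, Lemma~\ref{trivrade4cn} applies directly, since $P$ is a $\mathcal{B}$-subgroup of order $p$.

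The main obstacle is the remaining cases:
\begin{itemize}
\item $p=3$, where the exceptional sets of Claim~4 appear and the pure restriction argument needs the explicit forms in Eq.~\eqref{forms};
\item $k$ small;
\item $\mathcal{A}_U$ a tensor product with trivial or $\mathcal{T}$ factors.
\end{itemize}
In these cases one checks directly that $\aut(\mathcal{A}_U)^P$ lies in $\Hol(P)$, using the cyclotomic description from Theorem~\ref{hpasubgroups3}(2) or Lemma~\ref{2primepower}.

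\textbf{Statement (2).} This is dual to Statement~(1). Pass to $\widehat{\mathcal{A}}$. By Lemma~\ref{dual}(1), $U^\bot$ is an $\widehat{\mathcal{A}}$-subgroup of order $p$, and by Lemma~\ref{dual}(4) wreath decompositions correspond. However, automorphism groups do not dualize directly.

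So instead we redo the argument of Statement~(1) on the quotient $G/U\cong C_p$. Take $L$ to be the intersection of the radicals of all basic sets outside $U$ whose image in $G/U$ is a proper subset. If $|U/L|\le4$, we get the wreath product alternative. Otherwise, the same normality argument, applied to $\mathcal{A}_{G/L}$ via Lemma~\ref{trivrade4cn} (there $U/L$ is a subgroup of index $p$), gives $\aut(\mathcal{A})^{G/U}\le\aut(\mathcal{A}_{G/L})^{G/U}\le\Hol(G/U)$.
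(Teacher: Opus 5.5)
\textbf{There is a genuine gap: your argument rests on the indecomposability of $\mathcal{A}_U$, which is false in general.}

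Your "cleaner route" applies Lemma~\ref{trivrade4cn} to $\mathcal{A}_U$, where $U$ is generated by all basic sets $X$ with $P\nleq\rad(X)$. You claim this $S$-ring is indecomposable "by the choice of $U$". But $P\nleq\rad(X)$ only says that $\rad(X)$ is a $2$-group: the Sylow $p$-subgroup of $G$ is cyclic, so any subgroup of order divisible by $p$ contains $P$, hence $\rad(X)\le H$. It does not make the radicals trivial, and it does not rule out a wreath decomposition of $\mathcal{A}_U$.

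A counterexample: let $k\ge2$ and $A\le H$ with $|A|=2$. Let $\mathcal{A}$ be the $(A\times P)/A$-wreath product of $\mathbb{Z}A\otimes\mathcal{A}_P$ and $\cyc(K,G/A)$, where $K$ is a nontrivial $p'$-subgroup of $\aut(G/A)$ and the restrictions to $(A\times P)/A$ match. Every basic set outside $A\times P$ then has radical exactly $A$. So your $U$ equals $G$, while $\mathcal{A}_U=\mathcal{A}$ is a nontrivial generalized wreath product, and Lemma~\ref{trivrade4cn} does not apply.

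The rest of the proposal does not close Statement~(1) either. Your first route, via $X_{h,l}$ and $p$-th powers, is only a heuristic ("this should show"). The leftover cases you list are left open. Statement~(2) has the same problem when you apply Lemma~\ref{trivrade4cn} to $\mathcal{A}_{G/L}$. In addition, your $L$ is an intersection of radicals of only \emph{some} basic sets outside $U$. So it does not satisfy $L\le\rad(X)$ for all of them, and $|U/L|\le4$ does not give the wreath alternative.

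\textbf{The missing idea is to localize at one basic set and factor out its radical.}

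For Statement~(1), suppose $|U/P|>4$. Pick a basic set $X\not\subseteq H\times P$ with $P\nleq R=\rad(X)$.
\begin{enumerate}
\item[(a)] Then $R\le H$, so $R\cap P=\{e\}$ and $\aut(\mathcal{A})^P$ is faithfully reflected on $\overbar{P}=PR/R$.
\item[(b)] With $V=\langle X\rangle$, the $S$-ring $\mathcal{A}_{\overbar{V}}$ \emph{is} indecomposable, because $\overbar{X}$ generates $\overbar{V}$ and has trivial radical.
\item[(c)] $\overbar{P}$ is an $\mathcal{A}_{\overbar{V}}$-subgroup of order $p$, and $p^2\mid|\overbar{V}|$ because $X\not\subseteq H\times P$.
\item[(d)] For $p\ge5$, Lemma~\ref{trivrade4cn} gives normality, hence $\aut(\mathcal{A})^{\overbar{P}}\le\Hol(\overbar{V})^{\overbar{P}}\le\Hol(\overbar{P})$.
\end{enumerate}
This already covers small $k$ and the tensor-product cases you worried about. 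The case $p=3$ needs no use of Eq.~\eqref{forms}, since $\Hol(C_3)=\sym(C_3)$.

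For Statement~(2), argue the same way with a basic set $X$ outside $U$ such that $U\cap D\nleq\rad(X)$. Here $V=\langle X\rangle$ may have index $2$ or $4$ in $G$, so you also need one more observation. The groups $\aut(\mathcal{A})^{G/U}$ and $\aut(\mathcal{A})^{V/(V\cap U)}$ are permutation isomorphic, because $H_r$ acts trivially on $G/U$ and $|V:V\cap U|=|G:U|=p$. Then apply Lemma~\ref{trivrade4cn} to $\mathcal{A}_{\overbar{V}}$ with $\overbar{V}=V/\rad(X)$. There $\overbar{V\cap U}$ has index $p$, and $p^2\mid|\overbar{V}|$ because $\rad(X)\ngeq U\cap D$.
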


\begin{proof}
If $p=3$, then $\Hol(C_3)\cong \sym(3)$ and the lemma is clear. Let $p\geq 5$. Let us prove Statement~$(1)$. If $P\leq \rad(X)$ for every $X\in \mathcal{S}(\mathcal{A})_{G\setminus (H\times P)}$, then $\mathcal{A}$ is the $U/P$-wreath product for an $\mathcal{A}$-subgroup $U\leq H\times P$ and we are done. Further, we assume that there is $X\in \mathcal{S}(\mathcal{A})_{G\setminus (H\times P)}$ such that
$$\rad(X)\ngeq P.$$

Let $R=\rad(X)$ and $V=\langle X \rangle$. The image of $Y\subseteq G$ under the canonical epimorphism from $G$ to $G/R$ is denoted by $\overbar{Y}$. As $R\ngeq P$, we have $R\leq H$ and $|R\cap P|=1$ (note that $R$ can be trivial). The latter equality implies that the kernel of the action of $\aut(\mathcal{A})^P$ on $\overbar{P}$ is trivial and hence it suffices to show that
$$\aut(\mathcal{A})^{\overbar{P}}\leq \Hol(\overbar{P}).$$

Since $\overbar{X}$ generates $\overbar{V}$ and $|\rad(\overbar{X})|=1$, the $S$-ring $\mathcal{A}_{\overbar{V}}$ is indecomposable. Clearly, $\overbar{P}$ is an $\mathcal{A}_{\overbar{V}}$-subgroup of order~$p$. Moreover, $p^2~|~|\overbar{V}|$ because $X\nsubseteq H\times P$ and $R\leq H$. So $\overbar{V}$ satisfies all the conditions of Lemma~\ref{trivrade4cn} and consequently $\mathcal{A}_{\overbar{V}}$ is normal. Therefore
$$\aut(\mathcal{A})^{\overbar{P}}\leq \aut(\mathcal{A}_{\overbar{V}})^{\overbar{P}}\leq (\Hol(\overbar{V}))^{\overbar{P}}\leq \Hol(\overbar{P}),$$
where the second inequality follows from the normality of $\mathcal{A}_{\overbar{V}}$, and we are done.

Now let us prove Statement~$(2)$. If $U\cap D\leq \rad(X)$ for every $X\in \mathcal{S}(\mathcal{A})_{G\setminus U}$, then  $\mathcal{A}$ is the $U/L$-wreath product for an $\mathcal{A}$-subgroup $L\geq (U\cap D)$ and we are done. Further, we assume that there is $X\in \mathcal{S}(\mathcal{A})_{G\setminus U}$ such that
$$\rad(X)\ngeq (U\cap D).$$

Let $V=\langle X \rangle$. Clearly, $|G:V|\in\{1,2,4\}$ and $|V:(V\cap U)|=|G:U|=p$. Observe that $\aut(\mathcal{A})^{G/U}$ and $\aut(\mathcal{A})^{V/(V\cap U)}$ are permutation isomorphic in the sense of~\cite[p.~17]{DM}. Indeed, the kernel of the action of $\aut(\mathcal{A})$ on $G/U$ contains $H_r$ and hence for every $f\in \aut(\mathcal{A})$, there is $f^\prime\in \aut(\mathcal{A})$ such that $f^{G/U}=(f^\prime)^{G/U}$ and $V^{f^\prime}=V$. As $V\cap U\leq U$, for every $\Delta\in G/U$, there is $\Delta^\prime \in V/(V\cap U)$ such that $\Delta^\prime\subseteq\Delta$. Moreover, such $\Delta^\prime$ is unique because $|V:(V\cap U)|=|G:U|=p$. Therefore the mappings $f^{G/U}\mapsto (f^\prime)^{V/(V\cap U)}$ from $\aut(\mathcal{A})^{G/U}$ to $\aut(\mathcal{A})^{V/(V\cap U)}$ and $\Delta\mapsto \Delta^\prime$ from $G/U$ to $V/(V\cap U)$ define a permutation isomorphism from $\aut(\mathcal{A})^{G/U}$ to $\aut(\mathcal{A})^{V/(V\cap U)}$. Thus, it is enough to show that
$$\aut(\mathcal{A})^{V/(V\cap U)}\leq \Hol(V/(V\cap U)).$$

Let $R=\rad(X)$. Again, the image of $Y\subseteq G$ under the canonical epimorphism from $G$ to $G/R$ is denoted by $\overbar{Y}$. Clearly, the $S$-ring $\mathcal{A}_{\overbar{V}}$ is indecomposable and $\overbar{V\cap U}$ is an $\mathcal{A}_{\overbar{V}}$-subgroup of index~$p$. Since $R\ngeq (U\cap D)$, we conclude that $p^2~|~|\overbar{V}|$. So $\overbar{V}$ satisfies all the conditions of Lemma~\ref{trivrade4cn} and hence $\mathcal{A}_{\overbar{V}}$ is normal. Therefore 
$$\aut(\mathcal{A})^{V/(V\cap U)}\leq \aut(\mathcal{A}^{\overbar{V}})^{V/(V\cap U)}\leq \aut(\mathcal{A}_{\overbar{V}})^{V/(V\cap U)}\leq$$ 
$$\leq (\Hol(\overbar{V}))^{V/(V\cap U)}\leq \Hol(V/(V\cap U)),$$
where the third inequality follows from the normality of $\mathcal{A}_{\overbar{V}}$, and we are done.
\end{proof}

It should be mentioned that if $\mathcal{A}_P$ ($\mathcal{A}_{G/U}$, respectively) in Statement~$(1)$ (Statement~$(2)$, respectively) of Lemma~\ref{autrestprime} is nontrivial, then $\mathcal{A}_P$ ($\mathcal{A}_{G/U}$, respectively) is normal by Lemma~\ref{leungman} because $P$ ($G/U$, respectively) is of prime order~$p$. Therefore the claim of Lemma~\ref{autrestprime} easily follows from the definition of a normal $S$-ring. We also note that if the generalized wreath product in Statement~$(1)$ or~$(2)$ of Lemma~\ref{autrestprime} is trivial, then $|G|=4p$.

\section{Proof of Theorem~\ref{main1}}

Throughout this section, we keep the notation from Section~7. We start with a lemma concerned with a schurity of proper subgroups of $G$.

\begin{lemm}\label{sections2}
Every proper subgroup of $G$ is Schur. 
\end{lemm}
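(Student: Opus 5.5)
The plan is to list the maximal subgroups of $G=H\times D$ and check each against known Schur groups, using that the Schur property passes to subgroups (Section~2.3). Any proper subgroup of $G$ lies in a maximal subgroup, so it suffices to treat those. A maximal subgroup of $G$ has prime index. Hence it is either $A\times D$ with $|A|=2$, which is $\cong C_{2n}$, or $H\times D'$, where $D'$ is a maximal subgroup of $D$.

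\textbf{Case $n=p^k$.} The maximal subgroups are isomorphic to $C_{2p^k}$ or to $E_4\times C_{p^{k-1}}$.
\begin{enumerate}
\tm{i} The cyclic group $C_{2p^k}$ is of the form $2pq^k$ with the odd prime raised to the power, or of the form $pq^k$ with $p=2$. It is therefore Schur by the classification of cyclic Schur groups~\cite{EKP1}.
\tm{ii} The group $E_4\times C_{p^{k-1}}$ must be handled by induction on $k$. If $k=1$ it is $E_4$, which is Schur because every $S$-ring over $E_4$ is cyclotomic (Lemma~\ref{e4}). If $k=2$ it is $E_4\times C_p$, which is Schur by~\cite[Theorem~1.4]{EKP2}.
\end{enumerate}
For $k\geq 3$ we cannot cite a known result, since $E_4\times C_{p^{k-1}}$ is again a group from Theorem~\ref{main1}. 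So this lemma should really be read inside an induction on $|G|$ in the proof of Theorem~\ref{main1}: every proper subgroup is either cyclic Schur or of the form $E_4\times C_{p^j}$ with $j<k$, and the latter is Schur by the induction hypothesis. I would state this explicitly by proving Theorem~\ref{main1} for $E_4\times C_{p^k}$ by induction on $k$. The lemma then asserts that, under the inductive hypothesis, all proper subgroups are Schur.

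\textbf{Case $n=pq$.} The maximal subgroups are $C_{2pq}$, $E_4\times C_p$ and $E_4\times C_q$.
\begin{enumerate}
\tm{i} $C_{2pq}$ is of the form $pqr$ with $r=2$, so it is Schur by~\cite{EKP1}.
\tm{ii} $E_4\times C_p$ and $E_4\times C_q$ are Schur by~\cite[Theorem~1.4]{EKP2}.
\end{enumerate}
No induction is needed here.

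\textbf{Main obstacle.} The only delicate point is the inductive dependence in the $p^k$ case, that is, making sure the lemma is applied only within an induction on $k$. The cyclic and small cases are direct citations. If the author instead intends a self-contained argument, the alternative is to prove Theorem~\ref{main1} by induction on $|G|$ over both families at once. Subgroups of $E_4\times C_{pq}$ never fall back into the $p^k$ family with exponent larger than $1$, so that combined induction is well-founded.
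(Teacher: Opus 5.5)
Your proposal is correct and follows essentially the paper's argument: cyclic subgroups are handled by the classification in~\cite{EKP1}, $E_4$ and $E_4\times C_p$ by~\cite{EKP2}, and $E_4\times C_{r^m}$ with $m\geq 2$ by induction on the exponent (the paper also relies implicitly on Theorem~\ref{main1} being proved inductively), and your reduction to maximal subgroups is only a cosmetic reorganization. One small slip: $2p^k$ belongs to the family $pq^k$ of~\cite{EKP1} with the prime $2$ in the role of $p$, not to the family $2pq^k$.
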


\begin{proof}
Let $N<G$. If $N$ is cyclic, then $N$ is Schur by~\cite[Theorem~1.1]{EKP1}, whereas if $N\cong E_4$, then $N$ is Schur by~\cite[Theorem~1.2]{EKP2}. Otherwise, $N\cong E_4 \times C_{r^m}$, where $r\in \{p,q\}$ and $k>m\geq 1$, is Schur by induction on $m$ whose base follows from~\cite[Theorem~1.5]{EKP2}.
\end{proof}

Let $\mathcal{A}$ be a nontrivial $S$-ring over $G$. Let us prove that $\mathcal{A}$ is schurian. Theorem~\ref{e4cn} holds for $\mathcal{A}$. We are done if $\mathcal{A}$ is cyclotomic. If $\mathcal{A}$ is a nontrivial tensor product, then $\mathcal{A}$ is schurian by Lemma~\ref{schurtens} and Lemma~\ref{sections2}. So we may assume that $\mathcal{A}$ is a nontrivial $S$-wreath product for some $\mathcal{A}$-section $S=U/L$ and one of Statements~$(1)$-$(4)$ from Theorem~\ref{e4cn} holds. The $S$-rings $\mathcal{A}_U$ and $\mathcal{A}_{G/L}$ are schurian by Lemma~\ref{sections2}. If Statement~$(1)$ holds, i.e. $|S|\leq 2$, then $\mathcal{A}_S$ is $2$-minimal by Lemma~\ref{2minsmall} and hence $\mathcal{A}$ is schurian by Lemma~\ref{2min}. If Statement~$(2)$ holds, i.e. $\mathcal{A}_{L}$ is $\otimes$-complemented in $\mathcal{A}_{U}$ or $\mathcal{A}_{S}$ is $\otimes$-complemented in $\mathcal{A}_{G/L}$, then $\mathcal{A}$ is schurian by Lemma~\ref{otimescomplement}.

Now suppose that Statement~$(3)$ or~$(4)$ holds. Then one can choose $W\in \{U,G/L\}$ such that $\mathcal{A}_W$ is cyclotomic. This yields that $\mathcal{A}_S$ is also cyclotomic. Let $S_H$ and $S_D$ be the Sylow $2$-subgroup and the Hall $2^\prime$-subgroup of $S$, respectively. Note that $S_H$ and $S_D$ are $\mathcal{A}_S$-subgroups because $\mathcal{A}_S$ is cyclotomic. If $n=pq$ and $|S_D|=pq$, then $S=S_D$, $|U|=|G/L|=2pq$, and $|L|=|G/U|=2$. So $|W_H|=2$, where $W_H$ is the Sylow $2$-subgroup of $W$. Since $W$ is cyclotomic, $W_H$ is an $\mathcal{A}_W$-subgroup. Clearly, $\mathcal{A}_{W_H}=\mathbb{Z}W_H$. Together with Lemma~\ref{tenspr}(2), this implies that $\mathcal{A}_W\cong \mathbb{Z}W_H\otimes \mathcal{A}_{S}$. Therefore $\mathcal{A}_{L}$ is $\otimes$-complemented in $\mathcal{A}_{U}$ or $\mathcal{A}_{S}$ is $\otimes$-complemented in $\mathcal{A}_{G/L}$ and hence $\mathcal{A}$ is schurian by Lemma~\ref{otimescomplement}. 

In view of the above paragraph, we may assume further that $|S_D|$ is a $p$-power. The Hall $2^\prime$-subgroup of $W$ is denoted by~$W_D$. Note that $W_D$ is an $\mathcal{A}_W$-subgroup because $\mathcal{A}_W$ is cyclotomic.

\begin{lemm}\label{s0structure}
With the above notation, $\mathcal{A}_{S_D}$ is a cyclotomic $S$-ring with trivial radical.
\end{lemm}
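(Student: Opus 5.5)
We must show that $\mathcal{A}_{S_D}$ is cyclotomic and that $|\rad(\mathcal{A}_{S_D})|=1$. Cyclotomicity is immediate. $\mathcal{A}_S$ is cyclotomic, and $S_D$ is an $\mathcal{A}_S$-subgroup, so $\mathcal{A}_{S_D}$ is a section $S$-ring of a cyclotomic $S$-ring and hence is cyclotomic (Section~2.3). The work is therefore in the radical. The plan is to locate $S_D$ as a section of the cyclic group $W_D$ and then apply Lemma~\ref{cyclradp}, or the trivial-radical hypothesis directly.

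\textbf{Case of Statement~$(3)$.} Here $W\in\{U,G/L\}$ can be chosen so that $\mathcal{A}_W$ is a circulant cyclotomic $S$-ring with trivial radical.
\begin{itemize}
\item Write $\mathcal{A}_W=\mathcal{A}_{W_H}\otimes\mathcal{A}_{W_D}$ with $|W_H|\leq 2$, as in Eq.~\eqref{tensw1w0}.
\item Then $\rad(\mathcal{A}_{W_D})$ is trivial: radicals of basic sets of a tensor product with $\mathbb{Z}W_H$ are products of the radicals of the factors.
\item $S_D$ is a section of $W_D$: it is a subgroup of $W_D$ if $W=U$, and a quotient of $W_D$ if $W=G/L$.
\item If $S_D=W_D$, we are done. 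Otherwise $S_D$ is a proper section of the cyclic $p$-group $W_D$ (recall $|S_D|$ is a $p$-power, and in the case $n=pq$ the order $|W_D|$ must also be a $p$-power or be handled separately). Then Lemma~\ref{cyclradp}, applied with $|\rad(\mathcal{A}_{W_D})|=1$, gives $|\rad(\mathcal{A}_{S_D})|=1$.
\item For $n=pq$ with $|W_D|=pq$ and $|S_D|=p$: $\mathcal{A}_{S_D}$ lives over a group of prime order and is cyclotomic. Its radical is a subgroup of $C_p$, so it is trivial unless $\mathcal{A}_{S_D}=\mathcal{T}_{S_D}$ with $|S_D|$ prime. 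Even in that case the radical of the basic set $S_D^\#$ is trivial, since $S_D^\#$ is not a union of cosets of a nontrivial subgroup.
\end{itemize}

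\textbf{Case of Statement~$(4)$.} Here $L\leq D$, $U\geq H$, and $\mathcal{A}_U$ is cyclotomic, so $W=U$ and $S_D=(U\cap D)/L$. We know $|\rad(\mathcal{A}_{U\cap D})|=1$, or $n=3^k$ and $|\rad(\mathcal{A}_{U\cap D})|=3$.
\begin{itemize}
\item Since the wreath product is nontrivial, $L\neq\{e\}$, so $S_D$ is a proper quotient of $U\cap D$.
\item $\mathcal{A}_{U\cap D}$ is cyclotomic, being a section of the cyclotomic $\mathcal{A}_U$.
\item $|U\cap D|$ is a power of $p$ here: when $n=pq$ this follows from the standing assumption that $|S_D|$ is a $p$-power, after passing to the Sylow part. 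Otherwise that assumption handles it.
\item Lemma~\ref{cyclradp} then applies directly in both subcases, radical of size $1$ or $3=p$, giving $|\rad(\mathcal{A}_{S_D})|=1$.
\end{itemize}

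\textbf{Expected obstacle.} The delicate point is the $n=pq$ configurations where $W_D$ is not a $p$-group, since Lemma~\ref{cyclradp} only covers cyclic $p$-groups. There the plan is to use the decomposition $\mathcal{A}_{W_D}=\mathcal{A}_{P}\otimes\mathcal{A}_{Q}$ of cyclotomic $S$-rings over $C_{pq}$. This decomposition holds because the Sylow subgroups are $\mathcal{A}$-subgroups in a cyclotomic ring and Lemma~\ref{tenspr}(2) applies. It reduces the claim to the prime-order factor $S_D$, whose cyclotomic $S$-ring always has trivial radical.
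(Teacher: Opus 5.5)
Your proposal is correct and follows the paper's proof. Under Statement~(3) of Theorem~\ref{e4cn} you get $|\rad(\mathcal{A}_{W_D})|=1$ from $\mathcal{A}_W=\mathbb{Z}W_H\otimes\mathcal{A}_{W_D}$; under Statement~(4) you use $L\neq\{e\}$ to get $S_D\neq U\cap D$; in both cases you conclude with Lemma~\ref{cyclradp}. Your direct treatment of $S_D$ of prime order makes explicit what the paper glosses over when $n=pq$.

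A few side remarks are inaccurate, though none of them is load-bearing:
\begin{itemize}
\item The roles are reversed: $S_D$ is a quotient of $W_D$ when $W=U$, and a subgroup of $W_D$ when $W=G/L$.
\item Under Statement~(4) with $n=pq$, what makes Lemma~\ref{cyclradp} applicable is that $H\leq U<G$ forces $|U\cap D|$ to be prime, not the assumption on $|S_D|$. In fact this gives $L=U\cap D$, so $S_D$ is trivial.
\item The splitting $\mathcal{A}_{W_D}=\mathcal{A}_P\otimes\mathcal{A}_Q$ for every cyclotomic $S$-ring over $C_{pq}$ does not follow from Lemma~\ref{tenspr}(2), which only gives $\geq$. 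It is in fact false, for example for $\cyc(\langle\sigma_{-1}\rangle,C_{pq})$. You never need it, because every $S$-ring over a group of prime order has trivial radical.
\end{itemize}
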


\begin{proof}
One can see that $\mathcal{A}_{S_D}$ is cyclotomic because so is $\mathcal{A}_W$. Suppose that Statement~$(3)$ of Theorem~\ref{e4cn} holds. Then $|\rad(\mathcal{A}_W)|=1$ and $|W_H|\leq 2$. Observe that $|\rad(\mathcal{A}_{W_D})|=1$. Indeed, Lemma~\ref{tenspr}(2) and Lemma~\ref{leungman} imply that $\mathcal{A}_W=\mathbb{Z}W_H\otimes \mathcal{A}_{W_D}$ and hence $|\rad(\mathcal{A}_{W_D})|=|\rad(\mathcal{A}_W)|=1$. If $S_D=W_D$, then obviously $|\rad(\mathcal{A}_{S_D})|=1$. Otherwise, $|\rad(\mathcal{A}_{S_D})|=1$ by Lemma~\ref{cyclradp}.

Now suppose that Statement~$(4)$ of Theorem~\ref{e4cn} holds. Then $W=U$, $\{e\}<L\leq D$, and $|\rad(\mathcal{A}_{W_D})|\in\{1,3\}$. As $\{e\}<L\leq D$, we conclude that $S_D\neq W_D$. Therefore $|\rad(\mathcal{A}_{S_D})|=1$ by Lemma~\ref{cyclradp}.
\end{proof}

\begin{lemm}\label{s0schur}
With the above notation, $\mathcal{A}_S$ is $2$-minimal unless $\mathcal{A}_S=\mathcal{A}_{S_H}\otimes \mathcal{A}_{S_D}$ and at least one of the following statements holds:
\begin{enumerate}

\tm{1} $|S_H|=4$ and $\mathcal{A}_{S_H}=\mathcal{T}_{S_H}$;

\tm{2} $|S_D|=p$ and $\mathcal{A}_{S_D}=\mathcal{T}_{S_D}$.

\end{enumerate}
\end{lemm}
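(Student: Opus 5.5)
We treat the statement by splitting according to whether $\mathcal{A}_S$ decomposes as $\mathcal{A}_{S_H}\otimes\mathcal{A}_{S_D}$. Throughout, $S=S_H\times S_D$, both factors are $\mathcal{A}_S$-subgroups, $\mathcal{A}_S$ is cyclotomic, $|S_D|$ is a $p$-power, and $\mathcal{A}_{S_D}$ is cyclotomic with trivial radical by Lemma~\ref{s0structure}. If $S$ is cyclic, then $|S_H|\le 2$ and $\mathcal{A}_S=\mathbb{Z}S_H\otimes\mathcal{A}_{S_D}$ by Lemma~\ref{tenspr}(2). So the nontensor case can only arise when $S_H\cong E_4$.

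\textbf{Nontensor case.} Suppose $\mathcal{A}_S\neq\mathcal{A}_{S_H}\otimes\mathcal{A}_{S_D}$. Then $S\cong E_4\times C_{p^i}$, $\mathcal{A}_S$ is dense, and $|\rad(\mathcal{A}_{S_D})|=1$. Theorem~\ref{hpasubgroups2}(1), applied to $\mathcal{A}_S$ over $S_H\times S_D$, directly gives that $\mathcal{A}_S$ is $2$-minimal. The trivial case $S_D=\{e\}$ cannot occur here, since then $S=S_H$ and the tensor equality would hold automatically.

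\textbf{Tensor case.} Suppose $\mathcal{A}_S=\mathcal{A}_{S_H}\otimes\mathcal{A}_{S_D}$ and neither (1) nor (2) holds. By Corollary~\ref{2mintens}, it suffices to show that both factors are $2$-minimal.

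For $\mathcal{A}_{S_H}$: here $|S_H|\le 4$. Lemma~\ref{2minsmall} gives $2$-minimality unless $|S_H|=4$ and $\mathcal{A}_{S_H}=\mathcal{T}_{S_H}$, which is exactly (1).

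For $\mathcal{A}_{S_D}$: this is a cyclotomic circulant $S$-ring with trivial radical over a cyclic $p$-group. If it is nontrivial, it is normal by Lemma~\ref{leungman}(1) (trivial radical, and over a cyclic $p$-group there is no nontrivial tensor decomposition), hence $2$-minimal by Lemma~\ref{2minnorm}. If it is trivial, $\mathcal{T}_{S_D}$ has trivial radical only when $|S_D|\in\{1,p\}$. When $|S_D|=1$ the ring is $\mathbb{Z}\{e\}$, which is $2$-minimal. When $|S_D|=p$ we are in case (2), which is excluded.

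\textbf{Main obstacle.} The delicate step is justifying that a trivial radical forces normality, including excluding $\mathcal{T}_{S_D}$ with $|S_D|=p^i$, $i\ge2$. I would handle it as follows: the set $S_D^\#$ contains $\{e\}$-cosets of a nontrivial proper subgroup, so its radical is nontrivial. Equivalently, Lemma~\ref{trivradorb}(2) shows that a nontrivial ring with trivial radical is $\cyc(K,S_D)$ with $|K|$ dividing $p-1$, while $\mathcal{T}_{S_D}$ has a basic set of size $p^i-1>p-1$.
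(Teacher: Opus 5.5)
Your overall route matches the paper's. The nontensor case is settled by Theorem~\ref{hpasubgroups2}(1) together with Lemma~\ref{s0structure}. In the tensor case, Corollary~\ref{2mintens} reduces the claim to the two factors, which you handle by Lemma~\ref{2minsmall} for $\mathcal{A}_{S_H}$ and by Lemmas~\ref{leungman} and~\ref{2minnorm} for $\mathcal{A}_{S_D}$. However, the step you single out as the main obstacle is argued incorrectly, and as written it does not go through.

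\textbf{The false claim.} Your assertion that $\mathcal{T}_{S_D}$ has trivial radical only when $|S_D|\in\{1,p\}$ is false. For every $g\in S_D$ we have $gS_D^\#=S_D\setminus\{g\}$, so $\rad(S_D^\#)=\{e\}$. Hence $\mathcal{T}_{C_{p^i}}$ has trivial radical for every $i$; this is exactly why Lemma~\ref{leungman}(1) explicitly allows trivial tensor factors.

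\textbf{Why neither justification works.} Your first justification fails: a subgroup $L$ with $\{e\}<L<S_D$ does not lie in $\rad(S_D^\#)$, because $S_D^\#$ contains $L^\#$ but not $e$. Your second justification is a non sequitur. Lemma~\ref{trivradorb}(2) describes only \emph{nontrivial} $S$-rings with trivial radical, so it says nothing about $\mathcal{T}_{S_D}$.

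\textbf{Consequence.} The case $\mathcal{A}_{S_D}=\mathcal{T}_{S_D}$ with $|S_D|=p^i$, $i\geq 2$, is therefore not ruled out by your argument. This case is not covered by~(2), and such a factor is not $2$-minimal: for $|S_D|$ odd, $\alt(S_D)$ is $2$-transitive and contains $(S_D)_r$.

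\textbf{The fix.} The missing ingredient is one you already listed at the start: $\mathcal{A}_{S_D}$ is \emph{cyclotomic} (Lemma~\ref{s0structure}). Use it directly. $\mathcal{T}_{C_{p^i}}$ with $i\geq 2$ is not cyclotomic, since $S_D^\#$ contains elements of different orders while every orbit of $\aut(S_D)$ consists of elements of a single order. Equivalently, Lemma~\ref{trivradorb}(1), which applies to any cyclotomic ring with trivial radical whether trivial or not, gives $p^i-1=|S_D^\#|\leq p-1$, so $i\leq 1$. This is exactly how the paper concludes $|S_D|=p$. With this replacement your proof is complete.
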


\begin{proof}
Assume that $\mathcal{A}_S$ is not $2$-minimal. Clearly, $|S_H|\in\{1,2,4\}$. Let us prove that
\begin{equation}\label{tens111}
\mathcal{A}_S=\mathcal{A}_{S_H}\otimes \mathcal{A}_{S_D}
\end{equation}
If $|S_H|\leq 2$, then $\mathcal{A}_{S_H}=\mathbb{Z}S_H$ and Eq.~\eqref{tens111} follows from Lemma~\ref{tenspr}(2). If $|S_H|=4$ and Eq.~\eqref{tens111} does not hold, then $\mathcal{A}_S$ is $2$-minimal by Theorem~\ref{hpasubgroups2}(1) and Lemma~\ref{s0structure}, a contradiction to the assumption.

As $\mathcal{A}_S$ is not $2$-minimal, Corollary~\ref{2mintens} yields that at least one of the $S$-rings $\mathcal{A}_{S_H}$, $\mathcal{A}_{S_D}$ is not $2$-minimal. If $\mathcal{A}_{S_H}$ is not $2$-minimal, then Statement~$(1)$ of the lemma holds by Lemma~\ref{2minsmall}. If $\mathcal{A}_{S_D}$ is not $2$-minimal, then $\mathcal{A}_{S_D}$ is not normal by Lemma~\ref{2minnorm}. Note that $|\rad(\mathcal{A}_{S_D})|=1$ by Lemma~\ref{s0structure} and hence $\mathcal{A}_{S_D}=\mathcal{T}_{S_D}$ by Lemma~\ref{leungman}. Since $\mathcal{A}_{S_D}$ is cyclotomic, we conclude that $|S_D|=p$. Thus, Statement~$(2)$ of the lemma holds.
\end{proof}

It is worth to mention that the tensor product from Lemma~\ref{s0schur} can be trivial, i.e. $S_H$ or $S_D$ can be trivial. In view of Lemma~\ref{2min} and Lemma~\ref{s0schur}, we may assume further that 
$$\mathcal{A}_S=\mathcal{A}_{S_H}\otimes \mathcal{A}_{S_D}$$ 
and Statement~$(1)$ or~$(2)$ of Lemma~\ref{s0schur} holds. The canonical epimorphism from $G$ to $G/L$ is denoted by $\pi$ and the image of $X\subseteq G$ under~$\pi$ is denoted by $\overbar{X}$.

\begin{lemm}\label{state3}
If Statement~$(3)$ of Theorem~\ref{e4cn} holds, then $\mathcal{A}$ is schurian.
\end{lemm}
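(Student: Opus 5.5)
The plan is to pin down $\mathcal{A}_S$ exactly and then produce the two groups required by Lemma~\ref{schurwr}.

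\textbf{Step 1: identify $\mathcal{A}_S$.} Under Statement~$(3)$ of Theorem~\ref{e4cn}, the ring $\mathcal{A}_W$, for $W\in\{U,G/L\}$, is a circulant cyclotomic $S$-ring with trivial radical. Since $W$ is cyclic, $4\nmid |W|$, and hence $|S_H|\leq 2$. So Statement~$(1)$ of Lemma~\ref{s0schur} cannot hold, and we are in Statement~$(2)$:
$$\mathcal{A}_S=\mathbb{Z}S_H\otimes \mathcal{T}_{S_D},\qquad |S_D|=p,\qquad |S|\in\{p,2p\}.$$
It follows that $\aut(\mathcal{A}_S)=(S_H)_r\times \sym(S_D)$. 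Moreover, a group $M$ with
$$M\geq S_r,\qquad M\leq (S_H)_r\times \Hol(S_D),\qquad M\approx_2 \aut(\mathcal{A}_S)$$
must equal $(S_H)_r\times\Hol(S_D)$. Indeed, $M_e$ must act trivially on $S_H$ and transitively on $S_D^\#$ inside $\aut(S_D)$, which acts regularly there.

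\textbf{Step 2: the cyclotomic side.} The ring $\mathcal{A}_W$ is normal: it equals $\mathbb{Z}W_H\otimes\mathcal{A}_{W_D}$ with $\mathcal{A}_{W_D}$ cyclotomic with trivial radical, hence normal by Lemma~\ref{leungman}, and then normal by Eq.~\eqref{auttens}. So $\aut(\mathcal{A}_W)\leq \Hol(W)$, and therefore $\aut(\mathcal{A}_W)^S\leq \Hol(S)=(S_H)_r\times\Hol(S_D)$, using $|S_H|\leq 2$. Since $\mathcal{A}_W$ is schurian, $\aut(\mathcal{A}_W)^S\approx_2\aut(\mathcal{A}_S)$. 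By Step~1,
$$\aut(\mathcal{A}_W)^S=(S_H)_r\times\Hol(S_D).$$

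\textbf{Step 3: the other side $W'\in\{U,G/L\}\setminus\{W\}$.} This is the main obstacle, since $W'$ may be noncyclic, isomorphic to $E_4\times C_{p^m}$. Here $S_D$ is an $\mathcal{A}_{W'}$-subgroup of order $p$ (if $W'=U$), or an $\mathcal{A}_{W'}$-section of order $p$ sitting over a subgroup of index $p$ (if $W'=G/L$).

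I would apply Lemma~\ref{autrestprime}(1) or~(2) respectively to $\mathcal{A}_{W'}$, after passing if necessary to the section $\langle S_D\rangle$-part; the cyclic case is handled by Lemma~\ref{leungman} directly. This gives one of two outcomes:
\begin{enumerate}
\tm{a} $\aut(\mathcal{A}_{W'})^{S_D}\leq\Hol(S_D)$;
\tm{b} $\mathcal{A}_{W'}$, and hence $\mathcal{A}$, is a generalized wreath product with a section of order at most $4$.
\end{enumerate}

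In outcome~(b) I would replace $S$ by this new small section. If its $S$-ring is not $\mathcal{T}_{E_4}$, Lemma~\ref{2minsmall} makes it $2$-minimal, and Lemma~\ref{2min} gives schurity. If the section has order $4$ with trivial $S$-ring over $E_4$, I would use Statement~$(2)$ of Theorem~\ref{e4cn} or Lemma~\ref{otimescomplement}, arguing via Lemma~\ref{tenspr}(2) that $\mathcal{A}_L$ is $\otimes$-complemented. I expect this subcase to need the most care.

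In outcome~(a), the kernel of $\aut(\mathcal{A}_{W'})$ on the factor $S_H$ is trivial, because $|S_H|\leq2$ and $\mathcal{A}_{S_H}=\mathbb{Z}S_H$. Hence $\aut(\mathcal{A}_{W'})^S\leq (S_H)_r\times\Hol(S_D)$. Schurity of $\mathcal{A}_{W'}$ (Lemma~\ref{sections2}) gives $\aut(\mathcal{A}_{W'})^S\approx_2\aut(\mathcal{A}_S)$, so by Step~1 it also equals $(S_H)_r\times\Hol(S_D)$.

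\textbf{Conclusion.} In outcome~(a), take $K_1=\aut(\mathcal{A}_U)$ and $K_0=\aut(\mathcal{A}_{G/L})$. They contain the regular subgroups, are $2$-equivalent to the respective automorphism groups, and satisfy $K_1^S=K_0^S$. Lemma~\ref{schurwr} then shows that $\mathcal{A}$ is schurian.
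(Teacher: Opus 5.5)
You have a genuine gap. Your Step~1 is correct: it is the paper's Cayley-minimality observation. For $n=p^k$ with $4p^2\mid |W'|$ your outline is essentially the paper's argument. Elsewhere it breaks, because you insist on using the full groups $\aut(\mathcal{A}_U)$ and $\aut(\mathcal{A}_{G/L})$.

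\textbf{Step~2 is false as stated.} You claim $\mathcal{A}_W$ is normal, but Lemma~\ref{leungman} allows \emph{trivial} tensor factors, and $\mathcal{T}_{C_r}$ with $r\geq 5$ prime is cyclotomic with trivial radical but not normal. Two examples:
\begin{itemize}
\item $\mathcal{A}_W=\mathbb{Z}C_2\otimes\mathcal{T}_{C_p}$ over $W=U\cong C_{2p}$ with $L=U_H$;
\item for $n=pq$, $\mathcal{A}_U=\mathbb{Z}U_H\otimes\mathcal{T}_P\otimes\mathcal{T}_Q$ with $L=Q$.
\end{itemize}
In both cases $\aut(\mathcal{A}_W)^S$ contains $\sym(S_D)$. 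So the equality at the end of Step~2 fails, and your final pair $K_1,K_0$ need not agree on $S$. The paper avoids this by writing $\mathcal{A}_W=\cyc(M,W)$ and taking $K_1=W_r\rtimes M\leq\Hol(W)$, which is $2$-equivalent to $\aut(\mathcal{A}_W)$. Your Step~1 (Lemma~\ref{cyclcayleymin}) then gives $K_1^S=\Hol(S)$.

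\textbf{Step~3 misses cases.} When $|W'|=4p$ (for $n=pq$ this is every case with $4\mid |W'|$), Lemma~\ref{autrestprime} is vacuous. The wreath product it yields may be the trivial one, so you get neither (a) nor a nontrivial decomposition to feed into Lemma~\ref{2min}. Here is a concrete instance.
\begin{itemize}
\item Take $n=p^k$ with $k\geq 2$ and $p\geq 5$.
\item Let $U=U_H\times D$ with $|U_H|=2$, and $L=D_{p^{k-1}}$.
\item Let $\mathcal{A}_U=\mathbb{Z}U_H\otimes\cyc(A_1,D)$, where $A_1\leq\aut(D)$ has order $p-1$.
\item Let $\mathcal{A}_{G/L}=(\mathbb{Z}C_2\wr\mathbb{Z}C_2)\otimes\mathcal{T}_{C_p}$, with the image of $U_H$ as its $\mathcal{A}$-subgroup of order $2$.
\end{itemize}
Here Statement~(3) of Theorem~\ref{e4cn} holds and $\aut(\mathcal{A}_{G/L})^{S_D}=\sym(S_D)$. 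Moreover $\mathcal{A}_{G/L}$ admits only the trivial decomposition of the type in Lemma~\ref{autrestprime}(1).

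The missing idea is the same as in Step~2: use a smaller $2$-equivalent group on the $W'$ side. The paper invokes the description of $S$-rings over $E_4\times C_p$ due to Evdokimov, Kov\'acs, and Ponomarenko.
\begin{itemize}
\item If $\mathcal{A}_{W'}$ is dense, it is cyclotomic. One takes $K_0\leq\Hol(W')$ and gets $K_0^S=\Hol(S)$ by Cayley minimality.
\item If $\mathcal{A}_{W'}$ is nondense, $\mathcal{A}$ is a wreath product over a section of order~$2$, and Lemma~\ref{2min} applies.
\end{itemize}

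\textbf{The cyclic case is not handled.} For cyclic $W'$, Lemma~\ref{leungman} alone does not produce (a) or (b). The paper observes that then $U$ and $G/L$ are both cyclic with $|S|=p$, and applies Lemma~\ref{prime}. Its proof uses an external matching result precisely because both restrictions to $S$ may be all of $\sym(S)$.

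(Minor: parts~(1) and~(2) of Lemma~\ref{autrestprime} are swapped in your Step~3. For $\mathcal{A}_{G/L}$, $S_D$ is an $\mathcal{A}_{G/L}$-subgroup of order $p$, so part~(1) applies. For $\mathcal{A}_U$, $S_D$ is the quotient of $U$ by an $\mathcal{A}$-subgroup of index $p$, so part~(2) applies.)
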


\begin{proof}
By the condition of the lemma, $\mathcal{A}_W$ is a circulant cyclotomic $S$-ring with trivial radical. Clearly, $|W_H|\leq 2$ in this case. So $|S_H|\leq 2$. This implies that Statement~$(2)$ of Lemma~\ref{s0schur} holds for $\mathcal{A}_S$, i.e. $|S_D|=p$ and $\mathcal{A}_{S_D}=\mathcal{T}_{S_D}$. Let $V\in \{U,G/L\}\setminus \{W\}$. If $4 \nmid |V|$, then $V$ is cyclic and $2~|~|L|$. The latter yields that $2 \nmid |S|$ and hence $|S|=|S_D|=p$. Therefore $\mathcal{A}$ is schurian by Lemma~\ref{prime}.

Now suppose that $4~|~|V|$. Note that 
$$|V|=4p^l$$ 
for some $l\geq 1$. This is obvious if $n=p^k$ and follows from $|S_D|=p$ and $V<G$ if $n=pq$. One can see that 
$$|S_H|=2$$
because $4\nmid |W|$ and $4~|~|V|$. Clearly, $\mathcal{A}_{S_H}=\cyc(\aut(S_H),S_H)$ and $\mathcal{A}_{S_D}=\mathcal{T}_{S_D}=\cyc(\aut(S_D),S_D)$. So 
$$\mathcal{A}_S=\mathcal{A}_{S_H}\otimes \mathcal{A}_{S_D}=\cyc(\aut(S_H)\times\aut(S_D),S)=\cyc(\aut(S),S)$$
by Eq.~\eqref{cycltens}. The latter is equivalent to 
$$\mathcal{A}_S=V(\Hol(S),S).$$

As $\mathcal{A}_W$ is cyclotomic, there exists $K_1\leq \Hol(W)$ such that $\mathcal{A}_W=V(K_1,W)$. It is easy to see that $K_1^S\leq \Hol(S)$ and $\mathcal{A}_S=V(K_1^S,S)$. Since $\mathcal{A}_S$ is Cayley minimal (Lemma~\ref{cyclcayleymin}) and $\mathcal{A}_S=V(\Hol(S),S)$, we conclude that 
\begin{equation}\label{k11}
K_1^S=\Hol(S).
\end{equation}

Let $\aut(\mathcal{A}_{V})^{S_D}\leq \Hol(S_D)$. Then $\aut(\mathcal{A}_{V})^{S_D}=\Hol(S_D)$ because $\Hol(S_D)\in \mathcal{K}^{\min}(\mathcal{A}_{S_D})$ and $\mathcal{A}_{S_D}=V(\aut(\mathcal{A}_{V})^{S_D},S_D)$. Note that $\aut(\mathcal{A}_{V})^{S_H}=\Hol(S_H)$ because $|S_H|=2$. Therefore
$$\aut(\mathcal{A}_{V})^S=\aut(\mathcal{A}_{V})^{S_H}\times \aut(\mathcal{A}_{V})^{S_D}=\Hol(S_H)\times \Hol(S_D)=\Hol(S).$$
Together with Eq.~\eqref{k11}, this yields that $K_1^S=\aut(\mathcal{A}_{V})^S$. Thus, $\mathcal{A}$ is schurian by Lemma~\ref{schurwr}.

In view of the above paragraph, we may assume that 
$$\aut(\mathcal{A}_{V})^{S_D}\nleq \Hol(S_D).$$
If $V=G/L$, then $\mathcal{A}_{V}$ is the $\overbar{U_1}/S_D$-wreath product for an $\mathcal{A}_{G/L}$-subgroup $\overbar{U_1}\geq S$ with $|\overbar{U_1}/S_D|\leq 4$ by Lemma~\ref{autrestprime}(1) applied to $\mathcal{A}_{G/L}$, whereas if $V=U$, then $\mathcal{A}_{V}$ is the $U_2/L_2$-wreath product, where $U_2=S_H^{\pi^{-1}}$ and $L_2\leq L$ is an $\mathcal{A}_{U}$-subgroup with $|S_H^{\pi^{-1}}/L_2|\leq 4$, by Lemma~\ref{autrestprime}(2) applied to $\mathcal{A}_{U}$.

Suppose that $|V|\neq 4p$. Then $\mathcal{A}$ is the nontrivial $S_1=U_1/L_1$-wreath product, where $U_1=\overbar{U_1}^{\pi^{-1}}$ and $L_1=(S_D)^{\pi^{-1}}$, whenever $V=G/L$, and $\mathcal{A}$ is the nontrivial $S_2=U_2/L_2$-wreath product whenever $V=U$. As $|\overbar{U_1}/S_D|\leq 4$ ($|S_H^{\pi^{-1}}/L_2|\leq 4$, respectively), we obtain $|S_1|\leq 4$ ($|S_2|\leq 4$, respectively). If $|S_i|=4$, $i\in\{1,2\}$, then $\mathcal{A}_{S_i}\neq \mathcal{T}_{S_i}$ because $S_D$ is an $\mathcal{A}_{G/L}$-subgroup of order~$2$. Therefore $\mathcal{A}_{S_i}$, $i\in\{1,2\}$, is $2$-minimal by Lemma~\ref{2minsmall}. Thus, $\mathcal{A}$ is schurian by Lemma~\ref{2min}.

Now suppose that $|V|=4p$. If $\mathcal{A}_V$ is dense, then $\mathcal{A}$ is cyclotomic by~\cite[Case~$3$,~p.~115]{EKP2}. Therefore $\mathcal{A}_{V}=V(K_0,V)$ for some $K_0\leq \Hol(V)$. Clearly, $K_0^S\leq \Hol(S)$ and $\mathcal{A}_S=V(K_0^S,S)$. Since $\mathcal{A}_S$ is Cayley minimal (Lemma~\ref{cyclcayleymin}) and $\mathcal{A}_S=V(\Hol(S),S)$, we conclude that $K_0^S=\Hol(S)$. Together with Eq.~\eqref{k11} and Lemma~\ref{schurwr}, this implies that $\mathcal{A}$ is schurian.

Let $\mathcal{A}$ be nondense. If $V=G/L$, then $\mathcal{A}_{V}=\mathcal{A}_{G/L}$ is the $S/S_D$-wreath product by~\cite[Case~$1$,~p.~114]{EKP2} and hence $\mathcal{A}$ is the $S_1=U/L_1$-wreath product, where $L_1=(S_D)^{\pi^{-1}}$. If $V=U$, then $\mathcal{A}_{V}=\mathcal{A}_{U}$ is the $S_2=H/L$-wreath product by~\cite[Case~$2$,~p.~114]{EKP2} (in this case, $|L|=2$) and hence $\mathcal{A}$ is the $S_2$-wreath product. One can see that $|S_1|=|U/L_1|=|S/S_D|=|S_H|=2$ and $|S_2|=|H/L|=2$. Thus, $\mathcal{A}$ is schurian by Lemma~\ref{2min}. 
\end{proof}

In view of Lemma~\ref{state3}, we may assume that Statement~$(4)$ of Theorem~\ref{e4cn} holds, i.e. $\mathcal{A}$ is dense, $U\geq H$, $\mathcal{A}_U$ is cyclotomic, $|\rad(\mathcal{A}_{U\cap D})|=1$ unless $n=3^k$ and $|\rad(\mathcal{A}_{U\cap D})|=3$, and $L\leq D$. In this case, $|S_H|=4$.

\begin{lemm}\label{hrest}
There exist $K_1\leq \aut(\mathcal{A}_U)$ and $K_0\leq \aut(\mathcal{A}_{G/L})$ such that $K_1\approx_2 \aut(\mathcal{A}_U)$, $K_0\approx_2 \aut(\mathcal{A}_{G/L})$, and $K_1^{S_H}=K_0^{S_H}\in \mathcal{K}^{\min}(\mathcal{A}_{S_H})$.
\end{lemm}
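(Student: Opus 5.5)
We work in the setting of Statement~$(4)$ of Theorem~\ref{e4cn}: $\mathcal{A}$ is dense, $H\leq U$, $L\leq D$, $\mathcal{A}_U$ is cyclotomic, and $S_H=HL/L\cong H$. The plan is to start from $K_1'=\aut(\mathcal{A}_U)$ and $K_0'=\aut(\mathcal{A}_{G/L})$ and shrink each of them, without changing its $2$-orbits, until its restriction to $S_H$ lies in $\mathcal{K}^{\min}(\mathcal{A}_{S_H})$. Since $L\cap H$ is trivial, $H$ and $S_H$ are naturally identified, and we may regard both restrictions as subgroups of $\sym(H)$.

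First suppose $\mathcal{A}_{S_H}\neq\mathcal{T}_{S_H}$. Then $\mathcal{A}_{S_H}$ is $2$-minimal by Lemma~\ref{2minsmall}, so $\mathcal{K}^{\min}(\mathcal{A}_{S_H})=\{\aut(\mathcal{A}_{S_H})\}$. The restrictions $(K_1')^{S_H}$ and $(K_0')^{S_H}$ are $2$-equivalent to $\aut(\mathcal{A}_{S_H})$, because $\mathcal{A}_U$ and $\mathcal{A}_{G/L}$ are schurian (Lemma~\ref{sections2}). Each restriction is therefore contained in $\aut(\mathcal{A}_{S_H})$ and $2$-equivalent to it, hence equal to it by $2$-minimality. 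So we may take $K_1=K_1'$ and $K_0=K_0'$.

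Now suppose $\mathcal{A}_{S_H}=\mathcal{T}_{S_H}$. By Remark~\ref{2minsmallrem}, $\mathcal{K}^{\min}(\mathcal{T}_{S_H})=\{\alt(S_H)\}$, so we need restrictions equal to $\alt(H)$, a group of order~$12$. We split by whether the relevant $S$-ring is a tensor product with its $H$-part.

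\emph{Case $\mathcal{A}_U$.} If $\mathcal{A}_U\neq\mathcal{A}_H\otimes\mathcal{A}_{U\cap D}$, Lemma~\ref{hpasubgroups4} gives $|\aut(\mathcal{A}_U)^H|=12$. A group of order $12$ that is $2$-equivalent to $\sym(4)$ must be $\alt(H)$, so $K_1=\aut(\mathcal{A}_U)$ works. If instead $\mathcal{A}_U=\mathcal{A}_H\otimes\mathcal{A}_{U\cap D}$, then by Eq.~\eqref{auttens} $\aut(\mathcal{A}_U)=\sym(H)\times\aut(\mathcal{A}_{U\cap D})$. Put $K_1=\alt(H)\times\aut(\mathcal{A}_{U\cap D})$. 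It contains $U_r$, since $H_r\leq\alt(H)$, and it is $2$-equivalent to $\aut(\mathcal{A}_U)$ because $\alt(H)\approx_2\sym(H)$.

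\emph{Case $\mathcal{A}_{G/L}$.} We use the same dichotomy. Since $\mathcal{A}$ is dense, $\mathcal{A}_{G/L}$ is a dense $S$-ring over $\overbar{H}\times\overbar{D}$. If it is not the tensor product $\mathcal{A}_{\overbar{H}}\otimes\mathcal{A}_{\overbar{D}}$, Lemma~\ref{hpasubgroups4} gives $|\aut(\mathcal{A}_{G/L})^{\overbar{H}}|=12$. Otherwise we take the subgroup $\alt(\overbar{H})\times\aut(\mathcal{A}_{\overbar{D}})$, exactly as in the previous case.

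In every case both restrictions to $S_H$ equal $\alt(S_H)$. The inclusions $K_1\leq\aut(\mathcal{A}_U)$ and $K_0\leq\aut(\mathcal{A}_{G/L})$ hold by construction.

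The step I expect to need the most care is the application of Lemma~\ref{hpasubgroups4} to $\mathcal{A}_{G/L}$ and $\mathcal{A}_U$. Its hypotheses must be checked: schurity of these $S$-rings over proper sections, which follows from Lemma~\ref{sections2}; the isomorphism $G/L\cong E_4\times C_{n/|L|}$, where $n/|L|$ could be a product of two primes or a prime power; and density of the $S$-rings involved. A further point to verify is that a group of order~$12$ which is $2$-equivalent to $\sym(4)$ is $\alt(4)$; this is exactly Remark~\ref{2minsmallrem}.
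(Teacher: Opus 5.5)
Your proposal is correct and follows the paper's proof essentially step for step. When $\mathcal{A}_{S_H}\neq\mathcal{T}_{S_H}$, you take the full automorphism groups and conclude by $2$-minimality via Lemma~\ref{2minsmall}. Otherwise you split according to whether $\mathcal{A}_U$ (resp.\ $\mathcal{A}_{G/L}$) is the tensor product with its $H$-part: in the non-tensor case you invoke Lemma~\ref{hpasubgroups4}, and in the tensor case you replace $\sym(H)$ by $\alt(H)$ in the first factor. Your additional checks are points the paper leaves implicit: that $U_r\leq K_1$, that the restrictions to $H$ and to $S_H$ can be identified because $H\cap L$ is trivial, and that the hypotheses of Lemma~\ref{hpasubgroups4} hold for $\mathcal{A}_U$ and $\mathcal{A}_{G/L}$.
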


\begin{proof}
If $\mathcal{A}_{S_H}\neq \mathcal{T}_{S_H}$, then $\mathcal{A}_{S_H}$ is $2$-minimal by Lemma~\ref{2minsmall}. So $\aut(\mathcal{A}_U)^{S_H}=\aut(\mathcal{A}_{G/L})^{S_H}=\aut(\mathcal{A}_{S_H})$ and the lemma holds for $K_1=\aut(\mathcal{A}_U)$ and $K_0=\aut(\mathcal{A}_{G/L})$.

Further, we assume that $|S_H|=4$ and $\mathcal{A}_{S_H}=\mathcal{T}_{S_H}$. By Remark~\ref{2minsmallrem}, there is a unique proper $2$-equivalent subgroup $\alt(S_H)\cong \alt(4)$ of $\aut(\mathcal{A}_{S_H})\cong \sym(4)$ of order~$12$ such that $\alt(S_H)\in \mathcal{K}^{\min}(\mathcal{A}_{S_H})$. If $\mathcal{A}_U\neq \mathcal{A}_H\otimes \mathcal{A}_{U\cap D}$, then put $K_1=\aut(\mathcal{A}_U)$. In this case, $K_1^{S_H}=\alt(S_H)$ by Lemma~\ref{hpasubgroups4}. Otherwise, $\aut(\mathcal{A}_U)=\aut(\mathcal{A}_H)\times \aut(\mathcal{A}_{U\cap D})$. Let $K_H\leq \sym(H)$ be such that $K_H\cong \alt(S_H)$. The group $K_H$ is $2$-equivalent to $\aut(\mathcal{T}_H)$ (see Remark~\ref{2minsmallrem}) and hence $K_1=K_H\times \aut(\mathcal{A}_{U\cap D})$ is $2$-equivalent to $\aut(\mathcal{A}_U)$. By the definition of $K_1$, 
$$K_1^{S_H}=(K_H\times \aut(\mathcal{A}_{U\cap D}))^{S_H}=(K_H)^{S_H}=\alt(S_H).$$

The groups $S_H$ and $D/L$ are $\mathcal{A}_{G/L}$-subgroups. If $\mathcal{A}_{G/L}\neq \mathcal{A}_{S_H}\otimes \mathcal{A}_{D/L}$, then put $K_0=\aut(\mathcal{A}_{G/L})$; otherwise put $K_0=\alt(S_H)\times \aut(\mathcal{A}_{D/L})$. The argument which is the similar to the argument from the previous paragraph implies that $K_0$ is $2$-equivalent to $\aut(\mathcal{A}_{G/L})$ and $K_0^{S_H}=\alt(S_H)=K_1^{S_H}$ as required. 
\end{proof}

Let $K_1$ and $K_0$ are defined as in Lemma~\ref{hrest}.

\begin{lemm}\label{drest}
With the above notation, $K_1^{S_D}=K_0^{S_D}\in \mathcal{K}^{\min}(\mathcal{A}_{S_D})$ unless $\mathcal{A}$ is the $(H\times L)/L$- or $U/(U\cap D)$-wreath product. 
\end{lemm}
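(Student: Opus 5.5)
We are in the situation of Statement~(4) of Theorem~\ref{e4cn}: $\mathcal{A}$ is dense, $H\le U$, $\{e\}<L\le D$, $\mathcal{A}_U$ is cyclotomic, and $\mathcal{A}_S=\mathcal{A}_{S_H}\otimes\mathcal{A}_{S_D}$, where Statement~(1) or~(2) of Lemma~\ref{s0schur} holds. Since $|S_H|=4$, we have $S\ne S_H$ unless $U\cap D=L$. I split according to whether $\mathcal{A}_{S_D}$ is $2$-minimal.

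\textbf{Case $\mathcal{A}_{S_D}\neq\mathcal{T}_{S_D}$.} By Lemma~\ref{s0structure}, $\mathcal{A}_{S_D}$ is cyclotomic with trivial radical. Lemma~\ref{leungman} then shows it is normal, and Lemma~\ref{2minnorm} shows it is $2$-minimal. Hence $K_1^{S_D}$ and $K_0^{S_D}$ are both $2$-equivalent to $\aut(\mathcal{A}_{S_D})$ and therefore both equal $\aut(\mathcal{A}_{S_D})$. This group lies in $\mathcal{K}^{\min}(\mathcal{A}_{S_D})$, so the claim holds. The case $S_D$ trivial is covered here as well.

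\textbf{Case $\mathcal{A}_{S_D}=\mathcal{T}_{S_D}$.} By Lemma~\ref{s0structure} we get $|S_D|=p$. Since $\mathcal{T}_{C_p}=\cyc(\aut(C_p),C_p)$, the group $\Hol(S_D)$ belongs to $\mathcal{K}^{\min}(\mathcal{A}_{S_D})$ (the point stabilizer acts regularly on $S_D^\#$). So it suffices to show $K_1^{S_D}=K_0^{S_D}=\Hol(S_D)$, and for $p=3$ this is automatic since $\Hol(C_3)=\sym(3)$. First take the upper side. We have $S_D=(U\cap D)/L$, and $U\cap D$ is an $\mathcal{A}_U$-subgroup with $\mathcal{A}_U$ cyclotomic, say $\mathcal{A}_U=V(G_r'K,U)$ with $K\le\aut(U)$. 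I would like to replace $K_1$ by a subgroup of $\Hol(U)$. In the construction of Lemma~\ref{hrest}, $K_1$ is either $\aut(\mathcal{A}_U)$ or $K_H\times\aut(\mathcal{A}_{U\cap D})$. In the first case $\mathcal{A}_U\ne\mathcal{A}_H\otimes\mathcal{A}_{U\cap D}$, so by Theorem~\ref{hpasubgroups2} (when the radical is trivial) $\mathcal{A}_U$ is normal and $K_1\le\Hol(U)$. The exceptional radical-$3$ case has $p=3$ and is already done. In the second case $\mathcal{A}_{U\cap D}$ is cyclotomic with radical of order $1$ (or $3$), hence normal, and again $K_1^{S_D}\le\Hol(S_D)$. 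Since the restriction must be $2$-equivalent to $\Hol(S_D)$ and Cayley minimality (Lemma~\ref{cyclcayleymin}) applies, we get $K_1^{S_D}=\Hol(S_D)$.

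\textbf{The lower side $K_0^{S_D}$ is the main obstacle.} Here I apply Lemma~\ref{autrestprime}(1) to $\mathcal{A}_{G/L}\cong$ an $S$-ring over $E_4\times C_{p^{k'}}$ with the subgroup $S_D$ of order $p$. One alternative is $\aut(\mathcal{A}_{G/L})^{S_D}\le\Hol(S_D)$. Since $K_0$ restricted to $S_D$ agrees with $\aut(\mathcal{A}_{G/L})^{S_D}$ (in the non-tensor case) or equals $\aut(\mathcal{A}_{D/L})^{S_D}$ (in the tensor case, treated analogously inside the cyclic group $D/L$), this gives equality with $\Hol(S_D)$ as before. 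The other alternative is that $\mathcal{A}_{G/L}$ is the $\overbar{U_1}/S_D$-wreath product with $|\overbar{U_1}/S_D|\le 4$. Since $\mathcal{A}$ is dense and $S_H$ is an $\mathcal{A}_{G/L}$-subgroup, the possibilities are $\overbar{U_1}=S_D$ or $\overbar{U_1}=S_H\times S_D=S$; the cases of order $2$ must be excluded or absorbed using the structure of $\mathcal{A}_{S_H}$. Pulling back along $\pi$, this means every basic set outside $\pi^{-1}(\overbar{U_1})$ contains $U\cap D$ in its radical. In the subcase $\overbar{U_1}=S$ this gives $\pi^{-1}(\overbar{U_1})=U$, so $\mathcal{A}$ is the $U/(U\cap D)$-wreath product, which is one of the excluded alternatives. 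In the subcase $\overbar{U_1}=S_D$, combining with the $S$-wreath structure I expect to arrive at the $(H\times L)/L$-wreath product, using that $H$ acts on both sides. The delicate point is checking carefully that the intermediate subgroups allowed by $|\overbar{U_1}/S_D|\le 4$ give exactly these two exceptional decompositions.
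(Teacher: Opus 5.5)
Your first case, where $\mathcal{A}_{S_D}$ is normal and hence $2$-minimal, is the paper's. In the case $|S_D|=p\geq 5$, $\mathcal{A}_{S_D}=\mathcal{T}_{S_D}$, however, the last step for $K_0^{S_D}$ is left open and partly misdirected.

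\textbf{The gap.} When Lemma~\ref{autrestprime}(1) makes $\mathcal{A}_{G/L}$ the $\overbar{U_1}/S_D$-wreath product with $|\overbar{U_1}:S_D|\leq 4$, you expect the subcase $\overbar{U_1}=S_D$ to lead to the $(H\times L)/L$-wreath product. It cannot. For $p\geq 5$, Statement~(4) of Theorem~\ref{e4cn} gives $|\rad(\mathcal{A}_{U\cap D})|=1$. So the basic set containing a generator of $U\cap D$ lies in $(U\cap D)\setminus L\subseteq G\setminus(H\times L)$ and has trivial radical. Hence in this case $\mathcal{A}$ is never the $(H\times L)/L$-wreath product.

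\textbf{The missing step} is a monotonicity remark. As $L\leq D$, the group $G/L\cong E_4\times C_{p^{k'}}$ has Sylow $2$-subgroup $S_H$, and $S_D$ is its unique subgroup of order $p$. Any $\overbar{U_1}\geq S_D$ with $|\overbar{U_1}:S_D|\leq 4$ has order $p$, $2p$ or $4p$, so it lies in $S_H\times S_D=S$. Therefore every basic set of $\mathcal{A}_{G/L}$ outside $S$ is outside $\overbar{U_1}$ and contains $S_D$ in its radical. Thus $\mathcal{A}_{G/L}$ is the $S/S_D$-wreath product in every subcase; this is also what the proof of Lemma~\ref{autrestprime}(1) gives directly, its subgroup $H\times P$ being $S$ here.

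A basic set $X$ of $\mathcal{A}$ outside $U$ is a union of $L$-cosets, hence the full preimage of its image in $G/L$. So $U\cap D\leq\rad(X)$, i.e. $\mathcal{A}$ is the $U/(U\cap D)$-wreath product, which is an allowed exception. No case distinction on $|\overbar{U_1}|$ and no information about $\mathcal{A}_{S_H}$ is needed; this is how the paper finishes.

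\textbf{The upper side.} Your treatment of $K_1^{S_D}$ takes a different, valid route. The paper applies Lemma~\ref{autrestprime}(2) to $\mathcal{A}_U$ with the index-$p$ subgroup $H\times L$, keeping the $(H\times L)/L$-wreath product as an escape clause. You instead get $\aut(\mathcal{A}_U)^{S_D}\leq\Hol(S_D)$ outright from normality: Theorem~\ref{hpasubgroups2}(1) if $\mathcal{A}_U\neq\mathcal{A}_H\otimes\mathcal{A}_{U\cap D}$, and otherwise normality of $\mathcal{A}_{U\cap D}$ together with Eq.~\eqref{auttens}. This shows the escape clause is vacuous here. Two details need fixing:
\begin{enumerate}
\item Split on whether $\mathcal{A}_U$ is a tensor product, not on how $K_1$ was built in Lemma~\ref{hrest}; it suffices that $K_1\leq\aut(\mathcal{A}_U)$. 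Your split overlooks the subcase $K_1=\aut(\mathcal{A}_U)$ with $\mathcal{A}_{S_H}\neq\mathcal{T}_{S_H}$ and $\mathcal{A}_U$ a tensor product, though the same argument handles it.
\item The step ``cyclotomic with trivial radical, hence normal'' for $\mathcal{A}_{U\cap D}$ needs $|U\cap D|\geq p^2$, since $\mathcal{T}_{C_p}$ is not normal for $p\geq 5$. This holds because $L\neq\{e\}$.
\end{enumerate}
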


\begin{proof}
By Lemma~\ref{leungman} and Lemma~\ref{s0structure}, the $S$-ring $\mathcal{A}_{S_D}$ is normal or trivial over a group of order~$p$. If $\mathcal{A}_{S_D}$ is normal, then $\mathcal{A}_{S_D}$ is $2$-minimal by Lemma~\ref{2minnorm}. If $|S_D|=3$, then $\mathcal{A}_{S_D}$ is obviously $2$-minimal. So $K_1^{S_D}=K_0^{S_D}=\aut(\mathcal{A}_{S_D})\in \mathcal{K}^{\min}(\mathcal{A}_{S_D})$ in both these cases as required. Further, we assume that 
$$|S_D|=p\geq 5~\text{and}~\mathcal{A}_{S_D}=\mathcal{T}_{S_D}.$$
This assumption implies that $|(U\cap D):L|=p$.

If $\aut(\mathcal{A}_U)^{S_D}\leq \Hol(S_D)$ and $\aut(\mathcal{A}_{G/L})^{S_D}\leq \Hol(S_D)$, then $K_1^{S_D}\leq \Hol(S_D)$ and $K_0^{S_D}\leq \Hol(S_D)$. Since $\Hol(S_D)\in \mathcal{K}^{\min}(\mathcal{A}_{S_D})$, we obtain $K_1^{S_D}=K_0^{S_D}=\Hol(S_D)$ as desired. 

If $\aut(\mathcal{A}_U)^{S_D}\nleq \Hol(S_D)$ or $\aut(\mathcal{A}_{G/L})^{S_D}\nleq \Hol(S_D)$, then $\mathcal{A}_U$ is the $(H\times L)/L$-wreath product or $\mathcal{A}_{G/L}$ is the $S/S_D$-wreath product, respectively, by Lemma~\ref{autrestprime}. In the former case, $\mathcal{A}$ is the $(H\times L)/L$-wreath product, whereas in the latter one, $\mathcal{A}$ is the $U/(U\cap D)$-wreath product and we are done. 
\end{proof}

Return to the proof of Theorem~\ref{main1}. By Lemma~\ref{hrest}, we have $K_1^{S_H}=K_0^{S_H}\in \mathcal{K}^{\min}(\mathcal{A}_{S_H})$. If $\mathcal{A}$ is not the $T$-wreath product, where $T=U_1/L_1\in\{(H\times L)/L,U/(U\cap D)\}$, then Lemma~\ref{drest} implies that $K_1^{S_D}=K_0^{S_D}\in \mathcal{K}^{\min}(\mathcal{A}_{S_D})$. Therefore $K_1^S=K_0^S$ by Lemma~\ref{tensgroup}. Thus, $\mathcal{A}$ is schurian by Lemma~\ref{schurwr}. 

Suppose that $\mathcal{A}$ is the $T$-wreath product. Clearly, $|T|=|H|=4$ and Statement~$(4)$ of Theorem~\ref{e4cn} holds for~$\mathcal{A}$ and~$T$. By Lemma~\ref{hrest}, there are $M_1\leq \aut(\mathcal{A}_{U_1})$ and $M_0\leq \aut(\mathcal{A}_{G/L_1})$ such that $M_1\approx_2 \aut(\mathcal{A}_{U_1})$, $M_0\approx_2 \aut(\mathcal{A}_{G/L_1})$, and $M_1^T=M_0^T$. Again, $\mathcal{A}$ is schurian by Lemma~\ref{schurwr}.

\section{Proof of Theorem~\ref{main2}}

Since a section of a Schur group is a Schur group, to prove Theorem~\ref{main2}, it suffices to prove that the group 
$$G\cong C_8\times C_2\times C_p,$$ 
where $p$ is an odd prime, is not a Schur group. Let $A$, $B$, and $C$ be subgroups of $G$ such that
$$A\cong C_8,~B\cong C_2,~P\cong C_p,~\text{and}~G=A\times B\times P.$$
Denote generators of $A$ and $B$ by $a$ and $b$, respectively. Put 
$$A_1=\langle a^2 \rangle\cong C_4,~A_2=\langle a^4 \rangle\cong C_2,~U_1=A_1\times P\cong C_{4p},~\text{and}~U=A_1\times B\times P=B\times U_1\cong C_2\times C_{4p}.$$ 

Let us construct a nonschurian $S$-ring over $G$. The group $\aut(P)$ is a cyclic group of order~$p-1$. Let $M_0$ be the subgroup of $\aut(P)$ of index~$2$. The canonical epimorphism from $\aut(P)$ to $\aut(P)/M_0$ is denoted by $\pi$. Since $\aut(A_1)$ is a group of order~$2$, there exists a unique isomorphism $\varphi$ from $\aut(A_1)\cong C_2$ to $\aut(P)/M_0\cong C_2$. Put
$$\mathcal{A}_1=\cyc(M_1,U_1),$$
where $M_1=\{(\delta,\tau)\in \aut(A_1)\times \aut(P):~\delta^\varphi=\tau^\pi\}\leq \aut(U_1)$, and
$$\mathcal{A}_2=\mathcal{T}_{B}\otimes \mathcal{T}_P.$$
One can see that $P$ is an $\mathcal{A}_1$- and $\mathcal{A}_2$-section and $(\mathcal{A}_1)_P=(\mathcal{A}_2)_P=\mathcal{T}_P$. Therefore one can form the $S$-ring 
$$\mathcal{A}_{12}=\mathcal{A}_1\wr_{U_1/A_1} \mathcal{A}_2$$
over $U$. 

Put
$$\mathcal{A}_3=\cyc(\langle \delta_0 \rangle,A),$$
where $\delta_0\in \aut(A)$ is such that $a^{\delta_0}=a^{-1}$. It is a straightforward to check that the partition of $(A/A_2)\times B\cong C_4\times C_2$ into the sets
$$\{A_2\},~\{A_2a^2\},~\{A_2ab\},~\{A_2a^3b\},~\{A_2a,A_2a^3\},~\{A_2b,A_2a^2b\}$$
defines the $S$-ring $\mathcal{A}_4$ over $(A/A_2)\times B$ such that
$$\mathcal{A}_4\cong \mathbb{Z}C_4\wr_{C_2}\mathbb{Z}E_4.$$
One can see that $A/A_2$ is an $\mathcal{A}_3$- and $\mathcal{A}_4$-section and $(\mathcal{A}_3)_{A/A_2}=(\mathcal{A}_4)_{A/A_2}\cong \mathbb{Z}C_2\wr\mathbb{Z}C_2$. Therefore one can form the $S$-ring 
$$\mathcal{A}_{34}=\mathcal{A}_3\wr_{A/A_2} \mathcal{A}_4$$
over $A\times B$. 

The section $U/P=A_1\times B$ is an $\mathcal{A}_{12}$- and $\mathcal{A}_{34}$-section and 
$$(\mathcal{A}_{12})_{U/P}=(\mathcal{A}_{34})_{U/P}=(\mathbb{Z}A_2\wr \mathbb{Z}(A_1/A_2))\wr \mathbb{Z}B\cong (\mathbb{Z}C_2\wr \mathbb{Z}C_2)\wr \mathbb{Z}C_2.$$ 
Therefore one can form the $S$-ring
$$\mathcal{A}=\mathcal{A}_{12}\wr_{U/P}\mathcal{A}_{34}$$
over $G$. The basic sets of $\mathcal{A}$ are the following:
$$X_0=\{e\},~X_1=\{a^4\},~X_2=\{a^2,a^6\},~X_3=P^\#,~X_4=a^4P^\#,$$
$$Y_1=a^2P_1\cup a^6P_2,~Y_2=a^2P_2\cup a^6P_1,$$
$$Z_1=A_1b,~Z_2=A_1bP^\#,$$
$$T_1=\{a,a^7\}P,~T_2=\{a^3,a^5\}P,~T_3=\{ab,a^5b\}P,~T_4=\{a^3b,a^7b\}P,$$
where $P_1$ and $P_2$ are the nontrivial orbits of $M_0$ on $P$.

\begin{prop}\label{nonschur}
The $S$-ring $\mathcal{A}$ is nonschurian. 
\end{prop}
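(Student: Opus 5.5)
Since $\mathcal{A}=\mathcal{A}_{12}\wr_{S}\mathcal{A}_{34}$ with $S=U/P$, and $\mathcal{A}_{12}$, $\mathcal{A}_{34}$ live over the proper sections $U\cong C_2\times C_{4p}$ and $G/P\cong C_8\times C_2$ (both Schur groups, or directly checkable as schurian), I will apply Lemma~\ref{schurwr}. Nonschurity is then equivalent to the statement that for \emph{every} pair $K_1\le\sym(U)$, $K_0\le\sym(G/P)$ with $K_1\ge U_r$, $K_0\ge (G/P)_r$, $K_1\approx_2\aut(\mathcal{A}_{12})$ and $K_0\approx_2\aut(\mathcal{A}_{34})$, we have $K_1^S\ne K_0^S$. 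So the plan is to find a permutation-group invariant on the section $S=U/P\cong C_4\times C_2$ (with $\mathcal{A}_S\cong(\mathbb{Z}C_2\wr\mathbb{Z}C_2)\wr\mathbb{Z}C_2$) that is forced one way by $\mathcal{A}_{12}$ and the other way by $\mathcal{A}_{34}$.

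First I would analyse the $U$-side. In $\mathcal{A}_{12}$ the basic sets $Y_1=a^2P_1\cup a^6P_2$, $Y_2=a^2P_2\cup a^6P_1$ couple the swap of the cosets $a^2P,a^6P$ with the swap of $P_1,P_2$ inside $P$. I would use this to show that any $f\in K_1$ (indeed in $\aut(\mathcal{A}_{12})$) fixing $e$ and fixing the $P$-coset $bP$ (and the coset $P$) induces on $S$ a permutation that moves $a^2P$ to $a^6P$ if and only if it interchanges $P_1,P_2$ on $P$; more usefully, combined with $Z_1=A_1b$, $Z_2=A_1bP^\#$ (which make the $P$-part on the $b$-cosets independent), I expect to extract a parity-type constraint: inside $K_1^S$ the stabilizer of the block structure acts on the two $A_1$-halves $\{A_2,A_2a^2\}$-cosets in a \emph{correlated} way (or, dually, an independent way). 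Then on the $G/P$ side, using $\mathcal{A}_4$ with the singleton classes $\{A_2ab\},\{A_2a^3b\}$ and $\delta_0$, I would derive the opposite correlation for $K_0^S$: elements fixing $A_2b$-type points must act on $A_1/A_2\times B$ cosets in a way incompatible with the $U$-side constraint.

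Concretely, I would take the $8$ points of $S$ and, for each side, describe the group $\aut(\mathcal{A}_{12})^S$ and $\aut(\mathcal{A}_{34})^S$ together with all $2$-equivalent subgroups containing the regular group restricted to $S$; since $\mathcal{A}_S$ is a wreath product of tiny rings, the candidates for $K^S$ form a short list of $2$-groups of order dividing $2^7$ containing $S_r$. The key step is to show that every admissible $K_1^S$ contains a specific permutation $g$ (e.g.\ the one swapping $Pa^2$ and $Pa^6$ while fixing $Pb$-cosets pointwise, forced by the $Y_i$ and the transitivity needed to realize $X_2$ together with the independence given by $\mathcal{T}_B\otimes\mathcal{T}_P$), whereas no admissible $K_0^S$ contains $g$ because in $G/P$ such a move would have to be realized by an automorphism of $\mathcal{A}_{34}$ fixing $e$, which by $\mathcal{A}_3=\cyc(\langle\delta_0\rangle,A)$ and the singletons of $\mathcal{A}_4$ must simultaneously move $A_2ab$ to $A_2a^5b$ — or the reverse incompatibility.

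The main obstacle is this invariant: $2$-equivalence allows $K_i$ to be proper subgroups of the automorphism groups, so I cannot just compare $\aut(\mathcal{A}_{12})^S$ and $\aut(\mathcal{A}_{34})^S$; I must prove that the distinguishing permutation (or its absence) is forced for every $2$-equivalent overgroup of the regular group. I would handle it by working with stabilizers $K_e$: the orbits of $(K_1)_e$ must be exactly the basic sets, so e.g.\ transitivity of $(K_1)_e$ on $Y_1$ produces an element mapping $a^2x$ to $a^6y$ with $x\in P_1,y\in P_2$, and I would track its action on $S$ via Eq.~\eqref{aut}; similarly on the $\mathcal{A}_{34}$ side using orbits of $(K_0)_e$ on the $T_i$-images. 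Getting these forced elements to produce a genuine contradiction in $K^S$ (rather than just in full automorphism groups) is where I expect most of the work.
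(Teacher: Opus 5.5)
Your reduction to Lemma~\ref{schurwr} is legitimate, and the paper's argument can in fact be split along it. (Schurity of $C_2\times C_{4p}$ is only established in the sequel, but $\mathcal{A}_{12}$ is schurian directly, e.g.\ by Lemma~\ref{otimescomplement}.) The gap is that the proposal never identifies the distinguishing invariant, and the one you do name is false.

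The permutation of $S$ that swaps $Pa^2,Pa^6$ and fixes all four $Pb$-cosets is not even in $\aut(\mathcal{A}_{12})^S$. Suppose $f\in\aut(\mathcal{A}_{12})_e$ induced it (compose with a translation by an element of $P$ to fix $e$). Then $(a^2)^f=a^6$, $b^f=b$, $(a^2b)^f=a^2b$, and $(bx)^f=bx^f$ for $x\in P^\#$ (via $Z_1x$). Since $a^6x\in Y_1$ iff $x\in P_2$, applying Eq.~\eqref{aut} to $Y_1a^2$ shows that $f$ interchanges $P_1$ and $P_2$. Applying it to $Y_1a^2b$ shows that $f$ preserves $P_2$, a contradiction.

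Your remark that $Z_1,Z_2$ make the $P$-part on the $b$-cosets independent is backwards. These sets tie the $P$-coordinate on $U_1b$ to that on $U_1$, and that coupling is exactly what the proof needs. The correlation between the halves $U_1/P$ and $U_1b/P$ does not separate the two sides either: it is the same for $\aut(\mathcal{A}_{12})$ and $\aut(\mathcal{A}_{34})$. In both, a point stabilizer inverts the $b$-half up to an $A_1$-translation exactly when it inverts $U_1/P$. Finally, the $\mathcal{A}_{34}$-side contradiction ``$A_2ab\mapsto A_2a^5b$'' is vacuous, because $A_2ab=A_2a^5b$.

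What is missing is the following.
\begin{enumerate}
\tm{1} Fix $x_1\in P_1$ and $x_2\in P_2$. Since $Z_2$ is an orbit of $(K_1)_e$, pick $f\in(K_1)_e$ with $(bx_1)^f=bx_2$.
\tm{2} By Eq.~\eqref{aut}, $f$ maps $Z_2bx_1\cap X_3=P^\#\setminus\{x_1\}$ onto $P^\#\setminus\{x_2\}$, so $x_1^f=x_2$.
\tm{3} Then $(Y_1x_1\cap X_2)^f=Y_1x_2\cap X_2$ gives $(a^2)^f=a^6$. Hence $f^S$ fixes $P$ and $Pb$ and maps $Pa^2$ to $Pa^6$.
\tm{4} On the other side, let $g\in\aut(\mathcal{A}_{34})$ fix $\overline{e}$ and $\overline{b}$; this covers every element of every admissible $K_0$, since $K_0\leq K_0^{(2)}=\aut(\mathcal{A}_{34})$. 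Then $g$ fixes $\overline{a}$, the unique point of $\overline{T_3}\,\overline{b}\cap\overline{T_1}$. Therefore $g$ fixes $\overline{a}^2$, the unique point of $\overline{T_1}\,\overline{a}\cap\overline{X_2}$.
\end{enumerate}
So $K_1^S\neq K_0^S$ always.

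This also removes the obstacle you worried about. On the $U$-side you need only one element, supplied by the orbit condition; on the $G/P$-side you need only a property of the full group $\aut(\mathcal{A}_{34})$. The paper runs the same computation directly inside $\aut(\mathcal{A})_e$, without Lemma~\ref{schurwr}. It gets $(Pa)^f=Pa^7$ from $T_1a^2\cap T_1$ and $(Pa)^f=Pa$ from $T_3bx_1\cap T_1$.
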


\begin{proof}
Assume the contrary. Then 
\begin{equation}\label{schurian}
\mathcal{S}(\mathcal{A})=\orb(K,G),
\end{equation}
where $K=\aut(\mathcal{A})_e$. Let $x_1\in P_1$ and $x_2\in P_2$. Since the elements $bx_1$ and $bx_2$ lie in the same basic set $Z_2$ of $\mathcal{A}$, there exists $f\in K$ such that $(bx_1)^f=bx_2$. From Eqs.~\eqref{aut} and~\eqref{schurian} it follows that
$$(P^\#\setminus \{x_1\})^f=(A_1P^\#x_1\cap P^\#)^f=(Z_2bx_1\cap X_3)^f=Z_2bx_2\cap X_3=A_1P^\#x_2\cap P^\#=P^\#\setminus \{x_2\}.$$
Since $X_3=P^\#$ is an orbit of $K$, we have 
$$x_1^f=x_2.$$
The above equality together with Eqs.~\eqref{aut} and~\eqref{schurian} imply that 
$$((a^2P_1x_1\cup a^6P_2x_1)\cap \{a^2,a^6\})^f=(Y_1x_1\cap X_2)^f=Y_1x_2\cap X_2=(a^2P_1x_2\cup a^6P_2x_2)\cap \{a^2,a^6\}.$$
If $|M_0|$ is even, then $P_1=P_1^{-1}$ and $P_2=P_2^{-1}$ and hence  the left-hand side and the right-hand side of the latter equality are equal to $\{a^2\}$ and $\{a^6\}$, respectively. If $|M_0|$ is odd, then $P_2=P_1^{-1}$ and hence  the left-hand side and the right-hand side of the latter equality are equal to $\{a^6\}$ and $\{a^2\}$, respectively. Since $X_2=\{a^2,a^6\}\in \orb(K,G)$, we have
\begin{equation}\label{a2a6}
(a^2)^f=a^6
\end{equation}
in both cases.

Now by Eqs.~\eqref{aut},~\eqref{schurian}, and~\eqref{a2a6}, we have
\begin{equation}\label{contr}
(Pa)^f=(T_1a^2\cap T_1)^f=T_1a^6\cap T_1=Pa^7.
\end{equation}
On the other hand, since $(bx_1)^f=bx_2$, we have
$$(Pa)^f=(T_3bx_1\cap T_1)^f=T_3bx_2\cap T_1=Pa,$$
a contradiction to Eq.~\eqref{contr}.
\end{proof}

Theorem~\ref{main2} follows from Proposition~\ref{nonschur}.

\section{Proof of Theorem~\ref{main3}}

Let $p$ be an odd prime, $P\cong C_p$, $H\cong E_{16}$, and $G=H\times P$. If $p=3$, then $G$ is a Schur group by the computational results~\cite{Ziv}. Further, we assume that $p\geq 5$. In this case, we are going to construct a nonschurian $S$-ring over $G$ and thereby to prove Theorem~\ref{main3}.

Let $a,b,c,d\in H$ be such that $\langle a,b,c,d \rangle=H$, $A=\langle a \rangle$, $B=\langle b \rangle$, $C=\langle c \rangle$, and let $\delta_0\in \aut(H)$ be such that
$$\delta_0:(a,b,c,d)\mapsto (a,ab,bc,cd),$$
$M_0=\langle \delta_0 \rangle$, and 
$$\mathcal{A}_0=\cyc(M_0,H).$$ 
It is straightforward to verify that $|\delta_0|=4$ and the basic sets of $\mathcal{A}_0$ are the following:
$$\{e\},~\{a\},~Ab,~(A\times B)c,~\{e,b,c,abc\}d,~\{a,ab,ac,bc\}d.$$
One can see that each of two latter basic sets has trivial radical and generates $H$. So $\mathcal{A}_0$ is indecomposable. Every basic set of $\mathcal{A}_0$ has the $2$-power size, i.e. $\mathcal{A}_0$ is a $2$-$S$-ring in terms of~\cite{KR}. Clearly, $\mathcal{A}_0$ is cyclotomic and hence schurian. Therefore $\mathcal{A}_0$ is $2$-minimal by~\cite[Lemma~5.6]{KR} stating that every schurian indecomposable $p$-$S$-ring over an elementary abelian group of order~$p^4$ is $2$-minimal for every prime~$p$. Thus,
\begin{equation}\label{2mink0}
K_0=H_r\rtimes M_0
\end{equation}
for every $K_0\leq \sym(H)$ such that $K_0\geq H_r$ and $K_0\approx_2 \aut(\mathcal{A}_0)$ (recall that $H_r$ is the group of $\sym(H)$ induced by all right multiplications of $H$). In particular, $\aut(\mathcal{A}_0)=H_r\rtimes M_0$.

Let $V=A\times B \times C$ and $U=V\times P$. As $p\geq 5$, the group $M_P=\aut(P)\cong C_{p-1}$ has a nontrivial subgroup~$M_P^0$ of index~$2$. Let $\delta_1,\delta_2\in \aut(V)$ be such that
$$\delta_1:(a,b,c)\mapsto (a,ab,bc),~\delta_2:(a,b,c)\mapsto (a,b,bc).$$
Put $M_V=\langle \delta_1,\delta_2 \rangle$. It can be verified directly that $|\delta_1|=4$, $|\delta_2|=2$, and $\delta_2\delta_1\delta_2=\delta_1^{-1}$. So $M_V\cong D_8$.

Let $M^0_V=\langle \delta_1^2,\delta_2\delta_1 \rangle$. Clearly, $M_V^0\cong E_4$ and hence $M_V/M_V^0\cong C_2$.
Put 
$$M_1=\{(\delta,\tau)\in M_V\times M_P:~(\delta^{\pi_1})^\psi=\tau^{\pi_2}\}\leq \aut(U),$$
where $\pi_1$ and $\pi_2$ are the canonical epimorphisms from $M_V$ to $M_V/M_V^0$ and from $M_P$ to $M_P/M_P^0$, respectively, and $\psi$ is a unique isomorphism from $M_V/M_V^0\cong C_2$ to $M_P/M_P^0\cong C_2$, and 
$$\mathcal{A}_1=\cyc(M_1,U).$$ 
The definition of $M_1$ implies that $V$ and $P$ are $\mathcal{A}_1$-subgroups and the basic sets of $\mathcal{A}_1$ are the following:
$$\{e\},~\{a\},~Ab,~(A\times B)c,~P^\#,~aP^\#,~AbP^\#,$$
$$AcP_1\cup AbcP_2,~AcP_2\cup AbcP_1,$$
where $P_1$ and $P_2$ are the nontrivial orbits of $M_P^0$ on $P$.

One can see that $V$ is an $\mathcal{A}_0$- and $\mathcal{A}_1$-subgroup and $(\mathcal{A}_0)_V=(\mathcal{A}_1)_V\cong (\mathbb{Z}C_2\wr \mathbb{Z}C_2)\wr \mathbb{Z}C_2$. Therefore one can form the $S$-ring
$$\mathcal{A}=\mathcal{A}_1\wr_S \mathcal{A}_0$$
over $G$, where $S=U/P$ and $\mathcal{A}_0$ is considered as an $S$-ring over $G/P\cong H$. 

To complete the proof of Theorem~\ref{main3}, it suffices to prove the proposition below.

\begin{prop}\label{nonschure16}
The $S$-ring $\mathcal{A}$ is nonschurian. 
\end{prop}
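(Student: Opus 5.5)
The plan is to argue by contradiction using the criterion of Lemma~\ref{schurwr}, in the spirit of the proof of Proposition~\ref{nonschur}. First, $\mathcal{A}$ is the $S$-wreath product with $S=U/P$, so $\mathcal{A}_U=\mathcal{A}_1$ and $\mathcal{A}_{G/P}\cong\mathcal{A}_0$. Second, both of these $S$-rings are cyclotomic, hence schurian. So if $\mathcal{A}$ were schurian, Lemma~\ref{schurwr} would give groups $K_1\le\sym(U)$ and $K_0\le\sym(G/P)$ such that
\[
K_1\ge U_r,\qquad K_1\approx_2\aut(\mathcal{A}_1),\qquad K_0\ge (G/P)_r,\qquad K_0\approx_2\aut(\mathcal{A}_0),\qquad K_0^S=K_1^S .
\]
By Eq.~\eqref{2mink0} (the $2$-minimality of $\mathcal{A}_0$), the group $K_0$ is forced to be $H_r\rtimes M_0$. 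Under the identification $G/P\cong H$ we have $S\cong V$, and $\delta_0$ restricts to $\delta_1$ on $V$. Hence
\[
K_0^S=V_r\rtimes\langle\delta_1\rangle ,
\]
and the one-point stabilizer $(K_0^S)_e=\langle\delta_1\rangle\cong C_4$ acts semiregularly off the $\mathcal{A}_0$-subgroup $A\times B$. It therefore suffices to show that no $K_1\approx_2\aut(\mathcal{A}_1)$ with $K_1\ge U_r$ has $(K_1^S)_e=\langle\delta_1\rangle$, that is, $(K_1^V)_e=\langle\delta_1\rangle$.

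To get the contradiction I would exploit the orbit structure that $K_1$ inherits from $2$-equivalence. Since $\orb((K_1)_e,U)=\mathcal{S}(\mathcal{A}_1)$ and $V$ is a block, one can use Eq.~\eqref{aut} exactly as in the proof of Proposition~\ref{nonschur}. Fix $x_1\in P_1$ and $x_2\in P_2$. Because $P^\#$ and $aP^\#$ are basic sets, there is $f\in(K_1)_e$ with $x_1^f=x_2$. Intersecting the translates $Xx_1$ of the basic sets $AcP_1\cup AbcP_2$, $AcP_2\cup AbcP_1$ and $AbP^\#$ with $V$, and then applying $f$, determines how $f^V$ acts on the $A$-cosets $Ac$, $Abc$, $Ab$. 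Comparing with $x_1^{f}=x_2$ should show that $f^V$ interchanges $Ac$ and $Abc$ while fixing $a$. Any such element of $\langle\delta_1\rangle$ must then be $\delta_1^{\pm1}$.

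Next, I would take a second element $g\in(K_1)_e$ that fixes $x_1$ and moves some $c'\in Ac$ within its basic set $AcP_1\cup AbcP_2$ (after suitable translation). Transitivity of $(K_1)_e$ on the basic set $AcP_1\cup AbcP_2$, combined with the fact that the stabilizer of $x_1$ in $\langle\delta_1\rangle$-data is forced, should produce an element $g^V$ that fixes $c$'s coset but acts nontrivially on $Ac$. The only candidate is a $\delta_2$-type automorphism, or more generally an element not lying in $\langle\delta_1\rangle$. This contradicts $(K_1^V)_e=\langle\delta_1\rangle$, because the elements of $\langle\delta_1\rangle$ that fix $Ac$ are just $\langle\delta_1^2\rangle$, and $\delta_1^2$ maps $c\mapsto ac$ while fixing $b$.

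The main obstacle is this last step: pinning down, via Eq.~\eqref{aut}, precisely which permutation of $V$ is induced by an element of $(K_1)_e$ chosen with prescribed behaviour on $P$, and verifying that the resulting parity pattern is the $M_V/M_V^0$ coupling rather than the $\langle\delta_1\rangle$ pattern. The point is that in $\mathcal{A}_1$ the coupling between $V$ and $P$ goes through the dihedral quotient $M_V/M_V^0$, whereas in $\mathcal{A}_0$ the only available action is the cyclic group $\langle\delta_1\rangle$. I would first carry out the bookkeeping with explicit elements $c,bc,ac,abc$ and $x_1,x_2$, mimicking the computation leading to Eq.~\eqref{contr}, and look for two translates whose intersections with $V$ force $f^V$ to send a coset to two different places.
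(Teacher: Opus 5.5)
Your setup matches the paper's: Lemma~\ref{schurwr} together with the $2$-minimality of $\mathcal{A}_0$ forces $(K_1^S)_e=\langle\delta_1\rangle$. Your first step is also correct: an $f\in(K_1)_e$ with $x_1^f=x_2$ fixes $a$ and $Ab$ and sends $Ac$ to $Abc$, so $f^V=\delta_1^{\pm1}$. But that step yields nothing, because it is consistent with $\mathcal{A}_1$: $(\delta_1,\tau)\in M_1$ for every $\tau\in M_P\setminus M_P^0$.

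The genuine gap is in the second step, whose target is the wrong one. The elements of $M_V$ that stabilize $Ac$ setwise are exactly those of $M_V^0=\langle\delta_1^2,\delta_2\delta_1\rangle$. An element that fixes $Ac$ setwise but acts nontrivially on it may simply be $\delta_1^2\in\langle\delta_1\rangle$, so exhibiting such a $g^V$ contradicts nothing. Also, $\delta_2$ is not a candidate at all, since $c^{\delta_2}=bc$. What separates $M_V^0$ from $M_V^0\cap\langle\delta_1\rangle=\langle\delta_1^2\rangle$ is the action on $b$: the two elements of $M_V^0\setminus\langle\delta_1^2\rangle$ send $b$ to $ab$. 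You even note that $\delta_1^2$ fixes $b$, but you never produce an element of $(K_1)_{e,x_1}$ that moves $b$. The transitivity of $(K_1)_e$ on $X=AcP_1\cup AbcP_2$ that you invoke does not by itself produce one, since $2$-equivalence pins down only the orbits of $(K_1)_e$, not those of its two-point stabilizers.

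The missing idea is an orbit--stabilizer count. Put $M=(K_1)_e$ and $x=x_1$, and take $g\in M_x$. By Eq.~\eqref{aut},
\[
(Ac)^g=(X^{-1}x\cap(A\times B)c)^g=X^{-1}x\cap(A\times B)c=Ac .
\]
So $g^V$ is an element of $\langle\delta_1\rangle$ stabilizing $Ac$, hence $g^V\in\langle\delta_1^2\rangle$ and $b^g=b$. Then $g$ fixes the blocks $Vx$ and $Pb$, so it fixes their common point $bx$; thus $M_x\le M_{bx}$. On the other hand, $P^\#$ and $AbP^\#$ are $M$-orbits of sizes $p-1$ and $2(p-1)$, so $|M_x|=2|M_{bx}|$, a contradiction. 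This is exactly the paper's argument. Without it, or some other way of forcing an element of $M_x$ to move $b$, your proposal does not reach a contradiction.
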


\begin{proof}
Assume the contrary that $\mathcal{A}$ is schurian. Then there exist two groups $K_1\leq \sym(U)$ and $K_0\leq \sym(G/P)$ such that 
$$K_1\geq U_r,~K_0\geq (G/P)_r,~K_1\approx_2 \aut(\mathcal{A}_1),~K_0\approx_2 \aut(\mathcal{A}_0),~\text{and}~K_0^S=K_1^S$$
by Lemma~\ref{schurwr}. From Eq.~\eqref{2mink0} it follows that
\begin{equation}\label{2mink0s} 
K_0^S=S_r\rtimes M_0^S.
\end{equation}

Let $X\in \mathcal{S}(\mathcal{A}_U)=\mathcal{S}(\mathcal{A}_1)$ be such that 
$$X=AcP_1\cup AbcP_2$$
and $x\in P_1$. Suppose that $f$ belongs to the one-point stabilizer $M_x$ of $x$ in $M$, where $M=(K_1)_e$ is the one-point stabilizer of $e$ in $K_1$. Since $K_0^S=K_1^S$, Eq.~\eqref{2mink0s} implies that $f^S\in M_0^S$. In view of Eq.~\eqref{aut} and $(A\times B)c,X^{-1}\in \mathcal{S}(\mathcal{A})$, we obtain
$$(Ac)^f=(X^{-1}x\cap (A\times B)c)^f=X^{-1}x\cap (A\times B)c=Ac.$$
Together with $f^S\in M_0^S$ and the definition of $M_0$, this shows that $f^S\in \langle \delta_0^2 \rangle$ and hence $b^f=b$. Since $H$ and $P$ are $\mathcal{A}$-subgroups, $Hx$ and $Pb$ are blocks of $\aut(\mathcal{A})$. As $x^f=x$ and $b^f=b$, we conclude that $(Hx)^f=Hx$ and $(Pb)^f=Pb$. Therefore
$$(bx)^f=(Hx\cap Pb)^f=Hx\cap Pb=bx.$$
Thus, $f$ belongs to the one-point stabilizer $M_{bx}$ of $bx$ in $M$ and consequently
\begin{equation}\label{stabinc} 
M_x\leq M_{bx}.
\end{equation}

The definition of $M_1$ implies that the basic set of $\mathcal{A}$ containing~$x$ is $P^\#$ and the basic set of $\mathcal{A}$ containing~$bx$ is $Y=AbP^\#$. Clearly, $|Y|=2(p-1)$. By the assumption, we have $P^\#,Y\in \orb(M,U)$. Therefore
$$|M_x|=\frac{|M|}{p-1}>\frac{|M|}{2(p-1)}=\frac{|M|}{|Y|}=|M_{bx}|,$$
in contrast to Eq.~\eqref{stabinc}.
\end{proof}

\vspace{5mm}

\noindent \textbf{Acknowledgment:} The author is very grateful to Prof. Ilia Ponomarenko for a careful reading and valuable comments which help to improve the text significantly. The author would like to thank Dr. Matan Ziv-Av for the help with computer calculations.

\end{document}